\documentclass[UTF8]{amsart}
\usepackage{amssymb,amsmath,amsthm,mathrsfs,multirow,xcolor,framed,url,mathtools}
\usepackage{float}
\usepackage{tikz-cd} 
\usepackage{amsaddr}
\usepackage{lipsum} 
\usepackage{tabularx}
\usepackage{longtable} 
\usepackage{threeparttablex} 
\usepackage{booktabs} 
\usepackage[title]{appendix}
\usepackage{tikz}
\usepackage[all]{xy}

\usepackage[ 
bookmarks=true,
bookmarksnumbered=true
bookmarksopen=flase,
hypertexnames=false,
breaklinks=true,
colorlinks=true,
pagebackref=true,
hyperindex=true,
plainpages=false
pdfauthor={Zhou }
pdftitle={Ternary}
pdfsubject={ }
pdfkeywords={}
 ]
{hyperref}
\newtheorem{theorem}{Theorem}[section]
\newtheorem{lemma}[theorem]{Lemma}
\newtheorem{cor}[theorem]{Corollary}

\newtheorem{prop}[theorem]{Proposition}

\theoremstyle{definition}
\newtheorem{definition}{Definition}        

\newtheorem{example}{Example}              

\theoremstyle{remark}
\newtheorem{remark}{Remark}    

\usepackage{tikz}
\usetikzlibrary{arrows.meta}
\theoremstyle{remark}

\numberwithin{equation}{section}

\usepackage{hyperref}
\newcommand\nutwid{\overset{\text{\lower3pt\hbox{$\sim$}}}\nu}

\newcommand{\tr}{\operatorname{tr}}
\newcommand{\n}{\operatorname{n}}

\newcommand{\discrd}{\operatorname{discrd}}

\newcommand{\Aut}{\operatorname{Aut}}

\allowdisplaybreaks

\begin{document}

\title[On trace-zero norm forms of orders of rational quaternion algebras with applications]{On trace-zero norm forms of orders of rational quaternion algebras with applications}

	\author{Yifan Luo$^{1}$, Tianze Wang$^{1,2}$ and Haigang Zhou$^{3}$}
	
	\address{$^{1}$Institute of Mathematics, Henan Academy of Sciences, Zhengzhou, China, 450046}
	\email{kotori@hnas.ac.cn}
	
	\address{$^{2}$School of Mathematics and Statistics, North China University of Water Resources \\  and Electric Power, Zhengzhou, China, 450046}
	\email{wtz@ncwu.edu.cn}
	
	\address{$^{3}$School of Mathematical Sciences, Tongji University, Shanghai, China, 200092}
	\email{haigangz@tongji.edu.cn}

\begin{abstract}
In this paper, by normalizing the trace-zero norm forms according to the parity of orders of definite rational quaternion algebras, we give an explicit formulation to the even Clifford correspondence followed by the reciprocity of ternary forms. We obtain an exact identity between ternary representation numbers and the number of elements of an order with prescribed reduced norm and trace, which applies to all orders without squarefreeness constraint on the reduced discriminant. Applications include a criterion for realizing ternary forms as trace-zero norm forms, a formula for traces of Brandt matrices in terms of representation numbers, and explicit theta series relations associated with local reciprocity. We also describe reciprocal orders and order chains, and interpret the ternary forms occurring in the congruent number problem through their associated quaternion orders.
\end{abstract}

\keywords{trace-zero norm forms, quaternion orders, ternary quadratic forms}
\subjclass[2020]{11E20, 11R52, 11E88}
\maketitle

\section{Introduction}\label{Sec:1}

Let $Q$ be a definite quaternion algebra over $\mathbb Q$, with
reduced norm $\n$ and reduced trace $\operatorname{tr}$, and let
$\mathcal O\subset Q$ be a $\mathbb Z$-order. The restriction of the
norm to the trace-zero sublattice
\[
 \mathcal O^0\!:\,=\{\alpha\in\mathcal O:
                    \operatorname{tr}(\alpha)=0\}
\]
defines a positive definite integral ternary quadratic form
$$f_{\mathcal O^0}\!:\,=\n|_{\mathcal O^0},$$ 
considered up to integral
equivalence. This assignment does not determine the order in
general: the Lipschitz and Hurwitz orders have the same trace-zero
norm form $x^2+y^2+z^2$. Our purpose is to describe a normalization
of integral trace-zero norm forms according to the parity of orders explicitly, and gives a direct interpretation of
representation numbers in terms of elements of $\mathcal O$.

The correspondence between ternary quadratic forms and quaternion
orders through the even Clifford construction is classical.
Modern treatments include Gross and Lucianovic \cite{GL09}
and Voight \cite{Voi11}; see also \cite[Chapter~22]{Voi21}.
Over $\mathbb Z$, one realization of the ternary form associated
with $\mathcal O$ is
\[
 f_{\mathcal O}\!:\,
   =\operatorname{discrd}(\mathcal O)\,
      \n|_{(\mathcal O^\sharp)^0},
\]
where $\mathcal O^\sharp$ denotes the dual of $\mathcal O$, and
$\operatorname{discrd}(\mathcal O)$ denotes the reduced discriminant of $\mathcal O$.
This is an integral quadratic form in a basis of the lattice
$(\mathcal O^\sharp)^0$. Its representation numbers count vectors
in that lattice; these vectors need not belong to $\mathcal O$.
For counting elements of the original order with specified norm
and trace, we instead use lattices obtained directly from its
integral elements.

The arithmetic construction can be viewed on the additive quotient
$\mathcal O/\mathbb Z$. The expression
\[
 {\frak q}_{\mathcal O}([\alpha])
   =4\n(\alpha)-\operatorname{tr}(\alpha)^2
\]
is unchanged when an integer is added to $\alpha$ and defines a
positive definite integral ternary quadratic form on this quotient.
The map
\[
 [\alpha]\longmapsto2\alpha-\operatorname{tr}(\alpha)
\]
identifies it with the norm lattice
$(\mathbb Z+2\mathcal O)^0$.
For specified integers $n\geq0$ and $r$, each quotient class of
value $4n-r^2$ has a unique representative of trace $r$ and norm
$n$. This observation gives the element-counting interpretation
used throughout the paper.

We determine how this integral construction is related to the
classical ternary form. With the normalization and reciprocal
operation defined below, we prove
\[
 \widetilde f_{\mathcal O}=\psi_{f_{\mathcal O}},
 \quad
 \mathcal M=\psi\circ C_0^{-1}
\]
on the corresponding equivalence and isomorphism classes.
Thus the bijection on classes is identified with the classical
correspondence followed by reciprocity. The arithmetic content
is the realization by integral trace-zero norm lattices and the
determination of the scaling factor in the norm--trace counting
identity, including its dependence on the Gorenstein conductor.

Related norm--trace counting problems were studied for maximal
orders by Boylan, Skoruppa and Zhou \cite{BSZ19}, and for Eichler
orders of squarefree level by Li, Skoruppa and Zhou \cite{LSZ22}.
Luo and Zhou \cite{LZ24} classified ternary forms of level $4N$
for squarefree $N$ and obtained formulas for their weighted
genus representation numbers. The basic counting identity here
applies to arbitrary definite $\mathbb Z$-orders, including
non-Gorenstein orders and orders of non-squarefree reduced
discriminant. It supplies an elementwise description and a
normalization for the subsequent formulas involving genus
averages, theta series, and Brandt matrices.

In statements involving all definite orders, the ambient
rational quaternion algebra $Q$ is allowed to vary.
For a specified order or genus, its ambient algebra is fixed.
We now introduce the notations and invariants used in the normalization.
Write
\[
 f=(a,b,c,r,s,t),\quad
 f(x,y,z)=ax^2+by^2+cz^2+ryz+sxz+txy.
\]
Denote
\begin{align*}
d_f\!:&=4abc+rst-ar^2-bs^2-ct^2,\,\,\,\text{the {discriminant} of $f$},\\
m_{f}\!:&=\operatorname{gcd}(4bc-r^2,4ac-s^2,4ab-t^2,2st-4ar,2rt-4bs,2rs-4ct),\,\,\,\text{the {divisor} of $f$},\\
N_{f}\!:&=4d_f/m_f,\,\,\,\text{the {level} of $f$},\\
\widetilde{m}_{f}\!:&=\operatorname{gcd}(a,b,c,r,s,t),\,\,\,\text{the {content} of $f$},\\
\psi_{f}\!:&=\widetilde{m}_{f}\left(\frac{4bc-r^2}{m_{f}},\frac{4ac-s^2}{m_{f}},\frac{4ab-t^2}{m_{f}},\frac{2st-4ar}{m_{f}},
\frac{2rt-4bs}{m_{f}},\frac{2rs-4ct}{m_{f}}\right),\,\,\,\text{the reciprocal of $f$}.
\end{align*}
Notice that, if we write $f=\widetilde{m}_{f}f_1$ with $f_1=(a/\widetilde{m}_{f},b/\widetilde{m}_{f},c/\widetilde{m}_{f},r/\widetilde{m}_{f},s/\widetilde{m}_{f},t/\widetilde{m}_{f})$, then the last defining formula implies 
$$
\psi_{f}=\widetilde{m}_{f}\psi_{f_1}.
$$

Since $\operatorname{tr}(1)=2$, the trace ideal
$\operatorname{tr}(\mathcal O)$ is either $\mathbb Z$ or
$2\mathbb Z$. We distinguish these two cases as follows.

\vskip 0.2cm
\begin{definition}
We say that an order $\mathcal{O}\subset Q$ is \textbf{odd} if there exists an element $\alpha\in\mathcal{O}$ such that $2\nmid\tr(\alpha)$, otherwise we call it \textbf{even}.
\end{definition}

For any odd order $\mathcal{O}$, we use  
$$
S\!:\,=\mathbb{Z}+2\mathcal{O}
$$ 
to denote the Gross suborder of $\mathcal{O}$ (see~\cite{Gro87}). 
Then the following newly defined trace-zero norm form $\widetilde{f}_{\mathcal{O}}$ will serve as a solid foundation throughout the study of this paper.\\

\begin{definition}\label{def:tildef}
Let $Q$ be a definite quaternion algebra over $\mathbb{Q}$. For a Gorenstein order $\mathcal{O}$ of $Q$, we define 
\begin{equation*}
\widetilde{f}_{\mathcal{O}}\!:\,=
\begin{cases}
f_{\mathcal{O}^0}/\widetilde{m}_{f_{\mathcal{O}^0}}\!\!\!\!& \text{if}\,\,\, \mathcal{O}\,\, \text{is even},\\  
f_{S^0}/\widetilde{m}_{f_{S^0}}\!\!\!\!& \text{if}\,\,\, \mathcal{O}\,\, \text{is odd}.
\end{cases}
\end{equation*}
For an arbitrary order $\mathcal{O}\subset Q$ with conductor $m$ and Gorenstein saturation $\operatorname{Gor}(\mathcal{O})$, i.e., 
\begin{equation*}
\mathcal{O}=\mathbb{Z}+m\operatorname{Gor}(\mathcal{O}),
\end{equation*}
we define
\begin{equation*}
\widetilde{f}_{\mathcal{O}}\!:\,=m\widetilde{f}_{\operatorname{Gor}(\mathcal{O})}.
\end{equation*}
Then we define the map $\mathcal M$ to be the correspondence 
\begin{equation*}
{\mathcal M}\!:\,\mathcal{O}\mapsto \widetilde{f}_{\mathcal{O}}
\end{equation*}
from definite orders to positive definite integral ternary quadratic forms. 
\end{definition}

\vskip 0.2cm
With these definitions and notions in place, our main results can be stated as the following Theorem ~\ref{the:bijection} and Corollaries ~\ref{cor:tqf} to ~\ref{cor:trace}.

\begin{theorem}\label{the:bijection}
The map $\mathcal M$ induces a bijection between the isomorphism
classes of $\mathbb Z$-orders in definite quaternion algebras over
$\mathbb Q$ and the
$\mathrm{GL}_3(\mathbb Z)$-equivalence classes of positive definite
integral ternary quadratic forms. Moreover:
\begin{enumerate}
\item for all orders ${\mathcal O}$ of $Q$ we have $\widetilde{f}_{\mathcal{O}}=\psi_{f_{\mathcal{O}}}$; 
\item an order ${\mathcal O}$ of $Q$ is Gorenstein if and only if $\widetilde{f}_{\mathcal{O}}$ is primitive; hence the map ${\mathcal M}$ induces a bijection from the isomorphism classes of Gorenstein orders to the equivalence classes of primitive positive definite integral ternary quadratic forms;
\item for any order ${\mathcal O}$ of $Q$ of conductor $m$, there holds  
\begin{equation*}
\rho_{\mathcal{O}}(n,r)=R_{\widetilde{f}_{\mathcal{O}}}\left(\frac{m(4n-r^2)}{m_{f_{\mathcal{O}}}}\right),
\end{equation*}
where $\rho_{\mathcal{O}}(n,r)$ denotes the number of elements $\alpha\in\mathcal{O}$ with $\n(\alpha)=n$ and $\tr(\alpha)=r$ for integers $n\geq 0$ and $r\in\mathbb{Z}$, and $R_f(\gamma)$ denotes the number of integral solutions $(x,y,z)\in\mathbb{Z}^3$ of $f(x,y,z)=\gamma$ for given quadratic form $f$ and rational number $\gamma\in\mathbb{Q}$;   
\item for any order ${\mathcal O}$ of $Q$ of conductor $m$, let $\mathcal{O}_1,\ldots,\mathcal{O}_t$ be a complete set of representatives of isomorphism classes of genus of 
$\mathcal{O}$, then 
\begin{equation*}
\sum_{\mu=1}^t \frac{\rho_{\mathcal{O}_\mu}(n,r)}{|\Aut(\mathcal{O}_\mu)|}
=2R_{\operatorname{gen}(\widetilde{f}_\mathcal{O})}\left(\frac{m(4n-r^2)}{m_{f_{\mathcal{O}}}}\right),
\end{equation*}
where $|\Aut(\mathcal{O}_\mu)|$ denotes the cardinality of the group of automorphisms of $\mathcal{O}_\mu$, and 
\begin{equation*}
R_{\operatorname{gen}(\widetilde f_{\mathcal{O}})}\left(\gamma\right)=\sum\limits_{f\in \operatorname{gen}(\widetilde f_{\mathcal{O}})}\frac{R_{f}\left({\gamma}\right)}{|\Aut(f)|},
\end{equation*}
with the last summation running over a complete set of representatives of $\mathrm{GL}_3(\mathbb Z)$-equivalence classes of forms in the genus $\operatorname{gen}(\widetilde f_{\mathcal{O}})$ of $\widetilde f_{\mathcal{O}}$, the value of which can be explicitly computed by Siegel-Weil formula. Here $|\Aut(f)|$ denotes the cardinality of the subgroup of automorphisms of $f$ of $\mathrm{GL}_3(\mathbb Z)$.
\end{enumerate}
\end{theorem}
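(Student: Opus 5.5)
Our plan is to reduce everything to the classical even Clifford correspondence $C_0$ (Gross--Lucianovic, Voight), which gives a discriminant-preserving bijection between isomorphism classes of quaternion orders and twisted-similarity classes of ternary forms. In the definite case over $\mathbb Z$ this becomes a bijection with $\mathrm{GL}_3(\mathbb Z)$-classes of positive definite forms, realized by $\mathcal O\mapsto f_{\mathcal O}=\discrd(\mathcal O)\,\n|_{(\mathcal O^\sharp)^0}$. Under it, Gorenstein orders correspond to primitive forms, and $\mathbb Z+m\mathcal O$ corresponds to $m f$. We then show that $\psi$ is an involution on equivalence classes, up to the content normalization: $\psi_{\psi_f}$ equals $f$ when $f$ is primitive, and in general $\psi_{mf}=m\psi_f$. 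It follows that $\psi$ is a bijection on classes of positive definite integral forms and preserves primitivity. Granting (1), we get $\mathcal M=\psi\circ C_0^{-1}$, and the bijection statement and (2) follow from the corresponding properties of $C_0$ and $\psi$. So the core of the proof is (1), and then (3) and (4).

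For (1) we first reduce to Gorenstein $\mathcal O$. Write $\mathcal O=\mathbb Z+m\operatorname{Gor}(\mathcal O)$. Then $f_{\mathcal O}=m f_{\operatorname{Gor}(\mathcal O)}$, so $\psi_{f_{\mathcal O}}=m\psi_{f_{\operatorname{Gor}(\mathcal O)}}$, which agrees with the definition $\widetilde f_{\mathcal O}=m\widetilde f_{\operatorname{Gor}(\mathcal O)}$.

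For Gorenstein $\mathcal O$ we compute explicitly in a good basis. We take $\mathcal O=\mathbb Z\langle 1,i,j,k\rangle$ with the Gross--Lucianovic multiplication table attached to $f=(a,b,c,r,s,t)$. In that basis the cosets $\mathcal O/\mathbb Z$ carry the quadratic form ${\frak q}_{\mathcal O}([\alpha])=4\n(\alpha)-\tr(\alpha)^2$, and via $[\alpha]\mapsto 2\alpha-\tr\alpha$ this is the norm form of $(\mathbb Z+2\mathcal O)^0$. A direct computation should show that the Gram matrix of ${\frak q}_{\mathcal O}$ is the adjugate-type matrix of $f$, with entries $(4bc-r^2,\dots,2rs-4ct)$. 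So ${\frak q}_{\mathcal O}$ equals $m_f\psi_f$ for primitive $f$.

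Next we compare ${\frak q}_{\mathcal O}$ with $f_{\mathcal O^0}$ or $f_{S^0}$:
\begin{itemize}
\item If $\mathcal O$ is odd, $S^0=(\mathbb Z+2\mathcal O)^0$, so $f_{S^0}={\frak q}_{\mathcal O}$, and dividing by its content gives $\psi_f$.
\item If $\mathcal O$ is even, every trace is even. Then $\alpha\mapsto\alpha-\tr(\alpha)/2$ maps $\mathcal O\to\mathcal O^0$ with kernel $\mathbb Z$, so $\mathcal O^0\cong\mathcal O/\mathbb Z$ and ${\frak q}_{\mathcal O}=4f_{\mathcal O^0}$. Dividing by the content again gives $\psi_f$.
\end{itemize}
The remaining point is to check that the content of ${\frak q}_{\mathcal O}$ equals $m_f$, which holds because $f$ is primitive.

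For (3), each $\alpha\in\mathcal O$ with $\n\alpha=n$ and $\tr\alpha=r$ corresponds bijectively to a class $[\alpha]\in\mathcal O/\mathbb Z$ with ${\frak q}_{\mathcal O}([\alpha])=4n-r^2$. Given such a class, the trace $r$ is attained by a unique representative, and that representative then has norm $n$. The parity constraint is automatic: ${\frak q}\equiv-\tr^2\pmod 4$, and we must check that a class of value $4n-r^2$ admits trace $\equiv r \pmod 2$. Hence $\rho_{\mathcal O}(n,r)=R_{{\frak q}_{\mathcal O}}(4n-r^2)$. For Gorenstein $\mathcal O$ we have ${\frak q}_{\mathcal O}=m_{f_{\mathcal O}}\widetilde f_{\mathcal O}$. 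For general $\mathcal O$, $\mathcal O/\mathbb Z=m(\operatorname{Gor}/\mathbb Z)$, so ${\frak q}_{\mathcal O}=m^2{\frak q}_{\operatorname{Gor}}$ and $m_{f_{\mathcal O}}=m^2m_{f_{\operatorname{Gor}}}$. This gives ${\frak q}_{\mathcal O}=(m_{f_{\mathcal O}}/m)\,\widetilde f_{\mathcal O}$, which is the stated identity.

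For (4), we sum (3) over the genus. Local isomorphism of orders corresponds under $\mathcal M$ to local equivalence of forms, so the genus of $\mathcal O$ maps bijectively onto $\operatorname{gen}(\widetilde f_{\mathcal O})$. We also need $\Aut(\mathcal O)\cong\mathrm{SO}(\widetilde f_{\mathcal O})$, acting by conjugation on the trace-zero lattice. Since $-1\in\mathrm O\setminus\mathrm{SO}$ in odd dimension, $|\Aut(f)|=2|\Aut(\mathcal O)|$, which accounts for the factor $2$.

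The main obstacle is the explicit Gram computation in (1), in particular the content and parity bookkeeping. This is where the even/odd distinction matters: we expect $m_f\equiv0\pmod 4$ exactly when $\mathcal O$ is even. We would verify this with the multiplication table, reducing the discriminant of $f$ modulo 4.
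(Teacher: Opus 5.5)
Your proposal is correct and follows essentially the paper's route. You reduce to the Gorenstein case via $\mathcal O=\mathbb Z+m\operatorname{Gor}(\mathcal O)$ and compute ${\frak q}_{\mathcal O}$ in the even Clifford basis to get the cofactor form $m_{f_{\mathcal O}}\psi_{f_{\mathcal O}}$. You then identify it with $f_{S^0}$ or $4f_{\mathcal O^0}$ according to parity, as in Theorems~\ref{lem:odd} and~\ref{the:even}, and conclude (3) and (4) via Lemma~\ref{lem:rep} and the count $|\Aut(\widetilde f_{\mathcal O})|=2|\Aut(\mathcal O)|$. One remark: the content of ${\frak q}_{\mathcal O}$ equals $m_{f_{\mathcal O}}$ directly from the definition of $m_f$ as the gcd of exactly those six coefficients, so the parity bookkeeping you flag as the main obstacle (that $4\mid m_f$ exactly for even orders) is not needed for (1).
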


\begin{remark}
The divisor $m_{f_{\mathcal O}}$ has a direct interpretation in
terms of integral elements of the order:
\[
 m_{f_{\mathcal O}}
   =\gcd\{4\n(\alpha)-\operatorname{tr}(\alpha)^2:
                                      \alpha\in\mathcal O\}.
\]
Indeed, the norm--trace form on $\mathcal O/\mathbb Z$ satisfies
\[
 {\frak q}_{\mathcal O}
   \sim\frac{m_{f_{\mathcal O}}}{m}\,
               \widetilde f_{\mathcal O},
\]
where $m$ is the Gorenstein conductor of $\mathcal O$.
The content of $\widetilde f_{\mathcal O}$ is $m$, so the content
of ${\frak q}_{\mathcal O}$ is $m_{f_{\mathcal O}}$.
The content of an integral quadratic form equals the greatest
common divisor of its integral values, proving the assertion.
Consequently,
\[
 \widetilde f_{\mathcal O}
   \sim\frac{m}{m_{f_{\mathcal O}}}\,{\frak q}_{\mathcal O}.
\]
This describes the normalization using norm and trace values
together with the Gorenstein conductor. In particular, for a
Gorenstein order the normalized form is the primitive part of
${\frak q}_{\mathcal O}$.
\end{remark}

\begin{remark}
As an arithmetic interpretation of (3) in Theorem~\ref{the:bijection}, we see that the representation number of $m(4n-r^2)/m_{f_{\mathcal{O}}}$ by the ternary quadratic form $\widetilde f_{\mathcal{O}}$ is equal exactly to the number of elements with reduced characteristic polynomial $x^2-rx+n$ in $\mathcal{O}$. This observation sometimes allows us to translate arithmetic questions about orders into that of ternary quadratic forms, and vice versa. 
\end{remark}

\vskip 0.2cm
We next determine which positive definite integral ternary forms
occur as the unnormalized norm form on the trace-zero lattice
of an order. The following Corollary gives a necessary and sufficient
criterion in terms of the discriminant, level, and content.

\begin{cor}\label{cor:tqf}
For a positive definite integral ternary quadratic form $f$, there exists a quaternion order $\mathcal{O}$ such that 
$$
f_{\mathcal{O}^0}=f
$$ 
if and only if $16d_{f}/N_{f}^2$ is a square integer and, $(2\widetilde{m}^3_f) \nmid d_{f}$ in addition if $4d_{f}/N_{f}^2$ is not a square integer.   
\end{cor}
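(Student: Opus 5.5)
Throughout, I would use $N_f=4d_f/m_f$ to rewrite the two conditions. The first says $16d_f/N_f^2=m_f^2/d_f$ is a square integer. The second says that if $4d_f/N_f^2=m_f^2/(4d_f)$ is not a square integer, then $2\widetilde m_f^3\nmid d_f$. The plan is to reduce everything to Theorem~\ref{the:bijection}: every positive definite integral ternary form is $\widetilde f_{\mathcal O}$ for a unique order up to isomorphism. So it suffices to express $f_{\mathcal O^0}$ explicitly through $\widetilde f_{\mathcal O}$ in each case, read off its invariants $(d,m,\widetilde m)$, and compare the resulting list with the stated criterion.

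\emph{Step 1: structure of $\mathcal O^0$.} Write $\mathcal O=\mathbb Z+mG$ with $G=\operatorname{Gor}(\mathcal O)$. An element $a+m\beta$ with $a\in\mathbb Z$, $\beta\in G$ has trace zero iff $2a=-m\operatorname{tr}\beta$, so $\mathcal O^0=\{m\beta-\tfrac{m}{2}\operatorname{tr}\beta:\ \beta\in G,\ m\operatorname{tr}\beta\in 2\mathbb Z\}$. This gives three cases.
\begin{enumerate}
\item $G$ is even. Then $\mathcal O^0=mG^0$, so $f_{\mathcal O^0}=m^2 f_{G^0}=m^2k\,\widetilde f_G$, where $k=\widetilde m_{f_{G^0}}$.
\item $G$ is odd and $m$ is even. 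Then $\mathcal O^0=\tfrac m2(2\beta-\operatorname{tr}\beta)=\tfrac m2 S^0$ with $S=\mathbb Z+2G$, so $f_{\mathcal O^0}=\tfrac{m^2}{4}k\,\widetilde f_G$.
\item $G$ is odd and $m$ is odd. Then $\mathcal O^0=mG^0$, and $2G^0$ has index $2$ in $S^0$.
\end{enumerate}
In every case $\widetilde f_{\mathcal O}=m\widetilde f_G$ and $\widetilde f_G$ is primitive (Theorem~\ref{the:bijection}(2)).

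\emph{Step 2: invariants.} Using Theorem~\ref{the:bijection}(1), $\widetilde f_G=\psi_{f_G}$, together with the standard identity $\psi_{\psi_g}\sim(\text{scalar})\cdot g$ for primitive $g$, I would pin down the content $k$ of $f_{G^0}$ (respectively $f_{S^0}$). I expect $k\in\{1,2\}$ (or a small power of $2$), with $k$ determined by the $2$-adic type of $\widetilde f_G$. Then $d$ and $m_f$ scale as $d_{\lambda g}=\lambda^3d_g$ and $m_{\lambda g}=\lambda^2m_g$. Combined with the reciprocity relation $m_{\psi_g}^{\,2}\mid\cdot$ and $d_{\psi_g}=d_g^2/m_g^3\cdots$, this should show the following. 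For $f=f_{\mathcal O^0}$, the ratio $m_f^2/d_f$ is a square, namely (up to the factor $4$) the square of the content-related part, and the parity case decides whether $m_f^2/(4d_f)$ is also a square. Case (3) needs a separate index-$2$ computation, with a determinant change by $4$.

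\emph{Step 3: converse.} Given $f$ satisfying the conditions, write $f=\widetilde m_f f_1$ with $f_1$ primitive. Let $g=\psi_{f_1}$ (or $f_1$ itself, as dictated by Step 2), choose $G$ with $\widetilde f_G$ equal to the appropriate primitive form, and choose $m$ so that the scalar $m^2k$ or $m^2k/4$ equals $\widetilde m_f$. The square condition on $m_f^2/d_f$ is exactly what makes $m$ integral. The condition $2\widetilde m_f^3\nmid d_f$ is what forces the correct parity of $G$ (even versus odd) when only the weaker square condition holds.

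The main obstacle is Step 2: computing the content $k$ and the divisor $m_{f_{\mathcal O^0}}$ precisely at the prime $2$ in the odd cases, and matching them exactly with the dichotomy "$m_f^2/(4d_f)$ square or not". I would first test the matching on $x^2+y^2+z^2$, which comes from the Lipschitz order and is the trace-zero form of the Hurwitz order (odd case). I would then do a $2$-adic local computation using the Jordan decomposition of $\widetilde f_G$ to handle the general case.
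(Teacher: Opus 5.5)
Your Step 1 is correct (it is Remark~\ref{rem:fO0}), and reducing to Theorem~\ref{the:bijection} is the right framework. But Step 2, where all the work lies, is only sketched, and its one concrete prediction is false: the content $k$ is not $1$, $2$ or a power of $2$. By Theorems~\ref{lem:odd} and~\ref{the:even} (Corollary~\ref{cor:m_f}), $k=m_{f_G}/4$ if $G$ is even and $k=m_{f_G}$ if $G$ is odd. In Example~\ref{example6.1} both $f_{\mathcal O'^0}$ and $f_{S^0}$ have content $3$, so with $k$ a power of $2$ your Step 3 would wrongly force $\widetilde m_f$ to be a square times a power of $2$. What you are missing is the identity $k\in\{m_{f_G}/4,\,m_{f_G}\}$ together with $m_{f_G}=N_{\widetilde f_G}^2/d_{\widetilde f_G}$, which follows from $N_{\psi_g}=N_g$ and $d_{\psi_g}=N_g^3/(4d_g)$ for primitive $g$. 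Together they give $16d_f/N_f^2=16\lambda/m_{f_G}$ whenever $f=\lambda\widetilde f_G$. Your cases (1) and (2) then yield $16d_f/N_f^2=4m^2$ at once, and integrality of $m$ in Step 3 means exactly that $4d_f/N_f^2=m^2$.

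The more serious gap is your case (3), $G$ odd and $m$ odd, which you defer to ``a separate index-$2$ computation''. This is where odd values of $\sqrt{16d_f/N_f^2}$ and the condition $2\widetilde m_f^3\nmid d_f$ come from. The index only fixes the discriminant, $d_{f_{G^0}}=d_{f_{S^0}}/16$; the content and divisor of $f_{G^0}$ depend on the $2$-adic type of $G$, and the paper splits into three subcases:
\begin{itemize}
\item If $G\in\operatorname{Typ}_i(2^n)$ with $n\ge2$, then $G^+=\mathbb Z+2G'$ with $G'$ odd Gorenstein (Theorem~\ref{chains1}). So $f_{G^0}=f_{S'^0}=m_{f_{G'}}\widetilde f_{G'}$ and $16d_f/N_f^2=16m^2$.
\item If $n=1$, then $f_{G^0}=\lambda_4(f_{S^0})=m_{f_G}\lambda_4(\widetilde f_G)$ by Proposition~\ref{prop:Watsontrans}, and Lemma~\ref{Watsontrans111}(1) gives $4m^2$.
\item If $G\in\operatorname{Typ}_2(1)$, Lemma~\ref{Watsontrans111}(2) gives $m^2$ with $m$ odd, and $f/\widetilde m_f=\phi_2(\widetilde f_G)$ has odd discriminant; this is the extra condition.
\end{itemize}
A direct computation with the models of Theorem~\ref{the:oddorder} would also work; for example $f_{G^0}\sim_2-x^2-2^vyz$ in the Eichler case reproduces the $2$-parts $16,4,1$. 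But it has to be done.

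Consequently your Step 3 recipe of taking $\widetilde f_G$ to be $f_1$ or $\psi_{f_1}$ fails when $16d_f/N_f^2$ is an odd square: you need $\widetilde f_G=\phi_2^{-1}(f_1)$. The hypothesis $2\widetilde m_f^3\nmid d_f$ means $d_{f_1}$ is odd and hence $v_2(N_{f_1})=2$, which is exactly what makes this preimage exist by Lemma~\ref{Watsontrans111}(2). It is an existence condition (without it no order realizes $f$), not a device for selecting the parity of $G$.
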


As an application of this corollary we will give an explanation in section ~\ref{section07}, see Proposition~\ref{exam:congruent}, why the quadratic forms relevant to the theory of congruent numbers due to Tunnell~\cite{Tun83} and Qin~\cite{Qin22} take their particular forms, from the quaternion order point of view. The most important difference and the relationships of the forms of Qin~\cite{Qin22} and Tunnell~\cite{Tun83} are also discussed by means of orders. 

\vskip 0.2cm
The norm--trace identity also gives a criterion for an order
to have unit group $\{\pm1\}$.

\begin{cor}\label{cor:unitgroup}
Let $\mathcal{O}$ be an arbitrary order. If $\mathcal{O}$ is not Gorenstein, or if $\mathcal{O}$ is Gorenstein with $m_{f_{\mathcal{O}}}\geq5$, then we have 
\begin{equation*}
\mathcal{O}^{\times}=\{\pm1\},
\end{equation*}
where $\mathcal{O}^{\times}$ denotes the unit group of $\mathcal{O}$. Moreover, if we let $h({\mathcal{O}})$ denote the class number of $\mathcal{O}$ and $\operatorname{mass}(\operatorname{Cls} (\mathcal{O}))$ the mass of the right class set $\operatorname{Cls} (\mathcal{O})$ of $\mathcal{O}$, then under the same conditions on $\mathcal{O}$ we have 
\begin{equation*}
h({\mathcal{O}})=\operatorname{mass}(\operatorname{Cls} (\mathcal{O})).
\end{equation*}
Here the value of the right-hand side can be computed explicitly by Eichler mass formula.
\end{cor}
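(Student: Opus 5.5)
The proof rests on the norm--trace identity in Theorem~\ref{the:bijection}(3). A unit $\alpha\in\mathcal O^\times$ has $\n(\alpha)=1$, since $Q$ is definite and $\n(\alpha)$ is a positive integer whose inverse $\n(\alpha^{-1})$ is also a positive integer. Its trace $r$ must then satisfy $4-r^2=4\n(\alpha)-\tr(\alpha)^2\ge 0$, so $r\in\{0,\pm1,\pm2\}$. The elements with $r=\pm2$ and $\n=1$ are exactly $\pm1$, because $4\n(\alpha)-\tr(\alpha)^2=0$ forces $\alpha\in\mathbb Q$. So it suffices to show that no $\alpha\in\mathcal O$ satisfies $\n(\alpha)=1$ and $\tr(\alpha)\in\{0,\pm1\}$. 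For such an $\alpha$ the quantity $4\n(\alpha)-\tr(\alpha)^2$ equals $4$ or $3$.

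By the Remark following Theorem~\ref{the:bijection}, $m_{f_{\mathcal O}}=\gcd\{4\n(\alpha)-\tr(\alpha)^2:\alpha\in\mathcal O\}$. Hence if such a unit existed, $m_{f_{\mathcal O}}$ would divide $3$ or $4$, and in particular $m_{f_{\mathcal O}}\le 4$. This already settles the Gorenstein case with $m_{f_{\mathcal O}}\ge5$.

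In the non-Gorenstein case we have $\mathcal O=\mathbb Z+m\operatorname{Gor}(\mathcal O)$ with $m\ge2$. Write $\alpha=k+m\beta$ with $k\in\mathbb Z$ and $\beta\in\operatorname{Gor}(\mathcal O)$. Then
\[
4\n(\alpha)-\tr(\alpha)^2=m^2\bigl(4\n(\beta)-\tr(\beta)^2\bigr),
\]
because the expression is invariant under adding integers and scales quadratically. Since $\beta\notin\mathbb Q$ when $\alpha\notin\mathbb Q$, the bracket is a positive integer, so the left side is at least $m^2\ge4$. Equality would require $m=2$ and $4\n(\beta)-\tr(\beta)^2=1$. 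But $4\n(\beta)-\tr(\beta)^2\equiv -\tr(\beta)^2\pmod 4$ is congruent to $0$ or $3$ mod $4$, never $1$. So a nontrivial unit would need the value $3$ or $4$, and both are impossible, which gives $\mathcal O^\times=\{\pm1\}$. Alternatively, one can read the same conclusion off Theorem~\ref{the:bijection}(3): we need $R_{\widetilde f_{\mathcal O}}(m(4-r^2)/m_{f_{\mathcal O}})=0$, and the values of $\widetilde f_{\mathcal O}$ lie in $m\mathbb Z$. I would use the direct computation above as the main argument, since it avoids tracking $m_{f_{\mathcal O}}$ in the non-Gorenstein case.

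For the class number statement, I would use the standard fact that for every right ideal class $[I]$ in $\operatorname{Cls}(\mathcal O)$, the left order $\mathcal O_L(I)$ is locally isomorphic to $\mathcal O$, that is, it lies in the genus of $\mathcal O$. The invariants used above are local. The Gorenstein property and the conductor $m$ are local. The quantity $m_{f_{\mathcal O}}$ is the gcd of the values of $\frak q_{\mathcal O}$, i.e.\ its content, and this is determined by the local completions $\mathcal O_p$. So every left order in the genus satisfies the same hypothesis and has unit group $\{\pm1\}$. Therefore each term $1/[\mathcal O_L(I)^\times:\mathbb Z^\times]$ in
\[
\operatorname{mass}(\operatorname{Cls}(\mathcal O))=\sum_{[I]}\frac{1}{[\mathcal O_L(I)^\times:\{\pm1\}]}
\]
equals $1$, and $h(\mathcal O)=\operatorname{mass}(\operatorname{Cls}(\mathcal O))$. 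The mass itself is then given by the Eichler mass formula.

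The main obstacle is checking that these hypotheses are genus invariants. I would do this by comparing the forms of $\mathcal O_L(I)$ and $\mathcal O$ via Theorem~\ref{the:bijection}. Locally isomorphic orders correspond to forms in the same genus. The genus determines both the content, which is $m$, and the primitivity of $\widetilde f$, hence the Gorenstein property by Theorem~\ref{the:bijection}(2). The factor $m_{f_{\mathcal O}}$ is the content of $\frak q_{\mathcal O}$, which is likewise determined locally.
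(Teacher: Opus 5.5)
Your proof is correct and follows essentially the paper's route. Every value $4\n(\alpha)-\tr(\alpha)^2$ is a multiple of $m_{f_{\mathcal O}}$, and in the non-Gorenstein case it is $m^2$ times a positive integer $\equiv 0,3\pmod 4$; this rules out the values $3$ and $4$. The class-number identity then follows because every left order $\mathcal O_{\operatorname L}(I)$ lies in the genus of $\mathcal O$.

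The differences are minor. You use the gcd interpretation of $m_{f_{\mathcal O}}$ and a direct $m^2$-scaling computation in place of Theorem~\ref{the:bijection}(3). You also justify the genus invariance of the hypotheses (locality of the Gorenstein property, the conductor, and the content of ${\frak q}_{\mathcal O}$), which the paper only asserts.
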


Note that the unit group $\mathcal{O}^\times=\{\pm1,\pm i,\pm j,\pm k\}$ for Lipschitz order $\mathcal{O}$, and $m_{f_{\mathcal{O}}}=4$. Hence the lower bound $m_{f_{\mathcal{O}}}\geq5$ for $m_{f_{\mathcal{O}}}$ in this  corollary is the best possible if we want $\mathcal{O}^{\times}=\{\pm1\}$ in general.

\vskip 0.2cm
The same identity gives relations of the Eichler symbol of a Gorenstein
order to invariants of its associated ternary forms.
The following corollary gives explicit local formulas.
\begin{cor}\label{cor:Eichlersym}
Let $\mathcal{O}$ be a Gorenstein order. 
Then at places of odd prime $p\neq2$ we have
\begin{equation*}
\left(\frac{\mathcal{O}}{p}\right)=
\begin{cases}
0 &\text{if $p\mid m_{f_\mathcal{O}}$,}\\
\left(\frac{-m_{f_\mathcal{O}}}{p}\right)\left(\frac{{\widetilde{f}_\mathcal{O}}}{p}\right) &\text{if $p\nmid m_{f_\mathcal{O}}$, $p\mid m_{\widetilde{f}_\mathcal{O}}$},\\
* &\text{if $p\nmid m_{f_\mathcal{O}}m_{\widetilde{f}_\mathcal{O}}$,}\\
\end{cases}
\end{equation*}
and at place of prime $2$ we have
\begin{equation*}
\left(\frac{\mathcal{O}}{2}\right)=
\begin{cases}
0 &\text{if $\mathcal{O}$ is even,}\\
\left(\frac{-m_{f_\mathcal{O}}}{2}\right)\left(\frac{{\widetilde{f}_\mathcal{O}}}{8}\right) &\text{if $\mathcal{O}$ is odd and $2\mid d_{f_\mathcal{O}}$,}\\
* &\text{if $\mathcal{O}$ is odd and $2\nmid d_{f_\mathcal{O}}$,}\\
\end{cases}
\end{equation*}
where $\left(\frac{\mathcal{O}}{p}\right)$ is the Eichler symbol of order $\mathcal{O}$, $\left(\frac{f}{p}\right)$ is the genus symbol of ternary quadratic form $f$, and $*$ is a formal symbol to indicate the split maximal situation 
$\mathcal O_p\simeq M_2(\mathbb Z_p)$.
\end{cor}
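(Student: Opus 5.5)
The statement to prove is Corollary~\ref{cor:Eichlersym}. I would work locally at each prime $p$ and rely on two facts: the Eichler symbol is determined by the local order $\mathcal O_p$, and Theorem~\ref{the:bijection} turns questions about $\mathcal O_p$ into questions about the ternary form $\widetilde f_{\mathcal O}$. Recall the definition: $\left(\frac{\mathcal O}{p}\right)$ is $*$ if $\mathcal O_p\simeq M_2(\mathbb Z_p)$; otherwise it is $-1$, $0$ or $1$ according as the residue ring $\mathcal O_p/J(\mathcal O_p)$ is $\mathbb F_{p^2}$, $\mathbb F_p$ or $\mathbb F_p\times\mathbb F_p$. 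Since $\mathcal O$ is Gorenstein, the conductor is $m=1$. By the Remark after Theorem~\ref{the:bijection}, $\widetilde f_{\mathcal O}$ is then the primitive part of ${\frak q}_{\mathcal O}$, namely ${\frak q}_{\mathcal O}\sim m_{f_{\mathcal O}}\widetilde f_{\mathcal O}$. Moreover $m_{f_{\mathcal O}}=\gcd\{4\n(\alpha)-\tr(\alpha)^2\}$ is the gcd of the discriminants of the characteristic polynomials of elements of $\mathcal O$.

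\textbf{Odd $p$.} I would use the criterion that $\mathcal O_p\simeq M_2(\mathbb Z_p)$ exactly when the local ternary lattice ${\frak q}_{\mathcal O}\otimes\mathbb Z_p$ is unimodular, i.e.\ when $p\nmid d_{\widetilde f}$ and $p\nmid m_{f_{\mathcal O}}$. Here $p\nmid d_{\widetilde f}$ is equivalent to $p\nmid m_{\widetilde f}$, because for odd $p$ the $p$-part of the level of a primitive form sees the same Jordan components. This gives the third line.

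If $p\mid m_{f_{\mathcal O}}$, then every $\alpha\in\mathcal O_p$ has characteristic polynomial with discriminant $\equiv0\pmod p$. So every element is congruent mod $J$ to a scalar, the residue ring is $\mathbb F_p$, and the symbol is $0$.

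If $p\nmid m_{f_{\mathcal O}}$ but $p\mid m_{\widetilde f}$, then ${\frak q}_{\mathcal O}\otimes\mathbb Z_p$ has a Jordan splitting $\langle u\rangle\perp p^{k}(\ast)$ with $u$ a unit, $k\ge1$, and $u$ essentially $-m_{f_{\mathcal O}}\cdot(\text{genus symbol})$. The residue ring is generated by an $\alpha$ whose discriminant $4\n(\alpha)-\tr(\alpha)^2$ is the unit value $u$. This ring is $\mathbb F_{p^2}$ or $\mathbb F_p\times\mathbb F_p$ according as $\tr(\alpha)^2-4\n(\alpha)=-u$ is a nonsquare or a square mod $p$. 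Hence the symbol equals $\left(\frac{-u}{p}\right)$. Unwinding the scaling ${\frak q}\sim m_{f_{\mathcal O}}\widetilde f$ turns this into $\left(\frac{-m_{f_{\mathcal O}}}{p}\right)\left(\frac{\widetilde f_{\mathcal O}}{p}\right)$, where the genus symbol records the square class of the unimodular component of $\widetilde f$.

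\textbf{The prime $2$.} If $\mathcal O$ is even, all traces are even, so every characteristic polynomial is $\equiv x^2+\n(\alpha)\equiv(x+\n(\alpha))^2\pmod 2$, i.e.\ inseparable. The residue ring is then $\mathbb F_2$ and the symbol is $0$.

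If $\mathcal O$ is odd, there is an $\alpha$ with odd trace. The minimal polynomial $x^2+x+\n(\alpha)$ mod $2$ is irreducible or splits according as $\n(\alpha)$ is odd or even. Equivalently, the symbol is $\left(\frac{-D}{2}\right)$ (Kronecker symbol) for $D=4\n(\alpha)-\tr(\alpha)^2\equiv 3\pmod 4$. The split-maximal case $\mathcal O_2\simeq M_2(\mathbb Z_2)$ corresponds to $2\nmid d_{f_{\mathcal O}}$, using $d_{f_{\mathcal O}}=\discrd(\mathcal O)$ (standard for the Clifford correspondence, cf.\ \cite{GL09}). When $2\mid d_{f_{\mathcal O}}$, I would again decompose $\widetilde f\otimes\mathbb Z_2$ and read off the odd unimodular component; its class mod $8$ is the $8$-adic genus symbol. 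This gives the value $\left(\frac{-m_{f_{\mathcal O}}}{2}\right)\left(\frac{\widetilde f_{\mathcal O}}{8}\right)$.

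\textbf{Main obstacle.} The hardest part is matching conventions. I need to show that the paper's genus symbols $\left(\frac{f}{p}\right)$ and $\left(\frac{f}{8}\right)$ are exactly the quadratic character of the unit Jordan constituent of the primitive form. At $2$ I would also have to handle the possible even (hyperbolic or anisotropic binary) Jordan blocks carefully. I would do this by an explicit $2$-adic case check on normal forms of $\widetilde f\otimes\mathbb Z_2$.
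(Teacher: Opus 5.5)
Your proposal is correct and follows essentially the paper's route: both reduce the Eichler symbol to the quadratic characters of $\tr(\alpha)^2-4\n(\alpha)=-m_{f_{\mathcal O}}\widetilde f_{\mathcal O}$ and then read off the unit constituent of $\widetilde f_{\mathcal O}$. The paper cites \cite[24.3.3]{Voi21} for the first reduction, where you rederive it from $\mathcal O_p/J(\mathcal O_p)$. At $2$ the paper supplies exactly the fact your plan needs, namely $32\mid m_{\widetilde f_{\mathcal O}}=16d_{f_{\mathcal O}}/m_{f_{\mathcal O}}^2$, whence $\widetilde f_{\mathcal O}\sim_2 Cz^2+8q(x,y)$ and every odd value of $\widetilde f_{\mathcal O}$ is $\equiv C\pmod 8$.

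Two small fixes:
\begin{enumerate}
\item The equivalence $p\nmid d_{\widetilde f}\Leftrightarrow p\nmid m_{\widetilde f}$ is false for general primitive forms (e.g.\ $x^2+y^2+pz^2$). It does hold under your standing assumption $p\nmid m_{f_{\mathcal O}}$, because $m_{\widetilde f_{\mathcal O}}m_{f_{\mathcal O}}=4N_{f_{\mathcal O}}$ and $m_{f_{\mathcal O}}d_{\widetilde f_{\mathcal O}}=N_{f_{\mathcal O}}^2$.
\item The unit constituent of ${\frak q}_{\mathcal O}$ is $u=m_{f_{\mathcal O}}u_1$ with $\left(\frac{u_1}{p}\right)=\left(\frac{\widetilde f_{\mathcal O}}{p}\right)$, with no minus sign; this is what makes $\left(\frac{-u}{p}\right)$ come out as $\left(\frac{-m_{f_{\mathcal O}}}{p}\right)\left(\frac{\widetilde f_{\mathcal O}}{p}\right)$.
\end{enumerate}
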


Transporting reciprocity of ternary forms through the classical correspondence
defines a reciprocal operation on isomorphism classes of orders.
We can also give the associated correspondences
between type sets when reciprocity is prescribed at a single prime.

\begin{cor}\label{cor:automorphism}
There is a bijection $\psi$ up to isomorphism from the set of all orders to itself such that $f_{\mathcal{O}}=\widetilde{f}_{\psi(\mathcal{O})}$ and $f_{\psi(\mathcal{O})}=\widetilde{f}_{\mathcal{O}}$. Moreover, if $\mathcal{O}$ is an odd order, then $\psi(\mathcal{O})$ is an even order. Furthermore, if $\mathcal{O}$ and $\mathcal{O}'$ are two orders such that $\mathcal{O}_p\cong\psi(\mathcal{O}')_p$ for a prime $p\geq 2$ and $\mathcal{O}_q\cong\mathcal{O}'_q$ for all primes $q\neq p$, then there exists a bijection $\varphi_p$ between the type set of $\mathcal{O}$ and the type set of $\mathcal{O}'$.   
\end{cor}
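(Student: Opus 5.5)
Let $C_0$ be the classical even Clifford correspondence, which sends a ternary form class to an order isomorphism class and inverts $\mathcal O\mapsto f_{\mathcal O}$. I would define $\psi$ on orders by transporting the reciprocal operation $f\mapsto\psi_f$ on forms:
\[
\psi(\mathcal O):=C_0\bigl(\widetilde f_{\mathcal O}\bigr),
\qquad\text{equivalently}\qquad
f_{\psi(\mathcal O)}=\widetilde f_{\mathcal O}.
\]
This is well defined on isomorphism classes because $\widetilde f_{\mathcal O}$ is an invariant of the isomorphism class and $C_0$ is a bijection on classes.

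\textbf{The two identities and bijectivity.} Theorem~\ref{the:bijection}(1) gives $\widetilde f_{\psi(\mathcal O)}=\psi_{f_{\psi(\mathcal O)}}=\psi_{\widetilde f_{\mathcal O}}=\psi_{\psi_{f_{\mathcal O}}}$. I would next check that reciprocity is an involution on equivalence classes, $\psi_{\psi_f}\sim f$. The adjugate of the adjugate of the Gram matrix is $\det\cdot$Gram, and the scalings by $m_f$ and $\widetilde m_f$ should cancel. I would verify this by writing $f=\widetilde m_f f_1$ with $f_1$ primitive, checking $\psi_{\psi_{f_1}}=f_1$ for primitive $f_1$ by a local computation at each prime, and then using the homogeneity $\psi_f=\widetilde m_f\psi_{f_1}$ noted in the introduction.

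Granting the involution, $\widetilde f_{\psi(\mathcal O)}=f_{\mathcal O}$ and $f_{\psi(\mathcal O)}=\widetilde f_{\mathcal O}$ both hold. Bijectivity follows because $\psi=C_0\circ\mathcal M$, where $\mathcal M$ is a bijection by Theorem~\ref{the:bijection} and $C_0$ is a bijection. Moreover $\psi\circ\psi=C_0\,\mathcal M\,C_0\,\mathcal M$, and $\mathcal M C_0=\psi_{(\cdot)}$ on forms, so $\psi\circ\psi=\mathrm{id}$.

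\textbf{Parity.} If $\mathcal O$ is odd, then $\widetilde f_{\mathcal O}$ is built from $S=\mathbb Z+2\operatorname{Gor}(\mathcal O)$, so it comes from norms of trace-zero elements $2\alpha-\tr(\alpha)$. I would use the dictionary $\mathfrak q_{\mathcal O}\sim (m_{f_{\mathcal O}}/m)\widetilde f_{\mathcal O}$ from the Remark. An order is odd exactly when $4\n(\alpha)-\tr(\alpha)^2$ takes values $\equiv 3\pmod 4$, and even when all these values are divisible by $4$. The goal is to show that $\psi(\mathcal O)$, whose classical form is $\widetilde f_{\mathcal O}$, has $\tr(\psi(\mathcal O))\subseteq 2\mathbb Z$. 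Concretely, the Clifford order $C_0(g)$ of $g=(a,b,c,r,s,t)$ has basis $1,e_1,e_2,e_3$, with $\tr(e_i)$ given by $r,s,t$ up to sign. So $C_0(g)$ is even iff $r,s,t$ are all even. It then suffices to show that for odd $\mathcal O$ the cross coefficients of $\widetilde f_{\mathcal O}$ are even, i.e.\ $\widetilde f_{\mathcal O}$ is the norm form of a lattice whose bilinear form is even. I would get this from $\psi_f$: its cross terms $(2st-4ar)/m_f$ etc.\ are even as long as $m_f$ is odd, and for odd orders $m_{f_{\mathcal O}}$ should be odd, since the gcd of $4\n-\tr^2$ contains a value $\equiv 3\pmod 4$. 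This $2$-adic bookkeeping, including the factor $m$ for non-Gorenstein orders, is the step I expect to be the main obstacle. I would handle it by a $2$-adic local check of the Gorenstein case and then multiplying by $m$, noting that if $m$ is even the order $\mathbb Z+m\operatorname{Gor}$ is already even.

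\textbf{Type sets.} Suppose $\mathcal O_p\cong\psi(\mathcal O')_p$ and $\mathcal O_q\cong\mathcal O'_q$ for all $q\neq p$. The maps $\mathcal M$ and $C_0$ are compatible with localization, and reciprocity can be performed prime by prime. So I would define an order $\mathcal O''$ locally by taking $\psi$ at $p$ only. The local-global principle for lattices then yields a global order in the genus of $\mathcal O'$ whose localizations agree with those of $\mathcal O$ after a $p$-local reciprocal twist. Since the type set of an order corresponds via $C_0$ to the classes in the genus of $f_{\mathcal O}$, and the $p$-partial reciprocity is a bijection between the classes of two genera of ternary forms, composing gives the bijection $\varphi_p$. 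The key input is that $p$-local reciprocity acts on the lattice class sets, sending $L\mapsto$ the $p$-part dual lattice, and preserves global equivalence.
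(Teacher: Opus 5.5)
Your proposal is correct and follows the paper's proof. You define $\psi=C_0\circ\mathcal M$ and read the two identities off Theorem~\ref{the:bijection}(1), deduce that $\psi(\mathcal O)$ is even from the oddness of $m_{f_{\mathcal O}}$, and obtain $\varphi_p$ by transporting $p$-local reciprocity through the form correspondence; this last map is the paper's Lehman $\phi_p$, which by Remark~\ref{rem:phi} is scaling away from $p$ and reciprocity at $p$, i.e.\ your partial dual.

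The differences are minor. You explicitly justify the involution $\psi_{\psi_f}=f$, which the paper uses tacitly. Your parity step (the cross coefficients of $\psi_f$ are even when $m_f$ is odd) replaces the paper's use of $m_{\psi_f}m_f=4N_f$ with Theorem~\ref{the:odd}. Your equivariant partial-dual argument for the type-set bijection replaces the paper's appeal to Lehman's class bijections together with Gorenstein saturation.
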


As an application of this corollary, one can construct new types of orders and study the arithmetic and analytic properties of relevant subjects. The first one concerns with modular forms: we may establish explicit relationships as for the theta series associated with $\mathcal{O}$ and its reciprocal $\varphi_p(\mathcal{O})$ at prime $p$, see e.g. Proposition ~\ref{prop:theta}. 
Also, using Schiemann's theorem ~\cite{Sch97} that two ternary quadratic forms are determined up to equivalence by their theta series, we can prove in Theorem~\ref{the:theta} that two odd (or even) orders are determined completely up to equivalence by their theta series $\theta_{S^0}$ (or $\theta_{\mathcal{O}^0}$). 

Further, we may construct a counterexample (see e.g. Example ~\ref{exam:nodd} in section ~\ref{section6.00}) to a statement of Hashimoto~\cite[p.116]{Has06}, which says that there exists orders $\mathcal{O}$ and $\mathcal{O}'$, if $\mathcal{O}$ is not isomorphic but locally isomorphic to $\mathcal{O}'$, then $\theta_{\mathcal{O}^0}\neq\theta_{\mathcal{O}'^0}$ (see \eqref{gongshi5.3} for definition). 
A conjecture due to Hashimoto~\cite{Has06} on the linear relations over $\mathbb{C}$ of theta series associated to orders is also discussed. 

Moreover, recall that, although Pizer~\cite{Piz80} can successfully give linear combination representations for all newforms in $S_k(p^{2r+1}N)$ using the theta series attached to orders of level $p^{2r+1}N$, he found that the situation is quite different for the cases of level $p^{2}N$. To pursue the differences and the relationships of these two kinds of orders we prove in Proposition ~\ref{proposition7.2} that the orders of level $pN$ and $p^2N$ of Pizer~\cite{Piz80} are in fact reciprocal one another under $\varphi_p$. Following the same idea, Proposition ~\ref{proposition7.4} gives a complete construction of all the similar orders using $\varphi_p$. 

\vskip 0.2cm
Finally, we express traces of Brandt matrices in terms of
weighted genus representation numbers of ternary forms.
 Different from the usual ones, which often depend crucially on the theory of optimal embeddings (see, e.g., \cite{Hij74,Piz76a,Piz76b,HPS89}), our result relies only on representation numbers of ternary quadratic forms as well as the Picard group of orders, by an approach based mainly on Theorem~\ref{the:bijection}, instead of the classical optimal embeddings. 
The positive-index genus averages in the
formula can be evaluated through local representation densities
by the Siegel--Weil formula; the zero-index contribution is the
corresponding genus mass.


\begin{cor}\label{cor:trace}
Let $T_{\mathcal{O}}(n)$ denote the $n$-th Brandt matrix related to $\mathcal{O}$ with Gorenstein conductor $m$ for $n\geq1$. Then we have
\begin{equation*}
\tr(T_\mathcal{O}(n))=|\operatorname{Pic}(\mathcal{O})|\sum_{r^2\leq 4n} R_{\operatorname{gen}(\widetilde{f}_\mathcal{O})}\left(\frac{m(4n-r^2)}{m_{f_{\mathcal{O}}}}\right),
\end{equation*}
where $\operatorname{Pic}(\mathcal{O})$ is the Picard group of $\mathcal{O}$.
\end{cor}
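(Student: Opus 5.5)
We prove Corollary~\ref{cor:trace} by expanding the trace of the Brandt matrix as a sum over the right class set of $\mathcal O$, rewriting each term with Theorem~\ref{the:bijection}(3), and then regrouping by isomorphism type so that Theorem~\ref{the:bijection}(4) applies.

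\emph{Step 1: the trace as a sum over classes.} Let $I_1,\dots,I_h$ be representatives of the right class set $\operatorname{Cls}(\mathcal O)$. Let $\mathcal O_\lambda=\mathcal O_L(I_\lambda)$ be their left orders, and put $w_\lambda=|\mathcal O_\lambda^\times|/2$. The standard definition of the Brandt matrix gives the diagonal entry
\[
T_{\mathcal O}(n)_{\lambda\lambda}=\frac{1}{|\mathcal O_\lambda^\times|}\#\{\alpha\in \mathcal O_\lambda:\n(\alpha)=n\}
\]
(the normalization by $|\mathcal O_\lambda^\times|$ or by $w_\lambda$ should be matched to the paper's convention). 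We split the count by reduced trace $r$; only $r^2\le 4n$ contributes by definiteness. This gives
\[
\tr T_{\mathcal O}(n)=\sum_{\lambda=1}^h\frac{1}{|\mathcal O_\lambda^\times|}\sum_{r^2\le 4n}\rho_{\mathcal O_\lambda}(n,r).
\]

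\emph{Step 2: regroup by isomorphism type.} The left orders $\mathcal O_\lambda$ all lie in the genus of $\mathcal O$ and share its conductor $m$ and the invariant $m_{f_{\mathcal O}}$. Indeed, $m_{f_{\mathcal O}}$ is a gcd of local data, by the Remark following Theorem~\ref{the:bijection}, so it is a genus invariant. Every type $\mathcal O_\mu$ ($\mu=1,\dots,t$) in the genus occurs as a left order. The number of classes $[I]\in\operatorname{Cls}(\mathcal O)$ with $\mathcal O_L(I)\cong\mathcal O_\mu$ is
\[
\#\{[I]:\mathcal O_L(I)\cong \mathcal O_\mu\}=\frac{|\operatorname{Pic}(\mathcal O)|\,|\mathcal O_\mu^\times|}{|\Aut(\mathcal O_\mu)|}\cdot\frac12 .
\]
Here $\Aut(\mathcal O_\mu)=N(\mathcal O_\mu)/\mathbb Q^\times$ and $\operatorname{Pic}$ is the two-sided locally principal ideal class group, and the factor $\tfrac12$ comes from $\mathcal O_\mu^\times\cap\mathbb Q^\times=\{\pm1\}$.

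To prove this count, fix $\mathcal O_\mu$. The classes with left order isomorphic to $\mathcal O_\mu$ correspond to two-sided $\mathcal O_\mu$-ideal classes (locally principal, via connecting ideals) modulo the action of $\Aut(\mathcal O_\mu)$ by conjugation. The stabilizers of this action are analyzed by a Skolem--Noether argument, and the stabilizer of the identity class is exactly $\mathcal O_\mu^\times/\{\pm1\}$. I expect this step to be the main obstacle. The orbit--stabilizer bookkeeping must come out uniform, which needs $|\operatorname{Pic}(\mathcal O_\mu)|=|\operatorname{Pic}(\mathcal O)|$ together with freeness of a suitable action. To avoid orbit-by-orbit counting, I would first try the mass-type identity
\[
\sum_{[I]:\,\mathcal O_L(I)\cong\mathcal O_\mu}\frac{1}{|\mathcal O_L(I)^\times|}=\frac{|\operatorname{Pic}(\mathcal O)|}{2|\Aut(\mathcal O_\mu)|},
\]
proved by the same adelic double coset computation that gives the Eichler mass formula, written as a sum over types.

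\emph{Step 3: conclusion.} Substituting the identity from Step 2 into Step 1 gives
\[
\tr T_{\mathcal O}(n)=\frac{|\operatorname{Pic}(\mathcal O)|}{2}\sum_{r^2\le4n}\sum_{\mu=1}^t\frac{\rho_{\mathcal O_\mu}(n,r)}{|\Aut(\mathcal O_\mu)|}.
\]
Theorem~\ref{the:bijection}(4) replaces the inner sum by $2R_{\operatorname{gen}(\widetilde f_{\mathcal O})}\bigl(m(4n-r^2)/m_{f_{\mathcal O}}\bigr)$. The factor $2$ cancels the $\tfrac12$, which yields the stated formula.
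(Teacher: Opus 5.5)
Your proposal is correct and follows the paper's proof essentially step for step. The paper likewise writes the diagonal entries as $\sum_{r^2\le 4n}\rho_{\mathcal O_L(I_i)}(n,r)/|\mathcal O_L(I_i)^\times|$. It obtains exactly your identity $\nu_\mu/|\mathcal O_\mu^\times|=|\operatorname{Pic}(\mathcal O)|/(2|\Aut(\mathcal O_\mu)|)$ by citing Voight's Proposition~18.5.10, i.e.\ $\nu_\mu[N_{Q^\times}(\mathcal O_\mu):\mathbb Q^\times\mathcal O_\mu^\times]=|\operatorname{Pic}(\mathcal O_\mu)|$, together with $|\operatorname{Pic}(\mathcal O_\mu)|=|\operatorname{Pic}(\mathcal O)|$, and then regroups and applies Theorem~\ref{the:bijection}(4).

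One correction to your sketch of that count: the fiber of $[I]\mapsto[\mathcal O_L(I)]$ over $[\mathcal O_\mu]$ is $\operatorname{PIdl}(\mathcal O_\mu)\backslash\operatorname{Idl}(\mathcal O_\mu)$. So $\Aut(\mathcal O_\mu)$ acts on $\operatorname{Pic}(\mathcal O_\mu)$ by left translation via the homomorphism $\alpha\mapsto[\alpha\mathcal O_\mu]$, whose kernel is $\mathcal O_\mu^\times/\{\pm1\}$, so every orbit has the same size. It does not act by conjugation; under conjugation the identity class would be fixed by all of $\Aut(\mathcal O_\mu)$, contradicting your stabilizer claim.
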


Finally, we note that one can obtain more connections as for traces of Brandt matrices, see e.g. Proposition~\ref{prop:trace}, using the action of the map $\varphi_p$ defined by Corollary ~\ref{cor:automorphism} above.

\section{Basics on relations of orders and ternary quadratic forms}

In this section we first give a quick glance for the basic notations and fundamental results on quaternion algebras and quaternion orders over $\mathbb{Q}$. Then we collect some results on relations of quaternion orders and ternary quadratic forms for the use of our discussions in what follows. For more details, one is referred to~\cite{Voi21}.

\subsection{Basic notations and results on quaternion algebras}
Let $F$ stand for the fields $\mathbb{Q}$, $\mathbb{Q}_p$ of $p$-adic numbers, or $\mathbb{Q}_{\infty}=\mathbb{R}$ of real numbers, where $p$ denotes primes. For given nonzero elements $a,b\in F^{\times}$, we write $\left(\frac{a,\,b}{F}\right)$ for the $F$-algebra with a basis $1,i,j,k=ij=-ji$ and the defining relations $i^2=a, j^2=b$. We call $\left(\frac{a,\,b}{F}\right)$ a quaternion algebra, or simply an algebra. When $F=\mathbb{Q}_p$ or $\mathbb{Q}_{\infty}$, it is known that there exist exactly two isomorphism classes of $\left(\frac{a,\,b}{F}\right)$, namely the class of the algebra $M_2(F)$ of $2\times2$ matrices over $F$, and the class of a skew-field. Denoted by ${(a,b)}_p$ the standard $p$-adic Hilbert symbol. Then the quaternion algebra 
$$
Q_p\!:\,=\left(\frac{a,\,b}{\mathbb{Q}_p}\right)\simeq M_2(\mathbb{Q}_p)
$$ 
if and only if ${(a,b)}_p=1$. When $F=\mathbb{Q}$, two algebras 
$$
Q\!:\,=\left(\frac{a,\,b}{\mathbb{Q}}\right),\,\,\, \, Q'\!:\,=\left(\frac{a',\,b'}{\mathbb{Q}}\right)
$$ 
are isomorphic if and only if ${(a,b)}_p={(a',b')}_p$ for all primes $p$ and $\infty$. A quaternion algebra $Q$ over $\mathbb{Q}$ is said to be ramified at $p$ if $Q_p=\mathbb{Q}_p\otimes_{\mathbb Q} Q$ is a skew-field, and split at $p$ if $Q_p=\mathbb{Q}_p\otimes_{\mathbb Q} Q\simeq M_2(\mathbb{Q}_p)$. If $Q$ ramifies at $\infty$, it is called definite. A quaternion algebra $Q$ ramifies only at finitely many places and the number of these places is always even. Conversely, for a given set $S$ of even number of places, we can always find $a,b$ in $\mathbb{Q}^{\times}$ such that ${(a,b)}_p=-1$ exactly for places $p$ in $S$, and $Q=\left(\frac{a,\,b}{\mathbb{Q}}\right)$ is a quaternion algebra that ramifies exactly at primes in $S$.
For an element $\alpha = t + xi + yj + zk \in Q$, its conjugation $\overline{\alpha}$ is defined as
	\[
	\overline{\alpha}\!:\, = t - xi - yj - zk.
	\]
	This is an involution since it clearly satisfies $\overline{\overline{\alpha}} = \alpha$ and $\overline{\alpha\beta} = \overline{\beta} \, \overline{\alpha}$ for all $\alpha, \beta \in Q$. The reduced trace $\tr$ and the reduced norm $\n$ are defined by
	\[
	\tr(\alpha)\!:\, = \alpha + \overline{\alpha}, \quad \n(\alpha)\!:\, = \alpha \overline{\alpha}.
	\]
Every element $\alpha \in Q$ satisfies its reduced characteristic equation
	\[
	x^2 - \tr(\alpha) x + \n(\alpha) = 0.
	\]
	If $Q \simeq M_2(\mathbb{Q})$, then we may write instead of the above 
	\[
	A = \begin{pmatrix} a & b \\ c & d \end{pmatrix} \in Q,
	\]
	and we have
	\[
	\overline{A} = \begin{pmatrix} d & -b \\ -c & a \end{pmatrix}, \quad \tr(A) = a + d, \quad \n(A) = ad - bc = \det(A).
	\]

\subsection{Basic concepts and results on orders}
	An ideal of a quaternion algebra $Q$ is defined to be a full $\mathbb{Z}$-lattice in $Q$, that is, a finitely generated $\mathbb{Z}$-submodule of $Q$ that contains a $\mathbb{Q}$-basis of $Q$. 
	A (quaternion) order $\mathcal{O}$ in $Q$ is an ideal that is also a ring containing $\mathbb{Z}$. 
	From now on $\mathcal{O}$ is always used to denote orders in $Q$, and $\mathcal{O}_p$ is the localization of $\mathcal{O}$ over $\mathbb{Z}_p$ for all primes $p\geq 2$. 
	It is known that $\tr(\alpha),\n(\alpha)\in \mathbb{Z}$ for every $\alpha\in\mathcal{O}$. 
	Let $I$ be the ideal of $\mathbb{Z}$ generated by all $\det(\tr(\alpha_i\bar{\alpha_j}))$, where $\alpha_1,\ldots,\alpha_4\in \mathcal{O}$ is a $\mathbb{Z}$-basis of $\mathcal{O}$. 
	It is known that $I$ is the square of an ideal. 
	The reduced discriminant, $\discrd(\mathcal{O})$, of $\mathcal{O}$ is defined to be the square root of the positive generator of $I$. 	
		
Two orders $\mathcal{O}$ and $\mathcal{O}'$ are of the same type if $\mathcal{O}'=\alpha^{-1}\mathcal{O}\alpha$ for some $\alpha\in Q^{\times}$; 
or equivalently, if they are isomorphic as $\mathbb{Z}$-algebras. 
They are said to be locally of the same type (or locally isomorphic) if $\mathcal{O}_p$ and $\mathcal{O}_p'$ are of the same type for every finite prime $p$. 
The genus of $\mathcal{O}$ is defined to be the set of all orders in $Q$ locally isomorphic to $\mathcal{O}$, and the type set of $\mathcal{O}$ is defined to be the set of isomorphism classes of orders in this genus. It is known that the type set is finite, and its cardinality is called the type number of $\mathcal{O}$. 	

A fractional right $\mathcal O$-ideal is a full $\mathbb Z$-lattice
$I\subset Q$ satisfying $I\mathcal O\subseteq I$. We call $I$ locally
principal if, for every finite prime $p$, there exists
$\alpha_p\in Q_p^\times$ such that $I_p=\alpha_p\mathcal O_p$.
For such ideals, set
\[
 I\sim J \quad\Longleftrightarrow\quad
 J=\alpha I\ \text{for some }\alpha\in Q^\times.
\]
We denote the resulting class set by $\operatorname{Cls}(\mathcal O)$,
and write $h(\mathcal O)=|\operatorname{Cls}(\mathcal O)|$.
The left order of $I$ is
\[
 \mathcal O_L(I)\!:\,=\{\alpha\in Q:\alpha I\subseteq I\}.
\]
When $Q$ is definite, we use the conventions
\[
 w_I\!:\,=[\mathcal O_L(I)^\times:\{\pm1\}],
 \quad
 \operatorname{mass}(\operatorname{Cls}(\mathcal O))\!:\,
 =\sum_{[I]\in\operatorname{Cls}(\mathcal O)}\frac1{w_I}.
\]

Let $\operatorname{Idl}(\mathcal O)$ be the group of invertible two-sided
fractional $\mathcal O$-ideals. Here invertibility means that
$IJ=JI=\mathcal O$ for another two-sided fractional $\mathcal O$-ideal
$J$. We define
\[
 \operatorname{Pic}(\mathcal O)\!:\,
 =
 \operatorname{Idl}(\mathcal O)/
 \{a\mathcal O:a\in\mathbb Q^\times\}.
\]
Thus $\operatorname{Pic}(\mathcal O)$ is the Picard group of invertible
$\mathcal O$-bimodules over $\mathbb Z$.
We also write
\[
 \begin{split}
 N_{Q^\times}(\mathcal O)\!:\,
 &=\{\alpha\in Q^\times:
       \alpha\mathcal O\alpha^{-1}=\mathcal O\},\\
 \operatorname{PIdl}(\mathcal O)\!:\,
 &=\{\alpha\mathcal O:\alpha\in N_{Q^\times}(\mathcal O)\}.
 \end{split}
\]
The latter consists of all principal two-sided fractional
$\mathcal O$-ideals. Finally,
$\operatorname{Aut}(\mathcal O)$ denotes the group of
$\mathbb Z$-algebra automorphisms of $\mathcal O$; by Skolem--Noether's theorem,
\[
 \operatorname{Aut}(\mathcal O)
 \simeq N_{Q^\times}(\mathcal O)/\mathbb Q^\times.
\]

Suppose that $Q$ is definite, and choose representatives
$I_1,\ldots,I_h$ for $\operatorname{Cls}(\mathcal O)$.
For every integer $n\geq1$, define
\[
 T_{\mathcal O}(n)\!:\,=(b_{ij}(n))_{1\leq i,j\leq h},
 \quad
 b_{ij}(n)\!:\,
 =
 \#\{J\subseteq I_j:[J]=[I_i],\ [I_j:J]=n^2\},
\]
where $J$ ranges over locally principal fractional right
$\mathcal O$-ideals, and $[I_j:J]$ is the additive index.
We use this definition also when
$\gcd(n,\operatorname{discrd}(\mathcal O))>1$.
In particular, $T_{\mathcal O}(1)=I_{h\times h}$, the unit matrix of order $h$. Its diagonal entries satisfy
\[
 b_{ii}(n)
 =
 \frac{\#\{\alpha\in\mathcal O_L(I_i):\n(\alpha)=n\}}
      {|\mathcal O_L(I_i)^\times|}
 =
 \sum_{\substack{r\in\mathbb Z\\r^2\leq4n}}
 \frac{\rho_{\mathcal O_L(I_i)}(n,r)}
      {|\mathcal O_L(I_i)^\times|}.
\]
Indeed, a subideal $J$ in the class of $I_i$ has the form
$J=\alpha I_i$. When $J\subseteq I_i$, we have
$\alpha\in\mathcal O_L(I_i)$ and
$[I_i:\alpha I_i]=\n(\alpha)^2$.
Two such elements yield the same subideal precisely when they differ
by right multiplication by an element of $\mathcal O_L(I_i)^\times$.
This action is free for $n\geq1$.

\subsection{Basics on ternary quadratic forms} 
Let $f=(a,b,c,r,s,t)$ be a ternary quadratic form as in Section ~\ref{Sec:1}.
The Gram matrix associated with $f$ is defined as 
\begin{equation*}
M=M_f\!:\,=
\begin{pmatrix}
2a & t & s\\
t & 2b & r\\
s & r & 2c
\end{pmatrix}
.
\end{equation*}
We use
\begin{equation}
\begin{aligned}\label{gongshi2.01}
M_{11}=4bc-r^2, \,\,\,&M_{23}=M_{32}=st-2ar,\\
M_{22}=4ac-s^2, \,\,\,&M_{13}=M_{31}=rt-2bs,\\
M_{33}=4ab-t^2, \,\,\,&M_{12}=M_{21}=rs-2ct
\end{aligned}
\end{equation}
to denote the cofactors of $M$. 
We say $f$ is primitive if its content $\widetilde{m}_f=\gcd(a,b,c,r,s,t) = 1$.
Recall that for primitive $f$, the reciprocal of $f$ is defined as the ternary form 
\begin{equation*}
\psi=\psi_f=(M_{11}/m_f,M_{22}/m_f,M_{33}/m_f,2M_{23}/m_f,2M_{13}/m_f,2M_{12}/m_f).
\end{equation*} 
For general $f=\widetilde m_f f_1$, we generalize this by defining
\begin{equation}\label{gongshi2.02}
\psi=\psi_f\!:\,=\widetilde m_f \psi_{f_1},
\end{equation} 
so that 
$$
M_{\psi}=M_{\psi_f}=\frac{4\widetilde m_fd_f}{m_f}M_f^{-1}=\widetilde m_fN_fM_f^{-1}.
$$

Two integral ternary quadratic forms $f$ and $g$ are said to be equivalent, denoted as $f\sim g$, if there exists a matrix $M\in {\rm GL}_3(\mathbb{Z})$ (i.e., with integer entries and $\det(M)=\pm1$) such that $M_{f}=M^{t}M_{g}M$. Equivalent forms are said to belong to the same class. If $f \sim  g$, then it is clear that $d_f=d_g$ and $N_f=N_g$.
Two integral forms are said to be semi-equivalent if they are equivalent over the $p$-adic integers $\mathbb{Z}_p$ for all primes $p$, and over the real numbers $\mathbb{R}$. Semi-equivalent forms are said to be in the same genus of forms. Equivalent forms are clearly semi-equivalent. We write $\Aut(f)$ for the finite group of integral automorphisms of $f$, that is, $\Aut(f)$ is the set of all integral matrices $M\in {\rm GL}_3(\mathbb{Z})$ such that $M_f=M^{t}M_fM$. 
Two integral ternary quadratic forms are similar if one is equivalent to a unit
multiple of the other. 

\begin{definition}[\cite{Leh92}]\label{def:genussym}
	Let \(f=(a,b,c,r,s,t)\) be a primitive ternary quadratic form and \(\psi\) its reciprocal. Assume, after a suitable normalization, that \(a\) and \(M_{33}/m_f\) are coprime to \(m_f m_\psi\). If \(p\) is an odd prime with \(p \mid m_f\) (resp.\ \(p \mid m_\psi\)), we set
	\[
	\left(\frac{f}{p}\right) = \left(\frac{a}{p}\right) \quad
	\left(\text{resp. } \left(\frac{\psi}{p}\right) = \left(\frac{M_{33}/m_f}{p}\right)\right);
	\]
	for prime \(2\), if \(16 \mid m_f\) we set
	\[
	\left(\frac{f}{4}\right) = (-1)^{(a-1)/2},
	\]
	and if moreover \(32 \mid m_f\) we also set
	\[
	\left(\frac{f}{8}\right) = (-1)^{(a^2-1)/8};
	\]
	the symbols \(\left(\frac{\psi}{4}\right)\) and \(\left(\frac{\psi}{8}\right)\) are defined analogously. These symbols are collectively called the genus symbols of \(f\).
\end{definition}

Using genus symbols we have the following characterization of a genus.

\begin{lemma}[\cite{Leh92}, Proposition 4]\label{prop2.7}
	Two primitive positive definite ternary quadratic forms $f_1$ and $f_2$ lie in the same genus if and only if they have the same level, same discriminant, and same collection of genus symbols.
\end{lemma}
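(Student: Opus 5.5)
The plan is to prove this one prime at a time, comparing local Jordan decompositions with the invariants $d_f$, $m_f$, $N_f$ and the genus symbols. Positive definiteness is automatic at $\infty$, so only the finite primes matter.

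\emph{Necessity.} If $f_1,f_2$ are in the same genus, then $d_{f_1}$ and $d_{f_2}$ are positive integers differing by a $p$-adic unit square at every $p$, so $d_{f_1}=d_{f_2}$. The cofactor matrix transforms by $\operatorname{adj}(M_g)=\operatorname{adj}(U)\operatorname{adj}(M_f)\operatorname{adj}(U^t)$ when $M_g=U^tM_fU$. Hence the $p$-part of $m_f$ (the gcd of the cofactors) is a $\mathbb Z_p$-invariant, so $m_f$, and with it $N_f=4d_f/m_f$, agree. For the symbols, take an odd $p$ with $p\mid m_f$. Then all $2\times2$ minors of $M_f$ vanish mod $p$, so $M_f\bmod p$ has rank $\le1$; since $f$ is primitive, $f\equiv a\,L^2\pmod p$ for a linear form $L$. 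Thus every value of $f$ prime to $p$ lies in the square class of $a$ mod $p$. So $\left(\frac{f}{p}\right)$ does not depend on the chosen normalization and is a $\mathbb Z_p$-invariant. The same reasoning applied to $\psi_f$, whose Gram matrix is $\widetilde m_fN_fM_f^{-1}$ and is therefore itself a local invariant, handles $\left(\frac{\psi}{p}\right)$. For $p=2$, when $16\mid m_f$ one gets $f\equiv aL^2\pmod{16}$ (respectively mod $32$) up to the relevant factor, and the symbols mod $4$ and $8$ are again invariants of values.

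\emph{Sufficiency at odd $p$.} Write $f\simeq\langle u_1p^{a_1},u_2p^{a_2},u_3p^{a_3}\rangle$ over $\mathbb Z_p$ with $a_1\le a_2\le a_3$ and units $u_i$. Primitivity gives $a_1=0$. The minimal valuation of a $2\times2$ minor gives $v_p(m_f)=a_2$, and $v_p(d_f)=a_2+a_3$. So the exponents are determined by $d_f$ and $m_f$. A $p$-adic Jordan component is determined by its exponent, its dimension and the square class of its determinant. Now compare the invariants with the Jordan data:
\begin{itemize}
\item if $a_2>0$, the symbol $\left(\frac{f}{p}\right)$ fixes $u_1$;
\item the reciprocal $\psi_f$ is locally $\langle p^{a_3}\!\cdot\!\ast,\ p^{a_3-a_2}\!\cdot\!\ast,\ 1\!\cdot\!\ast\rangle$ up to units, so when $a_3>a_2$ the symbol $\left(\frac{\psi}{p}\right)$ fixes the unit of the top component;
\item the square class of $d_f/p^{v_p(d_f)}$ then fixes the remaining unit.
\end{itemize}
If some exponents coincide, the corresponding block is classified by its determinant alone, and exactly the symbols that are \emph{not} defined are the ones that are not needed. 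Going through the cases $a_2=0$ or $a_2>0$, and $a_3=a_2$ or $a_3>a_2$, shows the $\mathbb Z_p$-classes agree.

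\emph{Sufficiency at $p=2$.} This is the main obstacle. The $2$-adic Jordan decomposition involves blocks $\langle u\,2^k\rangle$ and binary blocks $2^k\begin{pmatrix}2&1\\1&2\end{pmatrix}$ or $2^k\begin{pmatrix}0&1\\1&0\end{pmatrix}$, and it is not unique. I would use the Conway--Sloane canonical $2$-adic symbol, which records for each component its dimension, determinant class, type (even/odd) and oddity mod $8$, modulo the oddity-fusion and sign-walking relations. The steps are:
\begin{enumerate}
\item Read off the scales from $v_2(d_f)$ and $v_2(m_f)$, using the factor $4$ in $N_f=4d_f/m_f$ and the Gram matrix $M_f$ of $f$.
\item Show that the unit invariants the $2$-adic symbol needs modulo those relations are exactly $\left(\frac{f}{4}\right)$ and $\left(\frac{f}{8}\right)$ (defined when $16\mid m_f$, resp.\ $32\mid m_f$) and their $\psi$-analogues, together with $d_f$. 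The divisibility thresholds $16$ and $32$ correspond precisely to the cases where a rank-one scale component is separated from the others by a gap of $\ge2$, resp.\ $\ge3$. Only in those cases do the oddity mod $4$, resp.\ mod $8$, of that component fail to be absorbed by a neighbouring component.
\end{enumerate}
A finite case check over the possible gap patterns then completes the local equality at $2$.

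Combining all primes gives $f_1$ and $f_2$ in the same genus.
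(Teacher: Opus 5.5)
The necessity direction and the sufficiency argument at odd $p$ are essentially right. At odd $p$ the Jordan exponents $0\le a_2\le a_3$ are read off from $v_p(m_f)$ and $v_p(d_f)$. The units are then fixed by $(f/p)$, by $(\psi/p)$ together with the common global $N_f$, and by $d_f$, exactly in the cases where each is needed.

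There is a small slip at $p=2$ in the necessity part. $16\mid m_f$ only gives $f\equiv ux^2\pmod 4$ in a suitable $\mathbb Z_2$-basis, and $32\mid m_f$ gives it modulo $8$. For example, $x^2+4y^2+4z^2$ has $m_f=16$ but is not $\equiv aL^2\pmod{16}$. This does not affect the invariance of $(f/4)$ and $(f/8)$.

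The genuine gap is your step (2) at $p=2$. The claim that $d_f$, $m_f$, $N_f$ and $(f/4),(f/8),(\psi/4),(\psi/8)$ determine the $\mathbb Z_2$-class is false. Compare $g_1=x^2+y^2+8z^2$ and $g_2=3x^2+3y^2+8z^2$ over $\mathbb Z_2$:
\begin{itemize}
\item Both have $v_2(d)=5$, with unit parts $1$ and $9$, so the same square class.
\item Both have $v_2(m)=2$, so neither $(f/4)$ nor $(f/8)$ is defined.
\item In both cases the reciprocal is $2$-adically $\langle N^*\rangle\perp 8(\cdots)$, where $N=32N^*$. So $(\psi/4)$ and $(\psi/8)$ agree for global forms with these localizations and the same $d$ and $N$.
\end{itemize}
Yet $g_1\not\sim_2 g_2$. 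The odd values of $g_1$ are $\equiv 1,5\pmod 8$, while those of $g_2$ are $\equiv 3,7\pmod 8$. Equivalently, their Hasse invariants at $2$ are $1$ and $(3,3)_2=-1$.

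The surviving bit is the oddity of the rank-two scale-$1$ component. Nothing in your list records it, and it is not absorbed by a neighbouring component, since the next Jordan component sits at scale $8$. So your heuristic that only separated rank-one components carry extra information is wrong.

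What excludes such pairs is a \emph{global} fact. Once $f_1\sim_p f_2$ for every odd $p$, and both are positive definite with the same determinant, Hilbert reciprocity forces equal Hasse invariants at $2$. Equivalently, the Conway--Sloane oddity formula forces equal $2$-adic oddity.

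Your remark that ``only the finite primes matter'' therefore discards exactly the input that is needed. The $2$-adic case analysis has to include the Hasse invariant at $2$, supplied by the product formula, as an additional invariant. As written, the ``finite case check'' is not carried out, and with your stated list of invariants it fails in cases like the one above. (The paper itself gives no proof here; it cites Lehman.)
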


\subsection{Lehman's correspondences}\label{section3.2} 
Let $C(N,d)$ denote the set of all classes of primitive positive definite ternary quadratic forms of level $N$ and discriminant $d$. For odd primes $p\neq2$, the two sets $C(p^gN,\,p^hd)$ and $C(p^gN,\,p^{3g-h}d)$ with $p\nmid Nd$ are in bijection via Lehman's correspondence $\phi_p$ (see~\cite[Theorem~4]{Leh92}),
\begin{align*}
\phi_p :C(p^gN,\,p^{h}d) & \to C(p^gN,\,p^{3g-h}d)\\
(p^ga,\,p^{h-g}b,\,c,\,p^{h-g}r,\,p^gs,\,p^gt) & \mapsto (a,\,p^{2g-h}b,\,p^gc,\,p^gr,\,p^gs,\,p^{2g-h}t),
\end{align*}
where $a, b, c, r, s$ and $t$ are integers satisfying $p\nmid ac$. 

For $p=2$, the two sets $C(2^{g}N,\,2^{h}d)$ and $C(2^{g}N,\,2^{3g-h-2}d)$ with $2\nmid Nd$ are in bijection via Lehman's one-to-one correspondence $\phi_2$ as follows (see~\cite[Theorem~5]{Leh92}):
\begin{enumerate}
    \item If $h=g-2$:
\begin{align*}
\phi_2 : C(2^gN,\,2^{g-2}d) & \to C(2^gN,\,2^{2g}d)\\
(2^{g-2}a,\,b,\,c,\,r,\,2^{g-1}s,\,2^{g-1}t)& \mapsto (a,\,2^{g}b,\,2^{g}c,\,2^{g}r,\,2^{g}s,\,2^{g}t).
\end{align*}
    \item If $g\leq h\leq 2g-2$:
\begin{align*}
\phi_2 : C(2^gN,\,2^{h}d) & \to C(2^gN,\,2^{3g-h-2}d)\\
(2^{g-2}a,\,2^{h-g}b,\,c,\,2^{h-g+1}r,\,2^{g-1}s,\,2^{g-1}t)& \mapsto (a,\,2^{2g-h-2}b,\,2^{g-2}c,\,2^{g-1}r,\,2^{g-1}s,\,2^{2g-h-1}t).
\end{align*}
    \item If $h=2g$:
\begin{align*}
\phi_2 : C(2^gN,\,2^{2g}d) & \to C(2^gN,\,2^{g-2}d)\\
(2^{g}a,\,2^{g}b,\,c,\,2^{g}r,\,2^{g}s,\,2^{g}t)& \mapsto (a,\,b,\,2^{g-2}c,\,2^{g-1}r,\,2^{g-1}s,\,t).
\end{align*}
\end{enumerate}
\begin{remark}\label{rem:phi}
For a form $f$ of level $N=\prod p^{v_p(N)} $, we have $\phi_{p_1} \circ \cdots \circ \phi_{p_{e(N)}}(f)=\psi_f$, where $e(N)$ denotes the number of distinct prime factors of $N$. 
It is clear that if $p\neq2$, then $\phi_p(f(x,y,z))=p^gf(p^{-g}x,p^{g-h}y,z)$. Hence for any prime $q\neq p$, we have $\phi_p(f)\sim_q p^g f$, and whence $\phi_p(f)\sim_p (p^{g}/N)\psi_f$. The same holds for $p=2$. 
\end{remark}

\subsection{Watson's transformation}\label{section3.3} 
Let $\Lambda_f$ be a 3-dimensional lattice with the corresponding integral ternary quadratic form $f$. 
For any positive integer $m\geq 1$, define $\Lambda_m(f)$ to be the set of all $\boldsymbol{x}$ in $\Lambda_f$ such that 
\begin{equation*}
f(\boldsymbol{x}+\boldsymbol{z})\equiv f(\boldsymbol{z})\!\!\!\pmod m,\,\, \forall \,\boldsymbol{z}\in\Lambda_f.
\end{equation*}
It is known that $\Lambda_m(f)$ is also a 3-dimensional lattice, and we can choose a matrix $P$ such that $\boldsymbol{x}$ is in $\Lambda_m(f)$ if and only if $\boldsymbol{x}=P\boldsymbol{y}, \,\, y \in \Lambda_f$. 
Define the Watson transformation $\lambda_m$ on $f$ by 
\begin{equation*}
\lambda_m(f)\!:\,=m^{-1}f(P\boldsymbol{y}),\,\,\boldsymbol{y}\in\Lambda_f.
\end{equation*}
Then we have the following fundamental result of Watson.
\begin{lemma}[\cite{Wat62}]\label{Watsontrans}
For any positive integer $m\geq 1$, Watson transformation $\lambda_m$ is a well-defined mapping. More precisely, if $f\sim g$, then $\lambda_m(f)\sim \lambda_m(g)$, and if $f$ and $g$ are semi-equivalent, then $\lambda_m(f)$ and $\lambda_m(g)$ are semi-equivalent. Furthermore, the class number does not increase under Watson transformations, that is, $\lambda_m(f)$ can range over the whole of the genus of $\lambda_m(f)$.
\end{lemma}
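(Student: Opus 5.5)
The statement to prove is Lemma~\ref{Watsontrans} (Watson's lemma). The plan is to describe $\Lambda_m(f)$ intrinsically, so that every assertion follows from functoriality under isometries.

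Write $B(\boldsymbol x,\boldsymbol z)=f(\boldsymbol x+\boldsymbol z)-f(\boldsymbol x)-f(\boldsymbol z)$. Then
\[
f(\boldsymbol x+\boldsymbol z)\equiv f(\boldsymbol z)\pmod m\ \ \forall\,\boldsymbol z
\]
holds if and only if $f(\boldsymbol x)\equiv 0\pmod m$ and $B(\boldsymbol x,\boldsymbol z)\equiv0\pmod m$ for all $\boldsymbol z$; the first condition comes from taking $\boldsymbol z=0$. The set of such $\boldsymbol x$ is closed under addition, because
\[
f(\boldsymbol x+\boldsymbol y)=f(\boldsymbol x)+f(\boldsymbol y)+B(\boldsymbol x,\boldsymbol y).
\]
It is closed under negation, because $f(-\boldsymbol x)=f(\boldsymbol x)$. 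It contains $m\Lambda_f$. Hence $\Lambda_m(f)$ is a full sublattice. On it, $f$ takes values in $m\mathbb Z$ and the polar form $B$ takes values in $m\mathbb Z$. Therefore $m^{-1}f|_{\Lambda_m(f)}$ is an integral quadratic form, and $\lambda_m(f)$ is its Gram class in any basis $P$. Changing the basis $P$ changes $\lambda_m(f)$ only by $\mathrm{GL}_3(\mathbb Z)$, so $\lambda_m$ is well defined on classes.

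For equivalence, take $U\in\mathrm{GL}_3(\mathbb Z)$ with $g(U\boldsymbol y)=f(\boldsymbol y)$. The defining congruence is phrased purely in terms of the form and the lattice, and $U$ maps $\Lambda_f$ onto $\Lambda_g$. So $U$ maps $\Lambda_m(f)$ onto $\Lambda_m(g)$, and restricting $U$ gives $\lambda_m(f)\sim\lambda_m(g)$.

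For semi-equivalence, localize. The condition defining $\Lambda_m(f)$ is local: $\Lambda_m(f)\otimes\mathbb Z_p$ equals the analogous set computed in $\Lambda_f\otimes\mathbb Z_p$ with modulus $p^{v_p(m)}$. This holds since both are lattices and membership is tested prime by prime via $\mathbb Z/m\cong\prod_p\mathbb Z/p^{v_p(m)}$. Running the same isometry argument over $\mathbb Z_p$ gives $\lambda_m(f)\cong\lambda_m(g)$ over $\mathbb Z_p$ for every $p$. Over $\mathbb R$ both forms are positive definite, so they agree there as well.

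For the last assertion, take any $h$ in the genus of $\lambda_m(f)$. The main obstacle is producing a form $g$ in the genus of $f$ with $\lambda_m(g)\sim h$. I would do this with the genus-via-lattices picture. Realize $h$ as a lattice $L'$ on the rational space $V=\Lambda_f\otimes\mathbb Q$ equipped with $m^{-1}f$; this uses strong approximation / the local-global description of the genus. Then $L'$ agrees with $\Lambda_m(f)$ at almost all primes, namely those $p\nmid m$ at which the two lattices coincide, and is locally isometric to it everywhere.

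At each of the finitely many remaining primes, choose $\sigma_p\in O(V_p)$ with $\sigma_p(\Lambda_m(f)_p)=L'_p$. Define $L$ by the local conditions $L_p=\sigma_p(\Lambda_{f,p})$ at those primes and $L_p=L'_p$ elsewhere. Then $L$ lies in the genus of $\Lambda_f$, and by the locality established above, $\Lambda_m(L)=L'$. This shows that $\lambda_m$ maps the genus of $f$ onto the genus of $\lambda_m(f)$, so the number of classes cannot increase.

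The delicate point is that the choices $\sigma_p$ must be compatible with reconstructing a global lattice. This is standard, since a lattice is determined by its localizations and these can be prescribed independently at finitely many primes. Alternatively, one may simply cite Watson~\cite{Wat62}, as the paper does.
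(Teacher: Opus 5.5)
Your argument is correct. It is also genuinely different from the paper, which gives no proof of this lemma and simply cites Watson's 1962 paper.

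\textbf{What you do differently.} You work entirely in the local--global lattice language. You describe
\[
\Lambda_m(f)=\{\boldsymbol x\in\Lambda_f:\ f(\boldsymbol x)\in m\mathbb Z,\ B(\boldsymbol x,\Lambda_f)\subseteq m\mathbb Z\}
\]
intrinsically, and you check that this construction commutes with localization and with (local) isometries. You then obtain surjectivity onto the genus of $\lambda_m(f)$ by modifying $\Lambda_f$ at finitely many primes. Watson's own argument is phrased classically, in terms of forms and transforming matrices.

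\textbf{What your route buys.} Invariance of $\lambda_m$ under $\sim$ and under semi-equivalence becomes a formal consequence of functoriality. The class-number statement reduces to two standard facts: forms in one genus are rationally equivalent, and lattices can be prescribed locally.

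\textbf{Three points to adjust.}

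\emph{Rational realization.} Realizing $h$ as a lattice $L'$ on $(V,m^{-1}f)$ needs only that $h$ and $\lambda_m(f)$ are equivalent over every $\mathbb Q_p$ and over $\mathbb R$, hence over $\mathbb Q$. That is Hasse--Minkowski; strong approximation plays no role here.

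\emph{The ``delicate point'' is standard.} Let $M$ be a lattice on $V$, and let $N_p$ be lattices on $V_p$ with $N_p=M_p$ for almost all $p$. Then $N=V\cap\bigcap_p N_p$ is the unique lattice on $V$ with localizations $N_p$. So the $\sigma_p$ need no compatibility condition.

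\emph{State one fact explicitly.} At the primes where you set $L_p=L'_p$, you have $p\nmid m$, so $\Lambda_m(f)_p=\Lambda_{f,p}$. Hence $L_p=\Lambda_{f,p}$ there, and $\Lambda_m(L)_p=L'_p$ holds at those primes as well. With that stated, the proof that $\lambda_m$ maps $\operatorname{gen}(f)$ onto $\operatorname{gen}(\lambda_m(f))$ is complete.
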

For the particular case of $m=4$, we have  
\begin{lemma}[\cite{LZ24}, Proposition 2.7]\label{Watsontrans111}
If $2\nmid Nd$, then the Watson transformation $\lambda_4$ is a bijection between:
\begin{enumerate}
\item $C(8N,\,64d)$ and $C(4N,\,4d)$; or
\item $C(4N,\,16d)$ and $C(4N,\,d)$; and in this particular case, $\lambda_4$ is exactly the same as Lehman's one-to-one correspondence $\phi_2$.  
\end{enumerate}
\end{lemma}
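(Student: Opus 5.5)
The plan is to reduce everything to an explicit computation of the sublattice $\Lambda_4(f)$ for a convenient representative of each class. By Lemma~\ref{Watsontrans}, $\lambda_4$ is well defined on classes and maps genera to genera, so it suffices to understand it on suitable normal forms. At an odd prime $p$, the congruence condition defining $\Lambda_4(f)$ is vacuous $p$-adically. Hence $\lambda_4(f)\sim_p \tfrac14 f$, and the odd parts of level and discriminant, as well as the odd genus symbols of Definition~\ref{def:genussym}, are preserved. The whole content is therefore $2$-adic.

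\emph{Step 1 (normal forms).} Since $2\nmid Nd$, a form in $C(4N,16d)$ can be brought by $\mathrm{GL}_3(\mathbb Z)$, using the same normalization as in Lehman's Theorem~5 (case $h=2g$ with $g=2$), to the shape $(4a,4b,c,4r,4s,4t)$ with $c$ odd. Similarly, a form in $C(4N,d)$ can be brought to the shape of case $h=g-2$, namely $(a,b,c,r,2s,2t)$. For $C(8N,64d)$ and $C(4N,4d)$ I would use the analogous reduction, driven by the $2$-adic Jordan decomposition of the Gram matrix $M_f$, whose determinant is $2d_f$. For $C(8N,64d)$ this gives an odd unary component plus a component scaled by $8$ or $16$. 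For $C(4N,4d)$ it gives a $2$-adically unimodular binary part plus a component scaled by $4$.

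\emph{Step 2 (compute $\Lambda_4$).} For $f=(4a,4b,c,4r,4s,4t)$ we have
\[
f(\boldsymbol x+\boldsymbol z)-f(\boldsymbol z)\equiv c\,x_3(x_3+2z_3)\pmod 4 .
\]
Hence $\Lambda_4(f)=\{x_3\equiv 0 \bmod 2\}$, so $P=\mathrm{diag}(1,1,2)$, and
\[
\lambda_4(f)=\tfrac14 f(x,y,2z)=(a,b,c,2r,2s,t).
\]
This is exactly Lehman's $\phi_2$ in case~(3) with $g=2$. One checks directly that its discriminant is $d$, its level is $4N$, and it is primitive. The $C(8N,64d)$ case is handled the same way. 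Here $\Lambda_4$ imposes parity conditions on the coordinates of the odd unary part, and after rescaling one verifies that $d_f$ drops by $16$ and $N_f$ by $2$.

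\emph{Step 3 (bijectivity).} Injectivity and surjectivity will come from writing down an explicit inverse on normal forms. From $(a,b,c,2r,2s,t)$, the map $(x,y,z)\mapsto 4\,g(x,y,z/2)$, restricted to the lattice where it is integral, recovers $(4a,4b,c,4r,4s,4t)$. This inverse is again class-invariant, because it can be expressed as $\phi_2^{-1}$, or equivalently as $\lambda_4$ applied to the reciprocal followed by reciprocation, using Remark~\ref{rem:phi}. The two composites are the identity on representatives, hence on classes. Genus symbols at $2$ are matched by reading off $(\frac{\cdot}{4})$ and $(\frac{\cdot}{8})$ from the preserved odd coefficient $c$ (or $a$).

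I expect the main obstacle to be case~(1). There the $2$-adic structure of $C(8N,64d)$ admits several Jordan types, with the odd component in different positions and the $8$- versus $16$-scaled parts possibly hyperbolic. One must show that all of them reduce to a single normal form on which $\Lambda_4$ is computable uniformly, and that the image is always primitive with the correct invariants. I would first attempt this by classifying $2$-adic Jordan decompositions compatible with $v_2(N_f)=3$ and $v_2(2d_f)=7$, and then checking each type by hand.
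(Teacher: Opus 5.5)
\paragraph{Verdict: part (2) is fine, part (1) is not proved.} Your argument for part (2) works. On Lehman's normal form $(4a,4b,c,4r,4s,4t)$ you correctly find $\Lambda_4=\{x_3\equiv 0 \pmod 2\}$ and $\lambda_4(f)=(a,b,c,2r,2s,t)$, which is $\phi_2$ in the case $h=2g$, $g=2$. Watson makes $\lambda_4$ class-invariant and Lehman proves $\phi_2$ bijective on classes, so (2) follows. (The paper gives no proof of this lemma; it cites \cite{LZ24}, so I am judging your proposal on its own.)

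\paragraph{The gap in part (1).} The difficulty you anticipate is not the real one. Lehman's normal form for $h=2g$, $g=3$ already puts every $f\in C(8N,64d)$ in the single shape $(8a,8b,c,8r,8s,8t)$, with $c,t$ odd because $2\nmid d$. Then $\lambda_4(f)=(2a,2b,c,4r,4s,2t)$ uniformly, with no Jordan-type case analysis needed. But this only shows that $\lambda_4$ maps $C(8N,64d)$ into $C(4N,4d)$. Your Step~3 justifies the inverse solely by identifying it with $\phi_2^{-1}$, and no Lehman correspondence links these two sets: $\phi_2$ sends $C(8N,64d)$ to $C(8N,2d)$ and $C(4N,4d)$ to itself. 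Reciprocation also keeps you inside $C(4N,4\cdot\text{odd})$. So neither surjectivity nor injectivity in (1) is proved.

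\paragraph{Why injectivity is not a formality.} Let $F\in C(4N,4d)$ live on $L'$. A preimage of discriminant $64d$ is $f=4F$ on an overlattice $L=L'+\tfrac12 w\mathbb{Z}$, so the naive inverse ``replace $z$ by $z/2$'' depends on the choice of $w \bmod 2L'$. The polar form of $F$ takes only even values, as is clear from Lehman's shape $(a,b,c,2r,2s,2t)$. Hence \emph{every} $w$ with $F(w)$ odd gives $\Lambda_4(L)=L'$, that is, $\lambda_4(f)=F$. For example, $F=2x^2+2y^2+z^2+2xy\in C(12,12)$ is the image of both of the following:
\begin{itemize}
\item $8x^2+8y^2+z^2+8xy\in C(24,192)$, from $w=e_3$;
\item $8x^2+8y^2+3z^2+4yz+8xz+8xy\in C(48,192)$, from $w=e_1+e_3$.
\end{itemize}
Choices of $w$ with $F(w)$ even give imprimitive forms. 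So injectivity must come from the level condition, by showing that it singles out an intrinsic class of $w$.

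\paragraph{How to close the gap.} Put $\beta=\tfrac12 B_F$; it is odd unimodular at $2$. Hence there is a unique $\chi\in L'/2L'$ with $F(v)\equiv\beta(v,\chi)\pmod 2$, and isometries preserve it.
\begin{itemize}
\item Injectivity: for $f$ in normal form one has $\chi\equiv 2e_3$, so $L=L'+\tfrac12\chi\mathbb{Z}$ is forced. Thus $F\mapsto 4F|_{L'+\frac12\chi\mathbb{Z}}$ is a class-invariant inverse.
\item Surjectivity: you must also show that every $F\in C(4N,4d)$ has a $\mathbb{Z}$-basis $u_1,u_2,\chi$ with $u_i\in\chi^\perp$ modulo $2$. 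This gives $F\sim(2a',2b',c',4r',4s',2t')$ with $c',t'$ odd. Its preimage $(8a',8b',c',8r',8s',8t')$ then lies in $C(8N,64d)$.
\end{itemize}
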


\subsection{Even Clifford algebras}
For a non-degenerate ternary quadratic form $f$ that is integral over $\mathbb{Z}$, let $C_0(f)$ denote the even Clifford algebra over $\mathbb{Z}$ associated to $f$. Then it is known that $C_0(f)$ is an order in a quaternion algebra over $\mathbb{Q}$. For more details on Clifford algebra, one can see \cite[Chapter 22]{Voi21}. 

Conversely, if $\mathcal{O}=\mathbb{Z}+\mathbb{Z}e_1+\mathbb{Z}e_2+\mathbb{Z}e_3$ is an order in a quaternion algebra $Q$, then $\Lambda =\mathcal{O}^{\sharp}\cap Q^0$ is a 3-dimensional $\mathbb{Z}$-lattice in $Q^0$, where and in the sequel, $\mathcal{O}^{\sharp}$ denotes the dual of $\mathcal{O}$ under the bilinear form $\tr(\alpha\beta)$, and 
$$
Q^0= \{ \alpha \in Q : \tr(\alpha) = 0 \}.
$$
More accurately, if $\mathcal{O}^{\sharp}= \langle e_0', e_1', e_2', e_3'\rangle$ where $\{e_0', e_1', e_2', e_3'\}$ is the dual basis of $\{1, e_1, e_2, e_3\}$, then $\Lambda=\langle e_1', e_2', e_3'\rangle$. Thus the ternary quadratic form $f_{\mathcal{O}}$ associated to $\mathcal{O}$ is, by definition, 
\begin{equation}\label{gongshi3.1}
f_{\mathcal{O}}=\discrd(\mathcal{O}) \n(xe_1'+ye_2'+ze_3').
\end{equation}

We note that, the above constructions can be generalized to the case with a general principal ideal domain $R$ instead of $\mathbb{Z}$. As for the more general construction we have the following important result.

\begin{lemma}[\cite{Voi21}, Main Theorem 22.1.1]\label{Lemma3.3}
Let $R$ be a principal ideal domain with fraction field $K$ of
characteristic different from $2$. The even Clifford construction
induces a bijection between similarity classes of non-degenerate
integral ternary quadratic forms over $R$ and isomorphism classes of
quaternion $R$-orders. Its inverse is induced by
\[
  \mathcal O\longmapsto
  f_{\mathcal O}
  =\delta\,\operatorname{nrd}\big|_{\mathcal O^\sharp\cap Q^0},
\]
where  $\mathcal O^\sharp$ is the dual with respect to
$\operatorname{tr}(x y)$, and $\delta$ is any generator of the
reduced discriminant ideal of $\mathcal O$. Moreover,
\[
  \discrd(C_0(f))=(d_f).
\]
In particular, $f_{\mathcal O}$ is well-defined up to similarity.
\end{lemma}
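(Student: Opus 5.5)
The proof has three parts: an explicit presentation of $C_0(f)$ by generators and relations, a direct computation showing that $\mathcal O\mapsto f_{\mathcal O}$ inverts $f\mapsto C_0(f)$ up to similarity, and a normal-form argument showing that every order arises this way.

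\emph{Construction of $C_0(f)$.} Write $f=(a,b,c,r,s,t)$ in a basis $e_1,e_2,e_3$ of $R^3$. In the Clifford algebra $C(f)$ we have $e_i^2=f(e_i)$ and $e_ie_j+e_je_i=$ the corresponding cross coefficient. Hence $C_0(f)$ is a free $R$-module with basis
\[
1,\quad i:=e_2e_3,\quad j:=e_3e_1,\quad k:=e_1e_2.
\]
Reducing products of these basis elements with the Clifford relations gives an explicit multiplication table. For instance $i^2=ri-bc$, and $jk=a\bar i$ with $\bar i=\tr(i)-i$, with similar cyclic formulas. All structure constants are integral polynomials in $a,\dots,t$, so $C_0(f)$ is an $R$-order. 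When $f$ is non-degenerate, $C_0(f)\otimes K$ is central simple, hence a quaternion algebra.

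Next we compute the Gram matrix $\tr(\alpha_i\bar\alpha_j)$ on the basis $1,i,j,k$. Its determinant should come out as $d_f^2$ times a unit, which gives $\discrd(C_0(f))=(d_f)$. Base changes $g\in\mathrm{GL}_3(R)$ of $R^3$ induce isomorphisms of $C(f)$ preserving $C_0$. For a unit $u$, the assignment $e_ie_j\mapsto u\,e_ie_j$ identifies $C_0(uf)$ with $C_0(f)$. Therefore $C_0$ is well defined on similarity classes.

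\emph{Left inverse.} Take $\mathcal O=C_0(f)$ and compute the dual basis $e_0',e_1',e_2',e_3'$ of $1,i,j,k$ under $\tr(xy)$. We then verify that $e_1',e_2',e_3'$ span $\mathcal O^\sharp\cap Q^0$. The key identity to check is
\[
\delta\,\n(xe_1'+ye_2'+ze_3')=f(x,y,z),
\]
up to the choice of generator $\delta$, which accounts for the similarity ambiguity. Structurally, $\mathcal O^\sharp\cap Q^0$ is the degree-one part $e_1R+e_2R+e_3R$ twisted by $(\det)^{-1}$, and $\delta\,\n$ restricts there to $f$. This identification is what makes the computation short.

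\emph{Right inverse.} This is the main obstacle: we must show $C_0(f_{\mathcal O})\cong\mathcal O$ for an arbitrary order, not only for orders already of Clifford type. I would follow Gross--Lucianovic. Since $R$ is a PID, $\mathcal O/R$ is free of rank $3$, so $\mathcal O$ has a basis $1,i,j,k$. After translating $i,j,k$ by elements of $R$ we may normalize the products $jk$, $ki$, $ij$ into the form $a\bar i+(\cdots)$, modulo $R$. Associativity of $\mathcal O$ then forces the multiplication table to be exactly the Clifford table for some six coefficients $(a,b,c,r,s,t)$, so $\mathcal O\cong C_0(f)$ for that $f$. The left-inverse computation then gives $f_{\mathcal O}\sim f$, proving surjectivity. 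Injectivity on similarity classes follows because $f$ is recovered from $C_0(f)$ up to similarity by the same computation.

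The delicate points are that the normalizations are achievable over a general PID, and that the associativity identities pin down all the structure constants. For both I would follow Voight's treatment in \cite[Chapter~22]{Voi21}.
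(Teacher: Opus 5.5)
Your proposal is correct and takes essentially the same approach: the paper gives no proof of its own and simply cites Voight's Main Theorem~22.1.1, whose Gross--Lucianovic-style proof consists of exactly your steps, and the paper's formulas \eqref{gongshi3.2}--\eqref{gongshi3.5} are the multiplication-table and dual-basis computations you describe for $C_0(f)$ and $\delta\,\operatorname{nrd}|_{\mathcal O^\sharp\cap Q^0}$. The step you leave schematic is the existence of a good basis for an arbitrary order, i.e.\ translating $i,j,k$ so that $jk=a\bar i$, $ki=b\bar j$, $ij=c\bar k$ hold exactly rather than merely ``modulo $R$''; that is the lemma you defer to \cite{Voi21}, and the paper relies on the same source for it.
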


For $R=\mathbb{Z}$, choosing the positive generator of the reduced discriminant
ideal gives the normalization used in \eqref{gongshi3.1}. When restricted to
positive definite forms, similarity over $\mathbb{Z}$ is the same as integral
equivalence, because the only unit preserving positive definiteness is
$1$.

As for practical computations we have the following fundamental explicit correspondences. 
Assume that $f=(a,b,c,r,s,t),\,\, d_f=d,\,\, \mathcal{O}=C_0(f)=\langle 1, e_1, e_2, e_3\rangle$. 
Then we have
\begin{equation}\label{gongshi3.2}
\begin{aligned}
& \!\!\!\! e_1^2=re_1-bc,\,\,\,e_2e_3=a\overline{e_1},\\
& \!\!\!\! e_2^2=se_2-ac,\,\,\,e_3e_1=b\overline{e_2},\\
& \!\!\!\! e_3^2=te_3-ab,\,\,\,e_1e_2=c\overline{e_3}.
\end{aligned}
\end{equation}
For the dual basis $\{e_0', e_1', e_2', e_3'\}$ of $\mathcal{O}^{\sharp}$ with respect to $\{1, e_1, e_2, e_3\}$ we have
\begin{equation}\label{gongshi3.3}
\tr(e_0')=1,\,\, \tr(e_1')=\tr(e_2')=\tr(e_3')=0, 
\end{equation}
\begin{equation}\label{gongshi3.4}
\begin{aligned}
& \!\!\!\! de_0'=d-2(abc+rst)+(ar+st)e_1+(bs+rt)e_2+(ct+rs)e_3,\\
& \!\!\!\! de_1'=ar+st-2ae_1-te_2-se_3,\\
& \!\!\!\! de_2'=bs+rt-te_1-2be_2-re_3,\\
& \!\!\!\! de_3'=ct+rs-se_1-re_2-2ce_3,
\end{aligned}
\end{equation}
and 
\begin{equation}\label{gongshi3.5}
\begin{aligned}
& \!\!\!\! \n(de_1')=da,\,\,\tr(de_2'\overline{de_3'})=dr,\\
& \!\!\!\! \n(de_2')=db,\,\,\tr(de_3'\overline{de_1'})=ds,\\
& \!\!\!\! \n(de_3')=dc,\,\,\tr(de_1'\overline{de_2'})=dt.
\end{aligned}
\end{equation}

\subsection{Gorenstein orders}
Recall that the dual of $\mathcal O$ is
\[
 \mathcal O^\sharp
 =
 \{\alpha\in Q:
       \tr(\alpha\beta)\in\mathbb Z
       \text{ for every }\beta\in\mathcal O\}.
\]
We call $\mathcal O$ a \emph{Gorenstein order} if
$\mathcal O^\sharp$ is an invertible two-sided fractional
$\mathcal O$-ideal.
Equivalently, for every prime $p$, there is an element
$\alpha_p\in Q_p^\times$ such that
\[
 \mathcal O^\sharp\otimes_{\mathbb Z}\mathbb Z_p
 =\alpha_p\mathcal O_p.
\]
By \cite[Theorem~24.2.10]{Voi21}, this condition is equivalent
to the primitivity of the associated ternary quadratic form
$f_{\mathcal O}$.
The following result describes the Gorenstein saturation of
an arbitrary order.
\begin{lemma}[\cite{Voi21}, Proposition 24.2.15]\label{the:Gor}
For any order $\mathcal{O}\subset Q$, there exists a unique integer $n=f(\mathcal{O})\geq 1$ and a unique Gorenstein order $\operatorname{Gor}(\mathcal{O})$ such that
\begin{equation*}
\mathcal{O}=\mathbb{Z}+n\operatorname{Gor}(\mathcal{O}).
\end{equation*}
Two orders $\mathcal{O}$, $\mathcal{O}'$ are isomorphic if and only if $f(\mathcal{O})=f(\mathcal{O}')$ and $\operatorname{Gor}(\mathcal{O})$ is isomorphic to $\operatorname{Gor}(\mathcal{O}')$.
\end{lemma}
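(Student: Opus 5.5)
The plan is to deduce the lemma from the even Clifford correspondence of Lemma~\ref{Lemma3.3}, together with the fact quoted just above from \cite[Theorem~24.2.10]{Voi21}: $\mathcal O$ is Gorenstein if and only if $f_{\mathcal O}$ is primitive. The main computational input is a scaling identity for even Clifford orders:
\[
C_0(nf)=\mathbb Z+n\,C_0(f)\qquad (n\ge 1).
\]

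\emph{Scaling identity.} Let $f=(a,b,c,r,s,t)$ and $C_0(f)=\langle 1,e_1,e_2,e_3\rangle$ with the multiplication table \eqref{gongshi3.2}. I would check directly that $e_i^*:=ne_i$ satisfy the table \eqref{gongshi3.2} for the form $nf=(na,nb,nc,nr,ns,nt)$. For example,
\[
(ne_1)^2=nr(ne_1)-(nb)(nc),\qquad (ne_2)(ne_3)=(na)\,\overline{ne_1},
\]
and the other relations follow in the same way. Since the table determines the order, this gives $C_0(nf)\cong\langle 1,ne_1,ne_2,ne_3\rangle=\mathbb Z+nC_0(f)$.

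\emph{Existence.} Let $f=f_{\mathcal O}$ and write $f=n f_1$, where $n=\widetilde m_f$ and $f_1$ is primitive. By Lemma~\ref{Lemma3.3}, $\mathcal O\cong C_0(f)$. By the scaling identity, $C_0(f)=\mathbb Z+nC_0(f_1)$. The order $C_0(f_1)$ is Gorenstein because $f_1$ is primitive. Transporting $C_0(f_1)$ through the isomorphism $C_0(f)\cong\mathcal O$ gives a Gorenstein order $\operatorname{Gor}(\mathcal O)\subset Q$ with $\mathcal O=\mathbb Z+n\operatorname{Gor}(\mathcal O)$.

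\emph{Uniqueness of the integer $n$.} Suppose $\mathcal O=\mathbb Z+n'A$ with $A$ Gorenstein. By Lemma~\ref{Lemma3.3}, $A\cong C_0(f_A)$. The scaling identity then gives $\mathcal O\cong C_0(n'f_A)$. Injectivity in Lemma~\ref{Lemma3.3}, together with positive definiteness (so that similarity is equivalence), gives $f_{\mathcal O}\sim n'f_A$. Since $f_A$ is primitive, comparing contents yields $n'=\widetilde m_{f_{\mathcal O}}=n$.

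\emph{Uniqueness of $A$ as a subset.} Suppose $\mathbb Z+nA=\mathbb Z+nA'$ with $A,A'$ Gorenstein. Then $nA\subset\mathcal O$, $nA'\subset\mathcal O$, $A\cap\mathbb Q=A'\cap\mathbb Q=\mathbb Z$, and $nA/n\mathbb Z\cong\mathcal O/\mathbb Z\cong nA'/n\mathbb Z$. I would pass to completions and argue prime by prime, characterizing $A_p$ inside $n^{-1}\mathcal O_p$ as the unique order $B$ with $\mathbb Z_p+nB=\mathcal O_p$.

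For $p\nmid n$ this is immediate, since then $A_p=\mathcal O_p$. For $p\mid n$ it requires showing that any such $B$ equals $\mathbb Z_p+\{x\in n^{-1}\mathcal O_p : x \text{ integral}\}$, with extra care when $p=2$, where trace parity interferes. I expect this literal-subset uniqueness to be the main obstacle. If it proves awkward, I would fall back on comparing discriminants: the saturation is forced by $\discrd(\mathbb Z+nB)=n^3\discrd(B)$ together with containment.

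\emph{Isomorphism criterion.} If $f(\mathcal O)=f(\mathcal O')$ and $\operatorname{Gor}(\mathcal O)\cong\operatorname{Gor}(\mathcal O')$ via conjugation by some $\alpha$, then conjugation by $\alpha$ maps $\mathbb Z+n\operatorname{Gor}(\mathcal O)$ onto $\mathbb Z+n\operatorname{Gor}(\mathcal O')$, so $\mathcal O\cong\mathcal O'$. Conversely, an isomorphism $\mathcal O\cong\mathcal O'$ gives $f_{\mathcal O}\sim f_{\mathcal O'}$. Hence the contents agree, so $f(\mathcal O)=f(\mathcal O')$, and the primitive parts are equivalent. Applying $C_0$ to the primitive parts then gives $\operatorname{Gor}(\mathcal O)\cong\operatorname{Gor}(\mathcal O')$.
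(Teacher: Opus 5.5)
The paper gives no proof of this lemma; it simply cites Voight. Your route is the standard one: take $n$ to be the content of $f_{\mathcal O}$ and use $C_0(nf)=\mathbb Z+nC_0(f)$. Your existence argument, the uniqueness of $n$ by comparing contents, and the isomorphism criterion are fine. The criterion does not even need subset uniqueness: any Gorenstein $A$ with $\mathcal O=\mathbb Z+nA$ has $f_{\mathcal O}\sim nf_A$, so $A$ is isomorphic to $C_0$ of the primitive part of $f_{\mathcal O}$.

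The genuine gap is the uniqueness of $\operatorname{Gor}(\mathcal O)$ as a subset of $Q$, which you leave open. Your fallback does not close it. Two candidates $A,A'$ with $\mathbb Z+nA=\mathbb Z+nA'$ both contain $\mathcal O$ and both have reduced discriminant $\operatorname{discrd}(\mathcal O)/n^3$. However, nothing you have shown makes one contain the other. Equal discriminants plus a common suborder prove nothing: an Eichler order of prime level lies in two distinct maximal orders.

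The missing idea is elementary and global: if $y\in Q$ and $y+q$ with $q\in\mathbb Q$ are both integral, then $q\in\mathbb Z$. Indeed, the trace gives $2q\in\mathbb Z$. If $q=\frac12+k$ with $k\in\mathbb Z$, then
\[
\operatorname{n}(y+q)=\operatorname{n}(y)+q\operatorname{tr}(y)+q^2\equiv\frac{\operatorname{tr}(y)}{2}+\frac14\pmod{\mathbb Z},
\]
which is not an integer. Now take $a\in A$ and write $na=k+na'$ with $k\in\mathbb Z$ and $a'\in A'$. Then $a-a'=k/n$ with $a,a'$ both integral, so $k/n\in\mathbb Z$ and $a\in A'$. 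By symmetry $A=A'$.

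The same observation proves the local characterization you proposed, in the slightly sharper form $B=\{x\in n^{-1}\mathcal O_p: x\ \text{integral}\}$. No separate treatment of $p=2$ is needed, because the trace-parity ambiguity you were worried about is exactly what the norm computation rules out.
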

Referring to this, for fixed $n\geq 1$, we can define a map $\sigma_n$ on Gorenstein orders $\mathcal{O}$, 
\begin{equation}\label{gongshi3.6}
\sigma_n: \mathcal{O} \mapsto \mathbb{Z}+n\mathcal{O},
\end{equation} 
which is bijective up to isomorphisms, and genus preserving, among others. 

\subsection{Trace-zero norm forms}
The following lemma is a basis for the separate considerations of trace-zero norm forms. 
\begin{lemma}[\cite{LZ24}, Proposition 4.1]\label{prop:basis}
Let $\mathcal{O}$ be an order. If $\mathcal{O}$ is even, then there exist $\alpha_1,\alpha_2,\alpha_3\in\mathcal{O}$ with $\tr(\alpha_1)=\tr(\alpha_2)=\tr(\alpha_3)=0$, such that
\begin{equation*}
\mathcal{O}=\mathbb{Z}+\mathbb{Z}\alpha_1+\mathbb{Z}\alpha_2+\mathbb{Z}\alpha_3.
\end{equation*}
If $\mathcal{O}$ is odd, then there exist $\beta_1,\beta_2,\beta_3\in\mathcal{O}$ with $\tr(\beta_1)=\tr(\beta_2)=0$ and $\tr(\beta_3)=1$, such that
\begin{equation*}
\mathcal{O}=\mathbb{Z}+\mathbb{Z}\beta_1+\mathbb{Z}\beta_2+\mathbb{Z}\beta_3.
\end{equation*}
\end{lemma}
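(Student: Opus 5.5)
The statement is Lemma~\ref{prop:basis}. The plan is to exploit the trace map $\tr:\mathcal O\to\mathbb Z$, which is a $\mathbb Z$-linear functional with image either $2\mathbb Z$ (even case) or $\mathbb Z$ (odd case), since $\tr(1)=2$. Start from the standard fact that $\mathbb Z\cdot 1$ is a primitive (saturated) sublattice of $\mathcal O$: if $\frac{a}{b}\in\mathcal O$ with $\gcd(a,b)=1$, then $\frac ab$ is integral over $\mathbb Z$, so $\frac ab\in\mathbb Z$. Hence $1$ extends to a $\mathbb Z$-basis $1,\gamma_1,\gamma_2,\gamma_3$ of $\mathcal O$, so $\mathcal O/\mathbb Z$ is free of rank $3$.

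\emph{Even case.} Here $\tr(\gamma_i)\in 2\mathbb Z$ for each $i$. Set $\alpha_i=\gamma_i-\tr(\gamma_i)/2\in\mathcal O$. Then $\tr(\alpha_i)=0$. The change of basis from $\{1,\gamma_i\}$ to $\{1,\alpha_i\}$ is unimodular (upper triangular with ones on the diagonal), so $\mathcal O=\mathbb Z+\mathbb Z\alpha_1+\mathbb Z\alpha_2+\mathbb Z\alpha_3$.

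\emph{Odd case.} Consider the homomorphism $\mathcal O/\mathbb Z\to\mathbb Z/2\mathbb Z$ induced by $\tr$. It is well defined because $\tr(\mathbb Z)=2\mathbb Z$, and it is surjective by oddness. Consider also the lifted functional $\lambda:\mathbb Z^3\cong\mathcal O/\mathbb Z\to \mathbb Z/2$, $\lambda(x)=\sum x_i\tr(\gamma_i)\bmod 2$. Its kernel is an index-$2$ sublattice. Choose a basis $v_1,v_2,v_3$ of $\mathbb Z^3$ with $v_1,v_2\in\ker\lambda$ and $v_3\notin\ker\lambda$; this is possible since a nonzero vector $(\tr(\gamma_i)\bmod 2)\in\mathbb F_2^3$ can be moved to $(0,0,1)$ by $\mathrm{GL}_3(\mathbb Z)\to\mathrm{GL}_3(\mathbb F_2)$, which is surjective. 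Lift $v_1,v_2,v_3$ to elements $\beta_1',\beta_2',\beta_3'$ with even, even, odd traces. Then put $\beta_i=\beta_i'-\tr(\beta_i')/2$ for $i=1,2$, and $\beta_3=\beta_3'-(\tr(\beta_3')-1)/2$. These have traces $0,0,1$. Since we again only adjusted by integer multiples of $1$, $\{1,\beta_1,\beta_2,\beta_3\}$ remains a $\mathbb Z$-basis.

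The only mildly delicate step is the basis-adaptation in the odd case, namely the surjectivity of $\mathrm{GL}_3(\mathbb Z)\to\mathrm{GL}_3(\mathbb F_2)$. This can alternatively be handled directly: if $\tr(\gamma_3)$ is odd, replace each $\gamma_i$ ($i=1,2$) having odd trace by $\gamma_i-\gamma_3$. That elementary operation suffices, so there is no real obstacle.
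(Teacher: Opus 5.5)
Your proof is correct and complete. The paper does not prove this lemma itself; it quotes it from \cite{LZ24}, so there is no in-paper proof to match. Your route is self-contained and elementary. It rests only on $\mathcal O\cap\mathbb Q=\mathbb Z$, which is the same fact the paper invokes in Lemma~\ref{lem:rep} to see that $\mathcal O/\mathbb Z$ is free of rank three. On top of that you make a unimodular change of basis of $\mathcal O/\mathbb Z$ adapted to the parity functional $\tr \bmod 2$, and then shift each basis vector by an integer, which leaves the basis property intact.

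The closest reasoning in the paper is its use of the Clifford basis $1,e_1,e_2,e_3$ of \eqref{gongshi3.2} in the proofs of Theorems~\ref{lem:odd} and~\ref{the:even}. There $\tr(e_1),\tr(e_2),\tr(e_3)$ are $r,s,t$, and the even-case shift $e_i-\tr(e_i)/2$ appears verbatim. One could derive the lemma that way, but it would draw on the even Clifford correspondence (Lemma~\ref{Lemma3.3}). Your argument needs none of that, which is preferable, since the lemma is what sets up the trace-zero forms $f_{\mathcal O^0}$ in the first place.

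The one point to make explicit in the odd case concerns the elementary variant. Some $\gamma_i$ must have odd trace, since otherwise $\tr(a+\sum x_i\gamma_i)=2a+\sum x_i\tr(\gamma_i)$ would always be even. So after relabelling you may assume $\tr(\gamma_3)$ is odd before applying $\gamma_i\mapsto\gamma_i-\gamma_3$ to the remaining odd-trace $\gamma_i$. With this step the appeal to surjectivity of $\mathrm{GL}_3(\mathbb Z)\to\mathrm{GL}_3(\mathbb F_2)$ is unnecessary.
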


From this we can define the trace-zero norm forms $f_{\mathcal{O}^{0}}$ according as $\mathcal{O}$ is even or odd as follows.
For positiveness we first assume that $Q$ is positive definite, i.e., it ramifies at $\infty$. 
If $\mathcal{O}$ is even, then $\mathcal{O}^{0}=\mathbb{Z}\alpha_1+\mathbb{Z}\alpha_2+\mathbb{Z}\alpha_3$; hence we can define 
\begin{align}\label{gongshi3.7}
f_{\mathcal{O}^{0}}\!: & =\n(x\alpha_1+y\alpha_2+z\alpha_3)\\
& =\n(\alpha_1)x^2+\n(\alpha_2)y^2+\n(\alpha_3)z^2+\tr(\alpha_2\overline{\alpha_3})yz
+\tr(\alpha_1\overline{\alpha_3})xz+\tr(\alpha_1\overline{\alpha_2})xy. \nonumber
\end{align}
If $\mathcal{O}$ is odd, then $\mathcal{O}^{0}=\mathbb{Z}\beta_1+\mathbb{Z}\beta_2+\mathbb{Z}(2\beta_3-1)$; hence we can define 
\begin{align}\label{gongshi3.8}
f_{\mathcal{O}^{0}}\!: & =\n(x\beta_1+y\beta_2+z(2\beta_3-1))\\
& =\n(\beta_1)x^2+\n(\beta_2)y^2+(4\n(\beta_3)-1)z^2+2\tr(\beta_2\overline{\beta_3})yz
+2\tr(\beta_1\overline{\beta_3})xz+\tr(\beta_1\overline{\beta_2})xy. \nonumber
\end{align}

\section{Characterization and classification for odd orders}

In this section we come to discuss the characterization and classification for odd orders by virtue of mainly the localization at prime $2$. The material of this section will serve as a solid foundation for the discussion of the remaining of this paper. We note that similar theory might have been established for odd primes in precisely the same way. 
\subsection{Odd orders in general}
We will give several characterizations for odd orders $\mathcal{O}$ in this subsection. 
The first one depends on the arithmetic invariants of $f_{\mathcal{O}}$. 
\begin{theorem}\label{the:odd}
Let $\mathcal{O}\subset Q$ be a Gorenstein order, and $f_{\mathcal{O}}$ be defined as in \eqref{gongshi3.1}. Let $d_{f_{\mathcal{O}}}$, $N_{f_{\mathcal{O}}}$ and $m_{f_{\mathcal{O}}}$ denote respectively the discriminant, the level and the divisor of $f_{\mathcal{O}}$. Then the following statements are equivalent:
\begin{enumerate}
\item $\mathcal{O}$ is an odd order;
\item $v_2(d_{f_{\mathcal{O}}})=v_2(N_{f_{\mathcal{O}}})-2$;
\item $2\nmid m_{f_{\mathcal{O}}}$.
\end{enumerate}
Here and in the sequel $v_p(n)$ always denotes the highest exponent of prime $p\geq 2$ in the prime factorization of positive integer $n\geq 1$.
\end{theorem}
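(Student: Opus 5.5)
The plan is to pass through the even Clifford correspondence and read everything off an explicit basis. By Lemma~\ref{Lemma3.3}, for a definite order we have $\mathcal O\simeq C_0(f_{\mathcal O})$. Both the parity of an order (it is determined by the trace ideal $\tr(\mathcal O)$, which is invariant under isomorphism) and the quantities $d_f,N_f,m_f$ (which are invariants of the $\mathrm{GL}_3(\mathbb Z)$-class) may therefore be computed for any representative. So I would fix $f=f_{\mathcal O}=(a,b,c,r,s,t)$ and work with $\mathcal O=C_0(f)=\langle 1,e_1,e_2,e_3\rangle$, whose multiplication is given by \eqref{gongshi3.2}.

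First, I would compute the trace ideal. The relations $e_1^2=re_1-bc$, $e_2^2=se_2-ac$, $e_3^2=te_3-ab$ are the reduced characteristic equations of the $e_i$. Hence $\tr(e_1)=r$, $\tr(e_2)=s$, $\tr(e_3)=t$, and of course $\tr(1)=2$. Since $\tr$ is $\mathbb Z$-linear, $\tr(\mathcal O)=2\mathbb Z+r\mathbb Z+s\mathbb Z+t\mathbb Z$. So $\mathcal O$ is odd if and only if at least one of $r,s,t$ is odd.

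Next, I would prove (1)$\Leftrightarrow$(3) directly from the definition of $m_f$.
\begin{itemize}
\item If $r,s,t$ are all even, then each of $4bc-r^2$, $4ac-s^2$, $4ab-t^2$, $2st-4ar$, $2rt-4bs$, $2rs-4ct$ is even, so $2\mid m_f$.
\item If, say, $r$ is odd, then $4bc-r^2$ is odd, so $m_f$ is odd. The cases $s$ odd and $t$ odd are symmetric, using $4ac-s^2$ and $4ab-t^2$.
\end{itemize}
Combined with the first step, $\mathcal O$ is odd if and only if $2\nmid m_{f_{\mathcal O}}$.

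Finally, (2)$\Leftrightarrow$(3) is purely arithmetic. From $N_f=4d_f/m_f$ we get $v_2(N_f)=2+v_2(d_f)-v_2(m_f)$. Thus $v_2(d_f)=v_2(N_f)-2$ exactly when $v_2(m_f)=0$.

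I do not expect a real obstacle here. The one point needing care is justifying the reduction to $C_0(f_{\mathcal O})$: that $\mathcal O\simeq C_0(f_{\mathcal O})$ via Lemma~\ref{Lemma3.3}, and that similarity coincides with equivalence for positive definite forms, so $m_f$, $d_f$, $N_f$ are well defined. The Gorenstein hypothesis does not seem to be needed for the argument itself; it only matches the normalization used elsewhere.
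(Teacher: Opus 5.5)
Your proof is correct. It reaches the result by a different and more elementary route than the paper.

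The paper proves (1)$\Leftrightarrow$(2) via Lehman's $2$-adic classification of primitive forms. With $h=v_2(d_{f_{\mathcal O}})$ and $g=v_2(N_{f_{\mathcal O}})$, it moves $f_{\mathcal O}$ into one of Lehman's normal shapes for the cases $h=g-2$, $g\le h\le 2g-2$ and $h=2g$. In the case $h=g-2$, the determinant forces the coefficient $r$ to be odd, so $\tr(e_1)=r$ is odd. In the other two shapes it reads off that all of $\tr(e_1),\tr(e_2),\tr(e_3)$ are even. It then gets (2)$\Leftrightarrow$(3) from $N_f=4d_f/m_f$, exactly as you do.

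You share with the paper the observation that $\tr(C_0(f))=2\mathbb Z+r\mathbb Z+s\mathbb Z+t\mathbb Z$. Where you differ is in linking the parity of $r,s,t$ to the parity of $m_f$ directly from the definition of $m_f$. The diagonal cofactors $4bc-r^2$, $4ac-s^2$, $4ab-t^2$ detect an odd $r$, $s$ or $t$, while the three off-diagonal generators are always even. This gives (1)$\Leftrightarrow$(3) with no classification at all.

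What your route buys is that primitivity of $f_{\mathcal O}$ is never used. So, as you observe, the equivalence holds for every order, not only Gorenstein ones. The paper needs the Gorenstein hypothesis precisely because Lehman's normal forms are stated for primitive forms. The paper itself uses your parity observation later, in the proof of Proposition~\ref{lem:oddtqf}.

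What the paper's route buys is extra information: odd Gorenstein orders correspond exactly to Lehman's case $h=g-2$. This ties the statement to the later use of Lehman's correspondence $\phi_2$, though it is not needed for the theorem itself.
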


\begin{proof}
We first prove the equivalence of statements (1) and (2). On the one hand, we assume $h=g-2$, where $h=v_2(d_{f_{\mathcal{O}}})$, $g=v_2(N_{f_{\mathcal{O}}})\geq 2$. Since $\mathcal{O}$ is Gorenstein, $f_{\mathcal{O}}$ is primitive. Hence in view of the list of the classification of primitive forms in Section~\ref{section3.2}, due to ~\cite[Theorem 5]{Leh92}, we may suppose without loss of generality that 
$$
f_{\mathcal{O}} = (2^{g-2}a,\,b,\,c,\,r,\,2^{g-1}s,\,2^{g-1}t),  
$$ 
where $2\nmid ac$. Whence by definition we have
$$
d_{f_{\mathcal{O}}}=\frac12\det 
\begin{pmatrix}
2^{g-1}a & 2^{g-1}t & 2^{g-1}s \\
2^{g-1}t & 2b & r \\
2^{g-1}s & r & 2c 
\end{pmatrix}
=2^{g-2}\det 
\begin{pmatrix}
a & t & s \\
2^{g-1}t & 2b & r \\
2^{g-1}s & r & 2c 
\end{pmatrix}.
$$
This together with the assumption $h=g-2$ clearly implies $2\nmid r$. 
Again by Lemma \ref{Lemma3.3} and \eqref{gongshi3.2} we may write  
\begin{equation*}
\mathcal{O}\simeq\mathbb{Z}+\mathbb{Z}e_1+\mathbb{Z}e_2+\mathbb{Z}e_3=\mathcal{O}', \,\,\,\text{say}, 
\end{equation*}
with
\begin{align*}
\begin{cases}
e_1^2=re_1-bc, & e_2e_3=2^{g-2}a\overline{e_1},\\
e_2^2=2^{g-1}se_2-2^{g-2}ac, & e_3e_1=b\overline{e_2},\\
e_3^2=2^{g-1}te_3-2^{g-2}ab, & e_1e_2=c\overline{e_3}.
\end{cases}
\end{align*}
Then from the first equality $e_1^2=re_1-bc$ above we see that $\tr(e_1)=r$. Thus $\mathcal{O}'$ is odd by definition. Hence the Gorenstein order $\mathcal{O}$ itself is odd since $\mathcal{O}\simeq\mathcal{O}'$, by the use of Skolem-Noether's theorem.  

On the other hand, if $\mathcal{O}$ is odd, we need to prove $h=g-2$.
Suppose the contrary, then by ~\cite[Theorem~2]{Leh92} there would exist two possibilities: (i) $g\leq h\leq 2g-2$, and (ii) $h=2g$. 
If $g\leq h\leq 2g-2$, then similar to the above we may assume, using the list of the classification of primitive forms in section~\ref{section3.2} again, 
$$
f_{\mathcal{O}}=(2^{g-2}a,\,2^{h-g}b,\,c,\,2^{h-g+1}r,\,2^{g-1}s,\,2^{g-1}t).
$$ 
Hence we can write
\begin{equation*}
\mathcal{O}\simeq\mathbb{Z}+\mathbb{Z}e_1+\mathbb{Z}e_2+\mathbb{Z}e_3=\mathcal{O}'', \,\,\,\text{say}, 
\end{equation*}
with
\begin{align*}
\begin{cases}
e_1^2=2^{h-g+1}re_1-2^{h-g}bc, & e_2e_3=2^{g-2}a\overline{e_1},\\
e_2^2=2^{g-1}se_2-2^{g-2}ac, & e_3e_1=2^{h-g}b\overline{e_2},\\
e_3^2=2^{g-1}te_3-2^{h-2}ab, & e_1e_2=c\overline{e_3}.
\end{cases}
\end{align*}
Note that by the left three identities above we have $\tr(e_1)=2^{h-g+1}r$, $\tr(e_2)=2^{g-1}s$ and $\tr(e_3)=2^{g-1}t$. 
Hence in view of $g\geq2$ and $g\leq h$ we obtain $2\mid\tr(e_1)$, $2\mid\tr(e_2)$, $2\mid\tr(e_3)$. This implies that $\mathcal{O}''$, and hence $\mathcal{O}$ is even by definition, which is a contradiction. 
For the case of $h=2g$, the argument is similar. The only difference is the use of the form
$f_{\mathcal{O}}=(2^{g}a,\,2^{g}b,\,c,\,2^{g}r,\,2^{g}s,\,2^{g}t)$ instead of the last one.

Turn to the proof of the equivalence of statements (2) and (3). It is a direct consequence of the identity $m_{f_{\mathcal{O}}}=4d_{f_{\mathcal{O}}}/N_{f_{\mathcal{O}}}$.
\end{proof}

The following proposition gives an explicit characterization to the localization at place $2$ of primitive positive definite quadratic forms $f$ with $v_2(d_f)=v_2(N_f)-2$.
\begin{prop}\label{lem:oddtqf}
Let $f$ be a primitive positive definite integral ternary
quadratic form such that
\[
v_2(d_f)=v_2(N_f)-2.
\]
Put $d_f^*=d_f/2^{v_2(d_f)}$. Then either
\[
f\sim_2 {d_f^*}
\bigl(3\cdot 2^{v_2(d_f)}x^2-(y^2+z^2+yz)\bigr)
\]
or
\[
f\sim_2 {d_f^*}
\bigl(-2^{v_2(d_f)}x^2-yz\bigr).
\]
Moreover, the two displayed normal forms given above are inequivalent {\rm(}resp. equivalent{\rm)} over $\mathbb Z_2$ 
if $v_2(d_f)\geq 1$ {\rm(}resp. $v_2(d_f)=0${\rm)}.
\end{prop}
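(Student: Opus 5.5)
Write $h=v_2(d_f)$ and $g=v_2(N_f)=h+2$. The plan starts from Lehman's normal form used in the proof of Theorem~\ref{the:odd}. Since $f$ is primitive with $h=g-2$, we may take
\[
f=(2^{h}a,\,b,\,c,\,r,\,2^{h+1}s,\,2^{h+1}t)
\]
with $a,c,r$ odd (oddness of $r$ was shown there). Over $\mathbb Z_2$ we then split off the binary part in $y,z$.

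The form $by^2+ryz+cz^2$ has odd cross coefficient $r$, so it is a unimodular even binary lattice. By the classical classification over $\mathbb Z_2$, it is equivalent either to $yz$ (hyperbolic plane) or to $y^2+yz+z^2$ (anisotropic plane). Which one occurs is decided by the class of $r^2-4bc$ modulo $8$: it is $1$ for the hyperbolic plane and $5$ ($\equiv -3$) for the anisotropic one. Because this plane is unimodular, it splits off as an orthogonal summand. Completing the square in $x$ then gives
\[
f\sim_2 \epsilon\,2^{h}x^2 + B(y,z),
\]
where $B$ is one of the two planes above and $\epsilon\in\mathbb Z_2^\times$. Matching discriminants via $d_f = \tfrac12\det M_f$ fixes $\epsilon$ in terms of $d_f^*$. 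For the hyperbolic plane we get $\epsilon=-d_f^*$; for the anisotropic plane we get $3\epsilon=d_f^*$ up to squares.

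Next we absorb scalars so the answer matches the displayed shapes. Any unit multiple of $yz$ is equivalent to $yz$, so $-d_f^*(yz)\sim yz$, and the second case reads $d_f^*(-2^h x^2 - yz)$. In the first case, a unit multiple $u(y^2+yz+z^2)$ is equivalent to $y^2+yz+z^2$ exactly when $u$ is a norm from $\mathbb Z_2[\omega]$. The unit norms from that ring are the units $\equiv 1 \pmod 4$ up to squares, namely the classes $\{1,-3\}$. So multiplying by $-d_f^*$ needs care. Rather than matching directly, we compute $d$ of the target form $d_f^*(3\cdot 2^h x^2-(y^2+z^2+yz))$: it is $-3\cdot 2^h (d_f^*)^3\cdot(-3)/4\cdot$(normalization), which we check gives $d_f$ up to a unit square. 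Then the coefficient of $x^2$ is fixed by discriminant matching, and the binary part can be rescaled by norms since $-d_f^*\cdot 3\cdot$(unit) lands in the norm group. \textbf{This square-class bookkeeping at $2$ is the main obstacle.} I would handle it by tracking the square classes $\{\pm1,\pm3\}$ (times $2^{\{0,1\}}$) explicitly. When doing so, I would use that a binary unimodular even plane scaled by a unit $u$ changes only through the norm group.

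Finally, for the inequivalence claim I would use the Hasse invariant, or equivalently the isotropy of the $2$-adic space, as the invariant.
\begin{itemize}
\item If $h\ge1$, the Jordan decompositions $\langle 2^h\epsilon\rangle\perp H$ and $\langle 2^h\epsilon'\rangle\perp A$ have the same determinant. However, their unimodular components $H$ and $A$ are inequivalent, and for $h\ge1$ the unimodular component of an even lattice is an invariant. Hence the two forms are not equivalent.
\item If $h=0$, the lattice is $\langle u\rangle\perp B$ with $u$ odd. By the standard relation $\langle u\rangle\perp A\cong\langle u'\rangle\perp H$ for odd $u$ with a suitable $u'$ (O'Meara 93:18), the two forms become equivalent once determinants agree.
\end{itemize}
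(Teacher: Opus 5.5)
Your skeleton matches the paper's: split off the unimodular plane $by^2+ryz+cz^2$, fix the rank-one coefficient by the discriminant, and separate the two shapes by an integral invariant. (The paper gets an odd cross coefficient directly from $2\nmid m_f$ rather than from Lehman's normal form.)

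The gap is in the step you call the main obstacle, which rests on a false claim. $\mathbb Q_2(\sqrt{-3})$ is the \emph{unramified} quadratic extension of $\mathbb Q_2$, so every unit of $\mathbb Z_2$ is a norm from $\mathbb Z_2[\omega]$. Indeed $\mathrm N(a+b\omega)=a^2+ab+b^2$ equals $1,3,7,13$ at $(a,b)=(1,0),(1,1),(2,1),(3,1)$, covering all unit classes modulo $8$. The group $\{1,-3\}$ of units $\equiv1\pmod4$ is the unit norm group of $\mathbb Z_2[i]$, not of $\mathbb Z_2[\omega]$. With your description, $-d_f^*(y^2+yz+z^2)\sim_2 y^2+yz+z^2$ would fail whenever $d_f^*\equiv1\pmod4$. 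Your unproved assertion that ``$-d_f^*\cdot3\cdot(\text{unit})$ lands in the norm group'' would then contradict the first normal form, so as written that case is not established.

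The repair is already in your own mod-$8$ criterion. For any $u\in\mathbb Z_2^\times$, the plane $u(y^2+yz+z^2)$ is even unimodular with $r^2-4bc=-3u^2\equiv5\pmod8$, hence it is the anisotropic plane. Together with $\epsilon\equiv3d_f^*$ modulo squares this gives directly
\[
f\sim_2 3d_f^*2^hx^2+(y^2+yz+z^2)\sim_2 d_f^*(3\cdot2^hx^2-(y^2+yz+z^2)),
\]
with no square-class bookkeeping. The paper avoids the issue by passing first to $g=d_f^{*-1}f$ and applying the classification to the binary part of $g$.

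For the inequivalence, drop the Hasse-invariant/isotropy idea, because it only works for odd $h$. If $h=2k$, then $(1,2^k,2^k)$ is an isotropic vector of $3\cdot2^hx^2-(y^2+yz+z^2)$. Both normal forms then span isotropic ternary $\mathbb Q_2$-spaces of the same determinant, and these are isometric. This is consistent with Theorem~\ref{the:oddorder}: for even $h$ both local models live in $M_2(\mathbb Q_2)$.

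The integral invariant in your first bullet is the right one, but ``the unimodular component is an invariant'' needs a reason. The paper's reason is the simplest: for $h\ge1$ the $x^2$-term vanishes modulo $2$, so the two forms reduce to $yz$ and $y^2+yz+z^2$ on $\mathbb F_2^3$, which have $6$ and $2$ zeros respectively.

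For $h=0$, the relation you cite is correct provided $\langle u\rangle$ means the form $ux^2$ (Gram entry $2u$) and $A,H$ mean the forms $y^2+yz+z^2$ and $yz$. If $u$ is read as an odd Gram entry, the relation is false, since $ux^2+2(y^2+yz+z^2)$ is anisotropic over $\mathbb Q_2$ while $u'x^2+2yz$ is isotropic. Either pin down the normalization or use the paper's explicit matrix $T$ with $\det T=-1/3$.
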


\begin{proof}
The identity $N_f=4d_f/m_f$ shows that the hypothesis $v_2(d_f)=v_2(N_f)-2$ is equivalent to
$m_f$ being odd. Write
\[
 f=ax^2+by^2+cz^2+ryz+sxz+txy.
\]
The definition of $m_f$ then implies that at least one of $r,s,t$ is
odd. After permuting the variables, we may assume $r$ is odd. The Gram
matrix of the restriction to the $y,z$-plane has determinant
$4bc-r^2\in\mathbb{Z}_2^\times$. Consequently, this binary sublattice splits
off orthogonally over $\mathbb{Z}_2$.

Set $g={d_f^*}^{-1}f$. The preceding splitting applies equally to
$g$. The classification of even unimodular binary lattices over
$\mathbb{Z}_2$ says that their quadratic forms are equivalent to either
$-yz$ or $-(y^2+z^2+yz)$. Hence, for some
$\alpha\in\mathbb{Z}_2\setminus\{0\}$,
\[
 g\sim_2\alpha x^2-yz
 \quad\text{or}\quad
 g\sim_2\alpha x^2-(y^2+z^2+yz).
\]
Since
\[
 d_g={d_f^*}^{-3}d_f=2^{v_2(d_f)}{d_f^*}^{-2},
\]
its unit part is a square in $\mathbb{Z}_2^\times$. 
Recall that discriminants of equivalent forms differ by the square of a
$\mathbb{Z}_2$-unit. Clearly, the discriminants of the
last two displayed forms are respectively $-\alpha$ and $3\alpha$.
It follows that in the first case
$\alpha=-2^{v_2(d_f)} u^2$, and in the second case
$\alpha=3\cdot2^{v_2(d_f)} u^2$, for a suitable $u\in\mathbb{Z}_2^\times$.
Rescaling $x$ by $u^{-1}$ and multiplying back by ${d_f^*}$ proves
the asserted normal forms.

If $v_2(d_f)\geq 1$, the two normal forms in the statement
reduce modulo $2$ to $y^2+z^2+yz$ and $yz$, respectively,
since ${d_f^*}$ is odd.
These forms have respectively two and six zeros in
$\mathbb F_2^3$, and hence are not equivalent over
$\mathbb Z_2$.

For $v_2(d_f)=0$, let
\[
 q_1=3x^2-y^2-z^2-yz,\qquad q_2=-x^2-yz,
 \qquad
 T=\frac13
 \begin{pmatrix}
  1&1&-1\\
  4&1&-1\\
  -2&1&2
 \end{pmatrix}.
\]
One verifies that $\det T=-1/3\in\mathbb{Z}_2^\times$ and
$q_1(T\mathbf x)=q_2(\mathbf x)$. Thus the two forms are equivalent
when $v_2(d_f)=0$, also after multiplication by ${d_f^*}$.
\end{proof}

Translating this into Gorenstein orders with the help of Theorem~\ref{the:odd} we may give the following explicit formulation for the localization of odd Gorenstein orders at $2$, which shows that for odd Gorenstein orders $\mathcal{O}$, there exist exactly three families of $\mathcal{O}_2$ over $\mathbb{Z}_{2}$.
\begin{theorem}\label{the:oddorder}
Let $\mathcal{O}\subset Q$ be a Gorenstein order. Then $\mathcal{O}$ is odd if and only if one of the following statements holds:
\begin{itemize}
\item[] When $Q$ ramifies at $2$, $\mathcal{O}_2$ is isomorphic over $\mathbb{Z}_{2}$ to a residually inert order
\begin{equation}\label{gongshi4.1}
\mathcal{O}^{(2u+1)}_2\!:\,=\left\{\begin{pmatrix}
\alpha & 2^u\beta\\
2^{u+1}\bar{\beta} & \bar{\alpha}
\end{pmatrix}
: \, \alpha,\,\beta \in\mathbb{Z}_2+\frac{1+\sqrt{-3}}{2}\mathbb{Z}_2\right\}
\end{equation}
with a nonnegative integer $u$;

\vskip 0.2cm
\item[] When $Q$ splits at $2$, $\mathcal{O}_2$ is isomorphic over $\mathbb{Z}_{2}$ to a residually inert order
\begin{equation}\label{gongshi4.2}
\mathcal{O}^{(2u+2)}_2\!:\,=\left\{ M\in M_2(\mathbb{Z}_2):M\equiv
\begin{pmatrix}
a+b & b\\
b & a
\end{pmatrix}\!\!\!
\pmod{2^{u+1}},\,\,a,\,b\in \mathbb{Z}_2 \right\}
\end{equation}
or equivalent
\begin{equation*}
\mathbb{Z}_2[T]+2^{u+1}M_2(\mathbb{Z}_2),
\quad
T=\begin{pmatrix}1&1\\1&0\end{pmatrix}
\end{equation*}
with a nonnegative integer $u$;

\vskip 0.2cm
\item[] Again when $Q$ splits at $2$, $\mathcal{O}_2$ is isomorphic over $\mathbb{Z}_{2}$ to an Eichler order of level $2^v$,
\begin{equation}\label{gongshi4.3}
\begin{pmatrix}
\mathbb{Z}_2 & \mathbb{Z}_2 \\
2^v\mathbb{Z}_2  & \mathbb{Z}_2 
\end{pmatrix}
\end{equation}
with a nonnegative integer $v$.
\end{itemize}
\end{theorem}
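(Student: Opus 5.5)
The plan is to combine Theorem~\ref{the:odd} with Proposition~\ref{lem:oddtqf} and then translate the resulting $2$-adic normal forms of $f_{\mathcal O}$ into orders. This uses the local version of the even Clifford correspondence (Lemma~\ref{Lemma3.3} over $R=\mathbb{Z}_2$), under which $\mathcal{O}_2\simeq C_0(f_{\mathcal O}\otimes\mathbb{Z}_2)$ and $\mathcal O_2$ is determined up to isomorphism by the $\mathbb{Z}_2$-similarity class of $f_{\mathcal O}$.

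For the forward direction, suppose $\mathcal O$ is odd. Since $\mathcal O$ is Gorenstein, $f=f_{\mathcal O}$ is primitive, and Theorem~\ref{the:odd} gives $v_2(d_f)=v_2(N_f)-2$. Proposition~\ref{lem:oddtqf} then puts $f$, up to the unit $d_f^*$, into one of two normal forms, which we handle in turn.
\begin{itemize}
\item[(a)] $f\sim_2 3\cdot 2^{e}x^2-(y^2+z^2+yz)$, with $e=v_2(d_f)$.
\item[(b)] $f\sim_2 -2^{e}x^2-yz$.
\end{itemize}
The unit factor $d_f^*$ does not affect the isomorphism class of $C_0$, since similar forms have isomorphic even Clifford algebras.

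We then compute $C_0$ of each normal form explicitly via the multiplication table \eqref{gongshi3.2}. For (b) with $(a,b,c,r,s,t)=(-2^e,0,0,-1,0,0)$, we expect the Eichler order of level $2^e$, i.e.\ \eqref{gongshi4.3} with $v=e$; this is the standard fact that $C_0(\langle -N\rangle\perp H)$ is the Eichler order of level $N$. For (a), the binary part $y^2+yz+z^2$ has even Clifford algebra $\mathbb{Z}_2[\omega]$ with $\omega=(1+\sqrt{-3})/2$, the unramified quadratic order. The full $C_0$ is then $\mathbb{Z}_2[\omega]+\mathbb{Z}_2[\omega]\,j$ with $j^2$ of valuation $e$ up to a unit. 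This identifies it with $\mathcal O_2^{(e+1)}$. When $e=2u$ is even the algebra is split, giving \eqref{gongshi4.2}. When $e=2u+1$ is odd it is ramified, giving \eqref{gongshi4.1}, whose matrix description indeed has the off-diagonal valuations $u$ and $u+1$. Whether $Q_2$ is split or ramified is read off from the Hilbert symbol $(-3,\,3\cdot 2^e)_2=(-1)^e$, up to units. This explains why the ramified case occurs only in the residually inert family. The final clause of Proposition~\ref{lem:oddtqf} handles $e=0$: there the two normal forms coincide, consistent with $\mathcal O_2^{(2)}=M_2(\mathbb{Z}_2)$ equalling the Eichler order of level $1$.

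For the converse, each listed local order visibly contains an element of odd trace. For \eqref{gongshi4.1} and \eqref{gongshi4.2} take $\alpha=\omega$, respectively $T$, which has trace $1$. For \eqref{gongshi4.3} take $\mathrm{diag}(1,0)$. The global order $\mathcal O$ is then odd, because $\tr(\mathcal O)\otimes\mathbb{Z}_2=\tr(\mathcal O_2)=\mathbb{Z}_2$ forces $\tr(\mathcal O)=\mathbb{Z}$. Alternatively, one can recompute $f$ for these orders, check that $m_f$ is odd, and invoke Theorem~\ref{the:odd}.

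The main obstacle is the explicit identification of $C_0$ of normal form (a) with the given matrix models. This requires choosing a basis $1,e_1,e_2,e_3$ from \eqref{gongshi3.2} and exhibiting explicit matrices satisfying these relations, with the correct $2$-power congruence conditions, especially the $\bmod\ 2^{u+1}$ description in \eqref{gongshi4.2}. To do this, I would first verify $\mathbb{Z}_2[T]\simeq\mathbb{Z}_2[\omega]$, then show that both orders are $\mathbb{Z}_2[\omega]+2^{u+1}(\text{maximal order})$, matching the discriminant $2^{e+1}$ as a check.
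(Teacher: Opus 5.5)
Your route is sound. It uses the same three ingredients as the paper (Theorem~\ref{the:odd}, Proposition~\ref{lem:oddtqf}, and the even Clifford correspondence over $\mathbb{Z}_2$), but runs the correspondence in the opposite direction.

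\textbf{How the two routes differ.} The paper starts from the three matrix models and writes down explicit dual bases. It computes $\discrd\cdot\operatorname{n}$ on the trace-zero part of each dual lattice, then reduces the result by explicit substitutions (for instance using that $-15$ is a $2$-adic square) to the normal forms of Proposition~\ref{lem:oddtqf}. The local bijection then identifies $\mathcal O_2$ with a model.

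You never compute a dual lattice. Instead you compute $C_0$ of the normal forms, so you must identify $\mathbb{Z}_2[\omega]+\mathbb{Z}_2[\omega]j$ with the matrix models structurally. That step needs two facts you should state:
\begin{itemize}
\item[(i)] Since $\mathbb{Q}_2(\sqrt{-3})/\mathbb{Q}_2$ is unramified, $\operatorname{n}(\mathbb{Z}_2[\omega]^\times)=\mathbb{Z}_2^\times$. Replacing $j$ by $\gamma j$ with $\operatorname{n}(\gamma)=3^{-1}$ gives $j^2=2^e$ exactly. For $e=2u+1$, sending $j$ to $\bigl(\begin{smallmatrix}0&2^u\\2^{u+1}&0\end{smallmatrix}\bigr)$ then gives \eqref{gongshi4.1} at once.
\item[(ii)] For even $e$, the lattice $\Lambda=\mathbb{Z}_2[\omega]+2^{-e/2}\mathbb{Z}_2[\omega]j$ is a maximal order in a split algebra, and $C_0=\mathbb{Z}_2[\omega]+2^{e/2}\Lambda$. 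The image of any embedding of $\mathbb{Z}_2[\omega]$ into $M_2(\mathbb{Z}_2)$ is $\mathrm{GL}_2(\mathbb{Z}_2)$-conjugate to $\mathbb{Z}_2[T]$, because the embedding makes $\mathbb{Z}_2^2$ a free $\mathbb{Z}_2[\omega]$-module of rank one.
\end{itemize}
Your converse, via an element of trace $1$ in each model, is more direct than the paper's, which leaves that direction implicit. It also does not use the Gorenstein hypothesis.

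\textbf{Indexing error to fix.} The form $3\cdot2^ex^2-(y^2+z^2+yz)$ has $d_f=9\cdot2^e$. By Lemma~\ref{Lemma3.3}, its even Clifford order has reduced discriminant $2^e$ up to a unit, so it is $\mathcal O_2^{(e)}$, not $\mathcal O_2^{(e+1)}$. Your proposed discriminant check should therefore be against $2^e$, not $2^{e+1}$. The correct assignments are:
\begin{itemize}
\item[(1)] $e=2u+1$ gives \eqref{gongshi4.1} with parameter $u$, as you say.
\item[(2)] $e=2u+2$ gives $\mathbb{Z}_2[T]+2^{u+1}M_2(\mathbb{Z}_2)$, i.e.\ \eqref{gongshi4.2} with parameter $u$.
\item[(3)] $e=0$ gives $M_2(\mathbb{Z}_2)$, i.e.\ \eqref{gongshi4.3} with $v=0$.
\end{itemize}
In particular, $\mathcal O_2^{(2)}=\mathbb{Z}_2[T]+2M_2(\mathbb{Z}_2)$ has reduced discriminant $4$ and is not $M_2(\mathbb{Z}_2)$. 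Your remark on $e=0$ should refer to the Eichler model with $v=0$ instead. As written, your labels ($e=2u$ giving \eqref{gongshi4.2}, $\mathcal O_2^{(e+1)}$, and $2^{u+1}$ in the last paragraph) contradict one another. With the labels fixed, the two normal forms produce exactly the listed families.
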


\begin{proof}
Throughout the proof, dual bases are taken with respect to the
reduced trace pairing
\[
  \langle X,Y\rangle=\operatorname{tr}(XY).
\]
Thus, if $B_1=I$, then $B_2^*,B_3^*,B_4^*$ form a basis of the
trace-zero sublattice of the dual lattice.

First consider the ramified model. Put
$s=\sqrt{-3}$, $\omega=(1+s)/2$, and $k=2^u$.
A $\mathbb Z_2$-basis of $\mathcal{O}^{(2u+1)}_2$ is
\[
\begin{aligned}
 & B_1=I,&
 B_2=\begin{pmatrix}\omega&0\\0&\bar\omega\end{pmatrix},&
 &B_3=\begin{pmatrix}0&k\\2k&0\end{pmatrix},&
 &B_4=\begin{pmatrix}0&k\omega\\2k\bar\omega&0\end{pmatrix}.
\end{aligned}
\]
Its dual basis is
\[
\begin{aligned}
 B_1^*&=
 \begin{pmatrix}
  (3+s)/6&0\\0&(3-s)/6
 \end{pmatrix},&
 B_2^*&=
 \begin{pmatrix}
  -s/3&0\\0&s/3
 \end{pmatrix},\\
 B_3^*&=
 \begin{pmatrix}
  0&(3-s)/(12k)\\
  (3+s)/(6k)&0
 \end{pmatrix},&
 B_4^*&=
 \begin{pmatrix}
  0&s/(6k)\\
  -s/(3k)&0
 \end{pmatrix}.
\end{aligned}
\]
Indeed, direct multiplication gives
$\operatorname{tr}(B_iB_j^*)=\delta_{ij}$.
The reduced discriminant ideal of this order is $(2k^2)$, and
\[
 2k^2\operatorname n(xB_2^*+yB_3^*+zB_4^*)
 =\frac{1}{3}\bigl(2k^2x^2-y^2+yz-z^2\bigr).
\]
The substitution
\[
 (x,y,z)\longmapsto(3x,y-z,-y-2z)
\]
has determinant $-9\in\mathbb Z_2^\times$ and transforms the
right-hand side into
$6k^2x^2-y^2-z^2-yz$.
Since the global reduced discriminant and $2k^2$ differ by a
$2$-adic unit, this proves
\begin{equation}
 f_{\mathcal O}\sim_2
 \alpha\bigl(3\cdot2^{2u+1}x^2-(y^2+z^2+yz)\bigr),
 \quad \alpha\in\mathbb Z_2^\times.
\end{equation}

Next consider the split residually inert model, and put
$k=2^{u+1}$. A basis is
\[
\begin{aligned}
& B_1=I,&
& B_2=\begin{pmatrix}1&1\\1&0\end{pmatrix},
& B_3=\begin{pmatrix}0&k\\0&0\end{pmatrix},&
& B_4=\begin{pmatrix}0&0\\k&0\end{pmatrix}.
\end{aligned}
\]
The corresponding dual basis is
\[
\begin{aligned}
& B_1^*=\begin{pmatrix}0&0\\0&1\end{pmatrix},&
& B_2^*=\begin{pmatrix}1&0\\0&-1\end{pmatrix},
& B_3^*=\frac1k\begin{pmatrix}-1&0\\1&1\end{pmatrix},&
& B_4^*=\frac1k\begin{pmatrix}-1&1\\0&1\end{pmatrix}.
\end{aligned}
\]
Again, $\operatorname{tr}(B_iB_j^*)=\delta_{ij}$.
The reduced discriminant ideal is $(k^2)$, and
\[
 k^2\operatorname n(xB_2^*+yB_3^*+zB_4^*)
 =-k^2x^2+2kx(y+z)-y^2-3yz-z^2.
\]
Replacing $y,z$ by $y+2kx/5,z+2kx/5$, respectively, gives
\[
 -\frac{k^2}{5}x^2-(y^2+3yz+z^2).
\]
The binary forms $y^2+3yz+z^2$ and $y^2+yz+z^2$ are equivalent
over $\mathbb Z_2$, by the classification of even unimodular
binary lattices: both have anisotropic reduction modulo $2$.
Moreover, $-15\equiv1\pmod8$, so $-15$ is a square in
$\mathbb Z_2^\times$. Rescaling $x$ by a square root of $-15$
therefore yields
\begin{equation}
 f_{\mathcal O}\sim_2
 \alpha\bigl(3\cdot2^{2u+2}x^2-(y^2+z^2+yz)\bigr),
 \quad \alpha\in\mathbb Z_2^\times.
\end{equation}

Finally, consider the Eichler order
\[
 \begin{pmatrix}
  \mathbb Z_2&\mathbb Z_2\\
  2^v\mathbb Z_2&\mathbb Z_2
 \end{pmatrix},
 \quad v\geq0.
\]
Write $E_{ij}$ for the standard matrix units, and choose
\[
 B_1=I,\quad B_2=E_{12},\quad
 B_3=2^vE_{21},\quad B_4=E_{22}.
\]
Its dual basis is
\[
 B_1^*=E_{11},\quad B_2^*=E_{21},\quad
 B_3^*=2^{-v}E_{12},\quad B_4^*=-E_{11}+E_{22}.
\]
The reduced discriminant ideal is $(2^v)$, and
\[
 2^v\operatorname n(xB_4^*+yB_2^*+zB_3^*)
 =-2^vx^2-yz.
\]
Consequently,
\begin{equation}
 f_{\mathcal O}\sim_2
 \alpha(-2^vx^2-yz),
 \quad \alpha\in\mathbb Z_2^\times.
\end{equation}
By Theorem~\ref{the:odd} and Proposition~\ref{lem:oddtqf},
these models realize all local similarity classes associated
with odd Gorenstein orders. When $v_2(d_{f_{\mathcal O}})=0$,
the two normal forms in Proposition~\ref{lem:oddtqf} are
equivalent, and the Eichler model with $v=0$ suffices.
Multiplication of a ternary form by a $2$-adic unit does not
change the isomorphism class of its even Clifford order.
Lemma~\ref{Lemma3.3} therefore identifies $\mathcal O_2$ with
one of the displayed models, whose ambient algebras are,
respectively, ramified, split, and split.
This completes the proof.
\end{proof}

\subsection{Classification of odd orders}
Denote by $[\mathcal{O}]$ the isomorphic class of an order $\mathcal{O}\subset Q$. For $i=1,2$, we introduce the notations $\operatorname{Typ}_i(2^n)$ as follows. For $n\geq 1$, define 
\begin{equation}\label{Typ1}
\operatorname{Typ}_1(2^n):\,=\left\{[\mathcal{O}]:\, \mathcal{O}\subset Q\,\, \text{is odd and Gorenstein},\,\,\mathcal{O}_2\simeq \mathcal{O}^{(n)}_2\right\},
\end{equation}
where $\mathcal{O}^{(n)}_2$ is defined by \eqref{gongshi4.1} or \eqref{gongshi4.2} according as $n$ is odd or even respectively. Clearly, $\operatorname{Typ}_1(2^n)$ makes sense only for odd $n$ if $Q$ ramifies at $2$, and $\operatorname{Typ}_1(2^n)$ makes sense only for even $n$ if $Q$ splits at $2$. For $n\geq 0$, define 
\begin{equation}\label{Typ2}
\operatorname{Typ}_2(2^n):\,=\left\{[\mathcal{O}]:\, \mathcal{O}\subset Q\,\, \text{is odd and Gorenstein, 
$\mathcal{O}_2\simeq
\begin{pmatrix}
 \mathbb{Z}_2 & \mathbb{Z}_2 \\
 2^n\mathbb{Z}_2 & \mathbb{Z}_2
\end{pmatrix}
$}\right\}.
\end{equation}
Clearly, $\operatorname{Typ}_2(2^n)$ makes sense for all $n\geq 0$ if $Q$ splits at $2$, but it does not make sense for any $n$ if $Q$ ramifies at $2$.
\begin{remark}
For $n=0$, we extend the notation by setting
\[
 \operatorname{Typ}_1(1)\!:\,=\operatorname{Typ}_2(1).
\]
Thus the two families coincide at $n=0$, corresponding to
orders whose localization at $2$ is isomorphic to
$M_2(\mathbb Z_2)$.
\end{remark}
Moreover, by Skolem-Noether's theorem we can define for any positive integer $m\geq 1$, 
\begin{equation}\label{gongshiZ+m}
\mathbb{Z}+m\operatorname{Typ}_i(2^{n})\!:\,=\{[\mathbb{Z}+m\mathcal{O}]:\,[\mathcal{O}]\in\operatorname{Typ}_i(2^{n})\}.
\end{equation}

Here, it is appropriate to point out that, for convenience of later discussion, we say occasionally that $\mathcal{O}$ is of local level $2^n$ at $2$ of type-$i$ if $\mathcal{O}$ belongs to a class of $\operatorname{Typ}_i(2^n)$.
From this point of view, the newly defined sets $\operatorname{Typ}_i(2^n)$ may be regarded as an isomorphic equivalence classification to the set of odd Gorenstein orders of type-$i$. Thus they are crucial for our classification in what follows. 

With these in place we can give a decomposition for the set of equivalent classes of odd orders as follows.
\begin{theorem}
If $Q$ ramifies at $2$, then we have
\begin{equation*}
\{[\mathcal{O}]\!:\,\mathcal{O}\subset Q\,\, \text{is odd}\,\}=\bigsqcup_{\substack{m\geq1\\2\nmid m}}\bigsqcup_{n\geq0}\left(\mathbb{Z}+m\operatorname{Typ}_1(2^{2n+1})\right).
\end{equation*}
If $Q$ splits at $2$, then we have
\begin{equation*}
\{[\mathcal{O}]\!:\,\mathcal{O}\subset Q\,\, \text{is odd}\,\}=\left(\bigsqcup_{\substack{m\geq1\\2\nmid m}}\bigsqcup_{n\geq0}\left(\mathbb{Z}+m\operatorname{Typ}_1(2^{2n+2})\right)\right)\sqcup\left(\bigsqcup_{\substack{m\geq1\\2\nmid m}}\bigsqcup_{n\geq0}\left(\mathbb{Z}+m\operatorname{Typ}_2(2^{n})\right)\right).
\end{equation*}
\end{theorem}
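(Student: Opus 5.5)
The plan is to combine the Gorenstein saturation of Lemma~\ref{the:Gor} with the local classification at $2$ in Theorem~\ref{the:oddorder}. Let $\mathcal{O}$ be any order and write $\mathcal{O}=\mathbb{Z}+m\operatorname{Gor}(\mathcal{O})$ with $m=f(\mathcal{O})\geq1$. The first step is to show that $\mathcal{O}$ is odd if and only if $m$ is odd and $\operatorname{Gor}(\mathcal{O})$ is odd. For every $\alpha\in\operatorname{Gor}(\mathcal{O})$ and $n\in\mathbb{Z}$ we have $\tr(n+m\alpha)=2n+m\tr(\alpha)$. If $m$ is even, every such trace is even, so $\mathcal{O}$ is even. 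If $m$ is odd, the parity of $\tr(n+m\alpha)$ equals the parity of $\tr(\alpha)$. Hence $\mathcal{O}$ is odd exactly when $\operatorname{Gor}(\mathcal{O})$ contains an element of odd trace.

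The second step sorts each odd Gorenstein order $\mathcal{O}'=\operatorname{Gor}(\mathcal{O})$ into exactly one of the sets $\operatorname{Typ}_i(2^n)$, using Theorem~\ref{the:oddorder}. If $Q$ ramifies at $2$, then $\mathcal{O}'_2\simeq\mathcal{O}^{(2u+1)}_2$ for some $u\geq0$, so $[\mathcal{O}']\in\operatorname{Typ}_1(2^{2u+1})$. If $Q$ splits at $2$, then $\mathcal{O}'_2$ is isomorphic either to $\mathcal{O}^{(2u+2)}_2$ or to an Eichler order of level $2^v$. In the first case $[\mathcal{O}']\in\operatorname{Typ}_1(2^{2u+2})$, and in the second $[\mathcal{O}']\in\operatorname{Typ}_2(2^v)$. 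Conversely, the ``if'' direction of Theorem~\ref{the:oddorder} says every order in these sets is odd Gorenstein. So $\mathbb{Z}+m\operatorname{Typ}_i(2^n)$ with $m$ odd consists of odd orders by the first step, which gives the inclusion ``$\supseteq$''. The inclusion ``$\subseteq$'' follows by writing $[\mathcal{O}]=[\mathbb{Z}+m\mathcal{O}']$.

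The main obstacle is disjointness of the pieces. Across different $m$, Lemma~\ref{the:Gor} applies: $f(\mathcal{O})$ and the class of $\operatorname{Gor}(\mathcal{O})$ are isomorphism invariants. So $\mathbb{Z}+m\mathcal{O}'\simeq\mathbb{Z}+m'\mathcal{O}''$ forces $m=m'$ and $\mathcal{O}'\simeq\mathcal{O}''$, and it suffices to show the sets $\operatorname{Typ}_i(2^n)$ are pairwise disjoint. The completions $\mathcal{O}^{(n)}_2$ and the Eichler orders of level $2^v$ must be pairwise non-isomorphic over $\mathbb{Z}_2$, except for the identification $\operatorname{Typ}_1(1)=\operatorname{Typ}_2(1)$. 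I would use the invariants computed in the proof of Theorem~\ref{the:oddorder}. First, the reduced discriminant separates the levels: $2^{2u+1}$, $2^{2u+2}$ and $2^v$ respectively, so different exponents $n$ are distinguished. For the same exponent $n\geq1$ in the split case, $\mathcal{O}^{(n)}_2$ corresponds to $3\cdot2^{n}x^2-(y^2+z^2+yz)$ and the Eichler order to $-2^nx^2-yz$. Proposition~\ref{lem:oddtqf} shows these are inequivalent over $\mathbb{Z}_2$ when $n\geq1$, and this persists after scaling by units. By the local version of Lemma~\ref{Lemma3.3}, the orders are then non-isomorphic; equivalently, one is residually inert and the other residually split. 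Isomorphic global orders have isomorphic completions, so the sets are disjoint. Finally, ramified and split orders live in different algebras, which is why the two decompositions are stated separately.
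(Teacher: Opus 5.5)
Your proposal is correct and follows the paper's proof: write $\mathcal{O}=\mathbb{Z}+m\operatorname{Gor}(\mathcal{O})$, note that $\mathcal{O}$ is odd exactly when both $m$ and $\operatorname{Gor}(\mathcal{O})$ are odd, and then sort $\operatorname{Gor}(\mathcal{O})$ into the sets $\operatorname{Typ}_i(2^n)$ using Theorem~\ref{the:oddorder}. The paper leaves disjointness of the pieces implicit; your explicit check fills this in correctly, using uniqueness in Lemma~\ref{the:Gor}, the local reduced discriminants $2^{2u+1}$, $2^{2u+2}$, $2^v$, and the mod-$2$ inequivalence (stable under unit scaling) from Proposition~\ref{lem:oddtqf}.
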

\begin{proof}
By Lemma~\ref{the:Gor}, for any order $\mathcal{O}\subset Q$, there exists a unique integer $m\geq 1$ such that
\begin{equation*}
\mathcal{O}=\mathbb{Z}+m\operatorname{Gor}(\mathcal{O}).
\end{equation*}
Clearly, $\mathcal{O}$ is odd if and only if $2\nmid m$ and $\operatorname{Gor}(\mathcal{O})$ is odd. Then the theorem follows from  Theorem~\ref{the:oddorder}. 
\end{proof}

\subsection{The even part of odd orders}
For an order $\mathcal{O}\subset Q$, set 
\begin{equation}\label{O^+}
\mathcal{O}^+\!\!:\,=\{\alpha\in\mathcal{O}:\,\tr(\alpha)\in2\mathbb{Z}\}. 
\end{equation}
Clearly, $\mathcal{O}^+=\mathcal{O}$ if $\mathcal{O}$ is even. For odd $\mathcal{O}$, $\mathcal{O}^+$ is clearly a lattice, but it is not necessarily an order. More accurately, we have the following theorem.

\begin{theorem}\label{O+4}
Let $\mathcal{O}\subset Q$ be an odd order. Then $\mathcal{O}^+$ is an order if and only if $\mathcal{O}_2\not\simeq M_2(\mathbb{Z}_2)$.
\end{theorem}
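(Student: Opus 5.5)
Since $\tr:\mathcal{O}\to\mathbb{Z}$ is additive and $\mathcal{O}$ is odd, reduction of the trace modulo $2$ is a surjective homomorphism $\mathcal{O}\to\mathbb{Z}/2\mathbb{Z}$. So $\mathcal{O}^+$ is its kernel: a full sublattice of index $2$ containing $\mathbb{Z}$. Hence $\mathcal{O}^+$ is an order if and only if it is closed under multiplication. I will reduce this to a single local check at $2$ and then run through the local models of Theorem~\ref{the:oddorder}.

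\emph{Step 1: reduction to $p=2$.} For odd $p$ the index-$2$ condition is invisible after tensoring with $\mathbb{Z}_p$, so $(\mathcal{O}^+)_p=\mathcal{O}_p$, which is closed. Multiplicative closure of a lattice is a local property, so $\mathcal{O}^+$ is an order if and only if $(\mathcal{O}^+)_2=\{\alpha\in\mathcal{O}_2:\tr(\alpha)\in2\mathbb{Z}_2\}$ is closed under products.

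\emph{Step 2: passage to a local model.} By Lemma~\ref{the:Gor}, $\mathcal{O}=\mathbb{Z}+m\operatorname{Gor}(\mathcal{O})$, and $m$ is odd because $\mathcal{O}$ is odd. Therefore $\mathcal{O}_2=\operatorname{Gor}(\mathcal{O})_2$, and this order is odd and Gorenstein. Theorem~\ref{the:oddorder} then places $\mathcal{O}_2$, up to isomorphism (which preserves trace and hence the subset $(\cdot)^+$), in one of three families: the ramified model \eqref{gongshi4.1}, the split residually inert model \eqref{gongshi4.2}, or an Eichler order \eqref{gongshi4.3} of level $2^v$. The case $M_2(\mathbb{Z}_2)$ is exactly the Eichler case with $v=0$.

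\emph{Step 3: check each model directly.} A useful identity here is $\tr(\alpha\beta)+\tr(\alpha\bar\beta)=\tr(\alpha)\tr(\beta)$, so closure amounts to the parity of products of traces.
\begin{itemize}
\item For $M_2(\mathbb{Z}_2)$: the trace-zero elements $E_{12},E_{21}$ lie in $\mathcal{O}^+$, but $E_{12}E_{21}=E_{11}$ has trace $1$. So $\mathcal{O}^+$ is not an order.
\item For the Eichler order with $v\ge1$: with $c,c'\in2^v\mathbb{Z}_2$ we have $\tr(XX')=aa'+bc'+cb'+dd'\equiv aa'+dd'\pmod 2$. When $a\equiv d$ and $a'\equiv d'\pmod 2$ this is $\equiv 2aa'\equiv0$, so $\mathcal{O}^+$ is closed.
\item For the split inert model $\mathbb{Z}_2[T]+2^{u+1}M_2(\mathbb{Z}_2)$: the trace of $a+bT+2^{u+1}X$ is $\equiv b\pmod2$. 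Hence $\mathcal{O}^+_2=\mathbb{Z}_2+2\mathbb{Z}_2T+2^{u+1}M_2(\mathbb{Z}_2)$, and closure is immediate from $(2T)^2\in4\mathbb{Z}_2[T]$ and $2T\cdot2^{u+1}X\in2^{u+1}M_2(\mathbb{Z}_2)$.
\item For the ramified model: the trace of the matrix is $\tr(\alpha)$ in $\mathbb{Z}_2[\omega]$, so $\mathcal{O}^+_2$ consists of the matrices with $\alpha\in\mathbb{Z}_2+2\omega\mathbb{Z}_2$ and $\beta$ arbitrary. The diagonal entry of a product is $\alpha\alpha'+2^{2u+1}\beta\bar\beta'$, and both terms have even trace. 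The off-diagonal entries stay in the correct $2^u$, $2^{u+1}$ multiples. So $\mathcal{O}^+$ is closed here as well.
\end{itemize}

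The only step needing care is Step 2. There I must justify that $(\cdot)^+$ commutes with localization and with isomorphism, and that the non-Gorenstein case reduces to the Gorenstein one because $m$ is odd. The model computations themselves are routine.
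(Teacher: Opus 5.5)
Your proposal is correct and follows essentially the same route as the paper. It reduces to the prime $2$, uses that the Gorenstein conductor is odd so that $\mathcal{O}_2=\operatorname{Gor}(\mathcal{O})_2$, and checks the local models of Theorem~\ref{the:oddorder} case by case, with $E_{12},E_{21}$ showing failure for $M_2(\mathbb{Z}_2)$. The only difference is that the paper handles the ramified case by embedding $\mathcal{O}_2$ in the unique maximal order of $Q_2$ rather than using the $2^u$-model, so it needs the classification only in the split case; your aside about $\tr(\alpha\beta)+\tr(\alpha\bar\beta)$ is harmless but never used.
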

\begin{proof}
Since $2\mathcal{O}\subseteq\mathcal{O}^+\subseteq\mathcal{O}$
and $\mathbb{Z}\subseteq\mathcal{O}^+$, the set
$\mathcal{O}^+$ is a full $\mathbb{Z}$-lattice containing
$\mathbb{Z}$. Thus it remains to check closure under
multiplication, which can be done locally. For every odd prime
$p$, we have
$\mathcal{O}^+_p=\mathcal{O}_p$, since
$2\mathbb{Z}_p=\mathbb{Z}_p$. It therefore suffices to consider
\[
\mathcal{O}^+_2
=
\{X\in\mathcal{O}_2:\tr(X)\in2\mathbb{Z}_2\}.
\]

First suppose that $Q$ ramifies at $2$. Put
\[
R_2=\mathbb{Z}_2+\mathbb{Z}_2\omega,
\quad
\omega=\frac{1+\sqrt{-3}}{2}.
\]
The order $\mathcal{O}_2$ is contained in the maximal order
\[
\left\{
\begin{pmatrix}
\alpha&\beta\\
2\bar\beta&\bar\alpha
\end{pmatrix}
:\alpha,\beta\in R_2
\right\}.
\]
Write $\tr(\alpha)=\alpha+\bar\alpha$ for the
field trace on $R_2$. Since
$\tr(a+b\omega)=2a+b$, we have
\[
\{\alpha\in R_2:\tr(\alpha)\in2\mathbb{Z}_2\}
=
\mathbb{Z}_2+2R_2,
\]
which is a subring of $R_2$.
Let
\[
X_i=
\begin{pmatrix}
\alpha_i&\beta_i\\
2\bar\beta_i&\bar\alpha_i
\end{pmatrix}
\in\mathcal{O}^+_2
\quad(i=1,2).
\]
Then $\alpha_1,\alpha_2\in\mathbb{Z}_2+2R_2$, and hence
$\tr(\alpha_1\alpha_2)\in2\mathbb{Z}_2$.
Direct multiplication gives
\[
\tr(X_1X_2)
=
\tr(\alpha_1\alpha_2)
+
2\tr(\beta_1\bar\beta_2)
\in2\mathbb{Z}_2.
\]
Since $X_1X_2\in\mathcal{O}_2$, it follows that
$X_1X_2\in\mathcal{O}^+_2$.

Now suppose that $Q$ splits at $2$. To apply the preceding
local classification to an arbitrary odd order, write
\[
\mathcal{O}
=
\mathbb{Z}+m\operatorname{Gor}(\mathcal{O}).
\]
Since $\mathcal{O}$ is odd, $m$ is odd, so
\[
\mathcal{O}_2=\operatorname{Gor}(\mathcal{O})_2.
\]
Moreover, $\operatorname{Gor}(\mathcal{O})$ is odd because it
contains $\mathcal{O}$. Thus the preceding classification of
odd Gorenstein orders of Theorem ~\ref{the:oddorder} shows that, after conjugation,
$\mathcal{O}_2$ has one of the following two forms:
\[
\mathbb{Z}_2[T]+2^{u+1}M_2(\mathbb{Z}_2),
\quad
T=\begin{pmatrix}1&1\\1&0\end{pmatrix},
\quad u\geq0,
\]
or
\[
\begin{pmatrix}
\mathbb{Z}_2&\mathbb{Z}_2\\
2^k\mathbb{Z}_2&\mathbb{Z}_2
\end{pmatrix},
\quad k\geq0.
\]

In the residually inert case, write
\[
X=aI+bT+2^{u+1}Y,
\quad
a,b\in\mathbb{Z}_2,\quad Y\in M_2(\mathbb{Z}_2).
\]
As $\tr(T)=1$, we have
\[
\tr(X)=2a+b+2^{u+1}\tr(Y).
\]
Consequently,
\[
\mathcal{O}^+_2
=
\mathbb{Z}_2I+2\mathbb{Z}_2T
+2^{u+1}M_2(\mathbb{Z}_2).
\]
The identity $T^2=T+I$ shows that
$\mathbb{Z}_2I+2\mathbb{Z}_2T$ is a subring of
$M_2(\mathbb{Z}_2)$. Since
$2^{u+1}M_2(\mathbb{Z}_2)$ is a two-sided ideal of
$M_2(\mathbb{Z}_2)$, the displayed expression for
$\mathcal{O}^+_2$ proves that $\mathcal{O}^+_2$ is also a ring.

In the Eichler case with $k\geq1$, let
\[
X_i=
\begin{pmatrix}
a_i&b_i\\
2^kc_i&d_i
\end{pmatrix}
\in\mathcal{O}^+_2
\quad(i=1,2).
\]
The trace condition gives $d_i\equiv a_i\pmod2$. Hence
\[
\begin{aligned}
\tr(X_1X_2)
&=
a_1a_2+d_1d_2
+2^k(b_1c_2+c_1b_2)\\
&\equiv a_1a_2+a_1a_2
\equiv0\!\!\!\pmod2.
\end{aligned}
\]
Again $X_1X_2\in\mathcal{O}_2$, so
$X_1X_2\in\mathcal{O}^+_2$.
This proves sufficiency.

For necessity, the only remaining local case is
$\mathcal{O}_2\simeq M_2(\mathbb{Z}_2)$.
In this case, the matrices
\[
U=\begin{pmatrix}0&1\\0&0\end{pmatrix},
\quad
V=\begin{pmatrix}0&0\\1&0\end{pmatrix}
\]
have trace zero, whereas
\[
UV=\begin{pmatrix}1&0\\0&0\end{pmatrix}
\]
has trace one. Thus $\mathcal{O}^+_2$ is not closed under
multiplication, and $\mathcal{O}^+$ is not an order.
The proof is complete.
\end{proof}

\subsection{Gross suborders associated to odd orders}
Let $S=\mathbb{Z}+2\mathcal{O}$ be the Gross suborder associated to an odd Gorenstein order $\mathcal{O}$.
It is clear that $S$ is not a Gorenstein order. To have a better understanding for the essence and importance of $S$, we first give a definition relative to $\operatorname{Typ}_i(2^{n})$.
For $i=1,2$ and $n\geq1$, in view of Theorem~\ref{O+4}, we can define the set $\operatorname{Typ}^+_i(2^{n})$ by
\begin{equation}\label{O+1}
\operatorname{Typ}^+_1(2^{n})\!:\,=\left\{[\mathcal{O}]:\,\mathcal{O}\subset Q,\,\, \mathcal{O}_2\simeq (\mathcal{O}^{(n)}_2)^+\,\, \text{and $\mathcal{O}_p$ is Gorenstein for $p\neq2$}\right\}.
\end{equation}
and
\begin{equation}\label{O+2}
\operatorname{Typ}^+_2(2^{n})\!:\,=\left\{[\mathcal{O}]:\,\mathcal{O}\subset Q,\,\, \mathcal{O}_2\simeq
\begin{pmatrix}
 \mathbb{Z}_2 & \mathbb{Z}_2 \\
 2^n\mathbb{Z}_2 & \mathbb{Z}_2
\end{pmatrix}^+\,\, \text{and $\mathcal{O}_p$ is Gorenstein for $p\neq2$}
\right\}.
\end{equation}

We will prove in Theorem ~\ref{the:M+} a moment later that $\operatorname{Typ}^+_i(2^{n})$ defined by~\eqref{O+1} and~\eqref{O+2} have a more direct and natural formulation as given by \eqref{O+} below, although at the first sight the latter seems just a subset of the former. 
\begin{equation}\label{O+}
\operatorname{Typ}^+_i(2^{n})=\left\{[\mathcal{O}^+]:\,[\mathcal{O}]\in \operatorname{Typ}_i(2^{n}) \right\}.
\end{equation}

Before this we give the following theorem, which establishes a one-to-one correspondence from the set $\operatorname{Typ}_i(2^n)$ of isomorphism classes of odd Gorenstein orders to the set $\operatorname{Typ}^+_i(2^{n+2})$ of isomorphism classes of orders which are neither Gorenstein nor odd. 
This perhaps can be viewed, to some extent, among others as an explanation of the reason why the Gross suborders $S$ are introduced.

\begin{theorem}\label{the:sigma2}
Let $\mathcal{O}$ denote odd Gorenstein orders. Then the map
\begin{equation}\label{sigma2}
\sigma_2: \mathcal{O} \mapsto S=\mathbb{Z}+2\mathcal{O}
\end{equation}
defined as in ~\eqref{gongshi3.6} canonically induces a  bijection from the set $\operatorname{Typ}_i(2^n)$ to the set $\operatorname{Typ}^+_i(2^{n+2})$ for $n\geq0$. 
\end{theorem}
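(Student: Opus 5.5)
The plan is to reduce everything to a local statement at $2$. The global bookkeeping comes from Lemma~\ref{the:Gor} together with the fact that $\sigma_2$ is bijective on isomorphism classes. The real content is the local identification $(\mathbb Z_2+2\mathcal O_2)\simeq (\mathcal O'_2)^+$, where $\mathcal O'_2$ is the model of local level $2^{n+2}$ of the same type.

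\textbf{Step 1 (well-definedness).} Let $[\mathcal O]\in\operatorname{Typ}_i(2^n)$. For odd $p$ we have $S_p=\mathbb Z_p+2\mathcal O_p=\mathcal O_p$, which is Gorenstein. At $p=2$ we must show that $\mathbb Z_2+2\mathcal O_2$ is isomorphic to $(\mathcal O^{(n+2)}_2)^+$ for $i=1$, or to the $+$-part of the Eichler order of level $2^{n+2}$ for $i=2$. I would check this directly on the explicit models of Theorem~\ref{the:oddorder}.

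In the split residually inert case, $\mathcal O_2=\mathbb Z_2[T]+2^{u+1}M_2(\mathbb Z_2)$. Then
\[
\mathbb Z_2+2\mathcal O_2=\mathbb Z_2+2\mathbb Z_2T+2^{u+2}M_2(\mathbb Z_2),
\]
and by the proof of Theorem~\ref{O+4} this is exactly $(\mathbb Z_2[T]+2^{u+2}M_2(\mathbb Z_2))^+=(\mathcal O^{(2u+4)}_2)^+$.

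The ramified case is the same computation inside the maximal order $\{(\alpha,\beta)\}$. With $\mathcal O^{(2u+1)}_2=\{\alpha\in R_2,\ \beta\in 2^uR_2\}$ we get
\[
\mathbb Z_2+2\mathcal O_2=\{\alpha\in\mathbb Z_2+2R_2,\ \beta\in 2^{u+1}R_2\},
\]
which is the set of trace-even elements of $\mathcal O^{(2u+3)}_2$.

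In the Eichler case, $\mathbb Z_2+2\begin{pmatrix}\mathbb Z_2&\mathbb Z_2\\2^n\mathbb Z_2&\mathbb Z_2\end{pmatrix}$ consists of matrices $\begin{pmatrix}a&2b\\2^{n+1}c&d\end{pmatrix}$ with $a\equiv d\pmod 2$. Conjugating by $\operatorname{diag}(1,2)$ sends this to $\begin{pmatrix}a&b\\2^{n+2}c&d\end{pmatrix}$ with $a\equiv d \pmod 2$, which is precisely the set of trace-even elements of the Eichler order of level $2^{n+2}$.

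Hence $[S]\in\operatorname{Typ}^+_i(2^{n+2})$. This also covers $n=0$, since $\operatorname{Typ}_1(1)=\operatorname{Typ}_2(1)$ and both local computations give the same answer at $u=-1$ and $n=0$, namely $\mathbb Z_2+2M_2(\mathbb Z_2)$.

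\textbf{Step 2 (injectivity).} By Lemma~\ref{the:Gor}, $\mathbb Z+2\mathcal O\simeq\mathbb Z+2\mathcal O'$ with $\mathcal O,\mathcal O'$ Gorenstein forces $\mathcal O\simeq\mathcal O'$. This is the stated bijectivity of $\sigma_n$ in \eqref{gongshi3.6}.

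\textbf{Step 3 (surjectivity), the main obstacle.} Let $[\mathcal P]\in\operatorname{Typ}^+_i(2^{n+2})$. Write $\mathcal P=\mathbb Z+m\operatorname{Gor}(\mathcal P)$. The goal is to show $m=2$ and that $\operatorname{Gor}(\mathcal P)$ is odd with the correct type at $2$.

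Since $\mathcal P_p$ is Gorenstein for odd $p$, $m$ is a power of $2$. The Gorenstein saturation is compatible with localization, and by Step 1 the local order $\mathcal P_2$ equals $\mathbb Z_2+2\mathcal O_2$ for the level-$2^n$ model $\mathcal O_2$. The local uniqueness in Lemma~\ref{the:Gor} then gives $m_2$-conductor $2$ and $\operatorname{Gor}(\mathcal P)_2\simeq\mathcal O_2$. Two facts need care here: $\mathcal O_2$ itself is Gorenstein, which follows from Theorem~\ref{the:oddorder}, and local uniqueness of the conductor holds over $\mathbb Z_2$, which is the same argument as Voight's Proposition 24.2.15 applied over the PID $\mathbb Z_2$.

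It follows that $\operatorname{Gor}(\mathcal P)$ is an odd Gorenstein order lying in $\operatorname{Typ}_i(2^n)$, and that $\mathcal P=\sigma_2(\operatorname{Gor}(\mathcal P))$.

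The delicate point is making sure the Gorenstein conductor is detected locally, i.e.\ that $\operatorname{Gor}(\mathcal P)_p=\operatorname{Gor}(\mathcal P_p)$. I would justify this through the ternary form: $f_{\mathcal P}$ has content $m$ (Theorem~24.2.10 of \cite{Voi21}), and content can be read off prime by prime.

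As a byproduct, this argument also yields \eqref{O+}.
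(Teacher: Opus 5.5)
Your proposal is correct and follows essentially the same route as the paper. The explicit computations on the local models of Theorem~\ref{the:oddorder} give $\mathbb{Z}_2+2\mathcal{O}_2\simeq(\mathcal{O}^{(n+2)}_2)^+$ in all three families, and the inverse is $[\mathcal{P}]\mapsto[\operatorname{Gor}(\mathcal{P})]$ by the uniqueness in Lemma~\ref{the:Gor}; you usefully spell out, via localization and local uniqueness over $\mathbb{Z}_2$, the paper's one-line claim that the Gorenstein conductor is $2$. Only your closing aside overreaches slightly: deducing \eqref{O+} also requires building a global overorder $\mathcal{O}\supseteq\mathcal{P}$ with $\mathcal{O}_2\simeq\mathcal{O}^{(n+2)}_2$, $\mathcal{O}_2^+=\mathcal{P}_2$ and $\mathcal{O}_p=\mathcal{P}_p$ for odd $p$, which the paper obtains separately through Theorem~\ref{the:M+}.
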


\begin{proof}
We first prove that for any odd Gorenstein order $\mathcal{O}$ such that $\mathcal{O}\in \operatorname{Typ}_i(2^n)$ one has 
\begin{equation}\label{gongshi4.10}
S=\mathbb{Z}+2\mathcal{O}\in \operatorname{Typ}^+_i(2^{n+2}).
\end{equation}
Here and in the sequel, we sometimes use $\mathcal{O}\in \operatorname{Typ}_i(2^n)$ to stand for $[\mathcal{O}]\in \operatorname{Typ}_i(2^n)$ for ease of notations; this should not cause confusions from the context. 
We do this by considering the localization $\mathcal{O}_2$ of $\mathcal{O}$ at $2$. In fact, by Theorem~\ref{the:oddorder} we may compute isomorphically as follows. For any nonnegative integer $u\geq 0$, we have by direct computations, 
\begin{align*}
S_2&\simeq\mathbb{Z}_2+2\mathcal{O}^{(2u+1)}_2 \\
&=\left\{\begin{pmatrix}
\alpha & 2^{u+1}\beta\\
2^{u+2}\bar{\beta} & \bar{\alpha}
\end{pmatrix}
: \alpha\in\mathbb{Z}_2+\sqrt{-3}\mathbb{Z}_2,\,\,\beta\in\mathbb{Z}_2+\frac{1+\sqrt{-3}}{2}\mathbb{Z}_2\,\right\} \\
&=\left\{\begin{pmatrix}
\alpha & 2^{u+1}\beta\\
2^{u+2}\bar{\beta} & \bar{\alpha}
\end{pmatrix}
: \alpha,\,\beta\in\mathbb{Z}_2+\frac{1+\sqrt{-3}}{2}\mathbb{Z}_2,\,\,2\mid\tr(\alpha)\,\right\} \\ 
&=(\mathcal{O}^{(2u+3)}_2)^+ ,
\end{align*}
and
\begin{align*}
S_2&\simeq\mathbb{Z}_2+2\mathcal{O}^{(2u+2)}_2 \\
&=\left\{ M\in M_2(\mathbb{Z}_2):M\equiv
\begin{pmatrix}
a+b & b\\
b & a
\end{pmatrix}\!\!\!
\pmod{2^{u+2}},\,\,2\mid b\right\} \\
&=(\mathcal{O}^{(2u+4)}_2)^+.
\end{align*}
This implies what we need for \eqref{gongshi4.10} when $i=1$. 
For $i=2$, direct computation yields, for nonnegative integer $v\geq 0$, 
\begin{align*}
S_2&\simeq\mathbb{Z}_2+2\mathcal{O}^{(v)}_2 \\
&=\left\{\begin{pmatrix}
a & 2b \\
2^{v+1}c  & d
\end{pmatrix}:\,\,a,b,c,d\in\mathbb{Z}_2,\,\,a+d\in2\mathbb{Z}_2\right\} \\
&\simeq 
\left\{\begin{pmatrix}
a & b \\
2^{v+2}c  & d
\end{pmatrix}:\,\,a,b,c,d\in\mathbb{Z}_2,\,\,a+d\in2\mathbb{Z}_2\right\} \\
&=(\mathcal{O}^{(v+2)}_2)^+.
\end{align*}
This again proves \eqref{gongshi4.10}. 
Conversely, let $[\mathcal{O}']\in\operatorname{Typ}_i^+(2^{n+2})$.
The local descriptions above show that the Gorenstein conductor of
$\mathcal{O}'$ is $2$, and that
$\operatorname{Gor}(\mathcal{O}')\in\operatorname{Typ}_i(2^n)$.
By the uniqueness in Lemma~\ref{the:Gor},
\[
 \mathcal{O}'=\mathbb Z+2\operatorname{Gor}(\mathcal{O}').
\]
Thus $[\mathcal{O}']\mapsto[\operatorname{Gor}(\mathcal{O}')]$
is the inverse of $\sigma_2$.
The proof of the theorem is complete.  
\end{proof}

\begin{cor}\label{cor:O^+isGor}
Let $\mathcal{O}\in\operatorname{Typ}^+_i(2^n) $ be an order. Then $\mathcal{O}$ is Gorenstein if and only if $n=1$.
\end{cor}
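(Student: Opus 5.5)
Since the Gorenstein property is local, and every $\mathcal{O}\in\operatorname{Typ}^+_i(2^n)$ is Gorenstein at all odd primes by the definitions \eqref{O+1} and \eqref{O+2}, everything reduces to deciding whether the $\mathbb{Z}_2$-order $\mathcal{O}_2$ is Gorenstein. By Lemma~\ref{the:Gor} (applied locally), this means deciding whether its Gorenstein conductor is $1$. That is, I must decide whether $\mathcal{O}_2$ can be written as $\mathbb{Z}_2+2\Lambda$ for some $\mathbb{Z}_2$-order $\Lambda$.

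\emph{Case $n\geq2$.} I will reuse the explicit local computations in the proof of Theorem~\ref{the:sigma2}. They show that
\[
(\mathcal{O}^{(n)}_2)^+=\mathbb{Z}_2+2\mathcal{O}^{(n-2)}_2
\quad\text{and}\quad
\begin{pmatrix}\mathbb{Z}_2&\mathbb{Z}_2\\2^n\mathbb{Z}_2&\mathbb{Z}_2\end{pmatrix}^+
\simeq\mathbb{Z}_2+2\begin{pmatrix}\mathbb{Z}_2&\mathbb{Z}_2\\2^{n-2}\mathbb{Z}_2&\mathbb{Z}_2\end{pmatrix}.
\]
Here $\mathcal{O}^{(n-2)}_2$ is Gorenstein; for $n=2$ it is $M_2(\mathbb{Z}_2)$, covered by the remark $\operatorname{Typ}_1(1)=\operatorname{Typ}_2(1)$. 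Hence $\mathcal{O}_2$ has Gorenstein conductor $2$. By the uniqueness part of Lemma~\ref{the:Gor}, $\mathcal{O}$ is therefore not Gorenstein. This is exactly the inverse direction already used in that proof.

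\emph{Case $n=1$.} The two possibilities are the following.
\begin{enumerate}
\item $i=1$: $Q$ is ramified at $2$ and $\mathcal{O}_2\simeq(\mathcal{O}^{(1)}_2)^+$. Here $\mathcal{O}^{(1)}_2$ is the maximal order, of reduced discriminant $2$.
\item $i=2$: $\mathcal{O}_2$ is the even part of the Eichler order of level $2$, of reduced discriminant $2$.
\end{enumerate}
In both cases $\mathcal{O}_2^+$ has index $2$ in the ambient order. Since $\operatorname{discrd}$ scales by the index, $v_2(\operatorname{discrd}\mathcal{O}_2)=2$.

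On the other hand, suppose $\mathcal{O}_2=\mathbb{Z}_2+2\Lambda$ for an order $\Lambda$. Then $[\Lambda:\mathbb{Z}_2+2\Lambda]=8$, so $v_2(\operatorname{discrd}\mathcal{O}_2)=3+v_2(\operatorname{discrd}\Lambda)\geq3$. This contradicts the value $2$ just computed. Hence the Gorenstein conductor is $1$, and $\mathcal{O}$ is Gorenstein.

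The only delicate points are the discriminant scaling (reduced discriminant multiplies by the index of the sublattice) and the verification that the index of $\mathcal{O}_2^+$ in the ambient order is exactly $2$. The latter holds because $\tr$ modulo $2$ is a surjective additive map on an odd order. As an alternative for $n=1$, I could compute a basis of $\mathcal{O}^+_2$ and its dual as in Theorem~\ref{the:oddorder}, and check directly that the local ternary form is primitive, using \cite[Theorem~24.2.10]{Voi21}. I expect the discriminant argument to suffice.
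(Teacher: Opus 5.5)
Your proof is correct. For $n\geq 2$ it is the same as the paper's argument: the local computations in Theorem~\ref{the:sigma2} give $\mathcal{O}_2=\mathbb{Z}_2+2\Lambda$ with $\Lambda$ Gorenstein, so $\mathcal{O}_2$ is not Gorenstein.

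For $n=1$ you take a different route, and the comparison is as follows.

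\textbf{The paper's route.} It uses the evenness of $\mathcal{O}_2$ directly. Every product $xy$ with $x,y\in\mathcal{O}_2$ lies in $\mathcal{O}_2$ and so has even trace, which gives $\tfrac12\mathcal{O}_2\subseteq\mathcal{O}_2^\sharp$. Since $\operatorname{discrd}(\mathcal{O}_2)=4$, both lattices have index $16$ over $\mathcal{O}_2$. Hence $\mathcal{O}_2^\sharp=\tfrac12\mathcal{O}_2$, which is principal and therefore invertible.

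\textbf{Your route.} You argue through the Gorenstein saturation. A non-Gorenstein $\mathbb{Z}_2$-order has the form $\mathbb{Z}_2+2\Lambda$, whose reduced discriminant is $8\operatorname{discrd}(\Lambda)$. This is incompatible with $v_2(\operatorname{discrd}\mathcal{O}_2)=2$. You also justify that valuation (index $2$ in an ambient order of reduced discriminant $2$), which the paper only asserts.

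\textbf{What each buys.} The paper's version works straight from the definition of Gorenstein, needs no appeal to Lemma~\ref{the:Gor}, and identifies the dual explicitly. Yours uses evenness only through the index computation. It in fact proves something more general: any order whose reduced discriminant has $2$-adic valuation at most $2$ is Gorenstein at $2$, since a cube-free discriminant forces Gorenstein conductor $1$.

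\textbf{One point to make explicit.} You use Lemma~\ref{the:Gor} over $\mathbb{Z}_2$, but it is stated over $\mathbb{Z}$. The local form follows by localizing $\mathcal{O}=\mathbb{Z}+m\operatorname{Gor}(\mathcal{O})$ at $2$, which gives $\mathcal{O}_2=\mathbb{Z}_2+2^{v_2(m)}\operatorname{Gor}(\mathcal{O})_2$. So if $\mathcal{O}_2$ is not Gorenstein, then $v_2(m)\geq 1$, and one may take $\Lambda=\mathbb{Z}_2+2^{v_2(m)-1}\operatorname{Gor}(\mathcal{O})_2$.
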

\begin{proof}
By definition, $\mathcal \mathcal{O}_p$ is Gorenstein for every odd prime
$p$, and $\mathcal \mathcal{O}_2$ is isomorphic to the even-trace suborder
of the corresponding odd local model.
Thus $\operatorname{tr}(\mathcal \mathcal{O})\subseteq2\mathbb Z$,
so $\mathcal \mathcal{O}$ is even.
If $n\geq2$,  by Theorem~\ref{the:sigma2} we see that $\mathcal{O}_2$ is not Gorenstein, hence $\mathcal{O}$ is not Gorenstein.
Conversely, suppose $n=1$.
For $x,y\in\mathcal \mathcal{O}_2$, the product $xy$ belongs to
$\mathcal \mathcal{O}_2$, and hence $\operatorname{tr}(xy)\in2\mathbb Z_2$.
It follows that
\[
 \frac12\mathcal \mathcal{O}_2\subseteq\mathcal \mathcal{O}_2^\sharp.
\]
The local order $\mathcal O_2$ has reduced
discriminant $4$, so
$[\mathcal O_2^\sharp:\mathcal O_2]=4^2$.
On the other hand,
\[
 [\mathcal \mathcal{O}_2^\sharp:\mathcal \mathcal{O}_2]=2^4
 =[\tfrac12\mathcal \mathcal{O}_2:\mathcal \mathcal{O}_2].
\]
Therefore
\[
 \mathcal \mathcal{O}_2^\sharp=\frac12\mathcal \mathcal{O}_2,
\]
which is an invertible fractional $\mathcal \mathcal{O}_2$-ideal.
Hence $\mathcal O_2$ is Gorenstein. Together with the
Gorenstein property at every odd prime, this proves that
$\mathcal O$ is Gorenstein.
\end{proof}

\begin{prop}\label{prop:Watsontrans}
Let $\mathcal{O}$ be an odd order, then $\lambda_4(f_{S^{0}})=f_{\mathcal{O}^{0}}$, where $\lambda_4$ is defined as in Section {\rm ~\ref{section3.3}}.
\end{prop}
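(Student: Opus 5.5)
The plan is to compute both lattices explicitly in the bases supplied by Lemma~\ref{prop:basis}, and then to identify the Watson sublattice $\Lambda_4(f_{S^0})$ by an index count. Since $\mathcal O$ is odd, Lemma~\ref{prop:basis} gives $\mathcal O=\mathbb Z+\mathbb Z\beta_1+\mathbb Z\beta_2+\mathbb Z\beta_3$ with $\tr(\beta_1)=\tr(\beta_2)=0$ and $\tr(\beta_3)=1$. By \eqref{gongshi3.8},
\[
\mathcal O^0=\mathbb Z\beta_1+\mathbb Z\beta_2+\mathbb Z w_0,\qquad w_0=2\beta_3-1.
\]
Next we describe $S^0$ for $S=\mathbb Z+2\mathcal O$. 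An element $n+2\alpha$ with $\alpha\in\mathcal O$ has trace zero exactly when $n=-\tr(\alpha)$. Hence
\[
S^0=\{2\alpha-\tr(\alpha):\alpha\in\mathcal O\}=\mathbb Z\,2\beta_1+\mathbb Z\,2\beta_2+\mathbb Z\,w_0 .
\]
In this basis $f_{S^0}(x,y,z)=f_{\mathcal O^0}(2x,2y,z)$. Also $2\mathcal O^0\subset S^0$, with index exactly $2$: the only obstruction is the parity of the $w_0$-coefficient.

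The main step is to show that, under the norm form, $\Lambda_4(f_{S^0})=2\mathcal O^0$. For a quadratic form the defining condition $f(\mathbf x+\mathbf z)\equiv f(\mathbf z)\pmod 4$ for all $\mathbf z$ is equivalent to two conditions: $\n(x)\equiv0\pmod 4$, and $\tr(x\bar w)\equiv0\pmod 4$ for all $w\in S^0$.

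First we check the inclusion $2\mathcal O^0\subseteq\Lambda_4$. Take $v\in\mathcal O^0$. Then $\n(2v)=4\n(v)$. For $w=2\alpha-\tr(\alpha)\in S^0$ we have
\[
\tr(2v\bar w)=2\bigl(2\tr(v\bar\alpha)-\tr(\alpha)\tr(v)\bigr)=4\tr(v\bar\alpha)\equiv0\pmod 4,
\]
where we used $\tr(v)=0$ and $\tr(v\bar\alpha)\in\mathbb Z$ because $v,\alpha\in\mathcal O$.

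For the reverse inclusion, recall that $\Lambda_4$ is a lattice with $2\mathcal O^0\subseteq\Lambda_4\subseteq S^0$. Since $[S^0:2\mathcal O^0]=2$, it suffices to exhibit one element of $S^0$ not in $\Lambda_4$. The element $w_0$ works, because $\n(w_0)=4\n(\beta_3)-2\tr(\beta_3)+1=4\n(\beta_3)-1$ is odd. This step, showing that $\Lambda_4$ is not all of $S^0$, is where oddness of $\mathcal O$ really enters; I expect it to be the only delicate point. The rest is routine once the index-$2$ observation is made.

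Finally, take $P$ to be the change of basis sending the basis $(\beta_1,\beta_2,w_0)$ of $\mathcal O^0$ to the basis $(2\beta_1,2\beta_2,2w_0)$ of $\Lambda_4$, expressed in $S^0$-coordinates. Then
\[
\lambda_4(f_{S^0})(\mathbf y)=\tfrac14\n(2v)=\n(v)=f_{\mathcal O^0}(\mathbf y),
\]
where $v\in\mathcal O^0$ has coordinates $\mathbf y$. By Lemma~\ref{Watsontrans}, $\lambda_4$ is well defined on equivalence classes. Hence the identity is independent of the choice of basis in Lemma~\ref{prop:basis}, and $\lambda_4(f_{S^0})=f_{\mathcal O^0}$ up to equivalence.
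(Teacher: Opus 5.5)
Your proof is correct and follows the paper's route. You use the same bases from Lemma~\ref{prop:basis} and the same computation $\lambda_4(f_{S^0})(x,y,z)=\tfrac14 f_{S^0}(x,y,2z)=f_{\mathcal O^0}(x,y,z)$. The one difference is that you justify $\Lambda_4(f_{S^0})=2\mathcal O^0$, via the index-$2$ argument and the oddness of $\n(2\beta_3-1)=4\n(\beta_3)-1$; the paper simply asserts this step ``by the definition of $\lambda_4$.''
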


\begin{proof}
Since $\mathcal{O}$ is odd, by Lemma \ref{prop:basis} we can write $\mathcal{O}=\mathbb{Z}+\mathbb{Z}\beta_1+\mathbb{Z}\beta_2+\mathbb{Z}\beta_3$ with $\tr(\beta_1)=\tr(\beta_2)=0,\,\tr(\beta_3)=1$. Hence $\mathcal{O}^0=\mathbb{Z}\beta_1+\mathbb{Z}\beta_2+\mathbb{Z}(2\beta_3-1)$, $S^0=\mathbb{Z}2\beta_1+\mathbb{Z}2\beta_2+\mathbb{Z}(2\beta_3-1)$, and by the definition formulas ~\eqref{gongshi3.7} and  ~\eqref{gongshi3.8} we have
\begin{align*}
&f_{S^{0}}=4\n(\beta_1)x^2+4\n(\beta_2)y^2+(4\n(\beta_3)-1)z^2+4\tr(\beta_2\overline{\beta_3})yz
+4\tr(\beta_1\overline{\beta_3})xz+4\tr(\beta_1\overline{\beta_2})xy,\\
&f_{\mathcal{O}^{0}}=\n(\beta_1)x^2+\n(\beta_2)y^2+(4\n(\beta_3)-1)z^2+2\tr(\beta_2\overline{\beta_3})yz
+2\tr(\beta_1\overline{\beta_3})xz+\tr(\beta_1\overline{\beta_2})xy. 
\end{align*}
Whence by the definition of $\lambda_4$ we get
\begin{align*}
&\quad \lambda_4(f_{S^{0}})(x,y,z)=\frac14\cdot f_{S^{0}}(x,y,2z)\\
&=\frac14\cdot \left(4\n(\beta_1)x^2+4\n(\beta_2)y^2+4(4\n(\beta_3)-1)z^2+8\tr(\beta_2\overline{\beta_3})yz
+8\tr(\beta_1\overline{\beta_3})xz+4\tr(\beta_1\overline{\beta_2})xy\right)\\
&=f_{\mathcal{O}^{0}}(x,y,z),
\end{align*}
as desired. 
\end{proof}

\begin{theorem}\label{lem:odd}
Let $\mathcal{O}$ be an odd Gorenstein order, $S=\mathbb{Z}+2\mathcal{O}$. Then we have
\begin{equation*}
m_{f_{\mathcal{O}}}\psi_{f_{\mathcal{O}}}=f_{S^0}.
\end{equation*} 
\end{theorem}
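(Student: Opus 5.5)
The plan is to compute both sides in the explicit even Clifford model, where they can be compared coefficient by coefficient. Put $f=f_{\mathcal O}=(a,b,c,r,s,t)$. By Lemma~\ref{Lemma3.3}, $\mathcal O\simeq C_0(f)=\mathbb Z+\mathbb Z e_1+\mathbb Z e_2+\mathbb Z e_3$, with multiplication table \eqref{gongshi3.2}. An isomorphism of orders preserves reduced norm and trace, so it carries $S=\mathbb Z+2\mathcal O$ onto $\mathbb Z+2C_0(f)$ and the lattice $S^0$ isometrically onto its counterpart. Moreover $f_{S^0}$ is simply the norm form of the lattice $S^0$, which is well defined up to $\mathrm{GL}_3(\mathbb Z)$-equivalence. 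Hence we may assume $\mathcal O=C_0(f)$. Because $\mathcal O$ is Gorenstein, $f$ is primitive, so $\widetilde m_f=1$ and, by \eqref{gongshi2.02},
\[
m_f\psi_f=(4bc-r^2,\,4ac-s^2,\,4ab-t^2,\,2st-4ar,\,2rt-4bs,\,2rs-4ct).
\]
It therefore suffices to show that $f_{S^0}$ has exactly these coefficients in a suitable basis.

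The first step is to describe $S^0$. An element $n+2\alpha$ of $S$, with $n\in\mathbb Z$ and $\alpha\in\mathcal O$, has trace $2n+2\tr(\alpha)$. It is therefore trace-zero exactly when $n=-\tr(\alpha)$, so
\[
S^0=\{2\alpha-\tr(\alpha):\alpha\in\mathcal O\}.
\]
The map $\alpha\mapsto 2\alpha-\tr(\alpha)$ kills $\mathbb Z$, so $S^0$ has the $\mathbb Z$-basis $2e_1-r,\ 2e_2-s,\ 2e_3-t$. For $\alpha=xe_1+ye_2+ze_3$ we have
\[
\n(2\alpha-\tr\alpha)=4\n(\alpha)-\tr(\alpha)^2,
\]
so $f_{S^0}(x,y,z)=4\n(xe_1+ye_2+ze_3)-(rx+sy+tz)^2$. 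This is the form ${\frak q}_{\mathcal O}$ from the introduction.

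Next I would expand this using \eqref{gongshi3.2}. The relations $e_1^2=re_1-bc$ and their analogues give $\tr(e_1)=r$, $\n(e_1)=bc$, $\tr(e_2)=s$, $\n(e_2)=ac$, $\tr(e_3)=t$ and $\n(e_3)=ab$. The diagonal coefficients are then $4bc-r^2$, $4ac-s^2$ and $4ab-t^2$. For the cross terms I would use $\tr(e_i\overline{e_j})=\tr(e_i)\tr(e_j)-\tr(e_ie_j)$. From $e_1e_2=c\,\overline{e_3}$ we get $\tr(e_1e_2)=ct$, hence $\tr(e_1\overline{e_2})=rs-ct$. The $xy$-coefficient is therefore $4(rs-ct)-2rs=2rs-4ct$. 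In the same way, $e_2e_3=a\overline{e_1}$ yields $yz$-coefficient $2st-4ar$, and $e_3e_1=b\overline{e_2}$ yields $xz$-coefficient $2rt-4bs$. These coincide with the displayed coefficients of $m_f\psi_f$, with the same placement of $yz$, $xz$ and $xy$, which would prove the claim.

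I expect the main obstacle to be bookkeeping rather than anything conceptual. One must check that the sign and orientation conventions in \eqref{gongshi3.2} (for instance $e_3e_1$ versus $e_1e_3$) give exactly these cross terms. Each cross term needs care, since $\tr(e_ie_j)=\tr(e_je_i)$ while $\overline{e_3}$ has trace $t$. One must also confirm that primitivity of $f_{\mathcal O}$, via the Gorenstein hypothesis and \cite[Theorem~24.2.10]{Voi21}, makes $\widetilde m_f=1$, so that no extra content factor appears. Note that the oddness of $\mathcal O$ is not actually needed for the identity itself. It only ensures that $S$ is the relevant lattice in Definition~\ref{def:tildef}.
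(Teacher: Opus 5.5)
Your proposal is correct and follows essentially the same route as the paper: realize $\mathcal O$ as $C_0(f_{\mathcal O})$, take the basis $2e_1-r,\,2e_2-s,\,2e_3-t$ of $S^0$, and read off the norm form from the multiplication table \eqref{gongshi3.2}. Your cross-term computations via $\tr(e_i\overline{e_j})=\tr(e_i)\tr(e_j)-\tr(e_ie_j)$ check out, and you correctly use primitivity of $f_{\mathcal O}$, which comes from the Gorenstein hypothesis, to identify the cofactor form with $m_{f_{\mathcal O}}\psi_{f_{\mathcal O}}$.
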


\begin{proof}
Let $\mathcal{O}=\mathbb{Z}+\mathbb{Z}e_1+\mathbb{Z}e_2+\mathbb{Z}e_3$, and the corresponding primitive quadratic form by even Clifford algebra construction is  
\begin{equation*}
f_{\mathcal{O}}=(a,b,c,r,s,t),
\end{equation*}
with
\begin{equation*}
\begin{cases}
& \!\!\!\! e_1^2=re_1-bc,\,\,e_2e_3=a\overline{e_1},\\
& \!\!\!\! e_2^2=se_2-ac,\,\,e_3e_1=b\overline{e_2},\\
& \!\!\!\! e_3^2=te_3-ab,\,\,e_1e_2=c\overline{e_3}.
\end{cases}
\end{equation*}
Then we have $\tr{(2e_1-r)}=\tr{(2e_2-s)}=\tr{(2e_3-t)}=0$. Hence 
\begin{equation*}
S^0=\mathbb{Z}(2e_1-r)+\mathbb{Z}(2e_2-s)+\mathbb{Z}(2e_3-t).
\end{equation*}
Also, simple computation gives, by definition, 
\begin{equation*}
\begin{aligned}
& \!\!\!\! \n(2e_1-r)=4bc-r^2,\,\,\,\,\tr((2e_2-s)\overline{(2e_3-t)})=2st-4ar,\\
& \!\!\!\! \n(2e_2-s)=4ac-s^2,\,\,\,\,\tr((2e_3-t)\overline{(2e_1-r)})=2rt-4bs,\\
& \!\!\!\! \n(2e_3-t)=4ab-t^2,\,\,\,\,\tr((2e_1-r)\overline{(2e_2-s)})=2rs-4ct.
\end{aligned}
\end{equation*}
Thus by ~\eqref{gongshi3.7}, and then by ~\eqref{gongshi2.01} and ~\eqref{gongshi2.02} we get 
\begin{equation*}
f_{S^0}=(4bc-r^2,4ac-s^2,4ab-t^2,2st-4ar,2rt-4bs,2rs-4ct)=m_{f_\mathcal{O}} \psi_{f_\mathcal{O}},
\end{equation*}
as stated. 
\end{proof}

\subsection{Chains of families of orders}
Let $\mathcal{O}$ denote odd Gorenstein orders. Define the map ${\mathcal M}_+$ by
\begin{equation}\label{gongshi3.16}
{\mathcal M}_+:\,\mathcal{O}\mapsto\mathcal{O}^+=\{\alpha\in\mathcal{O}:\,2\mid\tr(\alpha)\,\}.
\end{equation}
In this subsection we first prove that ${\mathcal M}_+$ may induce a canonical surjection from $\operatorname{Typ}_i(2^n)$ to $\operatorname{Typ}^+_i(2^n)$, and then use the properties of ${\mathcal M}_+$ and $\sigma_2$, which is defined by ~\eqref{sigma2}, to establish chains containing both the sequences of $\operatorname{Typ}_i(2^n)$ and $\operatorname{Typ}^+_i(2^n)$ as $n$ varies. 

\begin{theorem}\label{the:even}
Let $\mathcal{O}$ be an even Gorenstein order, then
\begin{equation*}
m_{f_{\mathcal{O}}}\psi_{f_{\mathcal{O}}}=4f_{\mathcal{O}^0}. 
\end{equation*} 
\end{theorem}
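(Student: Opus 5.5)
The plan is to mirror the proof of Theorem~\ref{lem:odd} almost verbatim, with the extra observation that evenness lets us halve the elements $2e_i-\tr(e_i)$ inside $\mathcal O$. Via Lemma~\ref{Lemma3.3} and \eqref{gongshi3.2}, I will write $\mathcal{O}=\mathbb{Z}+\mathbb{Z}e_1+\mathbb{Z}e_2+\mathbb{Z}e_3$ with $f_{\mathcal{O}}=(a,b,c,r,s,t)$ and the multiplication rules
$$
e_1^2=re_1-bc,\quad e_2e_3=a\overline{e_1},\quad\ldots
$$
These give $\tr(e_1)=r$, $\tr(e_2)=s$ and $\tr(e_3)=t$. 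Since $\mathcal O$ is even, $r$, $s$ and $t$ are all even. Since $\mathcal O$ is Gorenstein, $f_{\mathcal O}$ is primitive, so $\widetilde m_{f_{\mathcal O}}=1$. Hence \eqref{gongshi2.02} reads
$$
m_{f_{\mathcal O}}\psi_{f_{\mathcal O}}=(4bc-r^2,\ 4ac-s^2,\ 4ab-t^2,\ 2st-4ar,\ 2rt-4bs,\ 2rs-4ct).
$$

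Next I will identify the trace-zero sublattice. The general element $x_0+x_1e_1+x_2e_2+x_3e_3$ has trace $2x_0+x_1r+x_2s+x_3t$. Because $r,s,t$ are even, this trace vanishes exactly when $x_0=-(x_1r+x_2s+x_3t)/2$, and this $x_0$ is automatically an integer. Therefore
$$
\mathcal{O}^0=\mathbb{Z}\bigl(e_1-\tfrac r2\bigr)+\mathbb{Z}\bigl(e_2-\tfrac s2\bigr)+\mathbb{Z}\bigl(e_3-\tfrac t2\bigr),
$$
and this is a basis of the type used in Lemma~\ref{prop:basis} and \eqref{gongshi3.7}.

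Finally, I will evaluate $f_{\mathcal O^0}$ in this basis using the computations already carried out in the proof of Theorem~\ref{lem:odd}. There we found $\n(2e_1-r)=4bc-r^2$ and $\tr\bigl((2e_2-s)\overline{(2e_3-t)}\bigr)=2st-4ar$, together with the cyclic analogues. Since $e_i-\tr(e_i)/2=\tfrac12(2e_i-\tr(e_i))$, every norm and every cross-trace in the new basis is exactly $\tfrac14$ of the corresponding quantity there. By \eqref{gongshi3.7} this gives
$$
4f_{\mathcal O^0}=(4bc-r^2,\ldots,2rs-4ct),
$$
which equals $m_{f_{\mathcal O}}\psi_{f_{\mathcal O}}$ by the display above.

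There is no genuine obstacle. The only points needing care are, first, checking that evenness really forces all of $r,s,t$ to be even, so that the halved elements lie in $\mathcal O$ and form a basis of $\mathcal O^0$. Second, primitivity (the Gorenstein hypothesis) is what makes $\psi_{f_{\mathcal O}}$ equal to the cofactor form divided by $m_{f_{\mathcal O}}$, with no content factor. The identity is one of forms with respect to a specific basis, hence holds up to $\mathrm{GL}_3(\mathbb Z)$-equivalence as stated.
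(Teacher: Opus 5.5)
Your proposal is correct and follows essentially the same route as the paper: take the Clifford basis with $f_{\mathcal O}=(a,b,c,r,s,t)$, use evenness to get $2\mid r,s,t$, so that $\mathcal O^0$ has basis $e_i-\tr(e_i)/2$, and compare $4f_{\mathcal O^0}$ with the cofactor form, which equals $m_{f_{\mathcal O}}\psi_{f_{\mathcal O}}$ by primitivity. The only cosmetic difference is that you obtain the norms and cross-traces by quartering those of $2e_i-\tr(e_i)$ from the odd case, whose computation uses only the multiplication table and not oddness. The paper instead computes them directly.
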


\begin{proof}
Let $\mathcal{O}=\mathbb{Z}+\mathbb{Z}e_1+\mathbb{Z}e_2+\mathbb{Z}e_3$, and the corresponding primitive quadratic form by even Clifford algebra construction is  
\begin{equation*}
f_{\mathcal{O}}=(a,b,c,r,s,t),
\end{equation*}
with
\begin{equation*}
\begin{cases}
& \!\!\!\! e_1^2=re_1-bc,\,\,e_2e_3=a\overline{e_1},\\
& \!\!\!\! e_2^2=se_2-ac,\,\,e_3e_1=b\overline{e_2},\\
& \!\!\!\! e_3^2=te_3-ab,\,\,e_1e_2=c\overline{e_3}.
\end{cases}
\end{equation*}
Since $\mathcal{O}$ is even, we have $2\mid r,s,t$, the traces of $e_1$, $e_2$ and $e_3$ respectively. Then from $\tr(e_1-r/2)=\tr(e_2-s/2)=\tr(e_3-t/2)=0$ we obtain 
\begin{equation*}
\mathcal{O}^0=\mathbb{Z}(e_1-r/2)+\mathbb{Z}(e_2-s/2)+\mathbb{Z}(e_3-t/2).
\end{equation*}
Also, simple computation gives
\begin{equation*}
\begin{aligned}
& \!\!\!\! 4\n(e_1-r/2)=4bc-r^2,\,\,\,4\tr((e_2-s/2)\overline{(e_3-t/2)})=2st-4ar,\\
& \!\!\!\! 4\n(e_2-s/2)=4ac-s^2,\,\,\,4\tr((e_3-t/2)\overline{(e_1-r/2)})=2rt-4bs,\\
& \!\!\!\! 4\n(e_3-t/2)=4ab-t^2,\,\,\,4\tr((e_1-r/2)\overline{(e_2-s/2)})=2rs-4ct.
\end{aligned}
\end{equation*}
Thus by ~\eqref{gongshi3.7}, and then by ~\eqref{gongshi2.01} and ~\eqref{gongshi2.02} we get 
\begin{equation*}
4f_{{\mathcal{O}^0}}=(4bc-r^2,4ac-s^2,4ab-t^2,2st-4ar,2rt-4bs,2rs-4ct)=m_{f_\mathcal{O}}\psi_{f_\mathcal{O}},
\end{equation*}
as stated. 
\end{proof}

\begin{theorem}\label{the:M+}
Let ${\mathcal M}_+$ be defined as in ~\eqref{gongshi3.16}. Then the induced map, again denoted by ${\mathcal M}_+$, from the isomorphism classes of $\operatorname{Typ}_i(2^n)$ to the isomorphism classes of $\operatorname{Typ}^+_i(2^n)$ is surjective if $n\geq1$. Moreover, if $n=1$, then ${\mathcal M}_+$ is bijective.
\end{theorem}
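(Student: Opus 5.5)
The plan is to first check that ${\mathcal M}_+$ is well defined on classes, then prove surjectivity by local construction and a local--global principle for lattices, and finally prove injectivity for $n=1$ by a finite local count.

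\emph{Well-definedness.} Let $[\mathcal{O}]\in\operatorname{Typ}_i(2^n)$ with $n\geq1$. Then $\mathcal{O}_2\simeq\mathcal{O}^{(n)}_2$ or an Eichler order of level $2^n$; in either case $\mathcal{O}_2\not\simeq M_2(\mathbb Z_2)$. Hence $\mathcal{O}^+$ is an order by Theorem~\ref{O+4}.
\begin{itemize}
\item For odd $p$ we have $(\mathcal{O}^+)_p=\mathcal{O}_p$, which is Gorenstein.
\item At $2$ we have $(\mathcal{O}^+)_2=(\mathcal{O}_2)^+$. A local isomorphism $\mathcal{O}_2\simeq\mathcal{O}^{(n)}_2$ is conjugation in $Q_2^\times$, which preserves the reduced trace, so $(\mathcal{O}_2)^+\simeq(\mathcal{O}^{(n)}_2)^+$. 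The Eichler case is handled the same way.
\item If $\mathcal{O}'=\alpha^{-1}\mathcal{O}\alpha$ globally, then $\mathcal{O}'^+=\alpha^{-1}\mathcal{O}^+\alpha$.
\end{itemize}
So ${\mathcal M}_+$ maps $\operatorname{Typ}_i(2^n)$ into $\operatorname{Typ}^+_i(2^n)$ as defined in \eqref{O+1} and \eqref{O+2}.

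\emph{Surjectivity.} Let $[\mathcal{O}']\in\operatorname{Typ}^+_i(2^n)$.
\begin{itemize}
\item By Skolem--Noether choose $\gamma\in Q_2^\times$ with $\mathcal{O}'_2=\gamma(\mathcal{M})^+\gamma^{-1}$, where $\mathcal{M}$ is the local model of Theorem~\ref{the:oddorder}. Put $\Lambda_2=\gamma\mathcal{M}\gamma^{-1}$ and $\Lambda_p=\mathcal{O}'_p$ for odd $p$.
\item By the local--global correspondence for lattices, there is a unique lattice $\mathcal{O}\subset Q$ with these completions, since $\Lambda_p=\mathcal{O}'_p$ for almost all $p$.
\item $\mathcal{O}$ is an order because it is locally a ring. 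It is Gorenstein because each completion is: at odd $p$ by hypothesis, and at $2$ because $\Lambda_2$ is conjugate to a model from Theorem~\ref{the:oddorder}. It is odd because $\Lambda_2$ contains an element of trace $1$, so the trace ideal is $\mathbb Z$.
\item Finally $(\mathcal{O}^+)_p=\mathcal{O}'_p$ for all $p$, including $p=2$ by construction. Hence $\mathcal{O}^+=\mathcal{O}'$ and $[\mathcal{O}]\in\operatorname{Typ}_i(2^n)$.
\end{itemize}
The freedom in choosing $\gamma$ is exactly why injectivity can fail for larger $n$.

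\emph{Injectivity for $n=1$.} It suffices to show the following local uniqueness statement: for a fixed local order $\mathcal{O}'_2\simeq(\mathcal{M})^+$, there is exactly one odd order $\Lambda_2\supseteq\mathcal{O}'_2$ with $\Lambda_2^+=\mathcal{O}'_2$. Granting this, suppose $\mathcal{O}_1^+\simeq\mathcal{O}_2^+$, say $\mathcal{O}_2^+=\alpha^{-1}\mathcal{O}_1^+\alpha$. Then $\alpha^{-1}\mathcal{O}_1\alpha$ and $\mathcal{O}_2$ agree at every odd prime and, by uniqueness, also at $2$, so they are equal.

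The uniqueness is proved as follows.
\begin{itemize}
\item $[\Lambda_2:\mathcal{O}'_2]=2$, because $\tr$ modulo $2$ is a nonzero functional on $\Lambda_2$ with kernel $\mathcal{O}'_2$.
\item By Corollary~\ref{cor:O^+isGor} and its proof, $\mathcal{O}'_2$ is Gorenstein with $(\mathcal{O}'_2)^\sharp=\tfrac12\mathcal{O}'_2$.
\item For $x\in\Lambda_2$ and $y\in\mathcal{O}'_2$ we have $xy\in\Lambda_2$, so $\tr(xy)\in\mathbb Z_2$. Hence $\Lambda_2\subseteq(\mathcal{O}'_2)^\sharp=\tfrac12\mathcal{O}'_2$.
\end{itemize}
So $\Lambda_2/\mathcal{O}'_2$ is a line in $\tfrac12\mathcal{O}'_2/\mathcal{O}'_2\cong\mathbb F_2^4$. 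That line must be spanned by the class of an element of odd trace, and $\mathcal{O}'_2+\mathbb Z_2x$ must be closed under multiplication.

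Both cases for $n=1$ reduce to a finite check on explicit bases.
\begin{itemize}
\item In the ramified case $i=1$, $\mathcal{O}^{(1)}_2$ is the unique maximal order of the division algebra. Thus the odd overorder of $\mathcal{O}'_2$ is forced, and uniqueness is immediate.
\item In the Eichler case $i=2$, level $2$, we have $\mathcal{O}'_2=\{\left(\begin{smallmatrix}a&b\\2c&d\end{smallmatrix}\right):a\equiv d\ (2)\}$. We will enumerate the sixteen classes $\tfrac12 x$, $x\in\mathcal{O}'_2/2\mathcal{O}'_2$, and check that only the class of $E_{22}$ has odd trace and generates a ring with $\mathcal{O}'_2$.
\end{itemize}

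I expect this last enumeration, in particular excluding competing odd overorders such as conjugates of the Eichler order by normalizing elements, to be the main obstacle. If the direct count proves awkward, my first fallback is the identity $\mathcal{O}'_2=\mathbb Z_2+J(\Lambda_2)$ for Eichler level $2$. This would let me recover $\Lambda_2$ intrinsically from $\mathcal{O}'_2$, for instance as the left order of the radical $J(\Lambda_2)$ viewed as an $\mathcal{O}'_2$-lattice, which depends only on $\mathcal{O}'_2$.
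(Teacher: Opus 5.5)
Your proof is correct, but it takes a genuinely different route from the paper.

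\textbf{The paper's route.} The paper works entirely with ternary forms. The map $\mathcal O\mapsto f_{S^0}=m_{f_{\mathcal O}}\psi_{f_{\mathcal O}}$ is bijective onto its image on classes (Theorem~\ref{lem:odd}), and Proposition~\ref{prop:Watsontrans} gives $\lambda_4(f_{S^0})=f_{\mathcal O^0}=f_{(\mathcal O^+)^0}$. Watson's Lemma~\ref{Watsontrans} then supplies surjectivity on classes. The order $\mathcal O^+$ is recovered from $f_{(\mathcal O^+)^0}$ via $\sigma_2$ when $n\geq2$, and via Theorem~\ref{the:even} when $n=1$, where $\mathcal O^+$ is even Gorenstein. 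Injectivity at $n=1$ is inherited from the bijectivity of $\lambda_4\colon C(8N,64d)\to C(4N,4d)$ in Lemma~\ref{Watsontrans111}.

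\textbf{Your route.} You never leave the order side. Surjectivity is a local--global gluing of the conjugated model $\gamma\mathcal M\gamma^{-1}$ at $2$. Injectivity at $n=1$ becomes a purely local uniqueness statement for the odd overorder $\Lambda_2$ of $\mathcal O_2'$ with $\Lambda_2^+=\mathcal O_2'$.

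\textbf{What each buys.} Your argument is more elementary and self-contained: it needs no Watson theory and no input from \cite{LZ24}. It treats all $n\geq1$ and both $i$ uniformly, and it explains why injectivity can fail for $n\geq2$ as non-uniqueness of that overorder. It also avoids genus bookkeeping that the paper passes over quickly, namely that $\lambda_4$ carries $\{[f_{S^0}]\}$ onto the full set of classes $[f_{(\mathcal O'')^0}]$ with $[\mathcal O'']\in\operatorname{Typ}_i^+(2^n)$. The paper's route, in exchange, exhibits $\mathcal M_+$ as the order-theoretic counterpart of $\lambda_4$. That is exactly what is used later, e.g.\ for $U_4\theta_{S^0}=\theta_{\mathcal O^0}$ in Proposition~\ref{exam:theta1}.

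\textbf{The deferred Eichler check.} The finite check you postpone does go through. Among the odd-trace classes of $\tfrac12\mathcal O_2'/\mathcal O_2'$, only those of $E_{11}$, $E_{11}+\tfrac12E_{12}$ and $E_{11}+E_{21}$ contain integral elements. The last two fail closure under multiplication, since $(E_{11}+\tfrac12E_{12})\cdot2E_{21}=E_{11}$ and $E_{12}(E_{11}+E_{21})=E_{11}$ lie outside the respective lattices.

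Your fallback can also be made precise without any enumeration. The equalities $[\Lambda_2:\mathcal O_2']=2$ and $\operatorname{discrd}(\mathcal O_2')=4$ force $\operatorname{discrd}(\Lambda_2)=2$, so $\Lambda_2$ is Eichler of level $2$. Then $\mathcal O_2'=\Lambda_2^+=\mathbb Z_2+J(\Lambda_2)$, so $J(\mathcal O_2')=J(\Lambda_2)$. Hence $\Lambda_2=\mathcal O_{\operatorname L}(J(\mathcal O_2'))$ is the radical idealizer of $\mathcal O_2'$, which depends on $\mathcal O_2'$ alone.
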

\begin{proof}
Let $\mathcal{O}\in\operatorname{Typ}_i(2^n)$ denote odd Gorenstein orders.
Then by Theorem ~\ref{lem:odd} we see that the map ${\mu_1}\!:\,\mathcal{O}\mapsto f_{S^0}$ induces a bijection from $\operatorname{Typ}_i(2^n)$ to the equivalence classes of corresponding forms. 
If $n>1$, by Proposition ~\ref{prop:Watsontrans} and Lemma ~\ref{Watsontrans}, we see that there is a surjection between the equivalence classes of $f_{S^0}$ and of $f_{\mathcal{O}^0}$ by $\lambda_4$. 
Further, since $\sigma_2\!:\mathcal{O}\mapsto S$ is bijective, we see that ${\mu_0}:S\mapsto f_{S^0}$, i.e., ${\mu_0}={\mu_1}\circ\sigma_2^{-1}:\mathcal{O}^+\mapsto f_{(\mathcal{O}^+)^0}$ is bijective.
Again, it is clear that $\mathcal{O}^0=(\mathcal{O}^+)^0$. 
Thus we have the following sequence of correspondences
\begin{equation*}
\operatorname{Typ}_i(2^{n}) \xleftrightarrow[]{{\mu_1}} \left\{[f_{S^0}]:S=\mathbb{Z}+2\mathcal{O}\right\} \xrightarrow[]{\lambda_4} \left\{[f_{\mathcal{O}^0}]\right\}=\left\{[f_{(\mathcal{O}^+)^0}]\right\} \xleftrightarrow[]{{\mu_0}} \left\{[\mathcal{O}^+]\right\}=\operatorname{Typ}_i^+(2^{n}).
\end{equation*}
If $n=1$, then one has $\mathcal{O}^+$ is even Gorenstein order by Corollary~\ref{cor:O^+isGor}.
Since $\lambda_4(f_{S^0})=f_{(\mathcal{O})^0}=f_{(\mathcal{O}^+)^0}$ is bijective, and $\mu_0:\mathcal{O}^+\mapsto f_{(\mathcal{O}^+)^0}$ is bijective by Theorem~\ref{the:even}, then we see that ${\mathcal M}_+=\mu_0^{-1}\lambda_4\circ\mu_1$ is bijective. 
This completes the proof of the theorem.
\end{proof}

The combination of Theorems ~\ref{the:sigma2} and ~\ref{the:M+} clearly implies the following theorem, which gives a concise characterization to the relationship of odd Gorenstein orders $\mathcal{O}$ and the corresponding Gross orders $S=\mathbb{Z}+2\mathcal{O}$ of different local level at $2$ in terms of the sets $\operatorname{Typ}_i(2^{n})$ and $\operatorname{Typ}_i^+(2^{n})$, and hence demonstrates the importance of Gross orders. 

\begin{theorem}\label{chains1}
There exist chains of correspondences as for the odd Gorenstein orders and the corresponding Gross orders as follows.
\begin{enumerate}
\item  For odd positive integer $n$ we have the chain:
\begin{equation*}
\begin{tikzcd}
\cdots  \arrow[r, "{\mathcal M}_+", two heads] & \operatorname{Typ}_i^+(2^{n+4}) \arrow[leftrightarrow, r, "\sigma_2^{-1}"] & \operatorname{Typ}_i(2^{n+2}) \arrow[r, "{\mathcal M}_+", two heads]  & \operatorname{Typ}_i^+(2^{n+2})  \arrow[leftrightarrow, r, "\sigma_2^{-1}"] & \cdots  \arrow[leftrightarrow, r, "\sigma_2^{-1}"] &  \operatorname{Typ}_i(2) \arrow[leftrightarrow, r, "{\mathcal M}_+"] &  \operatorname{Typ}_i^+(2).
\end{tikzcd}
\end{equation*} 
\item For even positive integer $n$ we have two chains:
\begin{equation*}
\begin{tikzcd}
\cdots \arrow[r, "{\mathcal M}_+", two heads] & \operatorname{Typ}_1^+(2^{n+4}) \arrow[leftrightarrow, r, "\sigma_2^{-1}"] & \operatorname{Typ}_1(2^{n+2}) \arrow[r, "{\mathcal M}_+", two heads] & \operatorname{Typ}_1^+(2^{n+2}) \arrow[leftrightarrow, r, "\sigma_2^{-1}"] & \cdots \arrow[leftrightarrow, r, "\sigma_2^{-1}"] & \operatorname{Typ}_1(4) \arrow[d, "{\mathcal M}_+", two heads] &                                     &                         \\
              &                                           &                                         &                                           &               &                                     \operatorname{Typ}_i^+(4) \arrow[leftrightarrow, r, "\sigma_2^{-1}"] & \operatorname{Typ}_i(1). \\
\cdots \arrow[r, "{\mathcal M}_+", two heads] & \operatorname{Typ}_2^+(2^{n+4}) \arrow[leftrightarrow, r, "\sigma_2^{-1}"] & \operatorname{Typ}_2(2^{n+2}) \arrow[r, "{\mathcal M}_+", two heads] & \operatorname{Typ}_2^+(2^{n+2}) \arrow[leftrightarrow, r, "\sigma_2^{-1}"] & \cdots \arrow[leftrightarrow, r, "\sigma_2^{-1}"] & \operatorname{Typ}_2(4) \arrow[u, "{\mathcal M}_+", two heads] &                                     &                        
\end{tikzcd}
\end{equation*}
\end{enumerate}
\end{theorem}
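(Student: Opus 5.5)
The plan is to assemble the two chains by concatenating, for each admissible index, the two maps already constructed: the bijection $\sigma_2^{-1}\colon\operatorname{Typ}_i^+(2^{n+2})\to\operatorname{Typ}_i(2^{n})$ of Theorem~\ref{the:sigma2}, and the surjection ${\mathcal M}_+\colon\operatorname{Typ}_i(2^{n})\to\operatorname{Typ}_i^+(2^{n})$ of Theorem~\ref{the:M+}. Alternating these gives
\[
\operatorname{Typ}_i(2^{n+2})\xrightarrow{{\mathcal M}_+}\operatorname{Typ}_i^+(2^{n+2})\xrightarrow{\sigma_2^{-1}}\operatorname{Typ}_i(2^{n})\xrightarrow{{\mathcal M}_+}\cdots,
\]
which drops the exponent by $2$ at each $\sigma_2^{-1}$ step and preserves parity. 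So the odd and even exponents split into separate chains, and the only work is to check where each chain terminates and which arrows are bijective.

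For odd $n$, only the exponents $2u+1$ occur. This is the ramified family, together with the Eichler family $\operatorname{Typ}_2$ of odd level in the split case. Descending, the chain reaches $\operatorname{Typ}_i(2)$. There Theorem~\ref{the:M+} with $n=1$ says ${\mathcal M}_+$ is bijective, which explains the double arrow at the end. Every other ${\mathcal M}_+$ arrow is only a surjection, by the $n\ge1$ case of the same theorem. All $\sigma_2^{-1}$ arrows are bijections by Theorem~\ref{the:sigma2}. This yields chain (1).

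For even exponents, I would treat the families $i=1$ (residually inert, exponents $2u+2\ge2$) and $i=2$ (Eichler, exponents $v\ge0$) separately. Both descend through surjections ${\mathcal M}_+$ and bijections $\sigma_2^{-1}$ until they reach $\operatorname{Typ}_i(4)$. At that point ${\mathcal M}_+$ maps onto $\operatorname{Typ}_i^+(4)$, and $\sigma_2^{-1}$ then carries this set bijectively to $\operatorname{Typ}_i(1)$, by Theorem~\ref{the:sigma2} with $n=0$. By the convention $\operatorname{Typ}_1(1)=\operatorname{Typ}_2(1)$, both chains end in the same set. This gives the merging picture in (2).

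The step that needs care, and that I expect to be the main obstacle, is the bottom of the even chains. I must verify that $\sigma_2$ applied to $\operatorname{Typ}_1(1)=\operatorname{Typ}_2(1)$ lands in both $\operatorname{Typ}_1^+(4)$ and $\operatorname{Typ}_2^+(4)$. This requires checking that $(\mathcal O_2^{(2)})^+$ and the even part of the Eichler order of level $4$ are both isomorphic to $\mathbb Z_2+2M_2(\mathbb Z_2)$. Once that is done, the two $\operatorname{Typ}^+_i(4)$ coincide, and the vertical arrows in (2) are justified.
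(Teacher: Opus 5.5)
Your proposal is correct and follows the paper, which obtains the chains by concatenating the bijections $\sigma_2^{-1}$ of Theorem~\ref{the:sigma2} with the surjections $\mathcal M_+$ of Theorem~\ref{the:M+}, the latter being bijective at exponent $1$. The merging check you single out is left implicit in the paper, and it holds: $(\mathcal O_2^{(2)})^+=\mathbb Z_2+2\mathbb Z_2T+2M_2(\mathbb Z_2)=\mathbb Z_2+2M_2(\mathbb Z_2)$, and conjugating this by $\operatorname{diag}(1,2)$ gives the even part of the level-$4$ Eichler order, so $\operatorname{Typ}_1^+(4)=\operatorname{Typ}_2^+(4)$.
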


At the end of this section, we formulate a relation of the correspondences $\sigma_2$ and $M^+$ on odd Gorenstein orders $\mathcal{O}$ with their radical idealizers in terms of the localization of them at $2$ in Proposition ~\ref{idealizer1} below, which shows that the result of our Theorem ~\ref{chains1} above is a refining of that of the following Proposition ~\ref{idealizer} for $p=2$. 

Recall that for a quaternion algebra $Q_p$ over $\mathbb{Q}_p$, the radical idealizer $\mathcal{O}_p^{\natural}$ of an order $\mathcal{O}_p$ is defined as 
\begin{equation*}
\mathcal{O}_p^{\natural}\!:\,=\mathcal{O}_{\operatorname{L}}(\operatorname{rad}(\mathcal{O}_p))
\cap\mathcal{O}_{\operatorname{R}}(\operatorname{rad}(\mathcal{O}_p)),
\end{equation*}
where 
\begin{equation*}
\mathcal{O}_{\operatorname{L}}(I)\!:\,=\{\alpha\in Q_p:\alpha I\subset I\},\,\,\,\mathcal{O}_{\operatorname{R}}(I)\!:\,=\{\alpha\in Q_p:I\alpha\subset I\},
\end{equation*}
and $\operatorname{rad}(\mathcal{O}_p)$ denotes the Jacobson radical of $\mathcal{O}_p$, which is defined as the intersection of all maximal left ideals of $\mathcal{O}$. It is known that 

\begin{prop}[\cite{Voi21}, 24.4.5, Proposition 24.4.7]\label{idealizer}
For prime $p\geq 2$, the following statements hold for every non-negative integer $n\geq 0$:
\begin{enumerate}
\item If $\mathcal{O}_p$ is an Eichler order of level $p^{n+2}$, then $\mathcal{O}_p^{\natural}$ is an Eichler order of level $p^{n}$.
\item If $\mathcal{O}_p$ is residually inert but is not maximal, then $\mathcal{O}_p^{\natural}$ is the unique minimal, residually inert or maximal superorder  of $\mathcal{O}_p$, and we have $[\mathcal{O}_p^{\natural}:\mathcal{O}_p]=p^2$. 
\end{enumerate}
\end{prop}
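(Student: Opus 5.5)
The proof is a direct computation in explicit local models. I would fix a normal form for $\mathcal O_p$, compute $\operatorname{rad}(\mathcal O_p)$, and compute its left and right orders. In each case the radical $J$ is identified the same way. One checks that $\mathcal O_p/J$ is semisimple ($\mathbb F_p\times\mathbb F_p$ in the Eichler case, $\mathbb F_{p^2}$ in the residually inert case) and that $J^2\subseteq p\mathcal O_p$ or some power of $J$ lies in $p\mathcal O_p$. Then $J$ is topologically nilpotent, so $J=\operatorname{rad}(\mathcal O_p)$.

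For (1), take $\mathcal O_p=\begin{pmatrix}\mathbb Z_p&\mathbb Z_p\\ p^{n+2}\mathbb Z_p&\mathbb Z_p\end{pmatrix}$. Its radical is $J=\begin{pmatrix}p\mathbb Z_p&\mathbb Z_p\\ p^{n+2}\mathbb Z_p&p\mathbb Z_p\end{pmatrix}$, since the quotient is $\mathbb F_p\times\mathbb F_p$ and $J^2\subseteq p\mathcal O_p$. Write $\alpha=\begin{pmatrix}a&b\\c&d\end{pmatrix}\in Q_p$. The condition $\alpha J\subseteq J$, tested on the four generators $pE_{11}$, $E_{12}$, $p^{n+2}E_{21}$, $pE_{22}$ of $J$, gives linear conditions on $a,b,c,d$. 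These force $a,d\in\mathbb Z_p$, $b\in p^{-1}\mathbb Z_p$ and $c\in p^{n+1}\mathbb Z_p$. The condition $J\alpha\subseteq J$ yields the same set by symmetry. Hence $\mathcal O_p^\natural=\begin{pmatrix}\mathbb Z_p&p^{-1}\mathbb Z_p\\ p^{n+1}\mathbb Z_p&\mathbb Z_p\end{pmatrix}$. Conjugating by $\operatorname{diag}(p,1)$ turns this into the standard Eichler order of level $p^n$.

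For (2), I would first reduce to normal forms. A residually inert order is of the form $R+J$, where $R$ is the unramified quadratic $\mathbb Z_p$-order (as in \eqref{gongshi4.1} and \eqref{gongshi4.2} for $p=2$). In the split case this means $\mathcal O_p\simeq R+p^{u}M_2(\mathbb Z_p)$ with $u\ge1$ in the non-maximal case. In the ramified case it means $\mathcal O_p\simeq\bigl\{\begin{pmatrix}\alpha&p^u\beta\\ p^{u+1}\bar\beta&\bar\alpha\end{pmatrix}\bigr\}$ with $u\ge1$. In both models the radical is $pR+(\text{off-diagonal or }p^{u}M_2\text{ part})$, with quotient $\mathbb F_{p^2}$.

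The same entrywise computation of left and right multipliers then shows the idealizer is the model with parameter $u-1$. Concretely, in the split case this is $R+p^{u-1}M_2(\mathbb Z_p)$, and for $u=1$ it is the maximal order $M_2(\mathbb Z_p)$. The ramified case behaves in the same way. Comparing the two lattices gives $[\mathcal O_p^\natural:\mathcal O_p]=p^2$ directly: they share the $R$-part and differ by a factor $p$ in a rank-$2$ summand.

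The step I expect to be the main obstacle is minimality and uniqueness in (2). I would argue as follows. Let $\mathcal O'\supsetneq\mathcal O_p$ be a residually inert or maximal superorder. Then $\mathcal O'$ contains $R$, so after conjugation by an element normalizing $R$, $\mathcal O'$ is itself one of the models with some parameter $u'<u$. These models are totally ordered by inclusion, which gives $\mathcal O'\supseteq\mathcal O_p^\natural$. The delicate point is to justify that the conjugation can be chosen to fix $\mathcal O_p$. I would try to avoid it by working intrinsically. Note that $J'=\operatorname{rad}(\mathcal O')$ satisfies $J'\cap\mathcal O_p\supseteq J$. From this one checks that $\mathcal O'\cdot J\subseteq J$ on generators, which places $\mathcal O'$ inside the idealizer-type lattices and forces $\mathcal O'\supseteq\mathcal O_p^\natural$ by the index count $p^2$.
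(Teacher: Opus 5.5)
Your computations in (1), and your computations of the radical, of $\mathcal O_p^\natural$ and of the index $p^2$ in (2), are correct. The paper gives no proof of this statement and simply quotes Voight, so working in explicit local models is a reasonable route.

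The gap is exactly where you suspected: minimality and uniqueness in (2) are not proved, and your intrinsic fallback fails. From $J\subseteq J'=\operatorname{rad}(\mathcal O')$ you only get $\mathcal O'J\subseteq\mathcal O'J'=J'$, not $\mathcal O'J\subseteq J$. The latter is false in general: for $\mathcal O_p=R+p^2M_2(\mathbb Z_p)$ we have $J=pR+p^2M_2(\mathbb Z_p)$, and the maximal superorder $\mathcal O'=M_2(\mathbb Z_p)$ gives $\mathcal O'J=pM_2(\mathbb Z_p)\not\subseteq J$. Moreover, even where $\mathcal O'J\subseteq J$ and $J\mathcal O'\subseteq J$ do hold, they give $\mathcal O'\subseteq\mathcal O_p^\natural$. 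That is the opposite of the inclusion $\mathcal O'\supseteq\mathcal O_p^\natural$ that minimality needs, and no index count repairs it.

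The missing idea is that no conjugation is needed, because every superorder contains the \emph{same} $R$. Let $K$ be the fraction field of $R$, and write your models as $R\oplus p^uRj_0$ with $j_0x=\bar x j_0$ for $x\in K$. In the ramified model $j_0=\begin{pmatrix}0&1\\p&0\end{pmatrix}$; in the split model $M_2(\mathbb Z_p)=R\oplus Rj_0$.

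Any order $\mathcal O'\supseteq\mathcal O_p$ is an $R$-bimodule. Since $R$ is unramified, $R\otimes_{\mathbb Z_p}R\cong R\times R$, also for $p=2$. The two idempotents, acting by $(a\otimes b)\cdot x=axb$, split $\mathcal O'=(\mathcal O'\cap K)\oplus(\mathcal O'\cap Kj_0)$. Here $\mathcal O'\cap K=R$ because $R$ is the maximal order of $K$. Also $\mathcal O'\cap Kj_0$ is an $R$-lattice in the line $Kj_0$, hence equals $p^kRj_0$ for some $k$.

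Now $\mathcal O'\supsetneq\mathcal O_p$ forces $k\le u-1$, so $\mathcal O'\supseteq R\oplus p^{u-1}Rj_0=\mathcal O_p^\natural$. Thus $\mathcal O_p^\natural$ lies in every proper superorder, residually inert or not, which gives minimality and uniqueness at once.

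In particular the ``delicate point'' of your first route disappears: $\mathcal O'$ is already a model for the same $R$ and $j_0$. In any case, conjugation by the normalizer $K^\times\cup K^\times j_0$ of $R$ fixes each lattice $R\oplus p^kRj_0$. The same decomposition, applied to $\mathcal O_p$ after lifting $\mathcal O_p/J\cong\mathbb F_{p^2}$ to $R\subseteq\mathcal O_p$, also underlies the normal forms you assumed.
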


The following proposition is a straightforward verification from definitions. 
\begin{prop}\label{idealizer1}
For any positive integer $n\geq2$, if $\mathcal{O}$ is an order such that $[\mathcal{O}]\in \operatorname{Typ}_i(2^{n})$ then we have
\begin{equation*}
\sigma^{-1}_2\circ \mathcal{M}^+(\mathcal{O}_2)=\mathcal{O}_2^{\natural}.
\end{equation*}
\end{prop}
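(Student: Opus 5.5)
The plan is to check the identity model by model at $2$, using the local classification of Theorem~\ref{the:oddorder}. The key observation is that $\sigma_2^{-1}$ is the Gorenstein saturation, so it suffices to identify the Gorenstein saturation of $\mathcal{O}_2^+$ with the radical idealizer of $\mathcal{O}_2$. Throughout I read the statement as an equality of suborders of $Q_2$ once $\mathcal{O}_2$ is put in one of the normal forms.

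\emph{Step 1: locate the Gorenstein saturation.} Since $[\mathcal{O}]\in\operatorname{Typ}_i(2^n)$, we have $[\mathcal{O}^+]\in\operatorname{Typ}_i^+(2^n)$. By Theorem~\ref{the:sigma2} with $n\geq2$, this means $\mathcal{O}_2^+=\mathbb{Z}_2+2G$ for a unique Gorenstein order $G$ lying in the local class of level $2^{n-2}$ of the same type $i$. By the uniqueness in Lemma~\ref{the:Gor}, $\sigma_2^{-1}(\mathcal{O}_2^+)=G$.

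To avoid working only up to isomorphism, I will read $G$ off inside the fixed models, inverting the displayed computations in the proof of Theorem~\ref{the:sigma2}. In the residually inert models:
\begin{itemize}
\item for $\mathcal{O}_2=\mathcal{O}_2^{(2u+3)}$ one gets $G=\mathcal{O}_2^{(2u+1)}$;
\item for $\mathcal{O}_2=\mathcal{O}_2^{(2u+4)}$ one gets $G=\mathcal{O}_2^{(2u+2)}$.
\end{itemize}
Here $\mathcal{O}_2^{(1)}$ is the ramified maximal order and $\mathcal{O}_2^{(2)}$ is read as $M_2(\mathbb{Z}_2)$. In the Eichler model $\mathcal{O}_2=\begin{pmatrix}\mathbb{Z}_2&\mathbb{Z}_2\\2^v\mathbb{Z}_2&\mathbb{Z}_2\end{pmatrix}$ with $v\ge2$, a direct check gives
\[
G=\begin{pmatrix}\mathbb{Z}_2&\tfrac12\mathbb{Z}_2\\2^{v-1}\mathbb{Z}_2&\mathbb{Z}_2\end{pmatrix},
\]
since $\mathbb{Z}_2+2G$ consists of the matrices $\begin{pmatrix}x+2a&b\\2^vc&x+2d\end{pmatrix}$, which is exactly the even-trace part of $\mathcal{O}_2$. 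In every case $G\supseteq\mathcal{O}_2$ visibly.

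\emph{Step 2: Eichler case.} Compute the radical directly:
\[
\operatorname{rad}(\mathcal{O}_2)=\begin{pmatrix}2\mathbb{Z}_2&\mathbb{Z}_2\\2^v\mathbb{Z}_2&2\mathbb{Z}_2\end{pmatrix}.
\]
Its left and right orders are both $\begin{pmatrix}\mathbb{Z}_2&\frac12\mathbb{Z}_2\\2^{v-1}\mathbb{Z}_2&\mathbb{Z}_2\end{pmatrix}$. This agrees with Proposition~\ref{idealizer}(1), and it equals $G$ from Step 1.

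\emph{Step 3: residually inert case.} Compare indices. Since $\mathcal{O}_2$ is odd, $[\mathcal{O}_2:\mathcal{O}_2^+]=2$. Also $[G:\mathbb{Z}_2+2G]=2^3$. Hence
\[
[G:\mathcal{O}_2]=2^3/2=4=2^2 .
\]
Moreover $G$ is residually inert or maximal and contains $\mathcal{O}_2$. By the uniqueness in Proposition~\ref{idealizer}(2), $G$ is the unique minimal residually inert or maximal superorder of index $p^2$, so $G=\mathcal{O}_2^{\natural}$. Here $\mathcal{O}_2$ is not maximal because $n\geq2$.

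The main obstacle is Step 3 in the boundary cases. For $n=2$ the model $G=M_2(\mathbb{Z}_2)$ must be recognized as the ``maximal'' alternative in Proposition~\ref{idealizer}(2). For $n=3$ the superorder is the ramified maximal order. One must also ensure that the containment $\mathcal{O}_2\subseteq G$ holds as an equality of subrings of $Q_2$, not merely up to conjugacy. I would settle both points by inspecting the explicit matrix models, where $\mathcal{O}_2^{(n)}\subseteq\mathcal{O}_2^{(n-2)}$ is evident from the defining congruences.
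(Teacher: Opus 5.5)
Your argument is correct in substance, but in the residually inert case it takes a different route from the direct check. The paper gives no details here; it only calls the statement a straightforward verification from the definitions.

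\textbf{The direct check.} It is uniform over the three local models of Theorem~\ref{the:oddorder}: in every case one finds $\operatorname{rad}(\mathcal{O}_2)=2G$.
\begin{itemize}
\item For $\mathcal{O}_2^{(2u+3)}$, the radical consists of the matrices with $\alpha\in 2R_2$, and this is exactly $2\mathcal{O}_2^{(2u+1)}$.
\item For $\mathbb{Z}_2[T]+2^{u+1}M_2(\mathbb{Z}_2)$, the radical is $2\mathbb{Z}_2[T]+2^{u+1}M_2(\mathbb{Z}_2)=2\bigl(\mathbb{Z}_2[T]+2^{u}M_2(\mathbb{Z}_2)\bigr)$.
\item In the Eichler case, your displayed radical is exactly $2G$.
\end{itemize}
Hence $\mathcal{O}_2^\natural=\mathcal{O}_L(2G)\cap\mathcal{O}_R(2G)=G$ immediately. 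This also gives the conceptual identity $\mathcal{O}_2^+=\mathbb{Z}_2+\operatorname{rad}(\mathcal{O}_2)$.

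\textbf{Your route.} You do this directly in the Eichler case. In the residually inert case you replace it by the index count $[G:\mathcal{O}_2]=2^3/2=4$ together with Proposition~\ref{idealizer}(2). That avoids computing radicals, but it depends on the external result. It also needs one more sentence to be an actual deduction. Uniqueness of the minimal element alone does not tell you that a given superorder of index $4$ is that minimal one. Add this: $G$ contains some minimal residually-inert-or-maximal superorder of $\mathcal{O}_2$, since indices over $\mathcal{O}_2$ are positive integers and descending chains stabilize. By uniqueness that minimal superorder is $\mathcal{O}_2^\natural$, so $\mathcal{O}_2^\natural\subseteq G$. Equality of the indices then forces $G=\mathcal{O}_2^\natural$.

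\textbf{Indexing slip in Step 1.} In the paper's notation, $\mathcal{O}_2^{(2)}$ is the case $u=0$ of \eqref{gongshi4.2}, i.e.\ $\mathbb{Z}_2[T]+2M_2(\mathbb{Z}_2)$, not $M_2(\mathbb{Z}_2)$. The split residually inert list should read $\mathcal{O}_2^{(2u+2)}\mapsto\mathcal{O}_2^{(2u)}$ for $u\geq 0$, with $\mathcal{O}_2^{(0)}:=M_2(\mathbb{Z}_2)$. As written, your convention makes two errors:
\begin{itemize}
\item the entry for $\mathcal{O}_2^{(4)}$ would give $G=M_2(\mathbb{Z}_2)$ instead of $\mathbb{Z}_2[T]+2M_2(\mathbb{Z}_2)$;
\item the case $n=2$ with $\mathcal{O}_2=\mathbb{Z}_2[T]+2M_2(\mathbb{Z}_2)$ is missing from Step 1, although your final paragraph treats it correctly.
\end{itemize}
Step 3 uses only $\mathcal{O}_2\subseteq G$, the fact that $G$ is residually inert or maximal, and the index. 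So the conclusion is unaffected once the labels are corrected.
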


\section{Proof of the main results}
In this section we complete the proofs of Theorem ~\ref{the:bijection}, and Corollaries~\ref{cor:tqf} to ~\ref{cor:trace}. For this purpose we first prove the following Lemma ~\ref{lem:rep}. 

\begin{lemma}\label{lem:rep}
Let $\mathcal O$ be an order in a definite quaternion algebra $Q$
over $\mathbb Q$, and put $S=\mathbb Z+2\mathcal O$.
Then the function
\[
 {\frak q}_{\mathcal O}:\mathcal O/\mathbb Z\longrightarrow\mathbb Z,
 \quad
 {\frak q}_{\mathcal O}([\alpha])
 =4\n(\alpha)-\tr(\alpha)^2,
\]
is a well-defined positive definite integral ternary quadratic form
on the quotient $\mathbb Z$-module $\mathcal O/\mathbb Z$.
The map
\[
 \Phi:\mathcal O/\mathbb Z\longrightarrow S^0,
 \quad
 [\alpha]\longmapsto2\alpha-\tr(\alpha),
\]
is an isomorphism of $\mathbb Z$-modules satisfying
$\n(\Phi([\alpha]))={\frak q}_{\mathcal O}([\alpha])$.
Consequently, for $n\in\mathbb Z_{\geq0}$ and $r\in\mathbb Z$,
\[
 \rho_{\mathcal O}(n,r)
 =
 R_{{\frak q}_{\mathcal O}}(4n-r^2)
 =
 R_{f_{S^0}}(4n-r^2).
\]
If $\mathcal O$ is even, then $S^0=2\mathcal O^0$, and hence
\[
 \rho_{\mathcal O}(n,r)
 =
 R_{f_{\mathcal O^0}}\left(n-\frac{r^2}{4}\right).
\]
\end{lemma}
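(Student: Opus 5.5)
The plan is to verify everything directly from the definitions, in four steps.

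\emph{Well-definedness and integrality of ${\frak q}_{\mathcal O}$.} For $\alpha\in\mathcal O$ and $k\in\mathbb Z$ we have $\tr(\alpha+k)=\tr(\alpha)+2k$ and $\n(\alpha+k)=\n(\alpha)+k\tr(\alpha)+k^2$. Hence
\[
4\n(\alpha+k)-\tr(\alpha+k)^2=4\n(\alpha)-\tr(\alpha)^2,
\]
and ${\frak q}_{\mathcal O}$ descends to $\mathcal O/\mathbb Z$. Integrality follows because $\n$ and $\tr$ are integral on $\mathcal O$.

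\emph{The map $\Phi$ and the quadratic structure.} Put $\beta=2\alpha-\tr(\alpha)$. Then $\tr(\beta)=0$, and $\beta$ depends only on $[\alpha]$, since replacing $\alpha$ by $\alpha+k$ adds $2k-2k=0$. Moreover $\beta\in\mathbb Z+2\mathcal O=S$, so $\Phi$ lands in $S^0$. It is additive because the trace is linear. For the norm, use $\overline\beta=-\beta$ together with the identity $\n(2\alpha-t)=4\n(\alpha)-2t\tr(\alpha)+t^2$; setting $t=\tr(\alpha)$ gives $4\n(\alpha)-\tr(\alpha)^2$.

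Since ${\frak q}_{\mathcal O}=\n\circ\Phi$, once we know $\Phi$ is a $\mathbb Z$-module isomorphism, ${\frak q}_{\mathcal O}$ is a quadratic form on a rank-$3$ lattice. It is positive definite because $S^0\subset Q^0$ and $\n$ is positive definite on the definite algebra $Q$. It is also visibly isometric to $f_{S^0}$.

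\emph{Bijectivity of $\Phi$.} For injectivity, suppose $2\alpha=\tr(\alpha)\in\mathbb Z$. Then $\alpha\in\mathbb Q\cap\mathcal O=\mathbb Z$, since elements of $\mathcal O$ are integral. For surjectivity, any element of $S^0$ has the form $k+2\alpha$ with trace $2k+2\tr(\alpha)=0$, so $k=-\tr(\alpha)$ and the element equals $\Phi([\alpha])$.

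\emph{The counting identity.} The key observation is that for fixed $r$, the elements $\alpha$ of trace $r$ form a single coset representative set: each class $[\alpha]$ contains exactly one element of trace $r$ precisely when $\tr(\alpha)\equiv r\pmod 2$, namely $\alpha+(r-\tr\alpha)/2$. If ${\frak q}_{\mathcal O}([\alpha])=4n-r^2$, then $\tr(\alpha)^2\equiv r^2\pmod 4$ forces $\tr(\alpha)\equiv r\pmod 2$, so the representative exists. That representative has norm $(4n-r^2+r^2)/4=n$. Conversely, an element with trace $r$ and norm $n$ gives a class of value $4n-r^2$.

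This yields a bijection between $\{\alpha:\n(\alpha)=n,\ \tr(\alpha)=r\}$ and the vectors of ${\frak q}_{\mathcal O}$-value $4n-r^2$. Combined with $\Phi$, we get $\rho_{\mathcal O}(n,r)=R_{{\frak q}_{\mathcal O}}(4n-r^2)=R_{f_{S^0}}(4n-r^2)$. I expect this parity/representative step to be the only point needing care.

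\emph{The even case.} If $\mathcal O$ is even, then $\tr(\alpha)/2\in\mathbb Z$, so
\[
2\alpha-\tr(\alpha)=2\bigl(\alpha-\tr(\alpha)/2\bigr)\in 2\mathcal O^0,
\]
and $2\mathcal O^0\subseteq S^0$ trivially. Thus $S^0=2\mathcal O^0$ and $f_{S^0}=4f_{\mathcal O^0}$. It follows that $R_{f_{S^0}}(4n-r^2)=R_{f_{\mathcal O^0}}(n-r^2/4)$, which vanishes for odd $r$; this is consistent, since $\rho_{\mathcal O}(n,r)=0$ for odd $r$ when $\mathcal O$ is even.
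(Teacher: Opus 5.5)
Your proposal is correct and follows essentially the same route as the paper. The common steps are the invariance of $4\n(\alpha)-\tr(\alpha)^2$ under integer translation, injectivity of $\Phi$ via $\mathcal O\cap\mathbb Q=\mathbb Z$, surjectivity from the trace-zero condition forcing $k=-\tr(\alpha)$, the mod-$4$ parity argument selecting the unique trace-$r$ representative, and $S^0=2\mathcal O^0$ in the even case; the only cosmetic difference is that you read off the rank three of $\mathcal O/\mathbb Z$ from the isomorphism with $S^0$ rather than stating it first.
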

\begin{proof}
For $\alpha\in\mathcal O$ and $a\in\mathbb Z$, we have
\[
 \n(\alpha+a)=\n(\alpha)+a\tr(\alpha)+a^2,
 \quad
 \tr(\alpha+a)=\tr(\alpha)+2a.
\]
Thus $4\n(\alpha)-\tr(\alpha)^2$ is unchanged by adding an integer
to $\alpha$, i.e., ${\frak q}_{\mathcal O}$ is well defined.
Since $\mathcal O\cap\mathbb Q=\mathbb Z$,
the quotient $\mathcal O/\mathbb Z$ is free of rank three.

The map $\Phi$ is well defined and $\mathbb Z$-linear.
Its image lies in $S^0$, and its kernel is trivial:
if $2\alpha-\tr(\alpha)=0$, then
$\alpha\in\mathcal O\cap\mathbb Q=\mathbb Z$.
Conversely, every $v\in S^0$ can be written as
$v=a+2\alpha$ with $a\in\mathbb Z$ and $\alpha\in\mathcal O$.
The equality $\tr(v)=0$ gives $a=-\tr(\alpha)$, so
$v=\Phi([\alpha])$.
Moreover,
\[
 \n(2\alpha-\tr(\alpha))
 =4\n(\alpha)-\tr(\alpha)^2.
\]
This proves the asserted isometry and the positive definiteness
and integrality of ${\frak q}_{\mathcal O}$.

We now show that passage to the quotient gives a bijection
\[
 \{\alpha\in\mathcal O:\n(\alpha)=n,\ \tr(\alpha)=r\}
 \longrightarrow
 \{[\beta]\in\mathcal O/\mathbb Z:
                  {\frak q}_{\mathcal O}([\beta])=4n-r^2\}.
\]
Let $[\beta]$ belong to the set on the right and set
$t=\tr(\beta)$. Reducing
\[
 4\n(\beta)-t^2=4n-r^2
\]
modulo $4$ gives $t\equiv r\pmod2$. Therefore
\[
 \alpha=\beta+\frac{r-t}{2}
\]
belongs to $\mathcal O$ and has trace $r$.
Since ${\frak q}_{\mathcal O}([\alpha])={\frak q}_{\mathcal O}([\beta])$,
we also have $\n(\alpha)=n$.
This is the unique representative of $[\beta]$ with trace $r$,
because adding an integer $a$ changes the trace by $2a$.
Then the representation-number identities follow.

Finally, if $\mathcal O$ is even, then
$2\alpha-\tr(\alpha)=2(\alpha-\tr(\alpha)/2)\in2\mathcal O^0$,
and every $2\gamma$ with $\gamma\in\mathcal O^0$ occurs in this way.
Thus $S^0=2\mathcal O^0$, which proves the last formula.
\end{proof}

\begin{cor}\label{cor:m_f}
For a Gorenstein order $\mathcal{O}$ we have 
\begin{equation}\label{gongshi1.2}
m_{f_{\mathcal{O}}}=
\begin{cases}
4\widetilde{m}_{{f}_{\mathcal{O}^0}} & \!\!\!\! \text{if\, $\mathcal{O}$ is even,}\\
\widetilde{m}_{{f}_{S^0}} & \!\!\!\! \text{if\, $\mathcal{O}$ is odd.}
\end{cases}
\end{equation}
\end{cor}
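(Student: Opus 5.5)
The plan is to read off the contents from Theorems~\ref{lem:odd} and \ref{the:even}, which express $m_{f_{\mathcal O}}\psi_{f_{\mathcal O}}$ as $f_{S^0}$ (odd case) or $4f_{\mathcal O^0}$ (even case). It will then suffice to show that the reciprocal $\psi_{f_{\mathcal O}}$ is primitive whenever $\mathcal O$ is Gorenstein, because then $\widetilde m_{f_{S^0}}=m_{f_{\mathcal O}}$ in the odd case and $4\widetilde m_{f_{\mathcal O^0}}=m_{f_{\mathcal O}}$ in the even case.

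First, since $\mathcal O$ is Gorenstein, $f_{\mathcal O}$ is primitive (the criterion recalled before Lemma~\ref{the:Gor}), so $\widetilde m_{f_{\mathcal O}}=1$. By definition
\[
\psi_{f_{\mathcal O}}=\frac{1}{m_{f_{\mathcal O}}}\bigl(M_{11},M_{22},M_{33},2M_{23},2M_{13},2M_{12}\bigr).
\]
Its content is therefore $\gcd(M_{11},M_{22},M_{33},2M_{23},2M_{13},2M_{12})/m_{f_{\mathcal O}}$. By the very definition of $m_f$, this gcd is exactly $m_{f_{\mathcal O}}$: the six defining entries of $m_f$ are $4bc-r^2,\dots$ and $2st-4ar=2M_{23}$, etc. Hence $\widetilde m_{\psi_{f_{\mathcal O}}}=1$ automatically, and primitivity of $f_{\mathcal O}$ is not even needed beyond the normalization $\widetilde m_{f_{\mathcal O}}=1$ in \eqref{gongshi2.02}.

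Next, in the odd case Theorem~\ref{lem:odd} gives $f_{S^0}=m_{f_{\mathcal O}}\psi_{f_{\mathcal O}}$. Taking contents gives $\widetilde m_{f_{S^0}}=m_{f_{\mathcal O}}\widetilde m_{\psi_{f_{\mathcal O}}}=m_{f_{\mathcal O}}$. In the even case Theorem~\ref{the:even} gives $4f_{\mathcal O^0}=m_{f_{\mathcal O}}\psi_{f_{\mathcal O}}$, so $4\widetilde m_{f_{\mathcal O^0}}=m_{f_{\mathcal O}}$. Content is an invariant of the $\mathrm{GL}_3(\mathbb Z)$-class, so the basis choices made in those theorems (the Clifford basis $e_1,e_2,e_3$ versus the bases in \eqref{gongshi3.7}, \eqref{gongshi3.8}) do not matter.

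The only point needing care is the identification of the gcd defining $m_f$ with the content of the coefficient vector of $m_f\psi_f$. The cross coefficients of $\psi$ are $2M_{ij}$, not $M_{ij}$, and these match the entries $2st-4ar$, etc., in the definition of $m_f$ precisely. A sign mismatch between $M_{23}=st-2ar$ and the entry $2st-4ar$ is immaterial for a gcd. With that checked, both cases follow immediately.
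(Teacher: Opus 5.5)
Your proposal is correct and follows the paper's argument: the paper's proof simply cites Theorems~\ref{lem:odd} and~\ref{the:even}, and your check that $\psi_{f_{\mathcal O}}$ is primitive is the step it leaves implicit — the numerators of $\psi_{f_{\mathcal O}}$ have gcd exactly $m_{f_{\mathcal O}}$ by definition, so taking contents gives the result. One minor point: there is no sign mismatch to worry about, since $2M_{23}=2st-4ar$ holds exactly.
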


\begin{proof}
It is a direct consequence of Theorems ~\ref{lem:odd} and ~\ref{the:even}. 
\end{proof}

\begin{remark}\label{rem:fO0}
Let $\mathcal O=\mathbb Z+m\mathcal O'$ with $\mathcal O'$ Gorenstein,
and put $S'=\mathbb Z+2\mathcal O'$.
Then
\[
 \mathcal O^0=
 \begin{cases}
 (m/2)S'^0&\text{if $\mathcal O'$ is odd and $2\mid m$},\\
 m\mathcal O'^0&\text{otherwise}.
 \end{cases}
\]
Indeed, $a+m\beta$ has trace zero precisely when
$2a+m\tr(\beta)=0$. If $m$ is even, this gives
$a+m\beta=(m/2)(2\beta-\tr(\beta))$; if $m$ is odd,
it forces $\tr(\beta)$ to be even, giving
$a+m\beta=m(\beta-\tr(\beta)/2)$.
When $\mathcal O'$ is even, we also have $S'^0=2\mathcal O'^0$.
Consequently,
\[
 f_{\mathcal O^0}=
 \begin{cases}
 (m/2)^2f_{S'^0}&
       \text{if $\mathcal O'$ is odd and $2\mid m$},\\
 m^2f_{\mathcal O'^0}&\text{otherwise}.
 \end{cases}
\]
\end{remark}

\begin{proof}[Proof of Theorem~{\upshape\ref{the:bijection}}]
By Lemma ~\ref{the:Gor} we see that two orders $\mathcal{O}'=\mathbb{Z}+m_1\operatorname{Gor}(\mathcal{O}')$ and $\mathcal{O}''=\mathbb{Z}+m_2\operatorname{Gor}(\mathcal{O}'')$ with conductors $m_1$ and $m_2$ respectively are isomorphic if and only if $m_1=m_2$, and $\operatorname{Gor}(\mathcal{O}')$ and $\operatorname{Gor}(\mathcal{O}'')$ are isomorphic. 
By definition we see that two quadratic forms $f'$ and $f''$ are equivalent if and only if they have the same content and their primitive parts are equivalent. These together with ~\eqref{gongshi2.02} and Definition ~\ref{def:tildef} show that, except for equality (3), to prove Theorem~{\upshape\ref{the:bijection}} in general one only needs to prove it when the Gorenstein and primitive conditions are imposed on orders and forms respectively. 
Now assume $\mathcal{O}$ is Gorenstein, then for odd $\mathcal{O}$, by Theorem ~\ref{lem:odd} we have 
\begin{equation*}
\psi_{f_{\mathcal{O}}}=f_{S^0}/m_{f_{\mathcal{O}}}=\widetilde{f}_{\mathcal{O}};
\end{equation*} 
and for even $\mathcal{O}$, by Theorem ~\ref{the:even} we have 
\begin{equation*}
\psi_{f_{\mathcal{O}}}=4f_{\mathcal{O}^0}/m_{f_{\mathcal{O}}}=\widetilde{f}_{\mathcal{O}}.
\end{equation*} 
The combination of these two identities proves assertion (1).  

The reciprocal operation preserves the content of a ternary
quadratic form. Hence assertion (1) gives
\[
 \widetilde m_{\widetilde f_{\mathcal O}}
 =
 \widetilde m_{f_{\mathcal O}}.
\]
By the primitivity criterion, we obtain
\[
 \mathcal O\text{ is Gorenstein}
 \quad\Longleftrightarrow\quad
 f_{\mathcal O}\text{ is primitive}
 \quad\Longleftrightarrow\quad
 \widetilde f_{\mathcal O}\text{ is primitive}.
\]

Turn to assertion (3). Write
$f_{\mathcal O}=(a,b,c,r,s,t)$ and choose a corresponding basis
$1,e_1,e_2,e_3$ of $\mathcal O$.
In the quotient basis $[e_1],[e_2],[e_3]$, direct computation gives
\begin{align*}
 {\frak q}_{\mathcal O}(x,y,z)
 ={}&
 (4bc-r^2)x^2+(4ac-s^2)y^2+(4ab-t^2)z^2\\
 &+(2st-4ar)yz+(2rt-4bs)xz+(2rs-4ct)xy.
\end{align*}
Since $f_{\mathcal O}=m f_{\operatorname{Gor}(\mathcal O)}$,
the content of $f_{\mathcal O}$ is $m$.
The definition of the reciprocal therefore gives
\[
 {\frak q}_{\mathcal O}
 =\frac{m_{f_{\mathcal O}}}{m}\,\psi_{f_{\mathcal O}}
 =\frac{m_{f_{\mathcal O}}}{m}\,\widetilde f_{\mathcal O},
\]
where the last equality follows from assertion (1).
Applying Lemma~\ref{lem:rep}, we obtain
\[
 \rho_{\mathcal O}(n,r)
 =R_{{\frak q}_{\mathcal O}}(4n-r^2)
 =R_{\widetilde f_{\mathcal O}}
   \left(\frac{m(4n-r^2)}{m_{f_{\mathcal O}}}\right).
\]
This proves assertion (3). For assertion (4), since reciprocal preserves genera then the correspondence also preserves genera.
By the preceding parts of the theorem and~\cite[Remark 22.4.3]{Voi21} we have for any quaternion order $\mathcal{O}$, 
\begin{equation*}
2|\operatorname{Aut}(\mathcal{O})|=|\operatorname{Aut}(f_{\mathcal{O}})|=|\operatorname{Aut}(\psi_{f_{\mathcal{O}}})|=|\operatorname{Aut}(\widetilde{f}_{\mathcal{O}})|.
\end{equation*}
Hence
\begin{equation*}
\sum_{\mu=1}^t \frac{\rho_{\mathcal{O}_\mu}(n,r)}{|\Aut(\mathcal{O}_\mu)|}=2\sum\limits_{f\in \operatorname{gen}(\widetilde f_{\mathcal{O}})}\frac{R_{f}\left({\frac{m(4n-r^2)}{m_{f_{\mathcal{O}}}}}\right)}{|\Aut(f)|}=2R_{\operatorname{gen}(\widetilde{f}_\mathcal{O})}\left(\frac{m(4n-r^2)}{m_{f_{\mathcal{O}}}}\right).
\end{equation*}
The proof of Theorem~{\upshape\ref{the:bijection}} is complete. 
\end{proof}

\begin{proof}[Proof of Corollary~{\upshape\ref{cor:tqf}}]
We first prove the necessity. So we assume that there exists a quaternion order $\mathcal{O}$ such that 
\begin{equation}\label{gongshi4.01}
f_{\mathcal{O}^0}=f.
\end{equation} 
We first consider the case that $\mathcal{O}$ is Gorenstein. 
For any Gorenstein order $\mathcal{O}$, since ${f}_\mathcal{O}$ is the reciprocal of $\widetilde{f}_\mathcal{O}$ by Theorem~\ref{the:bijection}, thus  
\begin{equation}\label{gongshi4.05}
m_{f_\mathcal{O}}=N_{\widetilde{f}_\mathcal{O}}^2/d_{\widetilde{f}_\mathcal{O}}.
\end{equation}
If $\mathcal{O}$ is even, then by~\eqref{gongshi4.01} we have $f=f_{\mathcal{O}^0}=\widetilde m_{f_{\mathcal{O}^0}}\widetilde{f}_\mathcal{O}$.  
Hence we have
\begin{equation}\label{gongshi4.04}
4d_{f}/N_{f}^2=4(\widetilde{m}_{f_{\mathcal{O}^0}}^3d_{\widetilde{f}_\mathcal{O}})/(\widetilde{m}_{f_{\mathcal{O}^0}}N_{\widetilde{f}_\mathcal{O}})^2
=4\widetilde{m}_{f_{\mathcal{O}^0}}d_{\widetilde{f}_\mathcal{O}}/N_{\widetilde{f}_\mathcal{O}}^2. 
\end{equation}
Thus from \eqref{gongshi4.05} and in view of $4\widetilde{m}_{f_{\mathcal{O}^0}}=m_{f_\mathcal{O}}$ by \eqref{gongshi1.2}, we get  
$$
16d_{f}/N_{f}^2=16\widetilde{m}_{f_{\mathcal{O}^0}}m_{f_\mathcal{O}}^{-1}=4, 
$$ 
a square integer as desired. 

If $\mathcal{O}$ is odd and $\mathcal{O}\in\operatorname{Typ}_i(2^{n+2})$ with $n\geq0$ and $1\leq i
\leq 2$, then we may set $\mathcal{O}'=\sigma_2^{-1}\circ {\mathcal M}_+(\mathcal{O})$. Hence by definition we have $S'\!:\,=\mathbb{Z}+2\mathcal{O}'=\mathcal{O}^+$. Here we note that $\mathcal{O}'$ is odd and Gorenstein as $\mathcal{O}$ does. 
Thus by ~\eqref{gongshi4.01}, and then by~\eqref{gongshi1.2} we obtain
\begin{equation*}
f=f_{\mathcal{O}^0}=f_{(\mathcal{O}^+)^0}=f_{S'^0}={m}_{f_{\mathcal{O}'}}\widetilde{f}_{\mathcal{O}'}.
\end{equation*}
Then by~\eqref{gongshi4.05} we get 
\begin{equation*}
16d_{f}/N_{f}^2
=16{m}_{{f}_{\mathcal{O}'}}d_{\widetilde{f}_{\mathcal{O}'}}/N_{\widetilde{f}_{\mathcal{O}'}}^2
=16m_{f_{\mathcal{O}'}} m^{-1}_{f_{\mathcal{O}'}}
=16, 
\end{equation*}
a square integer as desired. 

If $\mathcal{O}$ is odd and $\mathcal{O}\in\operatorname{Typ}_i(2)$ with $1\leq i \leq 2$, then by ~\eqref{gongshi4.01} and Proposition~\ref{prop:Watsontrans} we have
\begin{equation}\label{gongshi4.06}
f=f_{\mathcal{O}^0}=\lambda_4(f_{S^0})={m}_{{f}_{\mathcal{O}}}\lambda_4(\widetilde{f}_{\mathcal{O}}).
\end{equation}
Again in view of $\mathcal{O}\in\operatorname{Typ}_i(2)$, one can deduce $v_2(d_{f_{\mathcal{O}}})=1$; whence by Theorem ~\ref{the:odd} we obtain $v_2(N_{f_{\mathcal{O}}})=v_2(d_{f_{\mathcal{O}}})+2=3$. Hence as the reciprocal of ${f}_{\mathcal{O}}$ we have for $\widetilde{f}_{\mathcal{O}}$, 
\begin{equation*}
v_2(N_{\widetilde f_{\mathcal{O}}})=v_2(N_{f_{\mathcal{O}}})=3,\,\,\,
v_2(d_{\widetilde f_{\mathcal{O}}})=v_2(N_{f_{\mathcal{O}}}^3/(4d_{f_{\mathcal{O}}}))=6.
\end{equation*}
Thus by the first statement of Lemma~\ref{Watsontrans111} we have
\begin{equation*}
N_{\lambda_4(\widetilde{f}_{\mathcal{O}})}=N_{\widetilde{f}_{\mathcal{O}}}/2,\,\, d_{\lambda_4(\widetilde{f}_{\mathcal{O}})}=d_{\widetilde{f}_{\mathcal{O}}}/16.
\end{equation*}
Then by~\eqref{gongshi4.05} and ~\eqref{gongshi4.06} we get, on noting that $\lambda_4(\widetilde{f}_{\mathcal{O}})$ is primitive as $\widetilde{f}_{\mathcal{O}}$ does, 
\begin{equation*}
16d_{f}/N_{f}^2
=16{m}_{{f}_{\mathcal{O}}}
d_{\lambda_4(\widetilde{f}_{\mathcal{O}})}/N_{\lambda_4(\widetilde{f}_{\mathcal{O}})}^2
=16m_{f_{\mathcal{O}}}(d_{\widetilde{f}_{\mathcal{O}}}/16)/(N_{\widetilde{f}_{\mathcal{O}}}/2)^2
=4m_{f_{\mathcal{O}}}m^{-1}_{f_{\mathcal{O}}}
=4,
\end{equation*}
a square integer as desired. 

If $\mathcal{O}$ is odd and $\mathcal{O}\in\operatorname{Typ}_2(1)$, then again by ~\eqref{gongshi4.01} and Proposition~\ref{prop:Watsontrans} we see that  ~\eqref{gongshi4.06} still holds.
However, this time one can deduce $v_2(d_{f_{\mathcal{O}}})=0$; and whence by Theorem ~\ref{the:odd} we obtain $v_2(N_{f_{\mathcal{O}}})=v_2(d_{f_{\mathcal{O}}})+2=2$. Hence as the reciprocal of ${f}_{\mathcal{O}}$ we have for $\widetilde{f}_{\mathcal{O}}$, 
\begin{equation*}
v_2(N_{\widetilde f_{\mathcal{O}}})=v_2(N_{f_{\mathcal{O}}})=2,\,\,\,
v_2(d_{\widetilde f_{\mathcal{O}}})=v_2(N_{f_{\mathcal{O}}}^3/(4d_{f_{\mathcal{O}}}))=4.
\end{equation*}
Then by the second statement of Lemma~\ref{Watsontrans111} we have
\begin{equation*}
N_{\lambda_4(\widetilde{f}_{\mathcal{O}})}=N_{\widetilde{f}_{\mathcal{O}}},\,\, d_{\lambda_4(\widetilde{f}_{\mathcal{O}})}=d_{\widetilde{f}_{\mathcal{O}}}/16.
\end{equation*}
Thus by ~\eqref{gongshi4.05} and ~\eqref{gongshi4.06} once more, we have  
\begin{equation*}
16d_{f}/N_{f}^2
=16{m}_{{f}_\mathcal{O}}
d_{\lambda_4(\widetilde{f}_{\mathcal{O}})}/N_{\lambda_4(\widetilde{f}_{\mathcal{O}})}^2
=16m_{f_\mathcal{O}}(d_{\widetilde{f}_{\mathcal{O}}}/16)/N_{\widetilde{f}_{\mathcal{O}}}^2
=m_{f_{\mathcal{O}}}m^{-1}_{f_{\mathcal{O}}}
=1,
\end{equation*}
as what we need. Moreover, in this case $4d_{f}/N_{f}^2=1/4$ is not an integer and
we have $ 2\nmid d_{f}/\widetilde{m}^3_{f}=d_{\lambda_4(\widetilde{f}_{\mathcal{O}})}$, i.e.,
$$
2\widetilde{m}^3_f \nmid d_{f}. 
$$

Now assume $\mathcal{O}$ is not Gorenstein and write
$$
\mathcal{O}=\mathbb{Z}+m\mathcal{O}',  
$$
where $m>1$ and $\mathcal{O}'$ is Gorenstein. 
If $\mathcal{O}'$ is odd and $2\mid m$, then by \eqref{gongshi4.01}, Remark~\ref{rem:fO0}, and~\eqref{gongshi1.2}, we get
$$
f=f_{\mathcal{O}^0}=(m/2)^2f_{S'^0}=(m/2)^2m_{f_{\mathcal{O}'}}\widetilde f_{\mathcal{O}'}.
$$
Hence by~\eqref{gongshi4.05} we get
\begin{equation*}
d_{f}/N_{f}^2
=(m/2)^2m_{f_{\mathcal{O}'}} d_{\widetilde f_{\mathcal{O}'}}/N_{\widetilde f_{\mathcal{O}'}}^2
=(m/2)^2,
\end{equation*}
a square integer. 
As for the remaining cases, we have by  ~\eqref{gongshi4.01},
$$
f=f_{\mathcal{O}^0}=m^2f_{\mathcal{O}'^0}.
$$
Hence we have 
$$
d_{f}/N_{f}^2
=m^2d_{f_{\mathcal{O}'^0}}/N_{f_{\mathcal{O}'^0}}^2,
$$
and the problem is reduced to the Gorenstein case as discussed above. 

Turn to the proof of sufficiency. So we assume that $16d_{f}/N_{f}^2$ is a square integer. Then there exist three possibilities: 
\begin{itemize}
\item[(1)]  $d_{f}/N_{f}^2$ is a square integer $m^2$, 
\item[(2)]  $4d_{f}/N_{f}^2$ is a square integer $m^2$, but $d_{f}/N_{f}^2$ is not, 
\item[(3)]  $16d_{f}/N_{f}^2$ is a square integer $m^2$, but $4d_{f}/N_{f}^2$ is not. 
\end{itemize}
Write
\begin{equation}\label{gongshi4.08}
f=\widetilde{m}_{f}f', 
\end{equation}
where $f'$ is primitive. 
Then we may take a Gorenstein order $\mathcal{O}'$ by Theorem~\ref{the:bijection} such that 
\begin{equation}\label{gongshi4.09}
\widetilde{f}_{\mathcal{O}'}=\psi_{f_{\mathcal{O}'}}=f'.
\end{equation}
Here we note that by~\eqref{gongshi1.2} and ~\eqref{gongshi4.05}, one has  
\begin{equation}\label{gongshi4.010}
f_{S'^0}=\left(N_{\widetilde{f}_{\mathcal{O}'}}^2/d_{\widetilde{f}_{\mathcal{O}'}}\right)\widetilde{f}_{\mathcal{O}'}
=\left(N_{{f'}}^2/d_{{f'}}\right){f'},
\end{equation}
if $\mathcal{O}'$ is odd, where $S'=\mathbb{Z}+2\mathcal{O}'$; and 
\begin{equation}\label{gongshi4.011}
f_{\mathcal{O}'^0}
=\left(N_{\widetilde{f}_{\mathcal{O}'}}^2/4d_{\widetilde{f}_{\mathcal{O}'}}\right)\widetilde{f}_{\mathcal{O}'}
=\left(N_{{f'}}^2/4d_{{f'}}\right){f'},
\end{equation}
if $\mathcal{O}'$ is even. 
We first consider the case of 
$$
d_{f}/N_{f}^2=m^2.
$$ 
Put
$$
\mathcal{O}=\mathbb{Z}+2m\mathcal{O}'.
$$
If $\mathcal{O}'$ is even, then by Remark~\ref{rem:fO0},~\eqref{gongshi4.08} and~\eqref{gongshi4.011} we have 
\begin{equation*}
f_{{\mathcal{O}}^0}=4m^2f_{{\mathcal{O}}'^0}
=4(d_{f}/N_{f}^2)\left(N_{{f}'}^2/4d_{{f}'}\right){f}'
=\widetilde{m}_ff'
=f. 
\end{equation*}
If $\mathcal{O}'$ is odd, then by Remark~\ref{rem:fO0},~\eqref{gongshi4.08} and~\eqref{gongshi4.010} we have 
\begin{align*}
f_{{\mathcal{O}}^0}
=m^2f_{S'^0}
=(d_{f}/N_{f}^2)\left(N_{f'}^2/d_{f'}\right)f'
=\widetilde{m}_{f}f'
=f. 
\end{align*}

For the case in which 
\begin{equation}\label{gongshi4.012}
4d_{f}/N_{f}^2=m^2 
\end{equation}
for a positive integer $m$, but $d_{f}/N_{f}^2$ does not, we have clearly $2\nmid m$. Let
$$
h=v_2(d_{f'}),\,\,\,g=v_2(N_{f'}).
$$
Then we have $2+h-2g\leq0$ by ~\eqref{gongshi4.012}. 
Hence in view of ${f_{\mathcal{O}'}}=\psi_{f'}$ we see that $v_2(d_{f_{\mathcal{O}'}})\ne v_2(N_{f_{\mathcal{O}'}})-2$; whence by Theorem~\ref{the:odd} we find that the Gorenstein order $\mathcal{O}'$ defined by~\eqref{gongshi4.09} is even. Put 
$$
\mathcal{O}=\mathbb{Z}+m\mathcal{O}'.
$$ 
Then by Remark~\ref{rem:fO0},~\eqref{gongshi4.011}, and~\eqref{gongshi4.012}, we have
\begin{equation*}
f_{\mathcal{O}^0}=m^2f_{\mathcal{O}'^0}
=\left(4d_{f}/N_{f}^2\right)\left(N_{f'}^2/4d_{f'}\right)f'
=\widetilde{m}_{f}f'
=f.
\end{equation*}

For the case in which 
\begin{equation}\label{gongshi4.013}
16d_{f}/N_{f}^2=m^2 
\end{equation}
for a positive integer $m$, but $4d_{f}/N_{f}^2$ does not, then $2\nmid m$. 
Again, by assumption we have $2\nmid d_{f'}$, hence $h=0$, and whence $g=2$.
Thus by the second statement of Lemma~\ref{Watsontrans111}, we may take a primitive form $f''$ from $f'$ such that 
$$
f''=\phi^{-1}_2(f')
$$
with $v_2(N_{f''})=2$ and $v_2(d_{f''})=4$. Then by Theorem~\ref{the:bijection}, we can take a Gorenstein order $\mathcal{O}'$ such that
\begin{equation*}
\widetilde{f}_{\mathcal{O}'}=\psi_{f_{\mathcal{O}'}}=f''.
\end{equation*}
Also, by definition we have $v_2(m_{\psi_{f''}})=v_2(m_{f'})=0$. 
Hence $2\nmid m_{\psi_{f''}}=m_{f_{\mathcal O}'}$, and whence $\mathcal{O}'$ is odd by Theorem~\ref{the:odd}. 
Again, by Lemma~\ref{Watsontrans111} we have $N_{f''}=N_{f'}$ and $d_{f''}=16d_{f'}$.
Put 
$$
\mathcal{O}=\mathbb{Z}+m\mathcal{O}'.
$$
Then by Remark~\ref{rem:fO0} and Proposition ~\ref{prop:Watsontrans} we have
$$
f_{\mathcal{O}^0}
=m^2f_{\mathcal{O}'^0}
=m^2\phi_2(f_{S'^0}).
$$
Again, similar to ~\eqref{gongshi4.010} we have 
\begin{equation*}
f_{S'^0}=\left(N_{\widetilde{f}_{\mathcal{O}'}}^2/d_{\widetilde{f}_{\mathcal{O}'}}\right)\widetilde{f}_{\mathcal{O}'}
=\left(N_{f''}^2/d_{f''}\right)f''.
\end{equation*}
Thus by ~\eqref{gongshi4.013} we have
\begin{equation*}
f_{\mathcal{O}^0}=m^2\phi_2(f_{S'^0})
= \left(16d_{f}/N^2_{f}\right)\left(N^2_{f''}/d_{f''}\right)\phi_2(f'')  
=(d_{f}/N_{f}^2)(N_{f'}^2/d_{f'})f'  \\
=\widetilde{m}_{f}f'
=f.
\end{equation*}
The proof of the corollary is complete. 
\end{proof}

\begin{proof}[Proof of Corollary~{\upshape\ref{cor:unitgroup}}]
If $\mathcal{O}$ is Gorenstein, then by Theorem~\ref{the:bijection} we have  
\begin{equation*}
\rho_{\mathcal{O}}(1,r)=R_{\widetilde{f}_{\mathcal{O}}}\left(\frac{4-r^2}{m_{f_{\mathcal{O}}}}\right). 
\end{equation*}
Hence when $m_{f_{\mathcal{O}}}\geq5$, it is clear that $R_{\widetilde{f}_{\mathcal{O}}}\left(\frac{4-r^2}{m_{f_{\mathcal{O}}}}\right)>0$ if and only if $r^2=4$.
Again, for any $\alpha\in\mathcal{O}$, it is easy to see that $\n(\alpha)=1$ and $\tr(\alpha)=\pm2$ if and only if $\alpha=\pm1$. This proves
\begin{equation*}
\mathcal{O}^{\times}=\{\pm1\}.
\end{equation*}
If $\mathcal{O}$ is not Gorenstein, then we may write 
\begin{equation*}
\mathcal{O}=\mathbb{Z}+m\operatorname{Gor}(\mathcal{O}),
\end{equation*}
where $1<m\in \mathbb{Z}$. Hence
\begin{equation}\label{gongshi4.12}
\rho_{\mathcal{O}}(n,r)=R_{\widetilde{f}_{\operatorname{Gor}(\mathcal{O})}}\left(\frac{4n-r^2}{m^2m_{f_{\operatorname{Gor}(\mathcal{O})}}}\right).
\end{equation}
Now, if $m^2m_{f_{\operatorname{Gor}(\mathcal{O})}}\geq5$, then the desired result follows by the same argument as in the above case.
Hence we are led to consider the case of $m^2m_{f_{\operatorname{Gor}(\mathcal{O})}}\leq4$. 
Thus the only possibility in this case is $m=2$ and $m_{f_{\operatorname{Gor}(\mathcal{O})}}=1$.
Now suppose $\alpha$ is an element of $\mathcal{O}$ with $n(\alpha)=1$, then we have $4\cdot 1-\tr(\alpha)^2\geq 0$. 
Again, since $m=2$, we have $2\mid\tr(\alpha)$. Whence there exist two possibilities:
$$
\tr(\alpha)=0\,\,\,\text{or}\,\, \tr(\alpha)=\pm2.
$$
If $\tr(\alpha)=0$, then \eqref{gongshi4.12} reduces to 
\begin{equation*}
\rho_{\mathcal{O}}(1,0)=R_{\widetilde{f}_{\operatorname{Gor}(\mathcal{O})}}\left(1\right),
\end{equation*}
which is $0$ by Lemma~\ref{lem:rep}, since $\operatorname{Gor}(\mathcal{O})$ is odd due to $m_{f_{\operatorname{Gor}(\mathcal{O})}}=1$. This shows that the only possibility for an element $\alpha$ of $\mathcal{O}$ with $n(\alpha)=1$ is the case of 
$\tr(\alpha)=\pm2$, which then implies $\alpha=\pm1$. Therefore 
\begin{equation*}
\mathcal{O}^{\times}=\{\pm1\}.
\end{equation*}

Next, let $\mathcal{O}$ be an order such that $m_{f_{\mathcal{O}}}\geq5$ or it is not Gorenstein.
For each right ideal class $[I] \in \operatorname{Cls}(\mathcal{O})$, let $\mathcal{O}_{\operatorname{L}}(I)$ denote its left order. 
Since $\mathcal{O}_{\operatorname{L}}(I)$ is locally isomorphic to $\mathcal{O}$,  thus $\mathcal{O}_{\operatorname{L}}(I)$ satisfies the same condition as $\mathcal{O}$, i.e., $m_{f_{\mathcal{O}_{\operatorname{L}}(I)}}\geq5$ or $\mathcal{O}_{\operatorname{L}}(I)$ is not Gorenstein. 
Hence $\mathcal{O}_{\operatorname{L}}(I)^{\times} = \{\pm1\}$. Thus $w_I = |\mathcal{O}_{\operatorname{L}}(I)^{\times}/\{\pm1\}|=1$ for every class $[I]$. 
Therefore 
\[
\operatorname{mass}(\operatorname{Cls} (\mathcal{O})) = \sum_{[I]\in \operatorname{Cls}(\mathcal{O})} \frac{1}{w_I} = \sum_{[I]\in \operatorname{Cls}(\mathcal{O})} 1 = h(\mathcal{O}),
\]
as claimed in the corollary.
\end{proof}

\begin{proof}[Proof of Corollary~{\upshape\ref{cor:Eichlersym}}]
By~\cite[24.3.3]{Voi21}, Eichler symbol is in fact determined by the Kronecker symbol of the discriminant quadratic form $\tr(\alpha)^2-4\n(\alpha)$.
More accurately, we have
\begin{itemize}
\item $\left(\frac{\mathcal{O}}{p}\right)=*$ if and only if $\left\{\left(\frac{\tr(\alpha)^2-4\n(\alpha)}{p}\right):\,\alpha\in\mathcal{O}\right\}=\{-1, 0, 1\}$;
\item $\left(\frac{\mathcal{O}}{p}\right)=1$ (resp. $-1$ or $0$) if and only if $\left\{\left(\frac{\tr(\alpha)^2-4\n(\alpha)}{p}\right):\,\alpha\in\mathcal{O}\right\}=\{0, 1\}$ (resp. $\{0,-1\}$ or $\{0\}$).
\end{itemize}

Let $\mathcal{O}$ be a Gorenstein order.
Let $f_\mathcal{O}=(a,b,c,r,s,t)$ and $\mathcal{O}=C_0(f_\mathcal{O})=\mathbb{Z}+\mathbb{Z}e_1+\mathbb{Z}e_2+\mathbb{Z}e_3$.
Let $\alpha=t'+xe_1+ye_2+ze_3$, then
\begin{equation*}
\Delta(\alpha)=\tr(\alpha)^2-4\n(\alpha)=-(4bc-r^2)x^2-(4ac-s^2)y^2-(4ab-t^2)z^2-(2st-4ar)yz-(2rt-4bs)xz-(2rs-4ct)xy.
\end{equation*}
Hence $\Delta(\alpha)=-m_{f_\mathcal{O}}\widetilde{f}_\mathcal{O}$. 
Let $p\neq2$ be a prime. 
If $p\mid m_{f_\mathcal{O}}$, then we have $p\mid \Delta(\alpha)=\tr(\alpha)^2-4\n(\alpha)$. Hence by  definition, we have
\begin{equation*}
\left(\frac{\mathcal{O}}{p}\right)=0.
\end{equation*}
If $p\nmid m_{f_\mathcal{O}}$ and $p\mid m_{\widetilde{f}_\mathcal{O}}$, let $\widetilde{f}_\mathcal{O}=(A,B,C,R,S,T)$ and without loss generality assume $p\nmid C$ (see \cite[Proposition 2]{Leh92}).
Since $p\mid m_{\widetilde{f}_\mathcal{O}}$, then
\begin{equation*}
4AC\equiv S^2,\quad
4BC\equiv R^2,\quad
4CT\equiv2RS\pmod p.
\end{equation*}
Also since $p\nmid 2C$, then
\begin{equation*}
A\equiv\frac{S^2}{4C},\quad
B\equiv\frac{R^2}{4C},\quad
T\equiv\frac{RS}{2C}\pmod p.
\end{equation*}
Hence
\begin{equation*}
\widetilde{f}_\mathcal{O}\equiv C\left(z+\frac{S}{2C}x+\frac{R}{2C}y\right)^2 \pmod p.
\end{equation*}
We have $\left(\frac{\Delta(\alpha)}p\right)=0$ if $p\mid \widetilde{f}_\mathcal{O}$, otherwise
\begin{equation*}
\left(\frac{\Delta(\alpha)}p\right)
=\left(\frac{-m_{f_\mathcal{O}}}{p}\right)\left(\frac Cp\right).
\end{equation*}
By definition of genus symbol (see~\cite[Section 3]{Leh92}), we have $\left(\frac Cp\right)=\left(\frac{\widetilde{f}_\mathcal{O}}{p}\right)$.
Hence by the definition, we have
\begin{equation*}
\left(\frac{\mathcal{O}}{p}\right)=\left(\frac{-m_{f_\mathcal{O}}}{p}\right)\left(\frac{\widetilde{f}_\mathcal{O}}{p}\right).
\end{equation*}
If $p\nmid m_{f_\mathcal{O}}m_{\widetilde{f}_\mathcal{O}}=4N_{\widetilde{f}_\mathcal{O}}$, by~\cite[Theorem 2]{Leh92} we have $p\nmid d_{f_\mathcal{O}}=\discrd(\mathcal{O})$.
Hence by the definition we have $\mathcal{O}_p\simeq M_2(\mathbb{Z}_p)$ and 
\begin{equation*}
\left(\frac{\mathcal{O}}{p}\right)=*.
\end{equation*}

Let $p=2$. If $\mathcal{O}$ is even, i.e., $2\mid\tr(\alpha)$, then we have $2\mid\Delta(\alpha)$. Hence
\begin{equation*}
\left(\frac{\mathcal{O}}{2}\right)=0.
\end{equation*}
If $\mathcal{O}$ is odd and $2\nmid d_{f_\mathcal{O}}=\discrd(\mathcal{O})$, then by the definition we have $\mathcal{O}_2\simeq M_2(\mathbb{Z}_2)$ and 
\begin{equation*}
\left(\frac{\mathcal{O}}{2}\right)=*.
\end{equation*}
If $\mathcal{O}$ is odd and $2\mid d_{f_\mathcal{O}}=\discrd(\mathcal{O})$, 
since $m_{\widetilde{f}_\mathcal{O}}=16d_{f_\mathcal{O}}/m^2_{f_\mathcal{O}}$
and $2\nmid m_{f_\mathcal{O}}$, then we have $32\mid m_{\widetilde{f}_\mathcal{O}}$.
Similarly, one can also check that
\begin{equation*}
\widetilde{f}_\mathcal{O}\sim_2 Cz^2+8q(x,y)
\end{equation*}
where $q(x,y)$ is a binary quadratic form. We have $\left(\frac{\Delta(\alpha)}2\right)=0$ if $2\mid z$, otherwise
\begin{equation*}
\left(\frac{\Delta(\alpha)}2\right)
=\left(\frac{-m_{f_\mathcal{O}}}{2}\right)\left(\frac C2\right).
\end{equation*}
By the definition of genus symbol (see~\cite[Section 3]{Leh92}), we have $\left(\frac C2\right)=\left(\frac{\widetilde{f}_\mathcal{O}}{8}\right)$.
Hence by the definition, we have
\begin{equation*}
\left(\frac{\mathcal{O}}{2}\right)=\left(\frac{-m_{f_\mathcal{O}}}{2}\right)\left(\frac{\widetilde{f}_\mathcal{O}}{8}\right).
\end{equation*}
The proof of the corollary is complete.
\end{proof}

\begin{proof}[Proof of Corollary~{\upshape\ref{cor:automorphism}}]
For a given order $\mathcal{O}$, by Definition ~\ref{def:tildef} we can construct a quadratic form $\widetilde f_{\mathcal{O}}$,
and then for this $\widetilde f_{\mathcal{O}}$, by even Clifford algebra $C_0$ we have up to isomorphism an order $\mathcal{O}'$ such that $f_{\mathcal{O}'}=\widetilde f_{\mathcal{O}}$.  
Hence we can define a map, denoted by $\psi$, up to isomorphism, from the set of all orders to itself as follows:
\begin{equation}
\psi\!:\,\mathcal{O}\mapsto \mathcal{O}'\,\,\text{with}\,\,f_{\mathcal{O}'}=\widetilde f_{\mathcal{O}}. 
\end{equation}
It is clear that $\psi$ is bijective up to isomorphism, and by Theorem ~\ref{the:bijection} we have
$$
\widetilde{f}_{\psi(\mathcal{O})}=\psi_{f_{\psi{(\mathcal{O})}}}=\psi_{f_{{\mathcal{O}'}}}=\psi_{\widetilde f_{{\mathcal{O}}}}=f_{{\mathcal{O}}}.
$$ 
Here and in what follows, it should be not confused by the common use of $\psi$ for the maps both on orders and on quadratic forms from the context. 

Next, suppose $\mathcal{O}$ is odd and Gorenstein. Then by Theorem~\ref{the:odd}, we have $2\nmid m_{f_{\mathcal{O}}}$. Again, for any primitive form $f$ we have $m_{\psi_f}=N^2_f/d_f=4N_f/m_f$, hence
$$
m_{\psi_f}m_f=4N_f.
$$ 
This clearly implies $2\mid m_{\psi_{f_\mathcal{O}}}$ if we take $f=f_{\mathcal{O}}$ because $2\nmid m_{f_{\mathcal{O}}}$. Thus $\psi(\mathcal{O})$ is even by Theorem~\ref{the:odd} again.  
For a general order $\mathcal{O}=\mathbb{Z}+m\operatorname{Gor}(\mathcal{O})$ of odd conductor $m$, the desired result follows from $\psi(\mathcal{O})\simeq\mathbb{Z}+m\psi(\operatorname{Gor}(\mathcal{O}))$, which is a direct consequence of the definition of $\psi$.

Furthermore, let $p\geq 2$ be a given prime, and $q$ denote primes with $q\ne p$. 
Let $\mathcal{O}'$ and $\mathcal{O}''$ be two Gorenstein orders such that $\mathcal{O}'_{p}\cong\psi(\mathcal{O}'')_{p}$, $\mathcal{O}'_{q}\cong\mathcal{O}''_{q}$, where $\mathcal{O}'_{p}$ and $\mathcal{O}''_{p}$ denote respectively the localizations of $\mathcal{O}'$ and $\mathcal{O}''$ at primes $p\geq 2$. 
By Remark~\ref{rem:phi}, Lehman's
correspondence leaves the local unit-similarity class
unchanged away from $p$, and replaces it by its
reciprocal at $p$.
Unit similarity does not change the isomorphism class
of the associated local quaternion order.
Using $\mathcal M=\psi\circ C_0^{-1}$, we therefore obtain
\[
 \bigl(\mathcal M^{-1}(\phi_p(\mathcal M(\mathcal O)))\bigr)_q
 \simeq\mathcal O_q
 \qquad(q\ne p),
\]
and
\[
 \bigl(\mathcal M^{-1}(\phi_p(\mathcal M(\mathcal O)))\bigr)_p
 \simeq\psi(\mathcal O)_p.
\]
The genus compatibility in Theorem~\ref{the:bijection} and
Remark~\ref{rem:phi} therefore show that
$$
\varphi_p\!:\,\mathcal{O}\xrightarrow[]{\mathcal M} \widetilde{f}_{\mathcal{O}} \xrightarrow[]{\phi_p} \phi_p(\widetilde{f}_{\mathcal{O}}) \xrightarrow[]{\mathcal M^{-1}} \mathcal{O}'=\mathcal M^{-1}(\phi_p(\widetilde{f}_{\mathcal{O}}))=\mathcal M^{-1}(\phi_p(\mathcal M({\mathcal{O}}))),
$$
is a bijection between the indicated type sets.
For general orders, write
$\mathcal O''=\mathbb Z+m\operatorname{Gor}(\mathcal O'')$.
Reciprocity preserves the Gorenstein conductor, so the local
hypotheses imply that $\mathcal O'$ has the same conductor $m$.
For each $[\mathcal O]\in\operatorname{Typ}(\mathcal O'')$,
define
\[
 \varphi_p(\mathcal O)
 =\mathbb Z+m\varphi_p(\operatorname{Gor}(\mathcal O)),
\]
using the Gorenstein correspondence on the right.
Uniqueness of Gorenstein saturation makes this well-defined
on isomorphism classes. The local identities already proved,
together with
\[
 \psi(\mathbb Z+m\mathcal O)
 \simeq\mathbb Z+m\psi(\mathcal O),
\]
show that its image lies in $\operatorname{Typ}(\mathcal O')$.
Applying the inverse Gorenstein correspondence at the same
conductor gives the inverse map. This proves the assertion
for arbitrary orders.
\end{proof}

\begin{proof}[Proof of Corollary~{\upshape\ref{cor:trace}}]
Let $I_1,\ldots,I_h$ represent the locally principal fractional
right $\mathcal O$-ideal classes, and put
\[
 \nu_\mu
 =
 \#\{i:\mathcal O_{\operatorname L}(I_i)
           \simeq\mathcal O_\mu\},
 \qquad 1\leq\mu\leq t.
\]
By \cite[Proposition~18.5.10 and (18.5.8)]{Voi21},
\[
 \nu_\mu
 \bigl[N_{Q^\times}(\mathcal O_\mu):
       \mathbb Q^\times\mathcal O_\mu^\times\bigr]
 =
 |\operatorname{Pic}(\mathcal O_\mu)|.
\]
Since
$N_{Q^\times}(\mathcal O_\mu)/\mathbb Q^\times
\simeq\operatorname{Aut}(\mathcal O_\mu)$
and
$\mathcal O_\mu^\times\cap\mathbb Q^\times=\{\pm1\}$,
we have
\[
 \bigl[N_{Q^\times}(\mathcal O_\mu):
       \mathbb Q^\times\mathcal O_\mu^\times\bigr]
 =
 \frac{2|\operatorname{Aut}(\mathcal O_\mu)|}
      {|\mathcal O_\mu^\times|}.
\]
Also,
$|\operatorname{Pic}(\mathcal O_\mu)|
=|\operatorname{Pic}(\mathcal O)|$
by \cite[(18.4.7)]{Voi21}. Consequently,
\[
 \frac{\nu_\mu}{|\mathcal O_\mu^\times|}
 =
 \frac{|\operatorname{Pic}(\mathcal O)|}
      {2|\operatorname{Aut}(\mathcal O_\mu)|}.
\]

For $n\geq1$, our normalization of the Brandt matrices gives
\[
 \bigl(T_{\mathcal O}(n)\bigr)_{ii}
 =
 \frac{\#\{\alpha\in\mathcal O_{\operatorname L}(I_i):
                 n(\alpha)=n\}}
      {|\mathcal O_{\operatorname L}(I_i)^\times|}
 =
 \sum_{\substack{r\in\mathbb Z\\r^2\leq4n}}
 \frac{\rho_{\mathcal O_{\operatorname L}(I_i)}(n,r)}
      {|\mathcal O_{\operatorname L}(I_i)^\times|}.
\]
Grouping the ideal classes according to the isomorphism classes
of their left orders, and then applying (4) in Theorem~\ref{the:bijection}, we obtain
\begin{align*}
 \tr(T_{\mathcal O}(n))
 &=
 \sum_{\substack{r\in\mathbb Z\\r^2\leq4n}}
 \sum_{\mu=1}^t
 \frac{\nu_\mu\,\rho_{\mathcal O_\mu}(n,r)}
      {|\mathcal O_\mu^\times|}\\
 &=
 \frac{|\operatorname{Pic}(\mathcal O)|}{2}
 \sum_{\substack{r\in\mathbb Z\\r^2\leq4n}}
 \sum_{\mu=1}^t
 \frac{\rho_{\mathcal O_\mu}(n,r)}
      {|\operatorname{Aut}(\mathcal O_\mu)|}\\
 &=
 |\operatorname{Pic}(\mathcal O)|
 \sum_{\substack{r\in\mathbb Z\\r^2\leq4n}}
 R_{\operatorname{gen}(\widetilde f_{\mathcal O})}
 \left(\frac{m(4n-r^2)}{m_{f_{\mathcal O}}}\right).
\end{align*}
This completes proof of the corollary.
\end{proof}

\section{Applications: Theta series associated with orders}

Let $\mathcal{O}\subset Q$ be a definite quaternion order. Let $\tau$ be a variable in the upper half plane $\mathbb{H}$ and $z$ a complex variable in $\mathbb{C}$. Let $q\!:\,=\exp(2\pi i\tau)$, $\zeta\!:\,=\exp(2\pi i z)$ as convention. We will consider the following theta series associated to $\mathcal{O}$ in this section. 
\begin{align}
&\theta_\mathcal{O}(\tau,z)\!:\,=\sum_{\alpha\in\mathcal{O}}q^{\n(\alpha)}\zeta^{\tr(\alpha)}
=\sum_{\substack{n,r\in\mathbb{Z}\\4n-r^2\geq0}}\rho_\mathcal{O}(n,r)q^n\zeta^r
=\sum_{\substack{n,r\in\mathbb{Z}\\4n-r^2\geq0}}c(n,r)q^n\zeta^r,\label{gongshi5.1}\\
&\theta_\mathcal{O}(\tau)\!:\,=\sum_{\alpha\in\mathcal{O}}q^{\n(\alpha)}=\sum_{n\geq0}a(n)q^n,\label{gongshi5.2}\\
&\theta_{\mathcal{O}^0}(\tau)\!:\,=\sum_{\alpha\in\mathcal{O}^0}q^{\n(\alpha)}
=\sum_{n\geq0}R_{f_{\mathcal{O}^0}}(n)q^n=\sum_{n\geq0}a_0(n)q^n,\label{gongshi5.3}\\
&\theta_{S^0}(\tau)\!:\,=\sum_{\alpha\in S^0}q^{\n(\alpha)}=\sum_{n\geq0}R_{f_{S^0}}(n)q^n=\sum_{n\geq0}a_1(n)q^n. \label{gongshi5.4}
\end{align}
Here $S=\mathbb{Z}+2\mathcal{O}$, and 
\begin{equation}\label{gongshi5.5}
c(n,r)=\rho_\mathcal{O}(n,r),\,\,a(n)=\sum_{r^2\leq 4n}c(n,r),\,\,a_0(n)=R_{f_{\mathcal{O}^0}}(n)=c(n,0),\,\,a_1(n)=R_{f_{S^0}}(n). 
\end{equation}
Clearly, by Lemma~\ref{lem:rep}, if $\mathcal{O}$ is odd, then for any non-negative integer $D\in\mathbb{Z}$, 
\begin{equation}\label{gongshi5.6}
a_1(D)=
\begin{cases}
\rho_{\mathcal{O}}(n,r) & \text{if}\,\,\,D=4n-r^2\geq 0\,\, \text{for some} \,\,n,\,r\in\mathbb{Z}, \\
0 & \text{otherwise};
\end{cases}
\end{equation} 
and if $\mathcal{O}$ is even, then
\begin{equation}\label{gongshi5.7}
a_0(D)=
\rho_{\mathcal{O}}(n,r),\,\,\,\text{if}\,\,D=n-r^2/4\geq 0\,\, \text{for some}\,\, n,r\in\mathbb{Z}\,\,\text{and}\,\,2\mid r.
\end{equation} 

For ease of notations, in the sequel, we will also use $f$ to denote modular forms and Jacobi forms besides quadratic forms. It will be clear for the exact meaning of it when it occurs in different places from the context. The operators $U_m, V_m$ on modular forms and $U(m), V(m)$ on Jacobi forms $f$ will be used freely as convention. 

\subsection{Connections of theta series}
The following proposition gives some interesting connections as for the theta series defined by \eqref{gongshi5.1}, \eqref{gongshi5.3} and \eqref{gongshi5.4}. 

\begin{prop}\label{exam:theta1}
Let $k\geq2$ be an integer, $\mathcal{O}$ be an odd order in $\operatorname{Typ}_i(2^{k})\,\,(i=1,2)$, $\mathcal{O}'=\sigma_2^{-1}\circ {\mathcal M}_+(\mathcal{O})$, 
$S=\mathbb{Z}+2\mathcal{O}$, $S'=\mathbb{Z}+2\mathcal{O}'$.
Then we have
\begin{equation*}
U_4\theta_{S^0}(\tau)=\theta_{\mathcal{O}^0}(\tau)=\theta_{S'^0}(\tau),\,\,\,
U_4\theta_{\mathcal{O}^0}(\tau)=\theta_{\mathcal{O}'^0}(\tau).
\end{equation*}
\end{prop}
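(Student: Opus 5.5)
The plan is to prove the three identities at the level of lattices. Each theta series equality will follow from a statement about the trace-zero sublattices, and $U_4$ corresponds to passing to the sublattice of vectors whose norm is divisible by $4$, followed by division by $4$.

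\textbf{First identity, $U_4\theta_{S^0}=\theta_{\mathcal O^0}$.} Write $\mathcal O=\mathbb Z+\mathbb Z\beta_1+\mathbb Z\beta_2+\mathbb Z\beta_3$ as in Lemma~\ref{prop:basis}, with $\tr(\beta_3)=1$. Then
\[
S^0=\mathbb Z2\beta_1+\mathbb Z2\beta_2+\mathbb Z(2\beta_3-1).
\]
A vector $v=2x\beta_1+2y\beta_2+z(2\beta_3-1)$ has $\n(v)=f_{S^0}(x,y,z)\equiv(4\n(\beta_3)-1)z^2\equiv -z^2\pmod 4$. So $4\mid\n(v)$ exactly when $z$ is even, that is, when $v\in 2\mathcal O^0$. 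This gives
\[
U_4\theta_{S^0}=\sum_{v\in S^0,\,4\mid \n(v)}q^{\n(v)/4}=\sum_{w\in\mathcal O^0}q^{\n(w)}=\theta_{\mathcal O^0}.
\]
This is exactly the content of Proposition~\ref{prop:Watsontrans}. Equivalently, one can read it from \eqref{gongshi5.6}: $a_1(4D)=\rho_{\mathcal O}(D+r^2/4,r)$ with $r$ even, which equals $\rho_{\mathcal O}(D,0)=a_0(D)$ after shifting by the integer $r/2$.

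\textbf{Second identity, $\theta_{\mathcal O^0}=\theta_{S'^0}$.} By construction $S'=\mathbb Z+2\mathcal O'=\mathcal M_+(\mathcal O)=\mathcal O^+$. A trace-zero element has even trace, so $(\mathcal O^+)^0=\mathcal O^0$. Hence $S'^0=\mathcal O^0$ as lattices, and the theta series coincide.

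\textbf{Third identity, $U_4\theta_{\mathcal O^0}=\theta_{\mathcal O'^0}$.} By the second identity, $\theta_{\mathcal O^0}=\theta_{S'^0}$. The hypothesis $k\ge2$ ensures that $\mathcal O'$ exists as an odd Gorenstein order in $\operatorname{Typ}_i(2^{k-2})$, by Theorem~\ref{the:sigma2} and Theorem~\ref{the:M+}. Applying the first identity to $\mathcal O'$ then gives
\[
U_4\theta_{\mathcal O^0}=U_4\theta_{S'^0}=\theta_{\mathcal O'^0}.
\]

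\textbf{Main obstacle.} The delicate step is the bookkeeping for $\mathcal O'$: one must check that $\mathcal O'$ is genuinely odd, since the first identity uses the odd basis. If $k=2$ gives $\mathcal O'\in\operatorname{Typ}_i(1)$, that is, $\mathcal O'_2\simeq M_2(\mathbb Z_2)$, this still holds, so the argument covers all $k\ge 2$. One also has to confirm that $\mathcal O^+$ is an order, which follows from Theorem~\ref{O+4} since $\mathcal O_2\not\simeq M_2(\mathbb Z_2)$.
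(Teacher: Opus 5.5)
Your proposal is correct and follows essentially the same route as the paper. The paper also identifies $S'^0=(\mathcal O^+)^0=\mathcal O^0$, obtains $U_4\theta_{S^0}=\theta_{\mathcal O^0}$ from $a_1(4n)=c(n,0)=a_0(n)$, and then applies this to the odd order $\mathcal O'\in\operatorname{Typ}_i(2^{k-2})$. Your direct lattice argument ($4\mid\n(v)$ iff $v\in 2\mathcal O^0$) is just an explicit form of that coefficient identity.
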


\begin{proof}
Since $k\geq2$ and $\mathcal{O}\in\operatorname{Typ}_i(2^{k})$, we have $\mathcal{O}'\in\operatorname{Typ}_i(2^{k-2})$ by Theorem ~\ref{chains1}, in particular $\mathcal{O}'$ is odd. Again, since $\mathcal{O}'=\sigma_2^{-1}\circ {\mathcal M}_+(\mathcal{O})$, we have $S'=\mathbb{Z}+2\mathcal{O}'=\mathcal{O}^+$ by definition. Hence $\mathcal{O}^0={(\mathcal{O}^+)}^0={S'}^0$, which clearly implies
$$
\theta_{\mathcal{O}^0}(\tau)=\theta_{S'^0}(\tau).
$$ 
Recall that for $m\geq 1$, the operator $U_m$ on modular forms $f(\tau)=\sum_{n\geq 0} a_nq^n$ is defined by 
\begin{equation}
U_mf(\tau)\!:\,=\sum_{n\geq 0} a_{nm}q^n.
\end{equation}
Hence by \eqref{gongshi5.3}, \eqref{gongshi5.4}, and in view of $a_0(n)=c(n,0)=a_1(4n)$ due to ~\eqref{gongshi5.5} and ~\eqref{gongshi5.6}, we get
\begin{equation*}
U_4\theta_{S^0}(\tau)=\theta_{\mathcal{O}^0}(\tau).
\end{equation*}
This together with $\theta_{\mathcal{O}^0}(\tau)=\theta_{S'^0}(\tau)$ yields
$$
U_4\theta_{\mathcal{O}^0}(\tau)=U_4\theta_{S'^0}(\tau)=\theta_{{\mathcal{O}'}^0}(\tau).
$$
Gathering together the above the proof is complete. 
\end{proof}

\begin{prop}\label{prop:theta}
Let $\mathcal{O}$ be a Gorenstein order of discriminant $d$, $p$ be a prime, and $\mathcal{O}'=\varphi_p(\mathcal{O})$ be defined as in Corollary {\rm ~\ref{cor:automorphism}}. Let $S=\mathbb{Z}+2\mathcal{O}$, $S'=\mathbb{Z}+2\mathcal{O}'$. If $\mathcal{O}$ is odd and $2\ne p\parallel d$, then 
\begin{equation*}
\theta_{{S'}^0}(\tau)=V_{p}U_{p}\theta_{S^0}(\tau). 
\end{equation*}
If $\mathcal{O}$ is odd and $p=2\parallel d$, then
\begin{equation*}
\theta_{{\mathcal{O}'}^0}(\tau)=V_{8}U_{8}\theta_{S^0}(\tau).
\end{equation*}
If $\mathcal{O}$ is even, then for any $2\leq p\parallel d$ we have
\begin{equation*}
\theta_{\mathcal{O}'^0}(\tau)=V_{p}U_{p}\theta_{\mathcal{O}^0}(\tau).
\end{equation*}
Moreover, if $2\nmid d$, and $p=2$, then  
\begin{equation*}
    \theta_{{\mathcal{O}'}^0}(\tau)=V_{4}U_{4}\theta_{S^0}(\tau).
\end{equation*}
\end{prop}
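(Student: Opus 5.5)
The plan is to convert each identity of theta series into an identity of ternary forms, using the realisations already proved. For a Gorenstein order, Theorems~\ref{lem:odd} and \ref{the:even} give
\[
f_{S^0}=m_{f_{\mathcal O}}\widetilde f_{\mathcal O}\quad(\mathcal O\ \text{odd}),\qquad
f_{\mathcal O^0}=\tfrac{m_{f_{\mathcal O}}}{4}\widetilde f_{\mathcal O}\quad(\mathcal O\ \text{even}),
\]
and the same formulas hold for $\mathcal O'$. Here $\widetilde f_{\mathcal O'}=\phi_p(\widetilde f_{\mathcal O})$ by construction of $\varphi_p$ in Corollary~\ref{cor:automorphism}. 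Each claimed identity then reduces to comparing $R_{c'\phi_p(g)}(n)$ with $R_{c\,g}(n)$, where $g=\widetilde f_{\mathcal O}$. The operator $V_pU_p$ keeps exactly the coefficients with $p\mid n$ and kills the rest, so we must show
\[
R_{c'\phi_p(g)}(n)=R_{cg}(n)\ \text{if } p\mid n,\qquad R_{c'\phi_p(g)}(n)=0\ \text{if } p\nmid n,
\]
with $V_8U_8$ or $V_4U_4$ in place of $V_pU_p$ in the cases $p=2$.

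\emph{Step 1: divisors and levels at $p$.} Since $p\parallel d=\discrd(\mathcal O)=d_{f_{\mathcal O}}$ and $f_{\mathcal O}=\psi_g$, Lehman's relations $m_{\psi_f}m_f=4N_f$ and $d_{\psi_f}=N_f^3/(4d_f)$ pin down $v_p(N_g)$ and $v_p(d_g)$. There are two cases.
\begin{itemize}
\item If $p\nmid m_{f_{\mathcal O}}$, then $v_p(N_g)=1$ and $v_p(d_g)=2$.
\item If $p\mid m_{f_{\mathcal O}}$, then $v_p(N_g)=1$ and $v_p(d_g)=1$.
\end{itemize}
Thus $\phi_p$ swaps the classes $C(pN,p^2d)$ and $C(pN,pd)$. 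Correspondingly, $m_{f_{\mathcal O'}}=p\,m_{f_{\mathcal O}}$ in the first case and $m_{f_{\mathcal O}}/p$ in the second. I will pick the orientation, or check both, that makes the scalars $c$ and $c'$ match.

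\emph{Step 2: explicit comparison for odd $p$.} Use Lehman's normal form $g=(pa,b,c,r,ps,pt)$ with $h=g=1$, so that $\phi_p(g)(x,y,z)=p\,g(p^{-1}x,y,z)$ by Remark~\ref{rem:phi}.
\begin{itemize}
\item For $x\in p\mathbb Z$ the substitution gives a bijection between solutions of $p\,g(x',y,z)=n$ and solutions of $\phi_p(g)(x,y,z)=n$. Hence the representation numbers of $p\,g$ match those of $\phi_p(g)$ on the sublattice $p\mathbb Z\times\mathbb Z^2$.
\item On the complement, $\phi_p(g)\equiv$ (unit)$\cdot x^2 \pmod p$ in the appropriate case. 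This is where the hypothesis $p\parallel d$ enters.
\end{itemize}
Combined with the scalar bookkeeping from Step 1, the multiples $m_{f_{\mathcal O'}}\phi_p(g)$ and $m_{f_{\mathcal O}}g$ agree on the coefficients with $p\mid n$, and the former vanishes for $p\nmid n$. Lemma~\ref{lem:rep} reinterprets these counts as $\rho_{\mathcal O}(n,r)$.

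\emph{Step 3: the case $p=2$.}
\begin{itemize}
\item If $\mathcal O$ is odd with $2\parallel d$, then $\mathcal O\in\operatorname{Typ}_i(2)$ and $\mathcal O'$ is even by Corollary~\ref{cor:automorphism}. I use Lemma~\ref{Watsontrans111}(1) together with Lehman's $\phi_2$ for $h=g-2$, which explains the operator $V_8$: the level drops $8\to4$ and there is an extra factor $2$ relating $f_{\mathcal O'^0}$ to $S'$.
\item If $2\nmid d$, then Lemma~\ref{Watsontrans111}(2) says $\phi_2=\lambda_4$. Combined with Proposition~\ref{prop:Watsontrans} this gives the $V_4U_4$ statement.
\item For even $\mathcal O$, Step 2 runs verbatim with $f_{\mathcal O^0}$ in place of $f_{S^0}$, using the $p=2$ normal forms of Section~\ref{section3.2}.
\end{itemize}

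The main obstacle is this $2$-adic bookkeeping: tracking the powers of $2$ among $m_{f_{\mathcal O}}$, the $1/4$ in the even normalisation, and Lehman's three $\phi_2$ cases, so that the scaling comes out as exactly $8$ or $4$. I would first verify each case on an explicit example, such as an Eichler order of level $2$, before writing the general computation.
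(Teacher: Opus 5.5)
Your plan is essentially the paper's proof. Both arguments use the same three ingredients:
\begin{itemize}
\item the reduction $f_{S^0}=m_{f_{\mathcal O}}\widetilde f_{\mathcal O}$ (resp.\ $f_{\mathcal O^0}=\tfrac{m_{f_{\mathcal O}}}{4}\widetilde f_{\mathcal O}$);
\item the divisor relations $m_{f_{\mathcal O'}}=p\,m_{f_{\mathcal O}}$, $32\,m_{f_{\mathcal O}}$, $16\,m_{f_{\mathcal O}}$ in the odd-$p$, $2\parallel d$ and $2\nmid d$ cases;
\item the transfer $R_{\widetilde f_{\mathcal O}}(pn)=R_{\widetilde f_{\mathcal O'}}(n)$ (with $8n$, $4n$ for $p=2$), which the paper quotes from \cite[Proposition~6.6]{LZ24} and you rederive from Lehman's normal form.
\end{itemize}

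When you write it up, tighten four points.
\begin{itemize}
\item The hypothesis $p\parallel d$ forces $v_p(N_{f_{\mathcal O}})=1$ and $p\nmid m_{f_{\mathcal O}}$. So your second case in Step~1 is empty, and no orientation is left to choose.
\item Since $v_p(d_{\widetilde f_{\mathcal O}})=2$, your normal form $(pa,b,c,r,ps,pt)$ is the shape of $\widetilde f_{\mathcal O'}$, not of $\widetilde f_{\mathcal O}$. Use $\phi_p\circ\phi_p=\mathrm{id}$ to get $\widetilde f_{\mathcal O}=\phi_p(\widetilde f_{\mathcal O'})$.
\item Even orders have $4\mid d$, so in the even case only odd $p$ can occur; no $p=2$ normal forms are needed there.
\item For $p=2\parallel d$, the relevant step is $\phi_2$ at the fixed level $8$, between $v_2(d)=6$ and $v_2(d)=1$. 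Concretely, $\widetilde f_{\mathcal O}\sim(8a,8b,c,8r,8s,8t)$ with $c$ odd, $\widetilde f_{\mathcal O}(x,y,4z)=8\,\phi_2(\widetilde f_{\mathcal O})(x,y,z)$, and $8\mid\widetilde f_{\mathcal O}(x,y,z)\iff 4\mid z$. By contrast, Lemma~\ref{Watsontrans111}(1) describes $\mathcal O^0$, not $\mathcal O'^0$.
\end{itemize}
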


\begin{proof}
By the definition of $\mathcal{O}'$ as in Corollary ~\ref{cor:automorphism} we have  
\begin{equation}\label{gongshi5.12}
\widetilde{f}_\mathcal{O}=\phi_p(\widetilde{f}_{\mathcal{O}'}).
\end{equation}
Thus, for odd prime $p\parallel d$, by the definition of Lehman's correspondence in section ~\ref{section3.2} we have 
\begin{equation}\label{gongshi5.13}
pm_{f_\mathcal{O}}=m_{f_{\mathcal{O}'}},\,\,(p,\,m_{f_\mathcal{O}})=1.
\end{equation}
Hence if $\mathcal{O}$ is odd then $\mathcal{O}'$ is also odd; and by \cite[Proposition 6.6]{LZ24}, we have for any integer $m$, 
\begin{equation*}
R_{\widetilde{f}_\mathcal{O}}(pm)=R_{\widetilde{f}_{\mathcal{O}'}}(m).
\end{equation*}
This together with Theorem~\ref{the:bijection} and \eqref{gongshi5.13} implies that, for any non-negative integer $D=4n-r^2$ with $n,r\in\mathbb{Z}$, if $p\mid D$, then
\begin{equation*}
\rho_{\mathcal{O}'}(n,r)=
R_{\widetilde{f}_{\mathcal{O}'}}\left(\frac{D}{m_{f_{\mathcal{O}'}}}\right)=
R_{\widetilde{f}_\mathcal{O}}\left(\frac{pD}{m_{f_{\mathcal{O}'}}}\right)
=R_{\widetilde{f}_\mathcal{O}}\left(\frac{D}{m_{f_\mathcal{O}}}\right)=
\rho_\mathcal{O}(n,r);
\end{equation*}
and if $p\nmid D$, then $\rho_{\mathcal{O}'}(n,r)=0$.
Writing  
\begin{equation*}
\theta_{S^0}(\tau)=\sum_{n\geq0}a_1(n)q^n,~\theta_{S'^0}(\tau)=\sum_{n\geq0}a'_1(n)q^n,
\end{equation*}
then by ~\eqref{gongshi5.6} we see that, if $\mathcal{O}$ is odd, for any $D=4n-r^2\geq 0$ with $p\mid D$, one has 
$$
a_1(D)=a'_1(D),
$$
and otherwise, one has $a'_1(D)=0$. Thus we have, again by ~\eqref{gongshi5.6}, 
$$
V_{p}U_{p}\theta_{S^0}(\tau)=\sum_{n\geq0}a_1(pn)q^{pn}=\sum_{n\geq0}a'_1(pn)q^{pn}=\sum_{n\geq0}a'_1(n)q^{n}=\theta_{S'^0}(\tau),
$$
as stated for the first equality of this proposition. 

If $\mathcal{O}$ is odd and $2\parallel d$, then by Lehman's correspondence we see that  
\begin{equation*}
32m_{f_\mathcal{O}}=m_{f_{\mathcal{O}'}},\,\,2\nmid m_{f_\mathcal{O}}, 
\end{equation*}
and $\mathcal{O}'$ is even.
Hence similar to above, for any integer $m$ we have
\begin{equation*}
R_{\widetilde{f}_\mathcal{O}}(8m)=R_{\widetilde{f}_{\mathcal{O}'}}(m).
\end{equation*}
Thus by Theorem~\ref{the:bijection} we find that, for any non-negative integer $D=n-r^2/4$ with $n,r\in\mathbb{Z}$, if $8\nmid D=n-r^2/4$, then $\rho_{\mathcal{O}'}(n,r)=0$; and if $8\mid D$, then
\begin{equation*}
\rho_{\mathcal{O}'}(n,r)=R_{\widetilde{f}_{\mathcal{O}'}}\left(\frac{4D}{m_{f_{\mathcal{O}'}}}\right)
=R_{\widetilde{f}_{\mathcal{O}'}}\left(\frac{4D}{32m_{f_\mathcal{O}}}\right)
=R_{\widetilde{f}_{\mathcal{O}'}}\left(\frac{D}{8m_{f_\mathcal{O}}}\right)
=R_{\widetilde{f}_\mathcal{O}}\left(\frac{D}{m_{f_\mathcal{O}}}\right).
\end{equation*}
Again, for $8\mid D$, it is clear that one can always write $D=4n_1-r_1^2$ with $n_1,r_1\in\mathbb{Z}$. Hence by Theorem~\ref{the:bijection} we also have
$R_{\widetilde{f}_\mathcal{O}}\left(\frac{D}{m_{f_\mathcal{O}}}\right)=\rho_{\mathcal{O}}(n_1,r_1).$ 
This together with the above identity implies
\begin{equation*}
\rho_{\mathcal{O}'}(n,r)=\rho_{\mathcal{O}}(n_1,r_1).
\end{equation*}
Hence, if we write   
\begin{equation*}
\theta_{\mathcal{O}^0}(\tau)=\sum_{n\geq0}a_0(n)q^{n},\,\,\theta_{\mathcal{O}'^0}(\tau)=\sum_{n\geq0}a_0'(n)q^{n}, 
\end{equation*}
then by ~\eqref{gongshi5.6} and ~\eqref{gongshi5.7} we see that for  $8\mid D$, one has
\begin{equation*}
a_0'(D)=\rho_{\mathcal{O}'}(n,r)=\rho_{\mathcal{O}}(n_1,r_1)=a_1(D);
\end{equation*}
and for $8\nmid D$, one has $a_0'(D)=0$. Thus 
\begin{equation*}
V_{8}U_{8}\theta_{S^0}(\tau)=V_{8}U_{8}\sum_{n\geq0}a_1(n)q^{n}=\sum_{n\geq0}a_1(8n)q^{8n}
=\sum_{n\geq0}a_0'(8n)q^{8n}=\sum_{n\geq0}a'_0(n)q^{n}=\theta_{\mathcal{O}'^0}(\tau),
\end{equation*}
as desired for the second identity of the proposition.

If $\mathcal{O}$ is even, then $4\mid d$. Hence from $p\parallel d$ we see that $p$ is odd; whence the third identity follows by using completely the same argument as that for the first one, only with $\mathcal{O}$ instead of $S$. 

Finally, if $2\nmid d$, then $2\nmid m_{f_\mathcal{O}}$, and so $\mathcal{O}$ is odd; and in view of \eqref{gongshi5.12} and Lehman's correspondence, we have 
\begin{equation*}
16m_{f_\mathcal{O}}=m_{f_{\mathcal{O}'}}. 
\end{equation*}
Hence for any integer $m$ we have
\begin{equation*}
R_{\widetilde{f}_\mathcal{O}}(4m)=R_{\widetilde{f}_{\mathcal{O}'}}(m).
\end{equation*}
Thus for any non-negative integer $D=n-r^2/4$ with $n,r\in\mathbb{Z}$, if $4\nmid D=n-r^2/4$, then $a_0'(D)=\rho_{\mathcal{O}'}(n,r)=0$; and if $4\mid D$, then we can write further $D=4n_1-r_1^2$ with $n_1,r_1\in\mathbb{Z}$, so that 
\begin{equation*}
a_0'(D)=\rho_{\mathcal{O}'}(n,r)=R_{\widetilde{f}_{\mathcal{O}'}}\left(\frac{4D}{m_{f_{\mathcal{O}'}}}\right)
=R_{\widetilde{f}_{\mathcal{O}'}}\left(\frac{4D}{16m_{f_\mathcal{O}}}\right)
=R_{\widetilde{f}_{\mathcal{O}'}}\left(\frac{D}{4m_{f_\mathcal{O}}}\right)
=R_{\widetilde{f}_\mathcal{O}}\left(\frac{D}{m_{f_\mathcal{O}}}\right)=\rho_{\mathcal{O}}(n_1,r_1)=a_1(D).
\end{equation*}
Thus
$$
V_{4}U_{4}\theta_{S^0}(\tau)=\sum_{n\geq0}a_1(4n)q^{4n}=\sum_{n\geq0}a_0'(4n)q^{4n}=\theta_{{\mathcal{O}'}^0}(\tau), 
$$
which is the fourth identity of the proposition. 
\end{proof}

\subsection{The level of theta series}
The following theorem gives an explicit description for the level of the theta series associated with Gorenstein orders of discriminant $d$. 
\begin{theorem}\label{prop:mf}
Let $G\left(\frac{3}{2},N,\chi_{d}\right)$ denote the space of modular
forms of weight $3/2$, level $N$, associated with the character $\chi_{d}=\left(\frac{d}{\cdot}\right)$.
Let $\mathcal{O}$ be a Gorenstein order with discriminant $d$. If $\mathcal{O}$ is odd, then
\begin{equation*}
\theta_{S^0}(\tau)\in G\left(\frac{3}{2},4d, {\chi_0}\right), \,\,\, \theta_{\mathcal{O}^0}(\tau)\in 
\begin{cases}
G\left(\frac{3}{2},d,{\chi_0}\right) & \text{if $4\mid d$;}\\
G\left(\frac{3}{2},2d,{\chi_0}\right)& \text{if $2\parallel d$;}\\
G\left(\frac{3}{2},4d,{\chi_0}\right)& \text{if $2\nmid d$.}\\
\end{cases}
\end{equation*}
If $\mathcal{O}$ is even, then
\begin{equation*}
\theta_{\mathcal{O}^0}(\tau)\in G\left(\frac{3}{2},d,{\chi_0}\right).
\end{equation*}
Here $\chi_0$ denotes principal characters.  
\end{theorem}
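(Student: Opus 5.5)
The plan is to reduce everything to the standard theorem on theta series of positive definite integral ternary forms. For a positive definite form $g$ with Gram matrix $M_g$, $\theta_g(\tau)=\sum_{\mathbf x}q^{g(\mathbf x)}$ lies in $G(\tfrac32,N_g,\chi)$. Here $N_g$ is the level in the sense of Section~\ref{Sec:1}, i.e.\ the least $N$ with $N M_g^{-1}$ even integral. The character is $\chi=\left(\frac{4d_g}{\cdot}\right)$ up to the usual theta-multiplier normalization; I will need to check that this is what the statement calls $\chi_0$. So in each case I identify the relevant form, compute its level, and show that the level divides the claimed one. Since $N_{cg}=cN_g$, I work throughout with primitive forms and track the scaling factor.

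First I treat the even case. By Theorem~\ref{the:even}, $f_{\mathcal O^0}=\frac{m_{f_{\mathcal O}}}{4}\psi_{f_{\mathcal O}}$. By the relation $M_{\psi_f}=N_fM_f^{-1}$ for primitive $f$, the Gram matrices of $f_{\mathcal O}$ and $\psi_{f_{\mathcal O}}$ are mutually inverse up to the scalar $N_{f_{\mathcal O}}$. Hence $N_{\psi_{f_{\mathcal O}}}=N_{f_{\mathcal O}}$, which is the reciprocity statement implicit in Lehman's theory. Then $M_{f_{\mathcal O^0}}^{-1}=\frac{4}{m_{f_{\mathcal O}}N_{f_{\mathcal O}}}M_{f_{\mathcal O}}$. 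Using $m_fN_f=4d_f$ and $d_{f_{\mathcal O}}=d=\operatorname{discrd}(\mathcal O)$ from Lemma~\ref{Lemma3.3}, this equals $d^{-1}M_{f_{\mathcal O}}$. Since $M_{f_{\mathcal O}}$ is even integral and primitive, the level of $f_{\mathcal O^0}$ is exactly $d$.

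The odd case for $S^0$ is identical. Theorem~\ref{lem:odd} gives $f_{S^0}=m_{f_{\mathcal O}}\psi_{f_{\mathcal O}}$, so $M_{f_{S^0}}^{-1}=\frac{1}{4d}M_{f_{\mathcal O}}$ and the level is $4d$.

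The main work is $\theta_{\mathcal O^0}$ for odd $\mathcal O$, where $f_{\mathcal O^0}=\lambda_4(f_{S^0})$ by Proposition~\ref{prop:Watsontrans}. The explicit computation there shows $f_{\mathcal O^0}(x,y,z)=\frac14 f_{S^0}(x,y,2z)$. Equivalently, $\mathcal O^0$ is the lattice spanned by $\tfrac12 S^0\cap\mathcal O$, and $\mathcal O^0\supset S^0$ with index $4$.

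I would compute its level $2$-adically, since away from $2$ the forms agree up to units. By Theorem~\ref{the:oddorder}, $\mathcal O_2$ is one of three explicit models: $\mathcal O_2^{(2u+1)}$, $\mathcal O_2^{(2u+2)}$, or Eichler of level $2^v$. Correspondingly, $f_{S^0}$ is $2$-adically a unit multiple of $\psi$ of the local normal forms $3\cdot2^{h}x^2-(y^2+yz+z^2)$ or $-2^hx^2-yz$, with $h=v_2(d)$. Applying $\lambda_4$ and computing $2$-adic levels gives the following.
\begin{itemize}
\item If $h\ge2$, the $2$-part of the level is $2^h$, so the level is $d$.
\item If $h=1$, Lemma~\ref{Watsontrans111}(1) gives level dropping from $8\cdot$ to $4\cdot$, i.e.\ $2d$.
\item If $h=0$, Lemma~\ref{Watsontrans111}(2) gives level $4d$.
\end{itemize}
I expect this $2$-adic bookkeeping to be the main obstacle, in particular the uniform verification for $h\ge2$ across both local types. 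I would first try it via the chain of Theorem~\ref{chains1}, using $\mathcal O^0=S'^0$ with $\mathcal O'=\sigma_2^{-1}\mathcal M_+(\mathcal O)$ of discriminant $d/4$. The $S^0$ case applied to $\mathcal O'$ then gives level $4\cdot d/4=d$ directly whenever $4\mid d$.

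Finally, I would check that the nebentypus is trivial in each case. The discriminant of each form is a square times a power of $2$ compatible with the level (cf.\ the squareness conditions in Corollary~\ref{cor:tqf}), so the character attached to the theta series is principal on $\Gamma_0$ of the stated level.
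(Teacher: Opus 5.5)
Your proposal is correct and follows essentially the paper's route. The even and $S^0$ cases come from $f_{\mathcal O^0}=\frac{m_{f_\mathcal O}}{4}\psi_{f_\mathcal O}$ and $f_{S^0}=m_{f_\mathcal O}\psi_{f_\mathcal O}$: you read the level off $M^{-1}$, where the paper writes $\theta_{S^0}=V_{m_{f_\mathcal O}}\theta_{\widetilde f_\mathcal O}$. The odd $\mathcal O^0$ case is handled as in the paper, via $\mathcal O^0=S'^0$ with $\operatorname{discrd}(\mathcal O')=d/4$ when $4\mid d$, and via Lemma~\ref{Watsontrans111} when $4\nmid d$. For the character, your Gram-matrix computation already gives the discriminants as the perfect squares $d^2/4$, $16d^2$ and $d^2$; state that in place of the vaguer ``square times a power of $2$''.
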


\begin{proof}
Denote, for any positive definite ternary quadratic form $f$, 
\begin{equation}\label{gongshi5.8}
\theta_{f}(\tau)\!:\,=\sum_{n\geq0} R_f(n)q^n.
\end{equation}
It is known that (see, e.g. \cite[Theorem 10.1]{WP12}), if $f$ is of level $N_f$ and discriminant $d_f$, then 
$$
\theta_{f}(\tau)\in G\left(\frac{3}{2},N_f,\chi_{d_f}\right).
$$ 
For a positive integer $m\geq 1$, define the operator $V_m$ on modular forms $f(\tau)=\sum_{n\geq 0} a_nq^n$ by 
\begin{equation}\label{gongshi5.9}
V_mf(\tau)\!:\,=\sum_{n\geq 0} a_{n}q^{mn}.
\end{equation}
Now, if $\mathcal{O}$ is odd, then $f_{S^0}=m_{f_{\mathcal{O}}}\widetilde{f}_\mathcal{O}$ by~\eqref{gongshi1.2}.
Hence by ~\eqref{gongshi5.4}, ~\eqref{gongshi5.8} and ~\eqref{gongshi5.9} we have  
\begin{equation*}
\theta_{S^0}(\tau)=V_{m_{f_\mathcal{O}}}\theta_{\widetilde{f}_\mathcal{O}}(\tau).
\end{equation*}
Again, by the assumption $\operatorname{discrd}(\mathcal{O})=d$ and Lemma \ref{Lemma3.3} we have $d_{f_\mathcal{O}}=d$. By $\psi_{f_{\mathcal{O}}}=\widetilde{f}_{\mathcal{O}}$ and section ~\ref{section3.2} we have $N_{\widetilde{f}_\mathcal{O}}=N_{f_\mathcal{O}}$, $d_{\widetilde{f}_\mathcal{O}}=N_{f_\mathcal{O}}^3/4d_{f_\mathcal{O}}$. Hence from  $m_{f_\mathcal{O}}N_{f_\mathcal{O}}=4d_{f_\mathcal{O}}$ we get
$$
m_{f_\mathcal{O}}N_{\widetilde f_\mathcal{O}}=m_{f_\mathcal{O}}N_{f_\mathcal{O}}=4d_{f_\mathcal{O}}=4d,\,\,\,
m_{f_\mathcal{O}}d_{\widetilde{f}_\mathcal{O}}=m_{f_\mathcal{O}}N_{f_\mathcal{O}}^3/4d_{f_\mathcal{O}}=N_{f_\mathcal{O}}^2. 
$$
Thus from $\theta_{\widetilde{f}_\mathcal{O}}(\tau)\in G\left(\frac{3}{2},N_{\widetilde{f}_\mathcal{O}},\chi_{d_{\widetilde{f}_\mathcal{O}}}\right)$ we get
\begin{equation*}
\theta_{S^0}(\tau)\in G\left(\frac{3}{2},4d,{\chi_0}\right).
\end{equation*}

For $\theta_{{\mathcal{O}}^0}(\tau)$, if $\mathcal{O}$ is even (which implies $4\mid d$), the argument is completely the same as the above case except that we need use $m_{f_\mathcal{O}}/4$ instead of $m_{f_\mathcal{O}}$~\eqref{gongshi1.2}. 
If $\mathcal{O}$ is odd, then it is contained in some $\operatorname{Typ}_i(2^{k})$. We consider two cases according to $k\geq 2$ or not. If $k\geq 2$ then $[\mathcal{O}':\mathcal{O}]=4$, i.e., $\operatorname{discrd}(\mathcal{O})=4\operatorname{discrd}(\mathcal{O}')$ under the notations of Proposition ~\ref{exam:theta1}. Hence
\begin{equation*}
\theta_{\mathcal{O}^0}(\tau)=\theta_{{S'}^0}(\tau)\in G\left(\frac{3}{2},d,{\chi_0}\right)
\end{equation*}
as what we desire. If $k=0,1$, then the desired result follows directly from Lemma ~\ref{Watsontrans111}. 
\end{proof}

\begin{cor}\label{cor5.2}
Let $\mathcal{O}$ be a Gorenstein order of discriminant $d$. Then $\theta_\mathcal{O}(\tau,z)$ is a Jacobi form of weight $2$, index $1$ and level $d$ with trivial character. 
\end{cor}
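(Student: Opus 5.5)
The plan is to realize $\theta_{\mathcal O}(\tau,z)$ through the theta decomposition of index-one Jacobi forms. Each component will be a weight $3/2$ theta series of the trace-zero lattice $S^0$. The modularity of those pieces is then exactly what Theorem~\ref{prop:mf} provides.

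\textbf{Step 1: theta decomposition.} By Lemma~\ref{lem:rep}, $\rho_{\mathcal O}(n,r)$ depends only on $D=4n-r^2$; it equals $a_1(D)$, the coefficient of $\theta_{S^0}$. The dependence on $r$ alone is through $r \bmod 2$, since $D\equiv -r^2 \pmod 4$. Hence
\[
\theta_{\mathcal O}(\tau,z)=\sum_{\mu\in\{0,1\}} h_\mu(\tau)\,\vartheta_{1,\mu}(\tau,z),
\qquad
h_\mu(\tau)=\sum_{D\equiv -\mu^2\ (4)} a_1(D)\,q^{D/4},
\]
where $\vartheta_{1,\mu}=\sum_{r\equiv\mu\ (2)}q^{r^2/4}\zeta^r$ are the standard index-one theta functions. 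Equivalently, $h_0(4\tau)+h_1(4\tau)=\theta_{S^0}(\tau)$. By the Eichler--Zagier correspondence at level $d$, it suffices to show that $\sum_\mu h_\mu(4\tau)$ lies in Kohnen's plus space of weight $3/2$ and level $4d$. Concretely, this means a modular form on $\Gamma_0(4d)$, with the character compatible with $\vartheta$, whose coefficients vanish unless $-D$ is a square mod $4$.

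\textbf{Step 2: modularity of $\theta_{S^0}$.} For odd $\mathcal O$, Theorem~\ref{prop:mf} gives $\theta_{S^0}\in G(\tfrac32,4d,\chi_0)$. The plus-space condition holds automatically, because every $D$ in the support is of the form $4n-r^2$. For even $\mathcal O$ we have $S^0=2\mathcal O^0$, so $\theta_{S^0}(\tau)=\theta_{\mathcal O^0}(4\tau)=V_4\theta_{\mathcal O^0}$. Since $\theta_{\mathcal O^0}\in G(\tfrac32,d,\chi_0)$ by Theorem~\ref{prop:mf}, applying $V_4$ lands in level $4d$; the support is contained in $4\mathbb Z$, which is again compatible with $r$ even. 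Combining Step~1 with this modularity gives a Jacobi form of weight $\tfrac32+\tfrac12=2$, index $1$ and level $d$. The character is trivial because the character $\chi_{d_{\widetilde f}}$ of $\theta_{\widetilde f_{\mathcal O}}$ is multiplied by that coming from the $V_{m_f}$-scaling; this was shown in Theorem~\ref{prop:mf} to yield $\chi_0$ (since $m_f d_{\widetilde f}=N_f^2$).

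\textbf{Step 3: the main obstacle.} The delicate point is passing from "$\theta_{S^0}$ is modular on $\Gamma_0(4d)$" to the vector-valued transformation law of $(h_0,h_1)$ under the metaplectic Weil representation. In other words, I need to verify that the multiplier system really matches the plus-space multiplier rather than a twist of it. My first approach avoids this bookkeeping altogether. I would view $\theta_{\mathcal O}(\tau,z)=\sum_{\alpha\in\mathcal O}e(\tau\n(\alpha)+z\tr(\alpha))$ directly as a Jacobi theta series of the rank-$4$ lattice $\mathcal O$ with the quadratic form $\n$ (whose bilinear form is $\tr(\alpha\bar\beta)$) and the vector $1\in\mathcal O$ with $\n(1)=1$. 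The standard theorem on Jacobi theta series of lattices then gives weight $4/2=2$, index $\n(1)=1$, and level equal to the level of $(\mathcal O,\n)$. Thus it remains to show that this level divides $d$ and that the character is $\chi_{\det}$ with $\det=\discrd(\mathcal O)^2$ a square, hence trivial. For Gorenstein $\mathcal O$ one has $\mathcal O^\sharp=\alpha^{-1}\mathcal O$ locally with $\n(\alpha)$ generating $(d)$. Consequently $d\,\n(\beta)\in\mathbb Z$ for all $\beta\in\mathcal O^\sharp$, which gives level $d$. I expect checking this local claim prime by prime via Theorem~\ref{the:oddorder} at $2$ to be the only real work.
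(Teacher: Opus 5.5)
Your Step~3 route is correct and is essentially the paper's proof. The paper simply applies the argument of \cite[Proposition~4.1]{BSZ19} verbatim to the even lattice $(\mathcal O,\n)$ with the vector $1$, giving weight $2$, index $\n(1)=1$, trivial character because the Gram determinant is $\operatorname{discrd}(\mathcal O)^2$, and level dividing $d$ because $\mathcal O^\sharp_p=\alpha_p\mathcal O_p$ with $\n(\alpha_p)\in d^{-1}\mathbb Z_p^\times$. That last fact follows from $[\mathcal O^\sharp:\mathcal O]=d^2$ alone, so Theorem~\ref{the:oddorder} is not needed. You should drop Steps~1--2 rather than present them as the plan: the converse of the Eichler--Zagier/Kohnen correspondence they rely on is only standard at odd level, so for even $d$ that route stays incomplete without exactly the multiplier bookkeeping you flag.
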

\begin{proof}
The argument in the proof of
\cite[Proposition~4.1]{BSZ19} applies verbatim to the norm
lattice of $\mathcal O$ and yields the asserted Jacobi
transformation law. The Fourier expansion follows from
the definition of $\theta_{\mathcal O}(\tau,z)$.
\end{proof}

\begin{remark}
By~\cite[Theorem 5.4]{EZa85}, the relation with Kohnen's plus space can be described uniformly
for odd and even orders. Put $S=\mathbb{Z}+2\mathcal{O}$ and write
\[
H_{\mathcal{O}}(\tau)
:=\theta_{S^0}(\tau)
=\sum_{D\geq0}a_{\mathcal{O}}(D)q^D.
\]
Since
\[
S^0=\{2\alpha-\tr(\alpha):\alpha\in\mathcal{O}\}
\]
and
\[
\n(2\alpha-\tr(\alpha))
=4\n(\alpha)-\tr(\alpha)^2,
\]
we have $a_{\mathcal{O}}(D)=0$ unless
$D\equiv0,3\pmod4$. If $\mathcal{O}$ is even, then
$S^0=2\mathcal{O}^0$, so that
\[
H_{\mathcal{O}}(\tau)=\theta_{\mathcal{O}^0}(4\tau).
\]
Thus, in both cases, Theorem~\ref{prop:mf} gives
\[
H_{\mathcal{O}}\in
M_{3/2}^{+}(\Gamma_0(4d))
\]
with trivial character, where the plus subspace is defined
by the above Fourier coefficient condition.
\end{remark}

\subsection{Relations of traces of Brandt matrices}
The following proposition gives a description for the relations of the traces of Brandt matrices related to $\mathcal{O}$ and $\varphi_p(\mathcal{O})$.

\begin{prop}\label{prop:trace}
Let $\mathcal{O}$ be a Gorenstein order of discriminant $d$ and $T_{\mathcal{O}}(n)$ denote the $n$-th Brandt matrix related to $\mathcal{O}$. Let $\mathcal{O}'=\varphi_p(\mathcal{O})$, $S=\mathbb{Z}+2\mathcal{O}$, with $\varphi_p$ as in Corollary ~{\rm\ref{cor:automorphism}}. Then for positive integers $n,\,k\geq 1$, if $2<p\parallel d$, one has  
\begin{equation*}
\frac{\tr(T_\mathcal{O}(p^kn))}{|\operatorname{Pic}(\mathcal{O})|}-
\frac{\tr(T_{\mathcal{O}'}(p^kn))}{|\operatorname{Pic}(\mathcal{O}')|}
=\sum_{\substack{r^2\leq 4p^kn\\ p\nmid r}}R_{\operatorname{gen}(\widetilde{f}_{\mathcal{O}})}\left(\frac{4p^kn-r^2}{m_{f_{\mathcal{O}}}}\right)
;
\end{equation*}
if $2=p\parallel d$, one has 
\begin{equation*}
\frac{\tr(T_S(2^{k+2}n))}{|\operatorname{Pic}(S)|}-\frac{\tr(T_{\mathcal{O}'}(2^{k+2}n))}{|\operatorname{Pic}(\mathcal{O}')|}
=\sum_{\substack{r^2\leq 2^{k+4}n\\ 8\nmid r}}R_{\operatorname{gen}(\widetilde{f}_{S})}\left(\frac{2(2^{k+4}n-r^2)}{m_{f_{S}}}\right);
\end{equation*}
if $2\nmid d$ and $p=2$, one has   
\begin{equation*}
\frac{\tr(T_S(2^{k+1}n))}{|\operatorname{Pic}(S)|}-\frac{\tr(T_{\mathcal{O}'}(2^{k+1}n))}{|\operatorname{Pic}(\mathcal{O}')|}
=\sum_{\substack{r^2\leq 2^{k+3}n\\ 4\nmid r}}R_{\operatorname{gen}(\widetilde{f}_{S})}\left(\frac{2(2^{k+3}n-r^2)}{m_{f_{S}}}\right).
\end{equation*}
\end{prop}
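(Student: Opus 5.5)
The plan is to combine Corollary~\ref{cor:trace} with the local reciprocity relations used in the proof of Proposition~\ref{prop:theta}. The aim is to show that, term by term in $r$, the summands for $\mathcal O$ and $\mathcal O'=\varphi_p(\mathcal O)$ agree whenever $p\mid r$ (respectively $8\mid r$, $4\mid r$), while the summands for $\mathcal O'$ vanish otherwise.

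\textbf{Case $2<p\parallel d$.} By Corollary~\ref{cor:trace} with conductor $m=1$,
\[
\frac{\tr(T_\mathcal{O}(N))}{|\operatorname{Pic}(\mathcal{O})|}=\sum_{r^2\leq 4N}R_{\operatorname{gen}(\widetilde f_\mathcal{O})}\Bigl(\frac{4N-r^2}{m_{f_\mathcal{O}}}\Bigr),
\]
and the analogous formula holds for $\mathcal O'$. Here $N=p^kn$ and $\mathcal O'$ is Gorenstein.

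From \eqref{gongshi5.13} we have $m_{f_{\mathcal O'}}=pm_{f_\mathcal O}$, and we know $R_{\widetilde f_\mathcal O}(pm)=R_{\widetilde f_{\mathcal O'}}(m)$. I would first upgrade this identity to genus averages. Since $\phi_p$ is a bijection between the classes of the two genera (Lehman) and preserves $|\Aut|$, summing over the genus gives $R_{\operatorname{gen}(\widetilde f_{\mathcal O'})}(m)=R_{\operatorname{gen}(\widetilde f_\mathcal O)}(pm)$.

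Then for each $r$ the $\mathcal O'$-term equals $R_{\operatorname{gen}(\widetilde f_\mathcal O)}\bigl((4N-r^2)/m_{f_\mathcal O}\bigr)$ when $p\mid 4N-r^2$, and it vanishes otherwise, because the argument is then non-integral. Since $k\geq1$, we have $p\mid 4N-r^2$ if and only if $p\mid r$. Subtracting the two formulas leaves exactly the sum over $p\nmid r$.

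\textbf{Cases $p=2$.} The same template applies, with $S=\mathbb Z+2\mathcal O$ in place of $\mathcal O$. Here $S$ is not Gorenstein, so Corollary~\ref{cor:trace} is applied with conductor $m=2$, which produces the factor $2$ in the argument $2(4N-r^2)/m_{f_S}$.

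By Theorem~\ref{the:bijection}(3), the counts for $S$ translate into counts for $\mathcal O$ with $D=4N-r^2$. Lemma~\ref{lem:rep} relates $\rho_S(N,r)$ to $R_{f_{S'^0}}$-type data. For the even order $\mathcal O'$ I would use the identity $R_{\widetilde f_\mathcal O}(8m)=R_{\widetilde f_{\mathcal O'}}(m)$ (respectively $4m$) from the proof of Proposition~\ref{prop:theta}, together with $m_{f_{\mathcal O'}}=32m_{f_\mathcal O}$ (respectively $16m_{f_\mathcal O}$). Traces in $S$ are even, so $r$ runs over even integers, and $N=2^{k+2}n$ (respectively $2^{k+1}n$).

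The main bookkeeping is to check two things. First, the $\mathcal O'$-term is nonzero only when $8\mid r$ (respectively $4\mid r$). Second, in that range it coincides with the $S$-term, both being expressed via $\widetilde f_S=2\widetilde f_\mathcal O$ and $m_{f_S}=4m_{f_\mathcal O}$ or the corresponding normalization. Subtracting then gives the stated sums over $8\nmid r$ and $4\nmid r$.

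\textbf{Main obstacle.} I expect the hardest step to be the $2$-adic bookkeeping. One must verify that the divisibility condition for the $\mathcal O'$-argument to be an integer represented by the genus is exactly $8\mid r$ (respectively $4\mid r$) given the power $2^{k+2}$ or $2^{k+1}$. One must also identify $|\operatorname{Pic}(S)|$ and $m_{f_S}$ correctly, and confirm that $\varphi_2$ (via $\phi_2$, cases (1) and (3) of Lehman's list) transports genus averages with matching automorphism counts. I would first settle this on the local level at $2$ using the normal forms of Proposition~\ref{lem:oddtqf}, checking which residues of $D$ modulo $32$ are represented.
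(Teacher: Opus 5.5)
Your proposal is essentially the paper's proof. It uses Corollary~\ref{cor:trace} for both orders (conductor $2$ for $S$), the Lehman identities $R_{\widetilde f_{\mathcal O}}(pm)=R_{\widetilde f_{\mathcal O'}}(m)$ (resp.\ $8m$, $4m$) lifted to genus averages via the class bijection $\phi_p$, and a termwise comparison in $r$ using $m_{f_{\mathcal O'}}=p\,m_{f_{\mathcal O}}$ (resp.\ $32m_{f_{\mathcal O}}$, $16m_{f_{\mathcal O}}$), $\widetilde f_S=2\widetilde f_{\mathcal O}$ and $m_{f_S}=4m_{f_{\mathcal O}}$. The $2$-adic step you flag as the main obstacle is the same integrality argument as for odd $p$, so local normal forms, residues modulo $32$ and $|\operatorname{Pic}(S)|$ are not needed. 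Since $k\geq1$, $32\mid 4N=2^{k+4}n$ (resp.\ $16\mid 4N=2^{k+3}n$), so the $\mathcal O'$-term vanishes unless $8\mid r$ (resp.\ $4\mid r$). For such $r$ both terms equal $R_{\operatorname{gen}(\widetilde f_{\mathcal O})}\bigl((4N-r^2)/4m_{f_{\mathcal O}}\bigr)$, because $m_{f_{\mathcal O}}$ is odd.
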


\begin{proof}
By Corollary~\ref{cor:trace} we have
\begin{equation}\label{gongshi5.141}
\tr(T_{\mathcal{O}}(n))=|\operatorname{Pic}(\mathcal{O})|\sum_{r^2\leq 4n} R_{\operatorname{gen}(\widetilde{f}_\mathcal{O})}\left(\frac{4n-r^2}{m_{f_{\mathcal{O}}}}\right),
\end{equation}
and
\begin{equation}\label{gongshi5.151}
\tr(T_{\mathcal{O}'}(n))=|\operatorname{Pic}(\mathcal{O}')|\sum_{r^2\leq 4n} R_{\operatorname{gen}(\widetilde{f}_{\mathcal{O}'})}\left(\frac{4n-r^2}{m_{f_{\mathcal{O}'}}}\right),
\end{equation}
where
\begin{equation*}
R_{\operatorname{gen}(\widetilde f_{\mathcal{O}})}\left(n\right)=\sum\limits_{f\in \operatorname{gen}(\widetilde f_{\mathcal{O}})}\frac{R_{f}\left({n}\right)}{|\Aut(f)|},\,\,\,
R_{\operatorname{gen}(\widetilde f_{\mathcal{O}'})}\left(n\right)=\sum\limits_{f\in \operatorname{gen}(\widetilde f_{\mathcal{O}'})}\frac{R_{f}\left({n}\right)}{|\Aut(f)|}.
\end{equation*}
Recall that by the definition of $\varphi_p$ and $\mathcal{O}'=\varphi_p(\mathcal{O})$ we have $\widetilde{f}_{\mathcal{O}}=\phi_p(\widetilde{f}_{\mathcal{O}'})$, hence $\phi_p$ canonically induces a bijection between the equivalence classes of $\operatorname{gen}(\widetilde f_{\mathcal{O}'})$ and $\operatorname{gen}(\widetilde f_{\mathcal{O}})$. Note that by \cite[Proposition 2.5, 6.6]{LZ24} and in view of $p\parallel d$ we have for any $f\in \operatorname{gen}(\widetilde f_{\mathcal{O}'})$,
$$
R_{f}\left(n\right)=R_{\phi_p(f)}\left(pn\right),\,\,\, |\Aut(f)|=|\Aut(\phi_p(f))|.
$$
Thus
$$
R_{\operatorname{gen}(\widetilde f_{\mathcal{O}})}\left(pn\right)
=\sum\limits_{f\in \operatorname{gen}(\widetilde f_{\mathcal{O}})}\frac{R_{f}\left({pn}\right)}{|\Aut(f)|}
=\sum\limits_{f\in \operatorname{gen}(\widetilde f_{\mathcal{O}'})}\frac{R_{\phi_p(f)}\left({pn}\right)}{|\Aut(\phi_p(f))|}
=\sum\limits_{f\in \operatorname{gen}(\widetilde f_{\mathcal{O}'})}\frac{R_{f}\left({n}\right)}{|\Aut(f)|}
=R_{\operatorname{gen}(\widetilde f_{\mathcal{O}'})}\left(n\right).
$$
Hence by ~\eqref{gongshi5.13} we have for $p\parallel d$, 
\begin{align*}
&\quad \sum_{r^2\leq 4p^kn}R_{\operatorname{gen}(\widetilde{f}_{\mathcal{O}})}\left(\frac{4p^kn-r^2}{m_{f_{\mathcal{O}}}}\right)\\
&=\sum_{p^2r^2\leq 4p^kn}R_{\operatorname{gen}(\widetilde{f}_{\mathcal{O}})}\left(\frac{p(4p^kn-p^2r^2)}{m_{f_{\mathcal{O}'}}}\right)
+\sum_{r^2\leq 4p^kn,\,\, p\nmid r}R_{\operatorname{gen}(\widetilde{f}_{\mathcal{O}})}\left(\frac{4p^kn-r^2}{m_{f_{\mathcal{O}}}}\right)\\
&=\sum_{p^2r^2\leq 4p^kn}R_{\operatorname{gen}(\widetilde{f}_{\mathcal{O}'})}\left(\frac{4p^kn-p^2r^2}{m_{f_{\mathcal{O}'}}}\right)
+\sum_{r^2\leq 4p^kn,\,\, p\nmid r}R_{\operatorname{gen}(\widetilde{f}_{\mathcal{O}})}\left(\frac{4p^kn-r^2}{m_{f_{\mathcal{O}}}}\right)\\
&=\sum_{r^2\leq 4p^kn}R_{\operatorname{gen}(\widetilde{f}_{\mathcal{O}'})}\left(\frac{4p^kn-r^2}{m_{f_{\mathcal{O}'}}}\right)
+\sum_{r^2\leq 4p^kn,\,\, p\nmid r}R_{\operatorname{gen}(\widetilde{f}_{\mathcal{O}})}\left(\frac{4p^kn-r^2}{m_{f_{\mathcal{O}}}}\right).
\end{align*}
Here for the last equality we used the fact that if $p\nmid 4p^kn-r^2$, i.e. $p\nmid r$ then $R_{\operatorname{gen}(\widetilde{f}_{\mathcal{O}'})}\left(\frac{4p^kn-r^2}{m_{f_{\mathcal{O}'}}}\right)=0$.  
This together with ~\eqref{gongshi5.141} and ~\eqref{gongshi5.151} proves the first identity of the proposition.

If $2\parallel d$, then similar to the above, we have
\begin{equation}\label{gongshi5.161}
\tr(T_{S}(n))=|\operatorname{Pic}(S)|\sum_{r^2\leq 4n} R_{\operatorname{gen}(\widetilde{f}_S)}\left(\frac{2(4n-r^2)}{m_{f_{S}}}\right),
\end{equation}
and similar to \cite[Proposition 6.6]{LZ24} we have for any $f\in \operatorname{gen}(\widetilde f_{\mathcal{O}'})$, 
$$
R_{f}\left(n\right)=R_{\phi_2(f)}\left(8n\right).
$$
Thus from $\widetilde{f}_S=2\widetilde{f}_\mathcal{O}$ we get 
\begin{align*}
R_{\operatorname{gen}(\widetilde f_{S})}\left(16n\right)
&=R_{\operatorname{gen}(\widetilde f_{\mathcal{O}})}\left(8n\right)
=\sum\limits_{f\in \operatorname{gen}(\widetilde f_{\mathcal{O}})}\frac{R_{f}\left({8n}\right)}{|\Aut(f)|}\\
&=\sum\limits_{f\in \operatorname{gen}(\widetilde f_{\mathcal{O}'})}\frac{R_{\phi_2(f)}\left({8n}\right)}{|\Aut(\phi_2(f))|}
=\sum\limits_{f\in \operatorname{gen}(\widetilde f_{\mathcal{O}'})}\frac{R_{f}\left({n}\right)}{|\Aut(f)|}
=R_{\operatorname{gen}(\widetilde f_{\mathcal{O}'})}\left(n\right).
\end{align*}
Hence in view of $m_{f_{\mathcal{O}'}}=32m_{f_\mathcal{O}},\,\,(2,\,m_{f_\mathcal{O}})=1$ and $m_{f_S}=4m_{f_\mathcal{O}}$, we obtain 
\begin{align*}
&\quad \sum_{r^2\leq 4\cdot 2^{k+2}n}R_{\operatorname{gen}(\widetilde{f}_{S})}\left(\frac{2(4\cdot 2^{k+2}n-r^2)}{m_{f_{S}}}\right)\\
&=\sum_{2^6r^2\leq 4\cdot 2^{k+2}n}R_{\operatorname{gen}(\widetilde{f}_{S})}\left(\frac{16(4\cdot 2^{k+2}n-2^6r^2)}{m_{f_{\mathcal{O}'}}}\right)
+\sum_{r^2\leq 2^{k+4}n,\,\, 8\nmid r}R_{\operatorname{gen}(\widetilde{f}_{S})}\left(\frac{2(4\cdot 2^{k+2}n-r^2)}{m_{f_{S}}}\right)\\
&=\sum_{2^6r^2\leq 4\cdot 2^{k+2}n}R_{\operatorname{gen}(\widetilde{f}_{\mathcal{O}'})}\left(\frac{4\cdot 2^{k+2}n-2^6r^2}{m_{f_{\mathcal{O}'}}}\right)
+\sum_{r^2\leq 2^{k+4}n,\,\, 8\nmid r}R_{\operatorname{gen}(\widetilde{f}_{S})}\left(\frac{2(4\cdot 2^{k+2}n-r^2)}{m_{f_{S}}}\right)\\
&=\sum_{r^2\leq 4\cdot 2^{k+2}n}R_{\operatorname{gen}(\widetilde{f}_{\mathcal{O}'})}\left(\frac{4\cdot 2^{k+2}n-r^2}{m_{f_{\mathcal{O}'}}}\right)
+\sum_{r^2\leq 2^{k+4}n,\,\, 8\nmid r}R_{\operatorname{gen}(\widetilde{f}_{S})}\left(\frac{2(2^{k+4}n-r^2)}{m_{f_{S}}}\right).
\end{align*}
This together with ~\eqref{gongshi5.151} and ~\eqref{gongshi5.161} proves the second identity of the proposition.

If $2\nmid d$, then we have for any $f\in \operatorname{gen}(\widetilde f_{\mathcal{O}'})$, 
$$
R_{f}\left(n\right)=R_{\phi_2(f)}\left(4n\right).
$$
Thus from $\widetilde{f}_S=2\widetilde{f}_\mathcal{O}$ we get 
\begin{align*}
R_{\operatorname{gen}(\widetilde f_{S})}\left(8n\right)
&=R_{\operatorname{gen}(\widetilde f_{\mathcal{O}})}\left(4n\right)
=\sum\limits_{f\in \operatorname{gen}(\widetilde f_{\mathcal{O}})}\frac{R_{f}\left({4n}\right)}{|\Aut(f)|}\\
&=\sum\limits_{f\in \operatorname{gen}(\widetilde f_{\mathcal{O}'})}\frac{R_{\phi_2(f)}\left({4n}\right)}{|\Aut(\phi_2(f))|}
=\sum\limits_{f\in \operatorname{gen}(\widetilde f_{\mathcal{O}'})}\frac{R_{f}\left({n}\right)}{|\Aut(f)|}
=R_{\operatorname{gen}(\widetilde f_{\mathcal{O}'})}\left(n\right).
\end{align*}
Hence in view of $m_{f_{\mathcal{O}'}}=16m_{f_\mathcal{O}},\,\,(2,\,m_{f_\mathcal{O}})=1$ and $m_{f_S}=4m_{f_\mathcal{O}}$, we obtain 
\begin{align*}
&\quad \sum_{r^2\leq 4\cdot 2^{k+1}n}R_{\operatorname{gen}(\widetilde{f}_{S})}\left(\frac{2(4\cdot 2^{k+1}n-r^2)}{m_{f_{S}}}\right)\\
&=\sum_{2^4r^2\leq 4\cdot 2^{k+1}n}R_{\operatorname{gen}(\widetilde{f}_{S})}\left(\frac{8(4\cdot 2^{k+1}n-2^4r^2)}{m_{f_{\mathcal{O}'}}}\right)
+\sum_{r^2\leq 2^{k+3}n,\,\, 4\nmid r}R_{\operatorname{gen}(\widetilde{f}_{S})}\left(\frac{2(4\cdot 2^{k+1}n-r^2)}{m_{f_{S}}}\right)\\
&=\sum_{2^4r^2\leq 4\cdot 2^{k+1}n}R_{\operatorname{gen}(\widetilde{f}_{\mathcal{O}'})}\left(\frac{4\cdot 2^{k+1}n-2^4r^2}{m_{f_{\mathcal{O}'}}}\right)
+\sum_{r^2\leq 2^{k+3}n,\,\, 4\nmid r}R_{\operatorname{gen}(\widetilde{f}_{S})}\left(\frac{2(4\cdot 2^{k+1}n-r^2)}{m_{f_{S}}}\right)\\
&=\sum_{r^2\leq 4\cdot 2^{k+1}n}R_{\operatorname{gen}(\widetilde{f}_{\mathcal{O}'})}\left(\frac{4\cdot 2^{k+1}n-r^2}{m_{f_{\mathcal{O}'}}}\right)
+\sum_{r^2\leq 2^{k+3}n,\,\, 4\nmid r}R_{\operatorname{gen}(\widetilde{f}_{S})}\left(\frac{2(2^{k+3}n-r^2)}{m_{f_{S}}}\right).
\end{align*}
This together with ~\eqref{gongshi5.151} and ~\eqref{gongshi5.161} proves the last identity of the proposition.
\end{proof}

\subsection{The independence of isomorphism classes of orders and the associated theta series}
For a quaternion algebra $Q$ ramified exactly at $p$ and $\infty$, Goren and Love~\cite{GL25} established a result concerning the theta series $\theta_\mathcal{O}(\tau)$ defined by ~\eqref{gongshi5.2}, which asserts that the maximal orders $\mathcal{O}$ of $Q$ are uniquely determined up to isomorphism by their theta series $\theta_\mathcal{O}(\tau)$. 
As for the theta series defined by ~\eqref{gongshi5.3}, ~\eqref{gongshi5.4} (and hence ~\eqref{gongshi5.1} by ~\eqref{gongshi5.6} and ~\eqref{gongshi5.7}), we may establish similar results in the following Theorem ~\ref{the:theta}.  

\begin{theorem}\label{the:theta}
Let $\mathcal{O},\mathcal{O}'$ be orders in $Q$. If both $\mathcal{O}$ and $\mathcal{O}'$ are even, then
\begin{equation*}
\mathcal{O}\ncong \mathcal{O}'\iff \theta_{\mathcal{O}^0}(\tau)\neq\theta_{\mathcal{O}'^0}(\tau).
\end{equation*}
Alternatively, if both $\mathcal{O}$ and $\mathcal{O}'$ are odd, then
\begin{equation*}
\mathcal{O}\ncong \mathcal{O}'\iff \theta_{S^0}(\tau)\neq\theta_{S'^0}(\tau);
\end{equation*}
moreover, if one assumes further $\mathcal{O},\mathcal{O}'\in\mathbb{Z}+m\operatorname{Typ}_i(2)$, or  $\mathcal{O},\mathcal{O}'\in\mathbb{Z}+m\operatorname{Typ}_2(1)$ for some odd positive integer $m$, then
\begin{equation*}
\mathcal{O}\ncong \mathcal{O}'\iff\theta_{\mathcal{O}^0}(\tau)\neq\theta_{\mathcal{O}'^0}(\tau).
\end{equation*}
\end{theorem}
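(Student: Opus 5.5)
The argument transfers everything to ternary forms and applies Schiemann's theorem \cite{Sch97}: two positive definite integral ternary quadratic forms with the same theta series are equivalent. In each case the direction ``$\theta\neq\theta'\Rightarrow\mathcal O\ncong\mathcal O'$'' is immediate, since an isomorphism $\mathcal O\to\mathcal O'$ preserves norm and trace. It therefore carries $\mathcal O^0$ onto $\mathcal O'^0$, and $S=\mathbb Z+2\mathcal O$ onto $S'$, so it carries $S^0$ onto $S'^0$, isometrically. So only the converse needs work: equal theta series should force $\mathcal O\cong\mathcal O'$.

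\emph{Even case.} Assume $\theta_{\mathcal O^0}=\theta_{\mathcal O'^0}$. Schiemann's theorem gives $f_{\mathcal O^0}\sim f_{\mathcal O'^0}$. By Lemma~\ref{lem:rep}, $\rho_{\mathcal O}(n,r)=R_{f_{\mathcal O^0}}(n-r^2/4)$ for even $r$, and $\rho_{\mathcal O}(n,r)=0$ for odd $r$, since $\mathcal O$ is even. Because $S^0=2\mathcal O^0$, we get $f_{S^0}=4f_{\mathcal O^0}$, so the forms ${\frak q}_{\mathcal O}$ and ${\frak q}_{\mathcal O'}$ are equivalent. The Remark following Theorem~\ref{the:bijection} gives $\widetilde f_{\mathcal O}\sim (m/m_{f_{\mathcal O}}){\frak q}_{\mathcal O}$. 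The content of ${\frak q}_{\mathcal O}$ is $m_{f_{\mathcal O}}$, and the content of $\widetilde f_{\mathcal O}$ is the Gorenstein conductor $m$. Both contents are invariants of the equivalence class. Hence equivalent ${\frak q}$'s have the same $m_{f}$ and the same $m$, and therefore $\widetilde f_{\mathcal O}\sim\widetilde f_{\mathcal O'}$. The bijectivity of $\mathcal M$ in Theorem~\ref{the:bijection} then yields $\mathcal O\cong\mathcal O'$.

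\emph{Odd case with $\theta_{S^0}$.} The same argument applies, and is shorter. Equal theta series give $f_{S^0}\sim f_{S'^0}$ by Schiemann. Since $f_{S^0}\cong{\frak q}_{\mathcal O}$ via the map $\Phi$ of Lemma~\ref{lem:rep}, we get ${\frak q}_{\mathcal O}\sim{\frak q}_{\mathcal O'}$. The content argument above then gives $\widetilde f_{\mathcal O}\sim\widetilde f_{\mathcal O'}$, and Theorem~\ref{the:bijection} gives the isomorphism. Note that this part uses only the theta series and no parity hypothesis beyond the definitions.

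\emph{Odd case with $\theta_{\mathcal O^0}$, where $\mathcal O,\mathcal O'\in\mathbb Z+m\operatorname{Typ}_i(2)$ or $\mathbb Z+m\operatorname{Typ}_2(1)$, $m$ odd.} Here we must recover $f_{S^0}$ from $f_{\mathcal O^0}$; I expect this to be the main obstacle.
\begin{enumerate}
\item Write $\mathcal O=\mathbb Z+m\mathcal O_1$ with $\mathcal O_1$ Gorenstein and odd. Remark~\ref{rem:fO0} gives $f_{\mathcal O^0}=m^2f_{\mathcal O_1^0}$. Proposition~\ref{prop:Watsontrans} gives $f_{\mathcal O_1^0}=\lambda_4(f_{S_1^0})=m_{f_{\mathcal O_1}}\lambda_4(\widetilde f_{\mathcal O_1})$.
\item Equal theta series give $f_{\mathcal O^0}\sim f_{\mathcal O'^0}$ by Schiemann. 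Comparing contents fixes $m^2m_{f_{\mathcal O_1}}$, and the primitive parts satisfy $\lambda_4(\widetilde f_{\mathcal O_1})\sim\lambda_4(\widetilde f_{\mathcal O'_1})$.
\item By the valuation computations in the proof of Corollary~\ref{cor:tqf}, $\widetilde f_{\mathcal O_1}$ lies in $C(8N,64d)$ in the $\operatorname{Typ}_i(2)$ case and in $C(4N,16d)$ in the $\operatorname{Typ}_2(1)$ case. These are exactly the class sets on which Lemma~\ref{Watsontrans111} says $\lambda_4$ is a bijection. Hence $\widetilde f_{\mathcal O_1}\sim\widetilde f_{\mathcal O'_1}$.
\item The hypothesis that both orders carry the same $\operatorname{Typ}$ label is what guarantees that $\widetilde f_{\mathcal O_1}$ and $\widetilde f_{\mathcal O'_1}$ lie in the same class set $C(\cdot,\cdot)$. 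It must be checked that $N$ and $d$ agree, not merely the $2$-parts. This follows because $\lambda_4$ changes $N$ and $d$ by fixed powers of $2$, so equality for the images forces equality for the sources.
\item Conclude $\mathcal O_1\cong\mathcal O'_1$ by Theorem~\ref{the:bijection}. The conductor $m$ is the same for both orders: it is the content of $\widetilde f$ and also appears as the square factor $m^2$ above. Lemma~\ref{the:Gor} then gives $\mathcal O\cong\mathcal O'$.
\end{enumerate}
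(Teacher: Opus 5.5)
Your route is the paper's: Schiemann's theorem plus recovering the Clifford data from $f_{S^0}\cong{\frak q}_{\mathcal O}$ for the first two assertions, and injectivity of $\lambda_4$ on the class sets of Lemma~\ref{Watsontrans111} for the last one. That injectivity is exactly what underlies the paper's appeal to the bijectivity of ${\mathcal M}_+$ for $\operatorname{Typ}_i(2)$ and of $\phi_2$ for $\operatorname{Typ}_2(1)$. Your explicit handling of the conductor in the third part is fine, and the common $m$ is in any case part of the hypothesis there.

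One step in the first two assertions does not follow as written: ``hence equivalent ${\frak q}$'s have the same $m_f$ and the same $m$.'' The content of ${\frak q}_{\mathcal O}$ is a class invariant, so it does give $m_{f_{\mathcal O}}=m_{f_{\mathcal O'}}$. But the content $m$ of $\widetilde f_{\mathcal O}$ is an invariant of the class of $\widetilde f_{\mathcal O}$, which is exactly what you are trying to compare. From ${\frak q}_{\mathcal O}\sim{\frak q}_{\mathcal O'}$ and the Remark you only get $\widetilde f_{\mathcal O}\sim(m/m')\,\widetilde f_{\mathcal O'}$, so the argument is circular at this point.

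Either of two fixes works.
\begin{itemize}
\item The Remark gives ${\frak q}_{\mathcal O}\sim\frac{m_{f_{\mathcal O}}}{m}\widetilde f_{\mathcal O}=m_{f_{\mathcal O}}\,\widetilde f_{\operatorname{Gor}(\mathcal O)}$, with $\widetilde f_{\operatorname{Gor}(\mathcal O)}$ primitive. So the primitive part of ${\frak q}_{\mathcal O}$ determines $\operatorname{Gor}(\mathcal O)$ up to isomorphism by Theorem~\ref{the:bijection}(2), and hence determines $m_{f_{\operatorname{Gor}(\mathcal O)}}$. Then $m^2=m_{f_{\mathcal O}}/m_{f_{\operatorname{Gor}(\mathcal O)}}$ is read off from the content, and Lemma~\ref{the:Gor} finishes.
\item Alternatively, argue as the paper does. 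The coefficient formula for ${\frak q}_{\mathcal O}$ says $M_{{\frak q}_{\mathcal O}}=2\operatorname{adj}(M_{f_{\mathcal O}})$, so $M_{f_{\mathcal O}}=\sqrt{\det M_{{\frak q}_{\mathcal O}}/2}\;M_{{\frak q}_{\mathcal O}}^{-1}$. Thus the class of $f_{S^0}$ determines the class of $f_{\mathcal O}$ itself, content included. The classical Clifford bijection (Lemma~\ref{Lemma3.3}) then gives $\mathcal O\cong\mathcal O'$ directly, with no need for the normalization $\mathcal M$ or the conductor.
\end{itemize}
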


\begin{proof}

By Schiemann's theorem, equality of the relevant ternary theta
series implies integral equivalence of the corresponding forms.
The coefficient identity for ${\frak q}_{\mathcal O}$ established above gives,
in the corresponding bases,
\[
 M_{f_{S^0}}=2\operatorname{adj}(M_{f_{\mathcal O}}),\qquad
 M_{f_{\mathcal O}}=\sqrt{\frac{\det M_{f_{S^0}}}{2}}\,M_{f_{S^0}}^{-1}.
\]
Consequently, the integral equivalence class of $f_{S^0}$ determines that
of $f_{\mathcal O}$, for arbitrary orders, and hence determines
$[\mathcal O]$ by the classical Clifford correspondence.
For an even order, $S^0=2\mathcal O^0$, so that
$f_{S^0}\sim4f_{\mathcal O^0}$ and the same conclusion follows
from $f_{\mathcal O^0}$.
The converse implications follow from the invariance of reduced
norm and trace under isomorphisms of orders

Turn to the last statement. Without loss of generality we may assume $m=1$.
If $\mathcal{O},\mathcal{O}'\in\operatorname{Typ}_i(2)$, then 
$$
\mathcal{O}\ncong \mathcal{O}'\iff \mathcal{O}^+\ncong (\mathcal{O}')^+
$$ 
since ${\mathcal M}_+$ is bijective. Hence from the first statement we get     
\begin{equation*}
\mathcal{O}\ncong \mathcal{O}'\iff \theta_{(\mathcal{O}^+)^0}\neq \theta_{(\mathcal{O}'^+)^0},
\end{equation*}
which is what we need since $\theta_{\mathcal{O}^0}=\theta_{(\mathcal{O}^+)^0}, \theta_{\mathcal{O}'^0}=\theta_{(\mathcal{O}'^+)^0}$.
If $\mathcal{O},\mathcal{O}'\in\operatorname{Typ}_2(1)$, by Proposition ~\ref{prop:Watsontrans}, we have $\phi_2(f_{S^0})=\lambda_4(f_{S^0})=f_{\mathcal{O}^0}$. Hence 
\begin{equation*}
\mathcal{O}\ncong \mathcal{O}'\iff \theta_{S^0}\neq\theta_{S'^0}\iff \theta_{\mathcal{O}^0}\neq\theta_{\mathcal{O}'^0}.
\end{equation*}
The proof is complete.
\end{proof}

\begin{remark}
The parity condition in this theorem is important and can not be omitted. To see this we need only consider the following example.  
Suppose $\mathcal{O}$ is the Hurwitz order, $\mathcal{O}'$ is the Lipschitz order, then it is easy to check that $\theta_{\mathcal{O}^0}=\theta_{\mathcal{O}'^0}$ by definition, but $\mathcal{O}\ncong \mathcal{O}'$ since $\mathcal{O}$ is odd, $\mathcal{O}'$ is even.
\end{remark}

\begin{remark}\label{remark5.7} 
It would be appropriate to point out that, the conditions $\mathcal{O},\mathcal{O}'\in\mathbb{Z}+m\operatorname{Typ}_i(2)$, or  $\mathcal{O},\mathcal{O}'\in\mathbb{Z}+m\operatorname{Typ}_2(1)$ are crucial and can not be skipped for the last statement of the theorem. In fact, we may construct counterexamples:  
Examples ~\ref{exam:nodd} and ~\ref{exam:neven} in \S \ref{section6.00}, which show that $\theta_{\mathcal{O}^0}=\theta_{\mathcal{O}'^0}$ but $\mathcal{O}\ncong \mathcal{O}'$ if one drops the conditions. 
\end{remark}

\begin{remark}\label{remark5.8}
Indeed, Example ~\ref{exam:nodd} gives explicit odd orders
$\mathcal O$ and $\mathcal O'$ which are locally isomorphic but not isomorphic, such that
$\theta_{\mathcal O^0}=\theta_{(\mathcal O')^0}$.
It is a counterexample to a statement following ~\cite[Conjecture~4.3]{Has06} of Hashimoto concerning $T_1$ there, which says that for orders $\mathcal{O}$ and $\mathcal{O}'$, if $\mathcal{O}$ is not isomorphic but locally isomorphic to $\mathcal{O}'$, then $\theta_{\mathcal{O}^0}\neq\theta_{\mathcal{O}'^0}$. 
Example~\ref{exam:neven} shows that the same can occur even for odd orders in different genera.
\end{remark}

\subsection{Hashimoto's conjecture}
Now, we come to consider Hashimoto's conjecture ~\cite{Has06} on linear
relations among theta series associated with quaternion orders.
Let $\mathcal O_1,\ldots,\mathcal O_t$ represent the isomorphism
classes in the genus of a Gorenstein order $\mathcal O$, and put
$S_j=\mathbb Z+2\mathcal O_j$ for $1\leq j\leq t$.
The conjecture asserts that, if $\mathcal O$ has level $(q,N)$ (see~\cite[Section 2]{Has06}), 
then for any $c_1,\ldots,c_t\in\mathbb C$,
\begin{equation}\label{gongshi5.14}
\sum_{j=1}^t c_j\theta_{S_j^0}(\tau)=0
\quad\Longleftrightarrow\quad
\sum_{j=1}^t c_j\theta_{\mathcal O_j}(\tau)=0.
\end{equation}
And if, in addition, $4\nmid N$, then
\begin{equation}\label{gongshi5.15}
\sum_{j=1}^t c_j\theta_{\mathcal O_j^0}(\tau)=0
\quad\Longleftrightarrow\quad
\sum_{j=1}^t c_j\theta_{\mathcal O_j}(\tau)=0.
\end{equation}

We establish the linear-relation equivalence for even orders.
For odd Gorenstein orders, we compare the two ternary theta series
when $4\nmid d$, where $d=\operatorname{discrd}(\mathcal O)$,
and reformulate the remaining assertion as injectivity of the
restriction map on the subspace generated by the quaternionic
Jacobi theta series. We do not establish this injectivity at
arbitrary level.

\begin{prop}\label{prop:hashimoto-even-orders}
Let $\mathcal O_1,\ldots,\mathcal O_t$ be even orders.
For any $c_1,\ldots,c_t\in\mathbb C$, we have
\begin{equation}\label{eq:hashimoto-even-equivalences}
\sum_{j=1}^t c_j\theta_{\mathcal O_j^0}=0
\quad\Longleftrightarrow\quad
\sum_{j=1}^t c_j\theta_{\mathcal O_j}=0.
\end{equation}
\end{prop}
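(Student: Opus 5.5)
The plan is to show that for an even order the quaternary theta series factors as the ternary one times a fixed, nonvanishing power series, namely
\[
 \theta_{\mathcal O}(\tau)=\vartheta(\tau)\,\theta_{\mathcal O^0}(\tau),
 \qquad
 \vartheta(\tau)\!:\,=\sum_{s\in\mathbb Z}q^{s^2}.
\]
Since $\vartheta$ is the same for every $\mathcal O_j$, the equivalence \eqref{eq:hashimoto-even-equivalences} then follows at once.

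First I would establish the orthogonal decomposition $\mathcal O=\mathbb Z\oplus\mathcal O^0$ for an even order $\mathcal O$. By Lemma~\ref{prop:basis}, $\mathcal O=\mathbb Z+\mathbb Z\alpha_1+\mathbb Z\alpha_2+\mathbb Z\alpha_3$ with $\tr(\alpha_i)=0$, so $\mathcal O^0=\mathbb Z\alpha_1+\mathbb Z\alpha_2+\mathbb Z\alpha_3$. Equivalently, each $\alpha\in\mathcal O$ has even trace $2s$, and then $\alpha-s\in\mathcal O^0$. Writing $\alpha=s+\beta$ with $s\in\mathbb Z$ and $\beta\in\mathcal O^0$, this expression is unique because $\mathcal O\cap\mathbb Q=\mathbb Z$ and $\mathcal O^0\cap\mathbb Z=0$.

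Next I would show the decomposition is orthogonal for the norm form:
\[
 \n(s+\beta)=s^2+s\tr(\beta)+\n(\beta)=s^2+\n(\beta).
\]
Summing $q^{\n(\alpha)}$ over $\alpha\in\mathcal O$ then factors as a product of the sum over $s\in\mathbb Z$ and the sum over $\beta\in\mathcal O^0$, which is the displayed identity. The same identity can be read off coefficientwise from the last formula of Lemma~\ref{lem:rep}: $\rho_{\mathcal O}(n,r)=R_{f_{\mathcal O^0}}(n-r^2/4)$ for $r=2s$, and $\rho_{\mathcal O}(n,r)=0$ for odd $r$, since $\mathcal O$ has no elements of odd trace.

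Finally I would deduce the equivalence. By linearity,
\[
 \sum_j c_j\theta_{\mathcal O_j}=\vartheta\cdot\sum_j c_j\theta_{\mathcal O_j^0}.
\]
View both sides as formal $q$-expansions, or as holomorphic functions on $\mathbb H$. The series $\vartheta$ has constant term $1$, so it is a unit in $\mathbb C[[q]]$, and multiplication by it is injective. Hence the left side vanishes if and only if $\sum_j c_j\theta_{\mathcal O_j^0}$ vanishes.

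I do not expect a real obstacle. The only point needing care is the decomposition $\mathcal O=\mathbb Z+\mathcal O^0$, which holds exactly because $\mathcal O$ is even and is supplied by Lemma~\ref{prop:basis}. For odd orders this step fails, which is why the proposition is restricted to even orders.
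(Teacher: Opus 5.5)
Your proposal is correct and follows the paper's proof: you use the unique decomposition $\alpha=\tfrac{\tr(\alpha)}{2}+\beta$ with $\beta\in\mathcal O^0$ to factor $\theta_{\mathcal O}=\vartheta\,\theta_{\mathcal O^0}$, and then note that $\vartheta$ has constant term $1$, so multiplying by it is injective on power series. The extra justifications you give (uniqueness of the decomposition, and the coefficientwise check against Lemma~\ref{lem:rep}) are accurate but not needed beyond what the paper states.
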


\begin{proof}
For an even order $\mathcal O$, every $\alpha\in\mathcal O$
has a unique decomposition
\[
\alpha=m+\beta,\qquad
m=\frac{\operatorname{tr}(\alpha)}2\in\mathbb Z,
\qquad \beta\in\mathcal O^0.
\]
Since $\operatorname n(m+\beta)=m^2+\operatorname n(\beta)$, we obtain
\begin{equation}\label{eq:hashimoto-even-factorization}
\theta_{\mathcal O}(\tau)
=\vartheta(\tau)\theta_{\mathcal O^0}(\tau),
\qquad
\vartheta(\tau)=\sum_{m\in\mathbb Z}q^{m^2}.
\end{equation}
The result follows because $\vartheta$ has constant term $1$, so
multiplication by $\vartheta$ is injective on formal power series.
\end{proof}

For the rest of the discussion, let $\mathcal O$ be odd and
Gorenstein, and write $d=\operatorname{discrd}(\mathcal O)$.
Thus $d=N_1N_2$ in the level-$(N_1,N_2)$ case. We use a common theta
decomposition to describe all three families of linear relations.
For $\mu\in\{0,1\}$, set
\[
\vartheta_\mu(\tau,z)
=\sum_{\substack{r\in\mathbb Z\\r\equiv\mu\!\!\!\!\pmod2}}
q^{r^2/4}\zeta^r,
\quad
\vartheta_\mu(\tau)=\vartheta_\mu(\tau,0),
\]
where $q=e^{2\pi i\tau}$ and $\zeta=e^{2\pi iz}$ as before.
Writing $a_j(D)=R_{f_{S_j^0}}(D)$, define
\[
h_{j,0}(\tau)=\sum_{n\geq0}a_j(4n)q^n,
\quad
h_{j,1}(\tau)=\sum_{n\geq1}a_j(4n-1)q^{n-1/4}.
\]
The norm--trace identity~\eqref{gongshi5.6} gives
\begin{equation}\label{eq:hashimoto-theta-components}
\begin{aligned}
&\theta_{\mathcal O_j}(\tau,z)
=h_{j,0}(\tau)\vartheta_0(\tau,z)
 +h_{j,1}(\tau)\vartheta_1(\tau,z),\\
&\theta_{\mathcal O_j^0}(\tau)=h_{j,0}(\tau),\\
&\theta_{S_j^0}(\tau)=h_{j,0}(4\tau)+h_{j,1}(4\tau).
\end{aligned}
\end{equation}
For fixed $c_1,\ldots,c_t\in\mathbb C$, put
\[
F(\tau,z)=\sum_{j=1}^t c_j\theta_{\mathcal O_j}(\tau,z),
\quad
h_\mu(\tau)=\sum_{j=1}^t c_jh_{j,\mu}(\tau)
\quad(\mu=0,1).
\]
By the Jacobi modularity established above,
$F\in J_{2,1}(\Gamma_0(d))$, with trivial character, and
$F=h_0\vartheta_0+h_1\vartheta_1$.
The Fourier supports of $h_0(4\tau)$ and $h_1(4\tau)$ lie in
$0$ and $3$ modulo $4$, respectively. Together with the uniqueness
of the theta decomposition, this yields
\begin{equation}\label{eq:hashimoto-components-vanishing}
\sum_{j=1}^t c_j\theta_{S_j^0}=0
\quad\Longleftrightarrow\quad h_0=h_1=0
\quad\Longleftrightarrow\quad F=0.
\end{equation}
In particular, without any restriction on $d$,
\begin{equation}\label{gongshi5.16}
\sum_{j=1}^t c_j\theta_{S_j^0}=0
\quad\Longrightarrow\quad
\sum_{j=1}^t c_j\theta_{\mathcal O_j^0}=0.
\end{equation}
Evaluating $F$ at $z=0$ also proves the forward implication in
\eqref{gongshi5.14}. 

\vskip0.2cm
The next proposition gives the converse of
\eqref{gongshi5.16} when $4\nmid d$.

\begin{prop}\label{prop:hashimoto-ternary-comparison}
With the above notation, suppose $4\nmid d$, i.e., $\mathcal O_1,\ldots,\mathcal O_t\in \operatorname{Typ}_i(2^k)$ for $k=0,1$. Then for any
$c_1,\ldots,c_t\in\mathbb C$,
\begin{equation}\label{eq:hashimoto-ternary-equivalence}
\sum_{j=1}^t c_j\theta_{S_j^0}=0
\quad\Longleftrightarrow\quad
\sum_{j=1}^t c_j\theta_{\mathcal O_j^0}=0.
\end{equation}
\end{prop}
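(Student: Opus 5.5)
The forward implication is already \eqref{gongshi5.16}, so only the converse needs proof. Assume $\sum_j c_j\theta_{\mathcal O_j^0}=0$. By \eqref{eq:hashimoto-theta-components} this says exactly that $h_0=0$. By \eqref{eq:hashimoto-components-vanishing} it is enough to show that $h_1=0$, since then $F=0$ and hence $\sum_j c_j\theta_{S_j^0}=0$.

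To pass from $h_0=0$ to $h_1=0$, I will use the modularity of $F$ rather than ternary forms. By Corollary~\ref{cor5.2}, $F\in J_{2,1}(\Gamma_0(d))$ with trivial character. The theta decomposition $F=h_0\vartheta_0+h_1\vartheta_1$ then shows that $(h_0,h_1)$ is a vector-valued form of weight $3/2$ for $\Gamma_0(d)$. It transforms under the dual of the index-$1$ Weil representation $\rho$, and on the metaplectic cover $\rho$ factors through a finite quotient of level dividing $4$. Concretely:
\begin{itemize}
\item $T$ acts diagonally, by the factors $1$ and $e(-1/4)$;
\item $S$ acts, up to an eighth root of unity, by the $2\times2$ discrete Fourier matrix $\tfrac1{\sqrt2}\begin{pmatrix}1&1\\1&-1\end{pmatrix}$.
\end{itemize}

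The key computation is for $\gamma_0=\begin{pmatrix}1&0\\2&1\end{pmatrix}=ST^{-2}S^{-1}$. Here $T^{-2}$ acts by $\operatorname{diag}(1,-1)$, and conjugating by the Fourier matrix turns this into the swap $\begin{pmatrix}0&1\\1&0\end{pmatrix}$, up to a scalar. So $\rho(\gamma_0)$ interchanges the two components up to a unit.

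Since $4\nmid d$, the reduction of $\Gamma_0(d)$ modulo $4$ contains the class of $\gamma_0$:
\begin{itemize}
\item if $d$ is odd, $\Gamma_0(d)\to\mathrm{SL}_2(\mathbb Z/4)$ is surjective;
\item if $2\parallel d$, its image is the full $\Gamma_0(2)$ modulo $4$, which contains $\gamma_0$.
\end{itemize}
Choose $\gamma\in\Gamma_0(d)$ with $\gamma\equiv\gamma_0\pmod 4$. Because $\rho$ factors through level $4$ (taking care of the metaplectic sign), we have $\rho(\gamma)=\rho(\gamma_0)$ up to a scalar. The transformation law then gives $h_0|_{3/2}\gamma=\epsilon\, h_1$ for some root of unity $\epsilon$. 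Hence $h_0=0$ forces $h_1=0$, and the converse follows.

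The main obstacle is pinning down the Weil-representation formalism exactly. I must check that Corollary~\ref{cor5.2} really gives the transformation law on all of $\Gamma_0(d)$, with trivial character and no extra twist at $2$, and that the level-$4$ factorization holds on the metaplectic double cover. If the representation-theoretic bookkeeping becomes delicate, I would fall back on a more explicit argument. In that route I use the fact that, for $k=0,1$, Lemma~\ref{Watsontrans111} and Proposition~\ref{prop:Watsontrans} identify $f_{\mathcal O_j^0}=\lambda_4(f_{S_j^0})$. I would then express $h_1(4\tau)$ through $h_0$ by applying the Fricke or Atkin--Lehner involution at $2$ to the ternary theta series, which is available since $v_2(d)\le1$.
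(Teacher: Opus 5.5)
Your argument is correct and is essentially the paper's: the paper also reduces to showing $h_0=0\Rightarrow h_1=0$ through the contragredient theta representation acting on $(h_0,h_1)^t$. The difference is that it works directly with $\gamma_d=ST^{-d}S^{-1}=\begin{pmatrix}1&0\\d&1\end{pmatrix}\in\Gamma_0(d)$, whose representation matrix has off-diagonal entry $(1-i^d)/2\neq0$ exactly when $4\nmid d$; this makes your lifting step (surjectivity of reduction modulo $4$, plus projective triviality of $\rho$ on $\Gamma(4)$) unnecessary, although that step is also valid.
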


\begin{proof}
By \eqref{gongshi5.16}, it remains to prove that $h_0=0$ implies
$h_1=0$. Let
\[
S=\begin{pmatrix}0&-1\\1&0\end{pmatrix},
\quad
T=\begin{pmatrix}1&1\\0&1\end{pmatrix}.
\]
The standard theta transformation matrices are
\[
\rho(T)=\begin{pmatrix}1&0\\0&i\end{pmatrix},
\quad
\rho(S)=\frac{e^{-\pi i/4}}{\sqrt2}
\begin{pmatrix}1&1\\1&-1\end{pmatrix}.
\]
The vector $(h_0,h_1)^t$ transforms with weight $3/2$ according
to the contragredient theta representation. For
\[
\gamma_d=ST^{-d}S^{-1}
=\begin{pmatrix}1&0\\d&1\end{pmatrix}\in\Gamma_0(d),
\]
we obtain
\begin{equation}\label{eq:hashimoto-component-mixing}
\begin{pmatrix}h_0(\gamma_d\tau)\\h_1(\gamma_d\tau)\end{pmatrix}
=\kappa_d(d\tau+1)^{3/2}\,
\frac12\begin{pmatrix}
1+i^d&1-i^d\\
1-i^d&1+i^d
\end{pmatrix}
\begin{pmatrix}h_0(\tau)\\h_1(\tau)\end{pmatrix},
\end{equation}
where $\kappa_d\in\mathbb C^\times$ accounts for the choice of
the half-integral weight multiplier. If $h_0=0$, the first component
implies $(1-i^d)h_1=0$. Since $4\nmid d$, we have $1-i^d\neq0$,
and hence $h_1=0$. The result follows from
\eqref{eq:hashimoto-components-vanishing}.
\end{proof}

Here it may be appropriate to point out that the condition $4\nmid d$ in
Proposition~\ref{prop:hashimoto-ternary-comparison} is crucial and cannot be
omitted in general. To see this, choose a genus of odd orders on
which the restriction of $\mathcal M_+$ is not injective, and let
$\mathcal O_u,\mathcal O_v$ be nonisomorphic representatives with
$(\mathcal O_u)^+\simeq(\mathcal O_v)^+$.
Such pairs occur in the families
$\mathbb Z+m\operatorname{Typ}_i(2^n)$ with $n\geq2$ and $m$
odd, already with $m=1$ in the examples below.
Since $(\mathcal O_j^+)^0=\mathcal O_j^0$, we have
\[
\theta_{\mathcal O_u^0}
=\theta_{(\mathcal O_u^+)^0}
=\theta_{(\mathcal O_v^+)^0}
=\theta_{\mathcal O_v^0}.
\]
On the other hand, Theorem~\ref{the:theta} gives
$\theta_{S_u^0}\neq\theta_{S_v^0}$.
Thus, taking $c_u=1$, $c_v=-1$ and $c_j=0$ for $j\neq u,v$, we get
\begin{equation}\label{gongshi5.17}
\sum_{j=1}^t c_j\theta_{\mathcal O_j^0}=0,
\quad
\sum_{j=1}^t c_j\theta_{S_j^0}\neq0.
\end{equation}
This shows precisely that the converse of \eqref{gongshi5.16}
can fail when $4\mid d$.

Returning to the odd Gorenstein genus fixed above, we can now
formulate the remaining part of the conjecture as an injectivity
problem.

\begin{cor}\label{cor:hashimoto-restriction}
Set
\[
\mathcal J(\mathcal O)
=\operatorname{span}_{\mathbb C}
\{\theta_{\mathcal O_j}(\tau,z):1\leq j\leq t\}
\subseteq J_{2,1}(\Gamma_0(d)).
\]
Then \eqref{gongshi5.14}, asserted for all
$c_1,\ldots,c_t\in\mathbb C$, is equivalent to the injectivity of
\[
D_0:\mathcal J(\mathcal O)\longrightarrow M_2(\Gamma_0(d)),
\quad F(\tau,z)\longmapsto F(\tau,0).
\]
If $4\nmid d$, this is also equivalent to
\eqref{gongshi5.15}. Consequently, under this condition the two
formulations of Hashimoto's conjecture are equivalent.
\end{cor}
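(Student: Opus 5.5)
The plan is to unwind both conjectural equivalences into statements about the single Jacobi form $F=\sum_j c_j\theta_{\mathcal O_j}(\tau,z)$, using the theta decomposition \eqref{eq:hashimoto-theta-components} and the equivalence \eqref{eq:hashimoto-components-vanishing} already established.

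First I will show that \eqref{gongshi5.14} is the same as injectivity of $D_0$. Every element of $\mathcal J(\mathcal O)$ has the form $F=\sum_j c_j\theta_{\mathcal O_j}(\tau,z)$ for some $c_1,\ldots,c_t\in\mathbb C$, and $D_0F=\sum_j c_j\theta_{\mathcal O_j}(\tau)$, since $\theta_{\mathcal O_j}(\tau,0)=\theta_{\mathcal O_j}(\tau)$ by \eqref{gongshi5.1}--\eqref{gongshi5.2}. By \eqref{eq:hashimoto-components-vanishing}, $\sum_j c_j\theta_{S_j^0}=0$ holds exactly when $F=0$. So \eqref{gongshi5.14} for a fixed tuple $(c_j)$ reads ``$F=0\iff D_0F=0$''. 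The forward implication is automatic. Hence \eqref{gongshi5.14}, asserted for all tuples, says precisely that $D_0F=0$ forces $F=0$ for every $F\in\mathcal J(\mathcal O)$, i.e. that $\ker D_0=0$.

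The one point needing care is well-definedness. Distinct tuples may give the same $F$, so I will phrase the argument directly in terms of $F$ rather than the tuple. This causes no trouble, because both sides of \eqref{gongshi5.14} depend on $(c_j)$ only through $F$: the left side through \eqref{eq:hashimoto-components-vanishing}, and the right side through $D_0F$. I will also note that $D_0$ lands in $M_2(\Gamma_0(d))$, by Corollary~\ref{cor5.2} and the standard fact that restricting a Jacobi form to $z=0$ gives a modular form of the same weight and level.

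Next, assume $4\nmid d$. Proposition~\ref{prop:hashimoto-ternary-comparison} gives $\sum_j c_j\theta_{S_j^0}=0\iff\sum_j c_j\theta_{\mathcal O_j^0}=0$ for every tuple. Substituting this into the left-hand sides shows that \eqref{gongshi5.14} and \eqref{gongshi5.15} are equivalent statement by statement for each tuple, and hence also when quantified over all tuples. Combined with the first step, this shows that under $4\nmid d$, \eqref{gongshi5.15}, \eqref{gongshi5.14} and injectivity of $D_0$ are all equivalent, which is the final assertion.

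Overall the argument is formal once \eqref{eq:hashimoto-components-vanishing} and Proposition~\ref{prop:hashimoto-ternary-comparison} are available. The only step where I expect to need real care is the well-definedness point above. The rest is bookkeeping.
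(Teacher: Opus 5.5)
Your proposal is correct and follows the paper's own proof. The equivalence \eqref{eq:hashimoto-components-vanishing} turns the two sides of \eqref{gongshi5.14} into $F=0$ and $D_0F=0$, so the statement for all tuples is exactly $\ker D_0=0$, and Proposition~\ref{prop:hashimoto-ternary-comparison} gives the equivalence with \eqref{gongshi5.15} when $4\nmid d$. Your extra remarks are harmless refinements of the paper's two-line argument: one notes that both sides depend on the tuple only through $F$, the other that $D_0F$ lies in $M_2(\Gamma_0(d))$.
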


\begin{proof}
By \eqref{eq:hashimoto-components-vanishing}, the left-hand side
of \eqref{gongshi5.14} is equivalent to $F=0$, whereas its
right-hand side is equivalent to $D_0F=0$. This proves the first
assertion. The second follows from
Proposition~\ref{prop:hashimoto-ternary-comparison}.
\end{proof}

\begin{remark}
With the notation above, the remaining injectivity
question is precisely whether
\[
 \mathcal J(\mathcal O)\cap\ker D_0=\{0\}.
\]
Indeed,
$D_0\theta_{\mathcal O_i}(\tau,z)=\theta_{\mathcal O_i}(\tau)$,
so this condition says that a linear combination of the
Jacobi theta series vanishes whenever its restriction
to $z=0$ vanishes.
The preceding identities and the comparison of ternary
theta series in
Proposition~\ref{prop:hashimoto-ternary-comparison}
do not establish this injectivity at arbitrary level.
For related work on restriction maps and differential
operators for Jacobi forms, see \cite{RS13,DR15}.
\end{remark}

\section{Applications: New examples of orders and order chains}\label{section6.00} 
In this section we give some concrete examples of new orders and order chains, and consider their relations from the reciprocal point of view of Corollary ~\ref{cor:automorphism}. 

\subsection{Analysis and generalization of a class of orders of Pizer}\label{sec:6.1}
Following Pizer~\cite{Piz80}, for given odd prime $p$ we let 
\begin{equation*}
Q_p'=
\left\{\begin{pmatrix}
\alpha & \beta\\
\epsilon\bar{\beta} & \bar{\alpha}
\end{pmatrix}
: \alpha,\beta\in\mathbb{Q}_p+\sqrt{p}\mathbb{Q}_p\right\}  
\end{equation*} 
denote the local quaternion division algebra over $\mathbb{Q}_p$, where $\epsilon$ is a quadratic non-residue modulo $p$. Set for any integer $r\geq 0$, 
\begin{equation*}
{\mathcal{O}}_p^{(r+1)}=\left\{\begin{pmatrix}
\alpha & \beta\\
\epsilon\bar{\beta} & \bar{\alpha}
\end{pmatrix}
: \alpha\in \mathbb{Z}_p+\sqrt{p}\mathbb{Z}_p,\,\beta\in \sqrt{p^r}(\mathbb{Z}_p+\sqrt{p}\mathbb{Z}_p)\right\}.  
\end{equation*} 
It is clear ${\mathcal{O}}_p^{(1)}$ is the unique maximal order, and ${\mathcal{O}}_p^{(2)}$ is a (unique) suborder of maximal order with index $p$.
Based on this, for any positive integer $N$, Pizer~\cite{Piz80} defined the orders of level $p^2N$.

\begin{definition}[\cite{Piz80}, Definition 3.1]\label{{Piz80}1111} 
For a given positive integer $N$, let $p$ be an odd prime such that $p\nmid N$. Let $Q$ be a definite quaternion algebra ramified exactly at $p$. Then an order $\mathcal{O}\subset Q$ is called of level $p^2N$ if $\mathcal{O}_p$ is isomorphic to ${\mathcal{O}}_p^{(2)}$ and for any prime $q\neq p$, $\mathcal{O}_q$ is isomorphic to $\begin{pmatrix}\mathbb{Z}_q&\mathbb{Z}_q\\ N\mathbb{Z}_q&\mathbb{Z}_q\end{pmatrix}$.
\end{definition}

The following proposition establishes an explicit connection of ${\mathcal{O}}_p^{(1)}$ and ${\mathcal{O}}_p^{(2)}$.

\begin{prop}\label{proposition7.2}
We have $\psi({\mathcal{O}}_p^{(1)})\simeq{\mathcal{O}}_p^{(2)}$, where $\psi$ is defined as in Corollary {\rm ~\ref{cor:automorphism}}, and $\psi({\mathcal{O}}_p)$ is defined as $\psi({\mathcal{O}})_p$ for any order $\mathcal{O}$ and any prime $p$. 
\end{prop}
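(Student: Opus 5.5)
The plan is to compute both sides at the level of local ternary forms over $\mathbb Z_p$ and then invoke the even Clifford correspondence, Lemma~\ref{Lemma3.3}, with $R=\mathbb Z_p$. By the proof of Corollary~\ref{cor:automorphism} and Theorem~\ref{the:bijection}(1), $\psi(\mathcal O)$ is the order with $f_{\psi(\mathcal O)}=\widetilde f_{\mathcal O}=\psi_{f_{\mathcal O}}$. Both constructions (the ternary form $f_{\mathcal O}$ and the reciprocal $\psi_f$) are compatible with localization, since the adjugate and the gcd $m_f$ can be read $p$-adically up to units. It therefore suffices to prove
\[
 \psi_{f_{{\mathcal O}_p^{(1)}}}\ \text{is similar over }\mathbb Z_p\text{ to } f_{{\mathcal O}_p^{(2)}}.
\]

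First I compute $f_{{\mathcal O}_p^{(1)}}$, imitating the dual-basis computations in the proof of Theorem~\ref{the:oddorder}. Take the $\mathbb Z_p$-basis
\[
 B_1=I,\quad B_2=\operatorname{diag}(\sqrt p,-\sqrt p),\quad B_3=\begin{pmatrix}0&1\\ \epsilon&0\end{pmatrix},\quad B_4=\begin{pmatrix}0&\sqrt p\\ -\epsilon\sqrt p&0\end{pmatrix}.
\]
Since $p$ is odd, $B_2,B_3,B_4$ are trace zero and pairwise orthogonal for the trace pairing, so the dual basis is simply $B_i^*=B_i/\tr(B_i^2)$. The reduced discriminant is $p$. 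Scaling by $p$ then gives a diagonal form, which I expect to be
\[
 f_{{\mathcal O}_p^{(1)}}\sim_p u_1x^2+u_2y^2+p\,u_3z^2,\qquad u_i\in\mathbb Z_p^\times,
\]
with the unimodular binary part $u_1x^2+u_2y^2$ anisotropic (equivalent, up to a unit, to $x^2-\epsilon y^2$). Next, for a diagonal form $\langle a,b,pc\rangle$ with units $a,b,c$, the cofactors are $4pbc,\,4pac,\,4ab$, so $m_f$ equals $4$ up to a $p$-adic unit and
\[
 \psi_f\sim_p\langle pbc,\,pac,\,ab\rangle .
\]

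Then I do the same computation for ${\mathcal O}_p^{(2)}$. A basis is $I$, $\operatorname{diag}(\sqrt p,-\sqrt p)$, and the two off-diagonal elements with $\beta=\sqrt p$ and $\beta=p$. It is again orthogonal, and the reduced discriminant is $p^2$. This yields $f_{{\mathcal O}_p^{(2)}}\sim_p\langle p v_1,\,p v_2,\,v_3\rangle$ with units $v_i$. Both forms therefore have the same Jordan shape: a $p$-modular binary part plus a unimodular line, with discriminant $p^2$ up to units.

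The main obstacle is matching the unit square classes up to similarity, i.e.\ showing $\langle pbc,pac,ab\rangle$ and $\langle pv_1,pv_2,v_3\rangle$ differ by a common unit factor. I would first try to check this directly from the explicit $u_i,v_i$ obtained above. If that bookkeeping gets messy, I would instead argue structurally. Multiplying by a suitable unit normalizes the unimodular line to $1$. Then the $p$-modular binary part is determined over $\mathbb Z_p$ by its discriminant class and its Hasse invariant. Both forms have discriminant $p^2$ up to the same unit class, and both Clifford algebras are the division algebra $Q_p'$, so both $p$-modular binary parts are anisotropic, and these invariants agree. Lemma~\ref{Lemma3.3} over $\mathbb Z_p$ then gives $C_0(\psi_{f_{{\mathcal O}_p^{(1)}}})\simeq{\mathcal O}_p^{(2)}$, i.e.\ $\psi({\mathcal O}_p^{(1)})\simeq{\mathcal O}_p^{(2)}$.
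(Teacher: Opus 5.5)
Your proposal is correct and is essentially the paper's argument: compute the local diagonal forms over $\mathbb Z_p$, take the reciprocal via the adjugate divided by its content, match it with the other local form up to unit similarity, and conclude with the even Clifford correspondence over $\mathbb Z_p$. The only difference is cosmetic: you compare $\psi_{f_{\mathcal{O}_p^{(1)}}}$ with $f_{\mathcal{O}_p^{(2)}}$ directly, while the paper compares the reciprocal of $\widetilde f_{\mathcal{O}^{(1)}}$ with $\widetilde f_{\mathcal{O}^{(2)}}$, which is equivalent because reciprocity is an involution; moreover, your explicit bookkeeping already closes the argument without the structural fallback, since with your orthogonal bases both $\psi_{f_{\mathcal{O}_p^{(1)}}}$ and $f_{\mathcal{O}_p^{(2)}}$ come out similar over $\mathbb Z_p$ to $\langle -p,\epsilon p,-\epsilon\rangle$.
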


\begin{proof}
Throughout the proof, similarity is taken over $\mathbb Z_p$.
Put
\[
 g_1=-\epsilon x^2-py^2+\epsilon pz^2,
 \qquad
 g_2=-\epsilon px^2-y^2+\epsilon z^2.
\]
Let $\mathcal O^{(1)}$ and $\mathcal O^{(2)}$ be global
orders whose localizations at $p$ are isomorphic to
$\mathcal O_p^{(1)}$ and $\mathcal O_p^{(2)}$, respectively.
The defining local models give
\[
 \widetilde f_{\mathcal O^{(1)}}\sim_p u_1g_1,
 \qquad
 \widetilde f_{\mathcal O^{(2)}}\sim_p u_2g_2,
 \qquad u_1,u_2\in\mathbb Z_p^\times.
\]
Here the unit factors account for the global normalization
of the associated forms.

We compare their local reciprocal similarity classes.
Since $p$ is odd, a primitive local reciprocal of a
diagonal form is obtained by taking its adjugate and
dividing out its content.
For $g_1$, this gives
\[
 \frac1p
 \operatorname{adj}
 \begin{pmatrix}
  -\epsilon&0&0\\
  0&-p&0\\
  0&0&\epsilon p
 \end{pmatrix}
 =
 \begin{pmatrix}
  -\epsilon p&0&0\\
  0&-\epsilon^2&0\\
  0&0&\epsilon
 \end{pmatrix}.
\]
The substitution $y\mapsto\epsilon^{-1}y$ therefore
identifies this form with $g_2$.
Consequently,
$\widetilde f_{\mathcal O^{(2)}}$ and
$\psi_{\widetilde f_{\mathcal O^{(1)}}}$
are similar over $\mathbb Z_p$.

Reciprocity preserves local unit-similarity classes,
and unit-similar ternary forms have isomorphic even
Clifford orders. Using
$\mathcal M=\psi\circ C_0^{-1}$ and the definition of
reciprocal orders, we conclude that
\[
 \psi(\mathcal O^{(1)})_p
 \simeq \mathcal O^{(2)}_p.
\]
This proves the assertion.
\end{proof}

The corresponding classical ternary forms satisfy
\[
 f_{\mathcal O^{(1)}}\sim_p
 v_1(-\epsilon px^2-y^2+\epsilon z^2),
 \qquad v_1\in\mathbb Z_p^\times,
\]
and
\[
 f_{\mathcal O^{(2)}}\sim_p
 v_2(-\epsilon x^2-py^2+\epsilon pz^2),
 \qquad v_2\in\mathbb Z_p^\times.
\]
Thus these local models determine the associated forms
up to unit similarity.

We remark also that, by Definition ~\ref{{Piz80}1111}, Proposition ~\ref{proposition7.2} and Corollary~\ref{cor:automorphism}, there exists a bijection $\varphi_p$ between Eichler orders of level $pN$ and orders of level $p^2N$.
But we have to point out that there exist two constraints here: one is $p\ne 2$, and the other is that $\mathcal{O}$ ramifies only at $p$. 

\vskip 0.2cm
The following example is an illustration for the result of Pizer above, in which we have $p=3$, $N=1$. 

\begin{example}\label{example1.00}
Let $Q=\left(\frac{-3,-1}{\mathbb{Q}}\right)$, where $i^2=-3$, $j^2=-1$, $k=ij$. It is easy to verify that $Q$ ramifies only at prime $3$ since the Hilbert symbol $(-3,-1)_p=-1$ if $p=3$, and $=1$ otherwise. Then one can check that 
\begin{equation*}
\mathcal{O}=\mathbb{Z}+\mathbb{Z}\frac{1+i}{2}+\mathbb{Z}\frac{3j+k}{2}+\mathbb{Z}\frac{j+k}{2}
\end{equation*}
is a maximal order; and its discriminant, by direct computation using the given basis, is $3$. 
Again, by Definition ~\ref{{Piz80}1111}, it can be checked directly that  
\begin{equation*}
\mathcal{O}'=\mathbb{Z}+\mathbb{Z}\frac{1+i}{2}+\mathbb{Z}\frac{3j+k}{2}+\mathbb{Z}\frac{3j+3k}{2} 
\end{equation*}
is an order of level $3^2$.
\end{example}

The following proposition is a generalization of the result of Pizer above; hence gives more explicit forms of orders associated with Eichler orders.

\begin{prop}\label{proposition7.4}
Let $p$ be an odd prime and let $\epsilon$ be a quadratic non-residue modulo $p$. Let $\psi$ be defined as in Corollary {\rm ~\ref{cor:automorphism}} and let 
\begin{equation*}
\mathcal{O}_p=\left\{\begin{pmatrix}
\alpha & \beta\\
p\bar{\beta} & \bar{\alpha}
\end{pmatrix}
: \alpha,\beta\in \mathbb{Z}_p+\sqrt{\epsilon}\mathbb{Z}_p\right\}, 
\end{equation*} 
then
\begin{equation*}
\psi(\mathcal{O}_p)=\left\{\begin{pmatrix}
\alpha & \beta\\
p\bar{\beta} & \bar{\alpha}
\end{pmatrix}
: \alpha\in \mathbb{Z}_p+p\sqrt{\epsilon}\mathbb{Z}_p,\,\beta\in \mathbb{Z}_p+\sqrt{\epsilon}\mathbb{Z}_p\right\}.
\end{equation*} 
Further, for $p=2$, if we let
\begin{equation*}
\mathcal{O}_2=\left\{\begin{pmatrix}
\alpha & \beta\\
2\bar{\beta} & \bar{\alpha}
\end{pmatrix}
: \alpha,\beta\in \mathbb{Z}_2+\frac{1+\sqrt{-3}}{2}\mathbb{Z}_2\right\},  
\end{equation*} 
then
\begin{equation*}
\psi(\mathcal{O}_2)=\left\{\begin{pmatrix}
\alpha & 2\beta\\
4\bar{\beta} & \bar{\alpha}
\end{pmatrix}
: \alpha\in\mathbb{Z}_2+4\sqrt{-3}\mathbb{Z}_2,\,\beta\in \mathbb{Z}_2+\frac{1+\sqrt{-3}}{2}\mathbb{Z}_2\right\}.  
\end{equation*} 
Moreover, for odd prime $p$ and $n\geq1$, we have
\begin{equation*}
\psi\left(\begin{pmatrix}
\mathbb{Z}_p & \mathbb{Z}_p\\
p^n\mathbb{Z}_p & \mathbb{Z}_p
\end{pmatrix}\right)=\begin{pmatrix}
a+p^n\mathbb{Z}_p & \mathbb{Z}_p\\
p^n\mathbb{Z}_p & a+p^n\mathbb{Z}_p
\end{pmatrix},~
\psi\left(\begin{pmatrix}
\mathbb{Z}_2 & \mathbb{Z}_2\\
2^n\mathbb{Z}_2 & \mathbb{Z}_2
\end{pmatrix}\right)=\begin{pmatrix}
b+2^{n+2}\mathbb{Z}_2 & \mathbb{Z}_2\\
2^{n+2}\mathbb{Z}_2 & b+2^{n+2}\mathbb{Z}_2
\end{pmatrix},
\end{equation*}  
where $a\in\mathbb{Z}_p$, $b\in\mathbb{Z}_2$. In particular, we have 
\begin{equation*}
\psi\left(\begin{pmatrix}
\mathbb{Z}_p & \mathbb{Z}_p\\
\mathbb{Z}_p & \mathbb{Z}_p
\end{pmatrix}\right)=\begin{pmatrix}
\mathbb{Z}_p & \mathbb{Z}_p\\
\mathbb{Z}_p & \mathbb{Z}_p
\end{pmatrix},~
\psi\left(\begin{pmatrix}
\mathbb{Z}_2 & \mathbb{Z}_2\\
\mathbb{Z}_2 & \mathbb{Z}_2
\end{pmatrix}\right)=\begin{pmatrix}
b+4\mathbb{Z}_2 & \mathbb{Z}_2\\
4\mathbb{Z}_2 & b+4\mathbb{Z}_2
\end{pmatrix}.
\end{equation*}  
\end{prop}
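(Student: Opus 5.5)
The plan is to follow the template of the proof of Proposition~\ref{proposition7.2}: compute everything at the level of local ternary forms over $\mathbb Z_p$ and transport back through the Clifford correspondence. For each local order $\mathcal O_p$ in the statement, I would first pick an explicit $\mathbb Z_p$-basis $1,B_2,B_3,B_4$ and its dual basis under $\tr(XY)$, exactly as in the proof of Theorem~\ref{the:oddorder}. From this I would read off $f_{\mathcal O_p}$ up to unit similarity via \eqref{gongshi3.1}. By Theorem~\ref{the:bijection}(1) and the definition of $\psi$ in Corollary~\ref{cor:automorphism}, $f_{\psi(\mathcal O)}=\widetilde f_{\mathcal O}=\psi_{f_{\mathcal O}}$. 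The local model of $\psi(\mathcal O)_p$ is therefore the even Clifford order of the local reciprocal form $\psi_{f_{\mathcal O_p}}$. It suffices, then, to compute the local reciprocal and show that the displayed candidate order has that form as its $f$, up to a $\mathbb Z_p$-unit. Lemma~\ref{Lemma3.3}, together with the fact that unit similarity does not change the Clifford order, then gives the isomorphism.

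For odd $p$, I would diagonalize as in Proposition~\ref{proposition7.2}. The ramified order with $\alpha,\beta\in\mathbb Z_p[\sqrt\epsilon]$ and off-diagonal $p$ is the maximal order, so $f\sim_p u(-\epsilon x^2+py^2-\epsilon pz^2)$ up to units. Its reciprocal, given by the adjugate divided by its content $p$, is of shape $-\epsilon p x^2+ y^2-\epsilon z^2$ up to units and squares. For the candidate order with $\alpha\in\mathbb Z_p+p\sqrt\epsilon\mathbb Z_p$, the basis $1,p\sqrt\epsilon,\begin{pmatrix}0&1\\p&0\end{pmatrix},\begin{pmatrix}0&\sqrt\epsilon\\-p\sqrt\epsilon&0\end{pmatrix}$ gives a dual computation that should produce exactly this diagonal form.

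For the Eichler order of level $p^n$, I would use $f\sim_p -p^nx^2-yz$ from the proof of Theorem~\ref{the:oddorder}. Its reciprocal, after removing content, is $-x^2-p^n yz$ up to units. The order $\mathbb Z_p+p^n M_2(\mathbb Z_p)$-type lattice displayed, namely matrices with congruent diagonal entries mod $p^n$ and lower-left entry in $p^n\mathbb Z_p$ (that is, $\mathbb Z_p+\begin{pmatrix}p^n&1\\p^n&p^n\end{pmatrix}$-shaped), has basis $1,E_{12},p^nE_{21},p^nE_{22}$. A direct dual-basis computation should give $-x^2-p^nyz$. The case $n=0$ is the trivial check that $yz-x^2$ is self-reciprocal.

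For $p=2$, I would do the same using the local forms in Theorem~\ref{the:oddorder}. For the ramified maximal order ($u=0$), $f\sim_2 \alpha(6x^2-(y^2+yz+z^2))$. Since $m_f$ is odd here, the reciprocal has $m_f=1$ at $2$, and the reciprocal is $\frac{1}{\cdot}\operatorname{adj}$ of the Gram matrix, roughly $3(y^2+yz+z^2)\cdot 4\cdot(\ldots)$. One obtains a form like $-3x^2+8(y^2+yz+z^2)$ up to units, in agreement with Proposition~\ref{lem:oddtqf}'s reciprocal shape, with $v_2(N)=3$ and $v_2(d)=6$. The candidate $\{\alpha\in\mathbb Z_2+4\sqrt{-3}\mathbb Z_2,\ \beta\ \text{with }2,4\}$ has discriminant $2^6$ at $2$, and its $f$ would be computed by the dual-basis method. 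The Eichler $2^n$ case is analogous: $-2^nx^2-yz$ has reciprocal of the shape $-x^2-2^{n+2}yz$ after Lehman's rule $h=g-2\mapsto 2g$ (case (1) of $\phi_2$). This matches the candidate with modulus $2^{n+2}$, whose even-trace structure is forced by Corollary~\ref{cor:automorphism}, since $\psi$ of an odd order is even.

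I expect the main obstacle to be the $2$-adic bookkeeping. Over $\mathbb Z_2$ the forms cannot be diagonalized, the reciprocal involves the factor $4$ in $M_\psi=\widetilde m N M^{-1}$, and I must verify that the displayed candidate lattices are actually orders of the correct reduced discriminant. For this I would first check closure and compute $\discrd$ from the Gram determinant of $\tr(B_i\bar B_j)$. I would then compare invariants ($d$, $N$, and the mod-$2$ zero count as in Proposition~\ref{lem:oddtqf}) rather than exhibit explicit transformation matrices, relying on Proposition~\ref{lem:oddtqf}-type normal forms to conclude $\mathbb Z_2$-equivalence.
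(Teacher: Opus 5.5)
Your strategy is the paper's own: its proof is just ``compare the forms via Corollary~\ref{cor:automorphism} and Definition~\ref{def:tildef}''. The Eichler and $2$-adic parts of your plan work. The Eichler candidates give $-x^2-p^nyz$ and $-x^2-2^{n+2}yz$, which match the reciprocals $-x^2-4p^nyz$ and $-x^2-2^{n+2}yz$ of $-p^nx^2-yz$ and $-2^nx^2-yz$. The ramified $2$-adic candidate has basis $1,4\sqrt{-3},2j,2\omega j$, with $j^2=2$ and $\sqrt{-3},\omega$ embedded diagonally. With $\delta=192$ it gives exactly $x^2-8(y^2-yz+z^2)$, which is the reciprocal of $6x^2-(y^2+yz+z^2)$. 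The odd ramified case, however, is set up wrongly, and the verification you predict there fails.

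The form $-\epsilon x^2+py^2-\epsilon pz^2$ has $v_p(d)=2$. By Lemma~\ref{Lemma3.3} it therefore cannot be $f_{\mathcal O_p}$ for the maximal order, whose reduced discriminant is $p$. Up to equivalence it is the norm form on $\mathcal O_p^0$, i.e.\ $\widetilde f_{\mathcal O_p}$ (the form $g_1$ of Proposition~\ref{proposition7.2}). So the ``reciprocal'' $-\epsilon px^2+y^2-\epsilon z^2$ you compute is $f_{\mathcal O_p}$ itself, and the candidate's dual-basis computation cannot reproduce it. The candidate is $\mathbb Z_p+\operatorname{rad}(\mathcal O_p)$, of index $p$ and reduced discriminant $p^2$. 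Take $e=\operatorname{diag}(\sqrt\epsilon,-\sqrt\epsilon)$ and $j=\bigl(\begin{smallmatrix}0&1\\p&0\end{smallmatrix}\bigr)$. Its trace-zero dual vectors $e/(2p\epsilon)$, $j/(2p)$, $ej/(2\epsilon p)$, scaled by $p^2$, give $-\tfrac{1}{4\epsilon}x^2-\tfrac p4y^2+\tfrac{p}{4\epsilon}z^2\sim-\epsilon x^2-py^2+\epsilon pz^2$. That is the form you started from.

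The fix is to compute $f_{\mathcal O_p}$ from its own dual vectors $e/(2\epsilon)$, $j/(2p)$, $ej/(2\epsilon p)$, scaled by $p$. This gives $f_{\mathcal O_p}\sim-\epsilon px^2-y^2+\epsilon z^2$. Its cofactors have unit content, and its reciprocal $\sim-\epsilon x^2-py^2+\epsilon pz^2$ matches the candidate. Equivalently, you may compare trace-zero norm forms on both sides via $\widetilde f_{\psi(\mathcal O)}=f_{\mathcal O}=\psi_{\widetilde f_{\mathcal O}}$; what fails is mixing $\widetilde f$ on one side with the dual-basis $f$ on the other.

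Finally, do not rely on Proposition~\ref{lem:oddtqf} to certify $\mathbb Z_2$-equivalence. It only treats forms with $m_f$ odd, whereas the reciprocal forms here come from even Gorenstein orders and have $m_f$ even. The explicit computations make that step unnecessary anyway.
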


\begin{proof}
The proof is a direct consequence of Corollary~\ref{cor:automorphism} by comparing the relevant quadratic forms using Definition~\ref{def:tildef}.
\end{proof}

\subsection{Two examples of chains of odd orders}

\begin{example}\label{exam:nodd}
There exist chains of explicit odd Gorenstein orders in quaternion algebras of even discriminant of the following form: 
\begin{equation*}
\mathcal{O}_5\subset\mathcal{O}_3\subset\mathcal{O}_1,
\end{equation*}
and
\begin{equation*}
\mathcal{O}_5'\subset\mathcal{O}_3'\subset\mathcal{O}_1',
\end{equation*}
such that
$\mathcal{O}_i$ and $\mathcal{O}_i'$ are locally isomorphic for $i=1,3,5$, and
\begin{equation*}
\mathcal{O}_5\ncong\mathcal{O}_5',~\mathcal{O}_3\cong\mathcal{O}_3',~\mathcal{O}_1\cong\mathcal{O}_1',~\mathcal{O}_5^+=S_3,~\mathcal{O}_5'^+=S_3',~\mathcal{O}_3^+=S_1,~\mathcal{O}_3'^+=S_1'.
\end{equation*}

Indeed, we can take a quaternion algebra $Q=\left(\frac{-32,-3}{\mathbb{Q}}\right)$. 
Then $Q$ ramifies only at prime $2$ since the Hilbert symbol $(-32,-3)_p=-1$ for $p=2$, and $=1$ for every other finite prime. Then using the construction method due to Hashimoto ~\cite[Proposition 3.1]{Has06} we may deduce an odd order $\mathcal{O}_5$ of level $(2^5,1)$ (see~\cite[Section 2]{Has06}) given by 
\begin{equation*}
\mathcal{O}_5=\mathbb{Z}+\mathbb{Z}\frac{1+j}{2}+\mathbb{Z}\frac{i+ij}{2}+\mathbb{Z}\frac{32j+ij}{3},
\end{equation*}
where $i^2=-32$, $j^2=-3$. Notice that the isomorphic equivalence class $[\mathcal{O}_5]$ is in $\operatorname{Typ}_1(2^5)$, hence the notation $\mathcal{O}_5$. Next, we let 
 \begin{equation*}
\mathcal{O}_3=\mathbb{Z}+\mathbb{Z}\frac{1+j}{2}+\mathbb{Z}\frac{i+ij}{4}+\mathbb{Z}\frac{32j+ij}{6},
\end{equation*}
\begin{equation*}
\mathcal{O}_1=\mathbb{Z}+\mathbb{Z}\frac{1+j}{2}+\mathbb{Z}\frac{i+ij}{8}+\mathbb{Z}\frac{32j+ij}{12}.
\end{equation*}
Then it is clear that $\mathcal{O}_5\subset\mathcal{O}_3\subset\mathcal{O}_1,$ and
\begin{align*}
&\mathcal{O}_5^+=S_3=\mathbb{Z}+\mathbb{Z}j+\mathbb{Z}\frac{i+ij}{2}+\mathbb{Z}\frac{32j+ij}{3},\\
&\mathcal{O}_3^+=S_1=\mathbb{Z}+\mathbb{Z}j+\mathbb{Z}\frac{i+ij}{4}+\mathbb{Z}\frac{32j+ij}{6},\\
&\mathcal{O}_1^+=\mathbb{Z}+\mathbb{Z}j+\mathbb{Z}\frac{i+ij}{8}+\mathbb{Z}\frac{32j+ij}{12}.
\end{align*}
Also, by the definition formula ~\eqref{gongshi3.8} we see that the trace-zero norm forms associated to $\mathcal{O}_5$, $\mathcal{O}_3$ and $\mathcal{O}_1$ are respectively 
\begin{align*}
\begin{aligned}
&f_{\mathcal{O}_5^0}=f_{(\mathcal{O}_5^+)^0}=f_{S_3^0}=(3,32,352,32,64,0),\quad N_{f_{\mathcal{O}_5^0}}=2^{5},\quad d_{f_{\mathcal{O}_5^0}}=2^{10};\\
&f_{\mathcal{O}_3^0}=f_{(\mathcal{O}_3^+)^0}=f_{S_1^0}=(3,8,88,8,32,0),\quad N_{f_{\mathcal{O}_3^0}}=2^{3},\quad d_{f_{\mathcal{O}_3^0}}=2^{6};\\
&f_{\mathcal{O}_1^0}=f_{(\mathcal{O}_1^+)^0}=(3,2,22,2,16,0),\quad N_{f_{\mathcal{O}_1^0}}=2^{2},\quad d_{f_{\mathcal{O}_1^0}}=2^{2}.
\end{aligned}
\end{align*}
Moreover, using the even Clifford construction we can obtain 
\begin{align*}
\begin{aligned}
&f_{\mathcal{O}_5}=(1376,1,3,-3,-128,64),\quad N_{f_{\mathcal{O}_5}}=2^{7},\quad  d_{f_{\mathcal{O}_5}}=2^{5};\\
&f_{\mathcal{O}_3}=(344,1,3,-3,-64,32),\quad N_{f_{\mathcal{O}_3}}=2^{5},\quad  d_{f_{\mathcal{O}_3}}=2^{3};\\
&f_{\mathcal{O}_1}=(86,1,3,-3,-32,16),\quad N_{f_{\mathcal{O}_1}}=2^{3},\quad  d_{f_{\mathcal{O}_1}}=2.
\end{aligned}
\end{align*}

Similarly, we take another quaternion algebra $Q'=\left(\frac{-96,-11}{\mathbb{Q}}\right)$. 
Then $Q'$ ramifies only at prime $2$ too, since the Hilbert symbol $(-96,-11)_p=-1$ for $p=2$, and $=1$ for every other finite prime. This implies that $Q'$ and $Q$ are isomorphic. Then using Hashimoto's construction ~\cite[Proposition 3.1]{Has06} again we may deduce another odd order $\mathcal{O}_5'$ of level $(2^5,1)$ (see~\cite[Section 2]{Has06}) given by 
\begin{equation*}
\mathcal{O}_5'=\mathbb{Z}+\mathbb{Z}\frac{1+j}{2}+\mathbb{Z}\frac{i+ij}{6}+\mathbb{Z}\frac{160j+ij}{11}.
\end{equation*}
Let
\begin{equation*}
\mathcal{O}_3'=\mathbb{Z}+\mathbb{Z}\frac{1+j}{2}+\mathbb{Z}\frac{i+ij}{12}+\mathbb{Z}\frac{160j+ij}{22},
\end{equation*}
\begin{equation*}
\mathcal{O}_1'=\mathbb{Z}+\mathbb{Z}\frac{1+j}{2}+\mathbb{Z}\frac{i+ij}{24}+\mathbb{Z}\frac{160j+ij}{44}.
\end{equation*}
Then $\mathcal{O}_5'\subset\mathcal{O}_3'\subset\mathcal{O}_1'$, and 
\begin{align*}
&\mathcal{O}_5'^+=S_3'=\mathbb{Z}+\mathbb{Z}j+\mathbb{Z}\frac{i+ij}{6}+\mathbb{Z}\frac{160j+ij}{11},\\
&\mathcal{O}_3'^+=S_1'=\mathbb{Z}+\mathbb{Z}j+\mathbb{Z}\frac{i+ij}{12}+\mathbb{Z}\frac{160j+ij}{22},\\
&\mathcal{O}_1'^+=\mathbb{Z}+\mathbb{Z}j+\mathbb{Z}\frac{i+ij}{24}+\mathbb{Z}\frac{160j+ij}{44}.
\end{align*}
The associated ternary quadratic forms are 
\begin{align*}
\begin{aligned}
&f_{\mathcal{O}_5'^0}=f_{(\mathcal{O}_5'^+)^0}=f_{S_3'^0}=(11,32,2336,32,320,0),\quad N_{f_{\mathcal{O}_5'^0}}=2^{5},\quad  d_{f_{\mathcal{O}_5'^0}}=2^{10},\\
&f_{\mathcal{O}_3'^0}=f_{(\mathcal{O}_3'^+)^0}=f_{S_1'^0}=(11,8,584,8,160,0),\quad N_{f_{\mathcal{O}_3'^0}}=2^{3},\quad  d_{f_{\mathcal{O}_3'^0}}=2^{6},\\
&f_{\mathcal{O}_1'^0}=f_{(\mathcal{O}_1'^+)^0}=(11,2,146,2,80,0),\quad N_{f_{\mathcal{O}_1'^0}}=2^{2},\quad  d_{f_{\mathcal{O}_1'^0}}=2^{2};
\end{aligned}
\end{align*}
and
\begin{align*}
\begin{aligned}
&f_{\mathcal{O}_5'}=(9312,3,11,-11,-640,320),\quad N_{f_{\mathcal{O}_5'}}=2^{7},\quad d_{f_{\mathcal{O}_5'}}=2^{5},\\
&f_{\mathcal{O}_3'}=(2328,3,11,-11,-320,160),\quad N_{f_{\mathcal{O}_3'}}=2^{5},\quad d_{f_{\mathcal{O}_3'}}=2^{3},\\
&f_{\mathcal{O}_1'}=(582,3,11,-11,-160,80),\quad N_{f_{\mathcal{O}_1'}}=2^{3},\quad d_{f_{\mathcal{O}_1'}}=2.
\end{aligned}
\end{align*}
Again, using the elementary reduction to reduce the first coefficients $1376$ and $9312$ in $f_{\mathcal{O}_5}$ and $f_{\mathcal{O}_5'}$ above respectively, we find that 
$$
f_{\mathcal{O}_5}\sim(11,1,3,-3,1,0),\,\,\, f_{\mathcal{O}_5'}\sim (3,3,11,-11,-2,1).
$$ 
But, it is straightforward to check $R_{f_{\mathcal{O}_5}}(1)=6$ and $R_{f_{\mathcal{O}_5'}}(1)=2$. Hence
$
f_{\mathcal{O}_5}\nsim f_{\mathcal{O}_5'}; 
$
and thus 
$$
\mathcal{O}_5\ncong\mathcal{O}_5'.
$$ 
Notice that, since both $\mathcal{O}_5$ and $\mathcal{O}_5'$ are of level $(2^5,1)$ (see~\cite[Section 2]{Has06}), so $\mathcal{O}_5$ and $\mathcal{O}_5'$ are locally isomorphic. 

Here we also check that
\begin{equation*}
U_5=\begin{pmatrix}
29&-16&304\\
1&-1&10\\
-2&1&-21
\end{pmatrix},\,\,\ M_{f_{\mathcal{O}_5^0}}=U_5^TM_{f_{\mathcal{O}_5'^0}}U_5,
\end{equation*}
hence $f_{\mathcal{O}_5^0}\sim f_{\mathcal{O}_5'^0}$. Thus by definition, 
$$
\theta_{\mathcal{O}_5^0}(\tau)=\theta_{\mathcal{O}_5'^0}(\tau). 
$$ 
This gives the counterexample as described in Remarks ~\ref{remark5.7} and ~\ref{remark5.8}. 
Finally, one can deduce 
\begin{equation*}
f_{\mathcal{O}_3}\sim f_{\mathcal{O}_3'},\,\,\, U_3=\begin{pmatrix}
-21&-1&2\\
0&-1&0\\
-304&-15&29
\end{pmatrix},
\end{equation*}
hence $\mathcal{O}_3\cong\mathcal{O}_3'$. Similarly, 
\begin{equation*}
f_{\mathcal{O}_1}\sim f_{\mathcal{O}_1'},\,\,\,U_1=\begin{pmatrix}
-1&-1&0\\
-4&-1&1\\
-12&-8&1
\end{pmatrix},
\end{equation*}
and $\mathcal{O}_1\cong\mathcal{O}_1'$.
\end{example}

\begin{example}\label{exam:neven}
There exist chains of explicit odd Gorenstein orders in quaternion algebras of odd discriminant of the following form: 
\begin{equation*}
\mathcal{O}_4\subset\mathcal{O}_2\subset\mathcal{O}_0,
\end{equation*}
and
\begin{equation*}
\mathcal{O}_4'\subset\mathcal{O}_2'\subset\mathcal{O}_0',
\end{equation*}
such that $\mathcal{O}_i$ and $\mathcal{O}_i'$ are not locally isomorphic for $i=2,4$, and
\begin{equation*}
\mathcal{O}_0\cong\mathcal{O}_0',~\mathcal{O}_4^+=S_2,~\mathcal{O}_4'^+=S_2',~\mathcal{O}_2^+=S_0,~\mathcal{O}_2'^+=S_0'.
\end{equation*}

In fact, we can take the quaternion algebra $Q=\left(\frac{-3,-1}{\mathbb{Q}}\right)$ as in Example ~\ref{example1.00}. 
Then $Q$ ramifies only at prime $3$. 
We may construct an odd order $\mathcal{O}_4$ as follows: 
\begin{equation*}
\mathcal{O}_4=\mathbb{Z}+\mathbb{Z}\frac{1+i}{2}+\mathbb{Z}4j+\mathbb{Z}(2j+2k).
\end{equation*}
Let
\begin{equation*}
\mathcal{O}_2=\mathbb{Z}+\mathbb{Z}\frac{1+i}{2}+\mathbb{Z}2j+\mathbb{Z}(j+k),
\end{equation*}
\begin{equation*}
\mathcal{O}_0=\mathbb{Z}+\mathbb{Z}\frac{1+i}{2}+\mathbb{Z}j+\mathbb{Z}\frac{j+k}{2}.
\end{equation*}
Then we have
\begin{align*}
&\mathcal{O}_4^+=S_2=\mathbb{Z}+\mathbb{Z}i+\mathbb{Z}4j+\mathbb{Z}(2j+2k),\\
&\mathcal{O}_2^+=S_0=\mathbb{Z}+\mathbb{Z}i+\mathbb{Z}2j+\mathbb{Z}(j+k).
\end{align*}
The associated ternary quadratic forms are
\begin{align*}
\begin{aligned}
&f_{\mathcal{O}_4^0}=f_{(\mathcal{O}_4^+)^0}=f_{S_2^0}=(3,16,16,16,0,0),\quad N_{f_{\mathcal{O}_4^0}}=3\cdot2^{4},\quad  d_{f_{\mathcal{O}_4^0}}=3^2\cdot2^{8},\\
&f_{\mathcal{O}_2^0}=f_{(\mathcal{O}_2^+)^0}=f_{S_0^0}=(3,4,4,4,0,0),\quad N_{f_{\mathcal{O}_2^0}}=3\cdot2^{2},\quad d_{f_{\mathcal{O}_2^0}}=3^2\cdot2^{4},
\end{aligned}
\end{align*}
and
\begin{align*}
\begin{aligned}
&f_{\mathcal{O}_4}=(16,1,1,-1,0,0),\quad N_{f_{\mathcal{O}_4}}=3\cdot2^{6},\quad d_{f_{\mathcal{O}_4}}=3\cdot2^{4},\\
&f_{\mathcal{O}_2}=(4,1,1,-1,0,0),\quad N_{f_{\mathcal{O}_2}}=3\cdot2^{4},\quad d_{f_{\mathcal{O}_2}}=3\cdot2^{2},\\
&f_{\mathcal{O}_0}=(1,1,1,-1,0,0),\quad N_{f_{\mathcal{O}_0}}=3\cdot2^{2},\quad d_{f_{\mathcal{O}_0}}=3.
\end{aligned}
\end{align*}

Similarly, we take a $Q'=\left(\frac{-48,-7}{\mathbb{Q}}\right)$, which ramifies only at prime $3$ since the Hilbert symbol $(-48,-7)_p=-1$ for $p=3$, and $=1$ for every other finite prime.
Take
\begin{equation*}
\mathcal{O}_4'=\mathbb{Z}+\mathbb{Z}\frac{1+j}{2}+\mathbb{Z}\frac{i+ij}{2}+\mathbb{Z}\frac{48j+ij}{7},
\end{equation*}
which is of level $(3,16)$ by Hashimoto's construction ~\cite[Proposition 3.1]{Has06}. Let
\begin{equation*}
\mathcal{O}_2'=\mathbb{Z}+\mathbb{Z}\frac{1+j}{2}+\mathbb{Z}\frac{i+ij}{4}+\mathbb{Z}\frac{48j+ij}{14},
\end{equation*}
\begin{equation*}
\mathcal{O}_0'=\mathbb{Z}+\mathbb{Z}\frac{1+j}{2}+\mathbb{Z}\frac{i+ij}{8}+\mathbb{Z}\frac{48j+ij}{28}.
\end{equation*}
Then we have
\begin{equation*}
\mathcal{O}_4'^+=S_2'=\mathbb{Z}+\mathbb{Z}j+\mathbb{Z}\frac{i+ij}{2}+\mathbb{Z}\frac{48j+ij}{7},
\end{equation*}
\begin{equation*}
\mathcal{O}_2'^+=S_0'=\mathbb{Z}+\mathbb{Z}j+\mathbb{Z}\frac{i+ij}{4}+\mathbb{Z}\frac{48j+ij}{14}.
\end{equation*}
The associated ternary quadratic forms are
\begin{equation*}
f_{\mathcal{O}_4'^0}=f_{(\mathcal{O}_4'^+)^0}=f_{S_2'^0}=(7,96,336,48,96,0),\,\,\,N_{f_{\mathcal{O}_4'^0}}=3\cdot2^{4}, \,\,\, d_{f_{\mathcal{O}_4'^0}}=3^2\cdot2^{8},
\end{equation*}
\begin{equation*}
f_{\mathcal{O}_2'^0}=f_{(\mathcal{O}_2'^+)^0}=f_{S_0'^0}=(7,24,84,12,48,0),\,\,\,N_{f_{\mathcal{O}_2'^0}}=3\cdot2^{2}, \,\,\, d_{f_{\mathcal{O}_2'^0}}=3^2\cdot2^{4},
\end{equation*}
and
\begin{align*}
&f_{\mathcal{O}_4'}=(2640,1,14,-7,-384,96),\,\,\,N_{f_{\mathcal{O}_4'}}=3\cdot2^{6},\,\,\, d_{f_{\mathcal{O}_4'}}=3\cdot2^{4},\\
&f_{\mathcal{O}_2'}=(660,1,14,-7,-192,48),\,\,\,N_{f_{\mathcal{O}_2'}}=3\cdot2^{4},\,\,\, d_{f_{\mathcal{O}_2'}}=3\cdot2^{2},\\
&f_{\mathcal{O}_0'}=(165,1,14,-7,-96,24),\,\,\,N_{f_{\mathcal{O}_0'}}=3\cdot2^{2},\,\,\, d_{f_{\mathcal{O}_0'}}=3.
\end{align*}
Since $\mathcal{O}_4\in\operatorname{Typ}_1(16)$ and $\mathcal{O}_4'\in\operatorname{Typ}_2(16)$,
and $\mathcal{O}_2\in\operatorname{Typ}_1(4)$ and $\mathcal{O}_2'\in\operatorname{Typ}_2(4)$, hence   
$$
\mathcal{O}_4\ncong\mathcal{O}_4',\,\,\,\mathcal{O}_2\ncong\mathcal{O}_2'.
$$
Here we also check that
\begin{equation*}
U_2=\begin{pmatrix}
3&-14&-4\\
0&-1&0\\
-1&4&1
\end{pmatrix},\,\,\ M_{f_{\mathcal{O}^0_2}}=U_2^TM_{f_{(\mathcal{O}_2')^0}}U_2,
\end{equation*}
hence $f_{\mathcal{O}_2^0}\sim f_{\mathcal{O}_2'^0}$. Thus by definition, 
$$
\theta_{\mathcal{O}_2^0}(\tau)=\theta_{\mathcal{O}_2'^0}(\tau). 
$$ 
Similarly, 
\begin{equation*}
f_{\mathcal{O}_0}\sim f_{\mathcal{O}_0'},\,\,\,U_0=\begin{pmatrix}
-1&-1&1\\
-2&1&-2\\
-4&-3&3
\end{pmatrix},
\end{equation*}
and $\mathcal{O}_0\cong\mathcal{O}_0'$.
In fact, $\mathcal{O}_0$ and $\mathcal{O}_0'$ are  maximal orders respectively in $Q$ and $Q'$.
This gives another counterexample as described in Remarks ~\ref{remark5.7} and ~\ref{remark5.8}. 
 
\end{example}

\section{Applications: Explicit orders associated to congruent numbers}\label{section07}

In this section we give an explicit formulation for the orders and their relationships associated to the quadratic forms of Tunnell~\cite{Tun83} and Qin~\cite{Qin22} concerning congruent numbers, hence provide a new perspective to understand the relevant important forms. 

\subsection{An example}
As an illustration of the procedures of computations, we first give an explicit example to present Theorem ~\ref{the:bijection}, for which we have $m_{f_\mathcal{O}}>1$.

\begin{example}\label{example6.1}
Take $Q=\left(\frac{-3,-1}{\mathbb{Q}}\right)$, and let 
\begin{align*}
&\mathcal{O}=\mathbb{Z}+\mathbb{Z}\frac{1+i}{2}+\mathbb{Z}3j+\mathbb{Z}\frac{3j+3k}{2},\\
&\mathcal{O}'=\mathbb{Z}+\mathbb{Z}i+\mathbb{Z}3j+\mathbb{Z}3k.
\end{align*}
Using the explicit basis over $\mathbb{Z}$, direct computations show that $\mathcal{O}$ is an odd order of discriminant $27$, and $\mathcal{O}'$ is an even order of discriminant $108$. Hence by definition we have 
\begin{equation*}
S=\mathbb{Z}+2\mathcal{O}=\mathbb{Z}+\mathbb{Z}i+\mathbb{Z}6j+\mathbb{Z}(3j+3k),
\end{equation*}
and direct computation by ~\eqref{gongshi3.7} and ~\eqref{gongshi3.8} yields 
\begin{align*}
&f_{S^0}=3x^2+36y^2+36z^2+36yz=3(x^2+12y^2+12z^2+12yz),\\
&f_{\mathcal{O}'^0}=3x^2+9y^2+27z^2=3(x^2+3y^2+9z^2),
\end{align*}
and hence $\widetilde m_{f_{S^0}}=\widetilde m_{f_{\mathcal{O}'^0}}=3$. Thus by Theorem ~\ref{the:bijection} we have
\begin{align*}
&\psi_{f_{\mathcal{O}}}=x^2+12y^2+12z^2+12yz,\\
&\psi_{f_{\mathcal{O}'}}=x^2+3y^2+9z^2,
\end{align*}
and
$$
m_{f_{\mathcal{O}}}=3,\,\,\,m_{f_{\mathcal{O}'}}=4\cdot3=12.
$$ 
And so by the definition of the reciprocal of forms we get 
\begin{equation*}
f_{\mathcal{O}}=9x^2+y^2+z^2+yz,\,\,\,f_{\mathcal{O}'}=9x^2+3y^2+z^2. 
\end{equation*}
Also, by Theorem~\ref{the:bijection} we have 
\begin{align*}
&\rho_{\mathcal{O}}(n,r)=R_{(1,12,12,12,0,0)}\left(\frac{4n-r^2}{3}\right),\\
&\rho_{\mathcal{O}'}(n,r)=R_{(1,3,9,0,0,0)}\left(\frac{4n-r^2}{12}\right),
\end{align*}
where and in what follows, $R_f(n)=R_{(a,b,c,r,s,t)}(n)$ denotes the number of representations of integer $n$ by the form $f=(a,b,c,r,s,t)$. 
\end{example}

Notice that, as for $f_{\mathcal{O}}$ and $f_{\mathcal{O}'}$, one can certainly use the even Clifford algebra method to compute directly but tediously, giving $f_{\mathcal{O}}=9x^2+y^2+z^2+yz$ and $f_{\mathcal{O}'}=9x^2+3y^2+z^2$ again, and then giving the arithmetic quantities: 
$$
N_{f_{\mathcal{O}}}=36,\,\, d_{f_{\mathcal{O}}}=27,\,\, m_{f_{\mathcal{O}}}=3;\,\,N_{f_{\mathcal{O}'}}=36,\,\, d_{f_{\mathcal{O}'}}=108,\,\, m_{f_{\mathcal{O}'}}=12.
$$ 
It is clear that the two methods are compatible. 

\vskip 0.2cm
\subsection{Explicit orders associated to congruent numbers}
Now turn to the main focus of this section. Recall that the classical notable theorem of Tunnell~\cite{Tun83} asserts the conditional equivalence under the Birch and Swinnerton-Dyer (BSD) conjecture between the congruent number problem of $n$ and identities concerning the number of representations of $n$ or $n/2$ by the ternary quadratic forms: 
\begin{equation}\label{gongshi6.1} 
x^2+2y^2+8z^2,~x^2+2y^2+32z^2;~x^2+4y^2+8z^2,~x^2+4y^2+32z^2.
\end{equation}
In spite of the simple diagonal form of the forms, the first and the second two forms do not belong to the same genus. In view of this, Qin~\cite{Qin22} optimized this by giving the following two new forms instead of the second and the fourth of ~\eqref{gongshi6.1}, among many others:
\begin{equation}\label{gongshi6.2}
2x^2+4y^2+9z^2-4yz,~4x^2+4y^2+9z^2-4yz.
\end{equation}
Notice that, under this modification, the first ones in ~\eqref{gongshi6.1} and ~\eqref{gongshi6.2} are in the same genus, and the third one in  ~\eqref{gongshi6.1} and the second one in  ~\eqref{gongshi6.2} are also in the same genus.

\vskip0.2cm
Using our main result we can give a quaternion order characterization instead of the quadratic form version for the results of Tunnell~\cite{Tun83} and Qin~\cite{Qin22} as follows.  
\begin{prop}\label{exam:congruent}
Let $Q=\left(\frac{-2,-1}{\mathbb{Q}}\right)$ and 
\begin{equation}
\mathcal{O}=\mathbb{Z}+\mathbb{Z}2i+\mathbb{Z}2j+\mathbb{Z}4k, 
\end{equation}
where $i^2=-2$, $j^2=-1$, $ij=k$. Take 
\begin{equation}
\psi(\mathcal{O})=\mathbb{Z}+\mathbb{Z}i+\mathbb{Z}4j+\mathbb{Z}2k.
\end{equation}
Then 
\begin{equation*}
\widetilde{f}_{\mathcal{O}}=f_{\psi(\mathcal{O})}=(4x^2+8y^2+32z^2)/4=x^2+2y^2+8z^2,
\end{equation*}
\begin{equation*}
\widetilde{f}_{\psi(\mathcal{O})}=f_{\mathcal{O}}=(2x^2+8y^2+16z^2)/2=x^2+4y^2+8z^2.
\end{equation*}
This shows in addition $\psi(\mathcal{O})$ can serve as the reciprocal, up to isomorphism, of $\mathcal{O}$ in Corollary {\rm ~\ref{cor:automorphism}}. 

Next, let
\begin{equation}
\mathcal{O}_1=\mathbb{Z}+\mathbb{Z}4i+\mathbb{Z}4j+\mathbb{Z}16k,\,\,\,
\mathcal{O}_2=\mathbb{Z}+\mathbb{Z}2i+\mathbb{Z}16j+\mathbb{Z}4k.
\end{equation}
Then 
\begin{align*}
&\widetilde{f}_{\mathcal{O}_1}=(16x^2+32y^2+512z^2)/16=x^2+2y^2+32z^2,\\
&\widetilde{f}_{\mathcal{O}_2}=(8x^2+32y^2+256z^2)/8=x^2+4y^2+32z^2.
\end{align*}

Further, let
\begin{align}
\mathcal{O}_1'=\mathbb{Z}+\mathbb{Z}4i+\mathbb{Z}8j+\mathbb{Z}(8k-4j),\,\,\,
\mathcal{O}_2'=\mathbb{Z}+\mathbb{Z}4i+\mathbb{Z}(8j-2k)+\mathbb{Z}4k.
\end{align}
Then 
\begin{align*}
&\widetilde{f}_{\mathcal{O}_1'}=(32x^2+64y^2+144z^2-64yz)/16=2x^2+4y^2+9z^2-4yz,\\
&\widetilde{f}_{\mathcal{O}_2'}=(32x^2+32y^2+72z^2-32yz)/8=4x^2+4y^2+9z^2-4yz.
\end{align*}

Moreover, by Theorem {\rm ~\ref{the:bijection}} we have
$$
\rho_{\mathcal{O}}(n,r)=R_{(1,2,8,0,0,0)}\left(\frac{4n-r^2}{16}\right),\quad
\rho_{\psi(\mathcal{O})}(n,r)=R_{(1,4,8,0,0,0)}\left(\frac{4n-r^2}{8}\right),
$$
$$
\rho_{\mathcal{O}_1}(n,r)=R_{(1,2,32,0,0,0)}\left(\frac{4n-r^2}{64}\right),\quad
\rho_{\mathcal{O}_1'}(n,r)=R_{(2,4,9,-4,0,0)}\left(\frac{4n-r^2}{64}\right),
$$
$$
\rho_{\mathcal{O}_2}(n,r)=R_{(1,4,32,0,0,0)}\left(\frac{4n-r^2}{32}\right),\quad \rho_{\mathcal{O}_2'}(n,r)=R_{(4,4,9,-4,0,0)}\left(\frac{4n-r^2}{32}\right).
$$
\end{prop}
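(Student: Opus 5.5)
The plan is a direct computation driven by Definition~\ref{def:tildef}, Corollary~\ref{cor:m_f}, Theorem~\ref{the:bijection} and Corollary~\ref{cor:automorphism}. Each of the six lattices $\mathcal O,\psi(\mathcal O),\mathcal O_1,\mathcal O_2,\mathcal O_1',\mathcal O_2'$ is handled the same way, in four steps.

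First, I check that the lattice is an order by multiplying basis elements with $i^2=-2$, $j^2=-1$, $ij=-ji=k$, $k^2=-2$. For example, $(2i)(2j)=4k$ lies in $\mathcal O$, and $(4i)(8j-2k)=32k+16j$ lies in $\mathcal O_2'$. All basis elements other than $1$ are pure quaternions, so every trace is even and each order is \emph{even}. Hence by Lemma~\ref{prop:basis} and \eqref{gongshi3.7}, $\mathcal O^0$ is spanned by the three displayed non-scalar basis vectors.

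Second, I compute $f_{\mathcal O^0}$ from \eqref{gongshi3.7}, using $\n(xi+yj+zk)=2x^2+y^2+2z^2$ and $\tr(\alpha\bar\beta)$ for the cross terms. For instance, for $\mathcal O$ the vectors $2i,2j,4k$ are pairwise orthogonal with norms $8,4,32$. For $\mathcal O_1'$ the vectors are $4i,8j,8k-4j$, with norms $32,64,144$ and one cross term $\tr\bigl(8j\,\overline{(8k-4j)}\bigr)=-64$. These produce exactly the displayed numerators, up to a permutation of variables that does not affect equivalence classes.

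Third, I determine the Gorenstein conductor $m$ of each order. This is the step I expect to be the main obstacle, because $\widetilde f_{\mathcal O}=m\widetilde f_{\operatorname{Gor}(\mathcal O)}$ and the division by content in Definition~\ref{def:tildef} is only correct when $m=1$.
\begin{itemize}
\item I would try to show $m=1$ directly. A factorisation $\mathcal O=\mathbb Z+m\mathcal O''$ forces $\mathcal O''=\mathbb Z+\frac1m(\mathcal O^0)$, and I would check that this lattice is never closed under multiplication for $m>1$. For $\mathcal O$ and $m=2$ one gets $\mathbb Z+\mathbb Zi+\mathbb Zj+\mathbb Z2k$, which fails because $ij=k\notin$ it. The same pattern ($\alpha\beta$ landing in $\tfrac12\cdot$(last generator)) recurs for the other orders.
\item As a cross-check I would use the remark after Theorem~\ref{the:bijection} and Theorem~\ref{the:bijection}(2). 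The form ${\frak q}_{\mathcal O}=4f_{\mathcal O^0}$ has content $m_{f_{\mathcal O}}$, and $\widetilde f_{\mathcal O}$ has content $m$, so primitivity of the resulting forms confirms $m=1$.
\end{itemize}
Given $m=1$, Definition~\ref{def:tildef} yields the displayed $\widetilde f$, namely $f_{\mathcal O^0}$ divided by its content. Corollary~\ref{cor:m_f} then gives $m_{f_{\mathcal O}}=4\widetilde m_{f_{\mathcal O^0}}$, which equals $16,8,64,32,64,32$ respectively.

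Fourth, I treat the reciprocity statement for $\psi(\mathcal O)$. The same computation gives $\widetilde f_{\psi(\mathcal O)}=(2x^2+16y^2+8z^2)/2\sim x^2+4y^2+8z^2$. I then check by the cofactor formula \eqref{gongshi2.01} that the reciprocal of $x^2+4y^2+8z^2$ is $(8,2,1)\sim x^2+2y^2+8z^2$, and conversely. Here $m_f=16$ and the cofactors are $128,32,16$. Theorem~\ref{the:bijection}(1) gives $f_{\mathcal O}=\psi_{\widetilde f_{\mathcal O}}$, since reciprocity is an involution on primitive forms. Hence $f_{\mathcal O}=\widetilde f_{\psi(\mathcal O)}$ and $f_{\psi(\mathcal O)}=\widetilde f_{\mathcal O}$, which are exactly the defining properties of the reciprocal order in Corollary~\ref{cor:automorphism}. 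The representation-number identities then follow immediately from Theorem~\ref{the:bijection}(3) with $m=1$ and the values of $m_{f_{\mathcal O}}$ found above.
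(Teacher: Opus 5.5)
Your route is the paper's own. The proposition is stated without a separate proof and rests on exactly this computation: compute $f_{\mathcal O^0}$, divide by its content, take $m_{f_{\mathcal O}}=4\widetilde m_{f_{\mathcal O^0}}$ from Corollary~\ref{cor:m_f}, and apply Theorem~\ref{the:bijection}, as in Example~\ref{example6.1}. Your numbers are correct, including $m_{f_{\mathcal O}}=16,8,64,32,64,32$ and the cofactor check of reciprocity. You are also right that $m=1$ has to be verified, which the paper passes over silently. However, your third step does not verify it correctly as written.

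\textbf{Two problems in the third step.} First, the claim that $\mathcal O=\mathbb Z+m\mathcal O''$ forces $\mathcal O''=\mathbb Z+\frac1m\mathcal O^0$ is false when $m$ is even and $\mathcal O''$ is odd. For the Hurwitz order $\mathcal H$, the lattice $(\mathbb Z+2\mathcal H)^0$ contains $i+j+k$, so $\mathbb Z+\frac12(\mathbb Z+2\mathcal H)^0$ is not even integral and cannot equal $\mathcal H$. Second, your cross-check is circular. The form $f_{\mathcal O^0}$ divided by its content is primitive by construction, and $\mathfrak q_{\mathcal O}$ has content $m_{f_{\mathcal O}}$ whatever $m$ is, so the check tells you nothing about $m$.

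\textbf{The repair is short.} If $p\mid m$, then $\mathcal O=\mathbb Z+p(\mathbb Z+\frac mp\mathcal O'')$, so it suffices to exclude $m=p$ prime.
\begin{itemize}
\item For odd $p$, $\mathcal O''$ must be even, since otherwise $\mathcal O$ would contain an element of odd trace. Then $\frac1p\mathcal O^0$ would consist of integral elements, forcing $p^2$ to divide the content of $f_{\mathcal O^0}$. But the contents are $4,2,16,8,16,8$.
\item For $p=2$ with $\mathcal O''$ even, your closure checks apply. In addition, $\psi(\mathcal O)$ is excluded at once because $\n(i/2)=\frac12$.
\item For $p=2$ with $\mathcal O''$ odd, Lemma~\ref{lem:rep} applied to $\mathcal O''$ gives $\mathcal O^0=\Phi(\mathcal O''/\mathbb Z)$. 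This lattice contains $2\beta-1$ with norm $4\n(\beta)-1\equiv3\pmod 4$, whereas every value of $f_{\mathcal O^0}$ is even here.
\end{itemize}

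\textbf{An alternative check.} You can verify Gorenstein-ness directly through the primitivity criterion. For $\mathcal O$, the trace Gram matrix in the basis $1,2i,2j,4k$ is $\operatorname{diag}(2,-16,-8,-64)$. Hence $\discrd(\mathcal O)=128$ and $(\mathcal O^\sharp)^0=\mathbb Z\frac i8+\mathbb Z\frac j4+\mathbb Z\frac k{16}$. This gives $f_{\mathcal O}=4x^2+8y^2+z^2$, which is primitive. It also yields $f_{\mathcal O}\sim x^2+4y^2+8z^2$ without appealing to the involutivity of reciprocity.

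Finally, the six $\rho$-identities do not depend on $m=1$ at all. For an even order, Lemma~\ref{lem:rep} already gives $\rho_{\mathcal O}(n,r)=R_{f_{\mathcal O^0}}(n-r^2/4)$. The conductor only matters when identifying the primitive part of $f_{\mathcal O^0}$ with $\widetilde f_{\mathcal O}$.
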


\begin{remark}
Here we point out that, for any form $f$ in~\eqref{gongshi6.1} and ~\eqref{gongshi6.2}, it is impossible to find an order $\mathcal{O}$ such that $f_{\mathcal{O}^0}=f$ on noting Corollary ~\ref{cor:tqf}. But with a modification by multiplying a suitable positive integer, this can be done. Indeed, the integers $2$, $4$, $8$ and $16$ arising in Proposition ~\ref{exam:congruent} above are the minimal multiples such that the relevant quadratic forms $mf$ can be written as $mf=f_{\mathcal{O}^0}$ with an order $\mathcal{O}$.  
\end{remark}

As an immediate consequence of Proposition ~\ref{exam:congruent}, together with \cite{Tun83} and~\cite[Theorem 1.3]{Qin22}, we have the following proposition. 
\begin{prop}
Retain the notations of Proposition ~{\rm \ref{exam:congruent}}. Let $N$ be a positive
squarefree congruent number, let $n,n_1,n_1'$ be positive integers,
and let $r,r_1,r_1'\in\mathbb Z$.
\begin{enumerate}
\item If $N$ is odd and
\[
\frac{4n_1-r_1^2}{64}
=\frac{4n-r^2}{16}
=\frac{4n_1'-(r_1')^2}{64}
=N,
\]
then
\[
\rho_{\mathcal O_1}(n_1,r_1)
=\frac12\rho_{\mathcal O}(n,r)
=\rho_{\mathcal O_1'}(n_1',r_1').
\]

\item If $N$ is even and
\[
\frac{4n_1-r_1^2}{32}
=\frac{4n-r^2}{8}
=\frac{4n_1'-(r_1')^2}{32}
=\frac N2,
\]
then
\[
\rho_{\mathcal O_2}(n_1,r_1)
=\frac12\rho_{\psi(\mathcal O)}(n,r)
=\rho_{\mathcal O_2'}(n_1',r_1').
\]
\end{enumerate}
\end{prop}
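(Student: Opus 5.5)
The plan is to convert each representation number in the statement into a representation number of one of Tunnell's or Qin's ternary forms, and then read off the asserted identities from Tunnell's theorem and \cite[Theorem~1.3]{Qin22}.

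\emph{Step 1: rewrite the $\rho$-values.} By Proposition~\ref{exam:congruent}, which rests on assertion (3) of Theorem~\ref{the:bijection}, we have
\[
\rho_{\mathcal O}(n,r)=R_{(1,2,8,0,0,0)}\Bigl(\tfrac{4n-r^2}{16}\Bigr),\quad
\rho_{\mathcal O_1}(n_1,r_1)=R_{(1,2,32,0,0,0)}\Bigl(\tfrac{4n_1-r_1^2}{64}\Bigr),
\]
\[
\rho_{\mathcal O_1'}(n_1',r_1')=R_{(2,4,9,-4,0,0)}\Bigl(\tfrac{4n_1'-(r_1')^2}{64}\Bigr).
\]
Under the hypotheses of case (1), all three arguments equal $N$. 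So the claim becomes
\[
R_{x^2+2y^2+32z^2}(N)=\tfrac12R_{x^2+2y^2+8z^2}(N)=R_{2x^2+4y^2+9z^2-4yz}(N).
\]
Case (2) is handled the same way. There the arguments all equal $N/2$, and the forms are $x^2+4y^2+32z^2$, $x^2+4y^2+8z^2$ and $4x^2+4y^2+9z^2-4yz$, coming from $\mathcal O_2$, $\psi(\mathcal O)$ and $\mathcal O_2'$ respectively.

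\emph{Step 2: apply Tunnell's and Qin's theorems.} Tunnell's theorem states that if $N$ is an odd squarefree congruent number, then
\[
\#\{x^2+2y^2+8z^2=N\}=2\,\#\{x^2+2y^2+32z^2=N\}.
\]
For even $N$ the analogous identity holds with $N/2$ and the forms $x^2+4y^2+8z^2$, $x^2+4y^2+32z^2$. Qin's Theorem~1.3 gives the same identity with $x^2+2y^2+32z^2$ replaced by $2x^2+4y^2+9z^2-4yz$, and with $x^2+4y^2+32z^2$ replaced by $4x^2+4y^2+9z^2-4yz$. Combining the two in each parity case gives the chain of equalities.

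\emph{Main obstacle.} The substantive point is checking that the cited theorems apply to congruent numbers \emph{unconditionally}. Tunnell's theorem proves this direction (congruent implies equality of counts) without assuming BSD, via Coates--Wiles. For Qin's result we must likewise confirm that the direction used is the unconditional one; if \cite{Qin22} states it only as an equivalence under BSD, the statement should be read with that caveat.

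There are two routine bookkeeping points to confirm before citing. First, that the normalizing factors $16$, $64$, $8$, $32$ in Proposition~\ref{exam:congruent} are correct, since they come from $m_{f_{\mathcal O}}/m$. Second, that the quantities $4n-r^2$ etc.\ are consistent with the hypotheses, which is automatic because the hypotheses define them.
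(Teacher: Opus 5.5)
Your proposal is correct and follows the paper's own argument. The paper derives the statement as an immediate consequence of Proposition~\ref{exam:congruent}, which turns the three $\rho$-values into representation numbers of Tunnell's and Qin's forms at $N$ (resp.\ $N/2$), combined with \cite{Tun83} and \cite[Theorem~1.3]{Qin22}.

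Your caveat about Qin's direction is not needed. Since $2x^2+4y^2+9z^2-4yz=2x^2+(2y-z)^2+8z^2$, splitting by the parity of $z$ gives, for odd $N$, the elementary identity $R_{(1,2,8,0,0,0)}(N)=R_{(1,2,32,0,0,0)}(N)+R_{(2,4,9,-4,0,0)}(N)$. The analogous identity holds in the even case with $4x^2$ in place of $2x^2$ and $N/2$ in place of $N$. So Qin's equality follows from the unconditional direction of Tunnell's theorem alone.
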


\subsection{Further relations}
The reciprocal operation gives further relations between the orders
occurring in the odd and even congruent-number criteria. We first fix
explicit representatives of their reciprocal isomorphism classes;
the inclusions below refer to these representatives inside
$Q=\left(\frac{-2,-1}{\mathbb Q}\right)$.

\begin{prop}\label{prop:cn-reciprocal-overorders}
With the notations of Proposition ~{\rm\ref{exam:congruent}}, put
\begin{equation}\label{eq:cn-reciprocal-orders}
\begin{aligned}
\psi(\mathcal O_1)
 &=\mathbb Z+\mathbb Zi+\mathbb Z8j+\mathbb Z4k,\\
\psi(\mathcal O_1')
 &=\mathbb Z+\mathbb Z4i+\mathbb Z(4j+k)+\mathbb Z2k,\\
\psi(\mathcal O_2)
 &=\mathbb Z+\mathbb Z4i+\mathbb Z2j+\mathbb Z8k,\\
\psi(\mathcal O_2')
 &=\mathbb Z+\mathbb Z4i+\mathbb Z(2j+4k)+\mathbb Z4j.
\end{aligned}
\end{equation}
Then we have 
\begin{equation}\label{eq:cn-cross-inclusions}
\mathcal O_1\subset\psi(\mathcal O_2),\quad
\mathcal O_1'\subset\psi(\mathcal O_2'),\quad
\mathcal O_2\subset\psi(\mathcal O_1),\quad
\mathcal O_2'\subset\psi(\mathcal O_1'),
\end{equation}
and each inclusion has index $4$.
\end{prop}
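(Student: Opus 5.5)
The plan is to verify the two parts of the statement separately. First, the lattices displayed in \eqref{eq:cn-reciprocal-orders} are orders representing the reciprocal classes in the sense of Corollary~\ref{cor:automorphism}. Second, the four inclusions \eqref{eq:cn-cross-inclusions} hold with index $4$.

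For the first part, I would check for each of the four lattices $L$ that it contains $1$ and is closed under multiplication. Using $i^2=-2$, $j^2=-1$, $k^2=-2$, $ij=-ji=k$, $jk=i$ and $ki=2j$, it suffices to multiply pairs of basis vectors. Next I would determine the parity of each order: $\psi(\mathcal O_1)$, for instance, is visibly even, since every basis element other than $1$ has trace $0$. I would then compute its trace-zero norm form by \eqref{gongshi3.7}, divide out the content to get $\widetilde f_L$, and compare it with $f_{\mathcal O_1}$ (and similarly for the other three). The form $f_{\mathcal O_1}$ is obtained from Theorem~\ref{the:bijection}(1): it is the reciprocal of the primitive form $\widetilde f_{\mathcal O_1}=x^2+2y^2+32z^2$ recorded in Proposition~\ref{exam:congruent}, scaled by the content $\widetilde m_{f_{\mathcal O_1}}$, which is $1$ when $\mathcal O_1$ is Gorenstein.

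As a sample of this check, the Gram matrix of $\widetilde f_{\mathcal O_1}$ is $\mathrm{diag}(2,4,64)$. Its cofactors are $256,128,8$ with gcd $8$, so the reciprocal is $32x^2+16y^2+z^2$. On the other side, the trace-zero basis $i,8j,4k$ of $\psi(\mathcal O_1)$ has norms $2,64,32$ and is orthogonal, so after dividing by the content $2$ we get $x^2+32y^2+16z^2$. These forms agree up to permuting variables. By Corollary~\ref{cor:automorphism} and the uniqueness part of Theorem~\ref{the:bijection}, this identifies $\psi(\mathcal O_1)$ up to isomorphism. The cases $\mathcal O_1',\mathcal O_2,\mathcal O_2'$ go the same way: the reciprocal of a diagonal form is diagonal, and for the non-diagonal forms of Qin one inverts the $2\times 2$ block in $y,z$. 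If the conductor of one of these orders is not $1$, the same argument is applied to its Gorenstein saturation, using Lemma~\ref{the:Gor}. The main obstacle is this bookkeeping of content and conductor, together with checking that the non-diagonal reciprocal form really matches the trace-zero form of the proposed lattice up to $\mathrm{GL}_3(\mathbb Z)$.

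The inclusions are elementary lattice computations. Each order listed contains $\mathbb Z$ as a direct summand, so I would express the three non-scalar basis vectors of the smaller order in the basis of the larger one. The index is then the absolute value of the resulting $3\times3$ determinant.

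For $\mathcal O_1\subset\psi(\mathcal O_2)$ we have $4i=4i$, $4j=2\cdot(2j)$ and $16k=2\cdot(8k)$, giving determinant $4$. For $\mathcal O_2\subset\psi(\mathcal O_1)$ we have $2i=2\cdot i$, $16j=2\cdot(8j)$ and $4k=4k$, again giving determinant $4$.

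For $\mathcal O_1'\subset\psi(\mathcal O_2')$, write vectors in the basis $(4i,\,2j+4k,\,4j)$. Then $8j=2\cdot(4j)$ and $8k-4j=2(2j+4k)-2(4j)$. The coordinate matrix has rows $(1,0,0)$, $(0,0,2)$, $(0,2,-2)$, whose determinant is $-4$.

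For $\mathcal O_2'\subset\psi(\mathcal O_1')$, write vectors in the basis $(4i,\,4j+k,\,2k)$. Then $8j-2k=2(4j+k)-2(2k)$ and $4k=2(2k)$. The coordinate matrix has rows $(1,0,0)$, $(0,2,-2)$, $(0,0,2)$, with determinant $4$.

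This proves every inclusion and that each has index $4$.
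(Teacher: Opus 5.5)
Your proposal is correct and follows the paper's proof. Both check closure and evenness of the four lattices, identify the reciprocal classes by matching ternary forms, and obtain the inclusions from $3\times 3$ coordinate matrices of determinant $\pm4$; your four matrices are right.

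The one variation is in the matching step. You compute $\widetilde f_L$ from $L^0$ and compare it with $f_{\mathcal O_1}=\psi_{\widetilde f_{\mathcal O_1}}$ (and similarly for the others), whereas the paper computes the Clifford form $f_L$ and compares it with $\widetilde f_{\mathcal O_1}$. Your direction needs each candidate $L$ to have Gorenstein conductor $1$; otherwise the primitive part of $f_{L^0}$ only identifies $\operatorname{Gor}(L)$. This is easy to confirm: $\operatorname{discrd}(L)=2^8,2^8,2^9,2^9$ agrees with $d_{\widetilde f_{\mathcal O_1}},d_{\widetilde f_{\mathcal O_1'}},d_{\widetilde f_{\mathcal O_2}},d_{\widetilde f_{\mathcal O_2'}}$, and a conductor $m$ would multiply the discriminant by $m^3$.
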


\begin{proof}
The relations $i^2=-2$, $j^2=-1$, and $ij=-ji=k$ show that
the four lattices in \eqref{eq:cn-reciprocal-orders} are closed
under multiplication. Thus they are orders, and they are even
because their displayed nonscalar basis elements have trace zero.

For such a basis, we have
\[
\begin{array}{c|c|l}
Orders & discriminant &f_{\mathcal O}\\ \hline
\psi(\mathcal O_1)&2^8&32x^2+y^2+2z^2\\
\psi(\mathcal O_1')&2^8&2x^2+4y^2+9z^2-4yz\\
\psi(\mathcal O_2)&2^9&4x^2+32y^2+z^2\\
\psi(\mathcal O_2')&2^9&4x^2+4y^2+9z^2-4yz.
\end{array}
\]
After permuting variables in the first and third rows, these are
precisely $\widetilde f_{\mathcal O_1}$,
$\widetilde f_{\mathcal O_1'}$,
$\widetilde f_{\mathcal O_2}$, and
$\widetilde f_{\mathcal O_2'}$. The bijection and reciprocal
identity established above therefore identify the four reciprocal
isomorphism classes as asserted.

The inclusions in \eqref{eq:cn-cross-inclusions} follow by
expressing the bases of the smaller orders in the displayed
bases of the larger ones. The absolute values of the resulting
determinants are all $4$. We shall also use the inclusions
\begin{equation}\label{eq:cn-doubled-lattices}
\begin{aligned}
2\psi(\mathcal O_2)^0&\subset\mathcal O_1^0,&
2\psi(\mathcal O_2')^0&\subset(\mathcal O_1')^0,\\
2\psi(\mathcal O_1)^0&\subset\mathcal O_2^0,&
2\psi(\mathcal O_1')^0&\subset(\mathcal O_2')^0,
\end{aligned}
\end{equation}
each of which has index $2$.
\end{proof}

The inclusions in Proposition~\ref{prop:cn-reciprocal-overorders} give an order-theoretic
interpretation of elementary parity relations between the
associated ternary representation numbers.

\begin{cor}\label{prop:cn-norm-transfer}
Let $n$ be a positive integer with $n\equiv1\pmod8$, then division by $2$ induces bijections
\begin{equation}\label{eq:cn-norm-bijections}
\begin{aligned}
\{\alpha\in\mathcal O_1^0:\operatorname n(\alpha)=16n\}
&\longrightarrow
\{\beta\in\psi(\mathcal O_2)^0:\operatorname n(\beta)=4n\},\\
\{\alpha\in\mathcal O_2^0:\operatorname n(\alpha)=8n\}
&\longrightarrow
\{\beta\in\psi(\mathcal O_1)^0:\operatorname n(\beta)=2n\}.
\end{aligned}
\end{equation}
In particular,
\begin{equation}\label{eq:cn-norm-counts}
\rho_{\mathcal O_1}(16n,0)=\rho_{\psi(\mathcal O_2)}(4n,0),
\qquad
\rho_{\mathcal O_2}(8n,0)=\rho_{\psi(\mathcal O_1)}(2n,0).
\end{equation}
More generally, for every $r\in\mathbb Z$,
\begin{equation}\label{eq:cn-trace-counts}
\begin{aligned}
\rho_{\mathcal O_1}(r^2+16n,2r)
 &=\rho_{\psi(\mathcal O_2)}(r^2+4n,2r),\\
\rho_{\mathcal O_2}(r^2+8n,2r)
 &=\rho_{\psi(\mathcal O_1)}(r^2+2n,2r).
\end{aligned}
\end{equation}
The statements also hold for $\mathcal O_1'$, $\mathcal O_2'$, $\psi(\mathcal O_1')$ and $\psi(\mathcal O_2')$.
\end{cor}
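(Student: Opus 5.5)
The plan is to reduce everything to an explicit congruence computation on the trace-zero lattices, using the doubled-lattice inclusions \eqref{eq:cn-doubled-lattices} from Proposition~\ref{prop:cn-reciprocal-overorders}. Since $2\psi(\mathcal O_2)^0\subset\mathcal O_1^0$ has index $2$, the map $\beta\mapsto 2\beta$ is an injection from $\{\beta\in\psi(\mathcal O_2)^0:\n(\beta)=4n\}$ into $\{\alpha\in\mathcal O_1^0:\n(\alpha)=16n\}$. Its inverse is division by $2$. So the bijection in \eqref{eq:cn-norm-bijections} is equivalent to the claim that no element of $\mathcal O_1^0\setminus 2\psi(\mathcal O_2)^0$ has norm $16n$ when $n\equiv1\pmod 8$. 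The second bijection reduces in the same way to the corresponding claim for $2\psi(\mathcal O_1)^0\subset\mathcal O_2^0$.

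To prove these claims I will write the norm forms in coordinates. For $\mathcal O_1^0=\mathbb Z4i+\mathbb Z4j+\mathbb Z16k$, the basis is orthogonal and the norms are $32,16,512$. Hence $\n(x\cdot4i+y\cdot4j+z\cdot16k)=16(2x^2+y^2+32z^2)$. On the other hand, $2\psi(\mathcal O_2)^0=\mathbb Z8i+\mathbb Z4j+\mathbb Z16k$ is exactly the sublattice $\{x\text{ even}\}$. The equation $2x^2+y^2+32z^2=n\equiv1\pmod 8$ with $x$ odd would force $y^2\equiv7\pmod 8$, which is impossible. Similarly, $\mathcal O_2^0=\mathbb Z2i+\mathbb Z16j+\mathbb Z4k$ has norm $8(x^2+32y^2+4z^2)$. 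Its sublattice $2\psi(\mathcal O_1)^0=\mathbb Z2i+\mathbb Z16j+\mathbb Z8k$ is $\{z\text{ even}\}$. With $z$ odd we would need $x^2\equiv5\pmod 8$, again impossible. This proves \eqref{eq:cn-norm-bijections}. Taking cardinalities gives \eqref{eq:cn-norm-counts}, because $\rho_{\mathcal O}(m,0)$ counts trace-zero elements of norm $m$.

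For \eqref{eq:cn-trace-counts}, note that all four orders involved are even: $\mathcal O_1,\mathcal O_2$ are by inspection, and $\psi(\mathcal O_1),\psi(\mathcal O_2)$ by Proposition~\ref{prop:cn-reciprocal-overorders}. As in the proof of Proposition~\ref{prop:hashimoto-even-orders}, every element of trace $2r$ in an even order is uniquely $r+\beta$ with $\beta\in\mathcal O^0$, and its norm is $r^2+\n(\beta)$. Hence $\rho_{\mathcal O}(r^2+m,2r)=\rho_{\mathcal O}(m,0)$ for even $\mathcal O$, and \eqref{eq:cn-trace-counts} follows from \eqref{eq:cn-norm-counts}. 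One could equally use Lemma~\ref{lem:rep}.

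For the primed orders I will repeat the argument with the forms recorded in Proposition~\ref{exam:congruent} and in the table of Proposition~\ref{prop:cn-reciprocal-overorders}. By Lemma~\ref{lem:rep} and Remark~\ref{rem:fO0}, the norm on $(\mathcal O_1')^0$ is $16(2x^2+4y^2+9z^2-4yz)$ in a suitable basis, and on $(\mathcal O_2')^0$ it is $8(4x^2+4y^2+9z^2-4yz)$. The main obstacle is to identify, in these bases, which index-$2$ sublattice equals $2\psi(\mathcal O_2')^0$, respectively $2\psi(\mathcal O_1')^0$. I will do this by expressing the bases explicitly, and I expect that sublattice to be a parity condition on a single coordinate. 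Once it is identified, the same mod-$8$ obstruction should go through. For $n$ odd, $2x^2+4y^2+9z^2-4yz=n$ forces $z$ odd, and with $x$ odd one gets residue $2+4y^2+1-4yz\equiv 3$ or $7\pmod 8$, never $1$. Likewise $4x^2+4y^2+9z^2-4yz\equiv1\pmod8$ forces a parity condition on $x$ or $y$. Which coordinate is the relevant one must be matched against the explicit sublattice.
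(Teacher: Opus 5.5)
Your route is the paper's own: write trace-zero elements in coordinates, use a mod-$8$ obstruction to show that every solution lies in the index-$2$ sublattice $2\psi(\cdot)^0$, invert via the doubled-lattice inclusions, and pass to trace $2r$ through $r+\alpha$. For $\mathcal O_1$ and $\mathcal O_2$ your computation is complete and correct.

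The primed half, however, is not finished. The identification you flag as ``the main obstacle'' is the entire content of that case, and you leave it as an expectation. It cannot be settled the way you set it up. Lemma~\ref{lem:rep} and Remark~\ref{rem:fO0} (with Proposition~\ref{exam:congruent}) give the norm form on $(\mathcal O_1')^0$ only up to equivalence, i.e.\ ``in a suitable basis''. That cannot tell you where $2\psi(\mathcal O_2')^0$ sits. Compute directly in the displayed bases instead.

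For $(\mathcal O_1')^0=\mathbb Z4i+\mathbb Z8j+\mathbb Z(8k-4j)$ one gets $\n(4xi+8yj+z(8k-4j))=16\bigl(2x^2+(2y-z)^2+8z^2\bigr)$. Here $2\psi(\mathcal O_2')^0$ is spanned by $8i$, $4j+8k=8j+(8k-4j)$ and $8j$, so it is exactly $\{x\ \text{even}\}$.

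For $(\mathcal O_2')^0=\mathbb Z4i+\mathbb Z(8j-2k)+\mathbb Z4k$ one gets $\n(4xi+y(8j-2k)+4zk)=8\bigl(4x^2+(2z-y)^2+8y^2\bigr)$. Here $2\psi(\mathcal O_1')^0$ is spanned by $8i$, $8j+2k=(8j-2k)+4k$ and $4k$, so it is again $\{x\ \text{even}\}$.

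Since $n$ is odd, the squared linear term in each norm equation is odd, hence $\equiv1\pmod 8$. This leaves $2x^2\equiv0$, resp.\ $4x^2\equiv0\pmod 8$, so $x$ is even in both cases, and the bijections follow as in the unprimed case. Two small corrections to your sketch: in your coordinates the residue for odd $x$ is always $3$, not ``$3$ or $7$''. In the second form the forced parity is on $x$; $y$ is unconstrained. Finally, $\mathcal O_1',\mathcal O_2',\psi(\mathcal O_1'),\psi(\mathcal O_2')$ are even, since their displayed nonscalar basis elements have trace zero, so your trace-$2r$ argument carries over verbatim.
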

\begin{proof}
We verify the integrality of division by $2$ in all four cases.
Writing a trace-zero element in the bases of Proposition~7.1,
the relevant norm equations are
\[
\begin{array}{c|l|l}
\text{orders}&\alpha&\text{norm equation}\\ \hline
\mathcal O_1
 &4xi+4yj+16zk&2x^2+y^2+32z^2=n\\
\mathcal O_1'
 &4xi+8yj+z(8k-4j)&2x^2+(2y-z)^2+8z^2=n\\
\mathcal O_2
 &2xi+16yj+4zk&x^2+32y^2+4z^2=n\\
\mathcal O_2'
 &4xi+y(8j-2k)+4zk&4x^2+(2z-y)^2+8y^2=n.
\end{array}
\]
Here $x,y,z\in\mathbb Z$. In the first two rows, reduction
modulo $8$ implies that $x$ is even. In the third row it implies
that $z$ is even, and in the fourth row it implies that $x$ is
even. Using \eqref{eq:cn-reciprocal-orders}, these are exactly
the conditions ensuring, respectively, that
\[
\alpha/2\in\psi(\mathcal O_2)^0,\quad
\alpha/2\in\psi(\mathcal O_2')^0,\quad
\alpha/2\in\psi(\mathcal O_1)^0,\quad
\alpha/2\in\psi(\mathcal O_1')^0.
\]
Conversely, \eqref{eq:cn-doubled-lattices} shows that
multiplication by $2$ maps each target in
\eqref{eq:cn-norm-bijections} into its source. Since
$\operatorname n(2\beta)=4\operatorname n(\beta)$, these maps
are inverse bijections, proving \eqref{eq:cn-norm-counts}.

For any of the orders $\mathcal O$ under consideration, an element
of trace $2r$ has a unique expression $r+\alpha$ with
$\alpha\in\mathcal O^0$. Since
\[
\operatorname n(r+\alpha)=r^2+\operatorname n(\alpha),
\]
the bijections in \eqref{eq:cn-norm-bijections} give the
corresponding bijections at trace $2r$ by
$r+\alpha\mapsto r+\alpha/2$. This proves
\eqref{eq:cn-trace-counts}.
\end{proof}

\bibliographystyle{plain}
\bibliography{TQF}

\end{document}